\newtheorem{thm}{Theorem}[section]
\newtheorem{cor}[thm]{Corollary}
\newtheorem{claim}[thm]{Claim}
\newtheorem{fact}[thm]{Fact}
\newtheorem{lemma}[thm]{Lemma}
\newtheorem{prop}[thm]{Proposition}
\newtheorem{lem}[thm]{Lemma}
\theoremstyle{definition}
\newtheorem{definition}[thm]{Definition}
\newtheorem{obs}[thm]{Observation}
\newtheorem{remark}[thm]{Remark}
\newtheorem*{thmA}{Theorem A}
\newtheorem*{thmB}{Theorem B}
\newtheorem*{corC}{Corollary C}
\newtheorem*{thmD}{Theorem D}
\newtheorem*{thmE}{Theorem E}
\newtheorem*{thmF}{Theorem F}
\newcommand{\bbZ}{{\mathbb Z}}
\newcommand{\cG}{{\mathcal G}}
\def\rquotient#1#2{%
	\makeatletter
	\raise.3ex\hbox{$#1$}/\lower.3ex\hbox{$#2$}%
	\makeatother
}	
\newcommand{\subjclass}[2][2010]{%
	\let\@oldtitle\@title%
	\gdef\@title{\@oldtitle\footnotetext{#1 \emph{Mathematics subject classification.} #2}}%
}
\newcommand{\keywords}[1]{%
	\let\@@oldtitle\@title%
	\gdef\@title{\@@oldtitle\footnotetext{\emph{Key words and phrases.} #1.}}%
}
\newcommand{\Address}{{
		\bigskip
		\small
		
		\textsc{D\'epartement de Math\'ematiques B\^atiment 307, Facult\'e des Sciences d'Orsay, Universit\'e Paris-Sud, F-91405 Orsay Cedex, France.}\par\nopagebreak
		\textit{E-mail address}: \texttt{anthony.genevois@math.u-psud.fr}
		
		\medskip
		
		\textsc{Department of Mathematics and the Maxwell Institute for Mathematical Sciences, Heriot-Watt University, Riccarton, EH14 4AS Edinburgh, United Kingdom.}\par\nopagebreak
		\textit{E-mail address}: \texttt{alexandre.martin@hw.ac.uk}
		
}}
\title{Automorphisms of graph products of groups from a geometric perspective}
\date{\today}
\author{Anthony Genevois and Alexandre Martin}
\subjclass{ Primary 20F65. Secondary 20F28.}
\keywords{graph products of groups,  automorphism groups, CAT(0) cube complexes and related geometries, acylindrical hyperbolicity}
\begin{document}

\maketitle

\begin{abstract}
This article studies the structure of the automorphism groups of general graph products of groups. We give a complete characterisation of the automorphisms that preserve the set of conjugacy classes of vertex groups for arbitrary graph products. Under mild conditions on the underlying graph, this allows us to provide a simple set of generators for the automorphism groups of graph products of \textit{arbitrary} groups. We also obtain information  about the geometry of the automorphism groups of such graph products: lack of property (T), acylindrical hyperbolicity.

The approach in this article is geometric and relies on the action of graph products of groups on certain complexes with a particularly rich combinatorial geometry. The first such complex is a particular Cayley graph of the graph product that  has a \textit{quasi-median} geometry, a combinatorial geometry reminiscent of (but more general than) CAT(0) cube complexes. The second (strongly related)  complex used is the Davis complex of the graph product, a CAT(0) cube complex that also has a structure of right-angled building. 
\end{abstract}

%

\tableofcontents

\newpage

\section{Introduction and main results}

Graph products of groups, which have been introduced by  Green in \cite{GreenGP}, define a class of group products that, loosely speaking, interpolates between free and direct products. For a simplicial graph $\Gamma$ and a collection of groups $\mathcal{G}=\{ G_v \mid v \in V(\Gamma) \}$ indexed by the vertex set $V(\Gamma)$ of $\Gamma$, the \emph{graph product} $\Gamma \mathcal{G}$ is defined as the quotient
$$\left( \underset{v \in V(\Gamma)}{\ast} G_v \right) / \langle \langle gh=hg, \ h \in G_u, g \in G_v, \{ u,v \} \in E(\Gamma) \rangle \rangle,$$
where $E(\Gamma)$ denotes the edge set of $\Gamma$. The two extreme situations where $\Gamma$ has no edge and where $\Gamma$ is a complete graph respectively correspond to the free product and the direct sum of the groups belonging to the collection $\mathcal{G}$. Graph products include two intensively studied families of groups: right-angled Artin groups  and right-angled Coxeter groups. Many articles have been dedicated to the study of the automorphism groups of these particular examples of graph products. In particular, the automorphism groups of right-angled Coxeter groups have been intensively studied in relation with the famous rigidity problem  for Coxeter groups, see for instance \cite{RACGrigidity}. 

Beyond these two cases, the automorphism groups of general graph products of groups are poorly understood. Most of the literature on this topic imposes very strong conditions on the graph products involved, either on the underlying graph (as in the case of the automorphisms groups of free products \cite{OutSpaceFreeProduct, HorbezHypGraphsForFreeProducts, HorbezTitsAlt}) or on the vertex groups (most of the case, they are required to be abelian or even cyclic  \cite{AutGPabelianSet, AutGPabelian, AutGPSIL, RuaneWitzel}). Automorphism groups of graph products of more general groups (and their subgroups) are essentially uncharted territory. For instance, the following general problem is still unsolved:\\

\textbf{General Problem.} Find a natural / simple generating set for the automorphism group of a general graph product of groups.\\


The first result in that direction is the case of right-angled Artin groups or right-angled Coxeter groups, solved by Servatius \cite{RAAGServatius} and Laurence \cite{RAAGgenerators}. More recently, Corredor--Guttierez described a generating set for automorphism groups of graph products of cyclic groups \cite{AutGPabelianSet}, using previous work of Guttierez--Piggott--Ruane \cite{AutGPabelian}.  Beyond these cases however, virtually nothing is known about the automorphism group of a graph product.

Certain elements in the generating sets of a right-angled Artin groups naturally generalise to more general graph products, and we take a moment to mention them as they play an important role in the present work:
\begin{itemize}
	\item For an element $g\in \Gamma\cG$,  the \textit{inner automorphism} $\iota(g)$ is defined by $$\iota(g): \Gamma\cG \rightarrow \Gamma\cG, ~~x \mapsto gxg^{-1}.$$
	\item Given an isometry $\sigma : \Gamma \to \Gamma$ and a collection of isomorphisms $\Phi = \{ \varphi_u : G_u \to G_{\sigma(u)} \mid u \in V(\Gamma) \}$, the \emph{local automorphism} $(\sigma, \Phi)$ is the automorphism of $\Gamma \mathcal{G}$ induced by $$\left\{ \begin{array}{ccc} \bigcup\limits_{u \in V(\Gamma)} G_u & \to & \Gamma \mathcal{G} \\ g & \mapsto & \text{$\varphi_u(g)$ if $g \in G_u$} \end{array} \right. .$$ For instance, in the specific case of right-angled Artin groups, graphic automorphisms (i.e. automorphisms of $\Gamma\cG$ induced by a graph automorphism of $\Gamma$) and inversions \cite{ServatiusCent} are local automorphisms.
	\item Given a vertex $u \in V(\Gamma)$, a connected component $\Lambda$ of $\Gamma \backslash \mathrm{star}(u)$ and an element $h \in G_u$, the \emph{partial conjugation} $(u, \Lambda,h)$ is the automorphism of $\Gamma \mathcal{G}$ induced by $$\left\{ \begin{array}{ccc} \bigcup\limits_{u \in V(\Gamma)} G_u & \to & \Gamma \mathcal{G} \\ g & \mapsto & \left\{ \begin{array}{cl} g & \text{if $g \notin \langle \Lambda \rangle$} \\ hgh^{-1} & \text{if $g \in \langle \Lambda \rangle$} \end{array} \right. \end{array} \right. .$$ 
	Notice that an inner automorphism of $\Gamma \mathcal{G}$ is always a product of partial conjugations. 
\end{itemize}


The goal of this article is to describe the structure (and provide a generating set) for much larger classes of graphs products of groups by adopting a new geometric perspective. In a nutshell, the strategy is to consider the action of graph products $\Gamma\cG$ on an appropriate space and to show that  this action can be extended  to an action of $\mathrm{Aut}(\Gamma\cG)$ on $X$, in order to exploit the geometry of this action.
Such a `rigidity' phenomenon appeared for instance in the work of Ivanov on the action of mapping class groups of hyperbolic surfaces on their curve complexes: Ivanov showed that an automorphism of the mapping class group induces an automorphism of the underlying curve complex \cite{IvanovAut}. Another example is given by the Higman group: in \cite{MartinHigman}, the second author computed the automorphism group of the Higman group $H_4$  by first extending the action of $H_4$ on a CAT(0) square complex naturally associated to its standard presentation to an action of $\mathrm{Aut}(H_4)$. In this article, we construct such rigid actions for large classes of graph products of groups. The results from this article vastly generalise earlier results obtained by the authors in a previous  version (not intended for publication, but still available on the arXiv as \cite{GPcycle}).\\

We now present the main results of this article. \textbf{For the rest of this introduction, we fix a  finite simplicial graph $\Gamma$ and a collection $\cG$ of groups  indexed by  $V(\Gamma)$. }

\paragraph{The subgroup of conjugating automorphisms.} When studying automorphisms of graph products of groups, an important subgroup consists of those automorphisms that send vertex groups to conjugates of vertex groups. This subgroup already appears for instance in the work of Tits \cite{TitsAutCoxeter}, Corredor--Gutierrez \cite{AutGPabelianSet}, and Charney--Gutierez--Ruane \cite{AutGPabelian}. A description of this subgroup was only available for right-angled Artin groups and other graph products of \textit{cyclic} groups by work of Laurence \cite{RAAGgenerators}. 

A central result of this paper is a complete characterisation of this subgroup under \textit{no restriction on the vertex groups or the underlying graph}.  More precisely, let us call a \emph{conjugating automorphism} of $\Gamma \mathcal{G}$  an automorphism $\varphi$ of $\Gamma \mathcal{G}$ such that, for every vertex group $G_v \in \mathcal{G}$, there exists a vertex group  $G_w \in \mathcal{G}$ and an element $g \in \Gamma \mathcal{G}$ such that $\varphi(G_v)=gG_wg^{-1}$. We prove the following:

\begin{thmA}
	\emph{	The subgroup of conjugating automorphisms of $\Gamma \mathcal{G}$ is exactly the subgroup of $ \mathrm{Aut}(\Gamma\cG)$ generated by the local automorphisms and the partial conjugations.  }
\end{thmA}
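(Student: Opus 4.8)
The plan is to establish the two inclusions separately; the easy direction is that local automorphisms and partial conjugations are conjugating automorphisms, which is immediate from their definitions (a local automorphism sends $G_v$ to $G_{\sigma(v)}$ with no conjugation, and a partial conjugation $(u,\Lambda,h)$ sends $G_v$ either to itself or to $hG_vh^{-1}$). The substantive content is the reverse inclusion: every conjugating automorphism is a product of local automorphisms and partial conjugations. First I would reduce to a normalised situation: given a conjugating automorphism $\varphi$, for each vertex $v$ choose $w=w(v)$ and $g_v\in\Gamma\cG$ with $\varphi(G_v)=g_vG_{w(v)}g_v^{-1}$. The assignment $v\mapsto w(v)$ is a bijection on vertices (since $\varphi$ is an automorphism and the $G_v$ are, up to conjugacy, the "indecomposable" pieces — this should follow from a normal-form / Kurosh-type argument for graph products, or can be quoted from the quasi-median machinery the paper has set up), and one checks it induces a graph isomorphism $\sigma$ of $\Gamma$: adjacency of $v,v'$ forces $[G_v,G_{v'}]=1$, hence $[g_vG_{w(v)}g_v^{-1},g_{v'}G_{w(v')}g_{v'}^{-1}]=1$, and in a graph product two conjugates of vertex groups commute only when the vertices are equal or adjacent (again a standard fact about graph products, provable via the action on the Davis complex or the quasi-median Cayley graph). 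Composing $\varphi$ with an inverse local automorphism realising $\sigma^{-1}$ and a suitable collection of $\varphi_u^{-1}$, I may therefore assume $\varphi(G_v)=g_vG_vg_v^{-1}$ for every $v$, i.e. $\varphi$ stabilises the conjugacy class of each vertex group.

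Next I would run an induction to "straighten" the conjugating elements $g_v$ into a product of partial conjugations. The key geometric tool is the action of $\Gamma\cG$ (and its relation to $\varphi$) on the quasi-median Cayley graph $X$, or equivalently on the Davis complex: after replacing $\varphi$ by $\iota(g_v)^{-1}\circ\varphi$ for a well-chosen $v$ I can assume $g_v=1$, and then I want to show the remaining $g_{v'}$ can be taken to lie in suitable "link" subgroups, allowing me to peel off one partial conjugation at a time. Concretely, one should pick a vertex $v$ with $g_v$ of minimal length (in $X$), use that $\varphi(G_v)=g_vG_vg_v^{-1}$ together with commutation relations coming from $\mathrm{star}(v)$ to show $g_v$ can be normalised to $1$ modulo $\langle \mathrm{star}(v)\rangle$, and then analyse how $\varphi$ acts on the subgroups $\langle\Lambda\rangle$ for the connected components $\Lambda$ of $\Gamma\setminus\mathrm{star}(v)$ — on each such component $\varphi$ restricted to $\langle\Lambda\rangle$ should agree with a conjugation by some element, and the partial conjugations $(v,\Lambda,\cdot)$ are exactly what is needed to absorb these. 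Iterating over vertices, ordered so that the complexity (total length of the conjugators, measured in $X$) strictly decreases, terminates with an automorphism that fixes every vertex group pointwise, hence is the identity. Unwinding, $\varphi$ is a product of the local automorphism and partial conjugations used along the way.

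The main obstacle I anticipate is the "straightening" step: showing that once $\varphi$ stabilises every conjugacy class $[G_v]$, the conjugators $g_v$ can be chosen compatibly so that $\varphi$ literally decomposes as a product of partial conjugations. The difficulty is bookkeeping the interaction between different vertices — a partial conjugation $(u,\Lambda,h)$ conjugates all of $\langle\Lambda\rangle$ at once, so one must verify that the "local" conjugation data $\varphi|_{G_v}$, which a priori is just an inner automorphism of $\Gamma\cG$ composed with an isomorphism $G_v\to G_v$, is in fact coherent across each connected component $\Lambda$ of $\Gamma\setminus\mathrm{star}(u)$. This is exactly where the rich combinatorial geometry of the quasi-median Cayley graph (convexity of the "sectors" / halfspaces associated to $\mathrm{star}(u)$, and the fact that components of $\Gamma\setminus\mathrm{star}(u)$ correspond to a genuine geometric decomposition of $X$) does the heavy lifting: it lets one read off, from where $\varphi$ sends a single vertex-group coset, what it must do to an entire branch of the complex. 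I expect the proof to set up a notion of complexity for pairs $(\varphi, \text{choices of } g_v)$ and to argue that, unless $\varphi$ is already the identity, one can strictly decrease it by composing with a local automorphism or a partial conjugation; termination then yields the theorem.
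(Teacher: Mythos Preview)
Your overall architecture---define a complexity in terms of the conjugating elements $g_v$ and reduce it by composing with partial conjugations---matches the paper's, and indeed the paper's complexity $\|\varphi\|=\sum_u d(1,N(\varphi\cdot J_u))$ is exactly the sum of the minimal lengths of coset representatives of $g_u$ modulo $\langle\mathrm{star}(u)\rangle$. However, your concrete reduction step has a genuine gap, and it is precisely at the obstacle you flag.

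The paper does \emph{not} pick a vertex with $g_v$ of minimal length and try to peel off components around it. The problem with that approach is twofold. First, composing with $\iota(g_v)^{-1}$ to force $g_v=1$ can increase the other $g_{v'}$, so the total complexity need not drop. Second, and more seriously, your claim that ``on each such component $\varphi$ restricted to $\langle\Lambda\rangle$ should agree with a conjugation by some element'' is exactly what one cannot assume: the individual conjugators $g_{v'}$ for $v'\in\Lambda$ are a priori unrelated, and there is no reason a single element of $G_v$ conjugates all of $\langle\Lambda\rangle$ correctly. You identify this as the main obstacle, but the geometric hand-wave (``convexity of sectors\ldots lets one read off what $\varphi$ must do to an entire branch'') does not supply a mechanism.

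The paper's missing ingredient is a ping-pong argument (its Lemma~\ref{lem:pingpong} and Claim~\ref{claim:peripheral}). One shows that if $\|\varphi\|>0$ then the family of hyperplanes $\{\varphi\cdot J_u\}$ cannot be \emph{peripheral}: there must exist vertices $a,b$ with $\varphi\cdot J_a$ separating $1$ from $\varphi\cdot J_b$. This is proved by observing that the rotative-stabilisers of the $\varphi\cdot J_u$ generate all of $\Gamma\mathcal{G}$, and a ping-pong on sectors then forces some hyperplane of the family to separate $1$ from another. Once such $a,b$ are found, one chooses $y\in G_a$ so that $\varphi(y)$ sends $\varphi\cdot J_b$ into the sector of $\varphi\cdot J_a$ containing $1$; the partial conjugation $(a,\Lambda,y)$, with $\Lambda$ the component of $b$ in $\Gamma\setminus\mathrm{star}(a)$, then strictly shortens $d(1,N(\varphi\cdot J_u))$ for \emph{every} $u\in\Lambda$ simultaneously (because all the $\varphi\cdot J_u$ with $u\in\Lambda$ lie in the same sector as $\varphi\cdot J_b$), and leaves the others unchanged. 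So the vertex $a$ is selected not by minimality of $g_a$ but by the separating property of $\varphi\cdot J_a$, and it is this separating property that guarantees coherence across the whole component $\Lambda$.

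A secondary point: your preliminary reduction (showing $v\mapsto w(v)$ is a graph automorphism and composing with its inverse local automorphism) is doable but requires proving upfront that commuting conjugates of vertex groups have adjacent labels, and that $w$ is a bijection. The paper avoids this by running the complexity argument directly and recovering the graph isomorphism only at the end, once $\|\varphi\|=0$ forces $\varphi\cdot\mathcal{J}=\mathcal{J}$.
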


\paragraph{Generating set and algebraic structure.} With this characterisation of conjugating automorphisms at our disposal, we are able to completely describe the automorphism group of large classes of graph products, and in particular to give  a generating set for such automorphism groups. To the authors' knowledge, this result represents the first results on the algebraic structure of automorphism groups of graph products of general (and in particular non-abelian) groups. 

\begin{thmB}
 \emph{If $\Gamma$ is a finite connected simplicial graph  of girth at least $5$ and without vertices of valence $<2$, then $\mathrm{Aut}(\Gamma\cG)$ is generated by the partial conjugations and the local automorphisms.}
\end{thmB}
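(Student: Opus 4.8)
The plan is to deduce Theorem~B from Theorem~A by showing that, under the hypotheses on $\Gamma$ (connected, girth $\geq 5$, no vertex of valence $<2$), every automorphism of $\Gamma\cG$ is automatically a conjugating automorphism. Once this reduction is in place, Theorem~A immediately gives that $\mathrm{Aut}(\Gamma\cG)$ is generated by the local automorphisms and partial conjugations, as claimed. So the whole content is the statement: \emph{the girth and valence conditions force the set of conjugacy classes of vertex groups to be preserved by all of $\mathrm{Aut}(\Gamma\cG)$}.

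To prove that, I would look for an intrinsic, algebraic characterisation of the conjugates of vertex groups inside $\Gamma\cG$ that is manifestly automorphism-invariant. The natural candidate is something like: a subgroup $H\leq \Gamma\cG$ is conjugate to a vertex group if and only if $H$ is a maximal element (with respect to inclusion, up to conjugacy) of the family of subgroups satisfying some first-order-ish closure property --- for instance, being a ``parabolic'' subgroup that is directly indecomposable relative to the graph-product structure, or being characterised via the structure of its centraliser/normaliser. The girth $\geq 5$ hypothesis is exactly what one expects here: it rules out triangles and squares in $\Gamma$, so that the commutation graph is sparse, and the local structure around a vertex $u$ (its star, its link, the components of $\Gamma\setminus\mathrm{star}(u)$) becomes rigid enough that one can detect vertex groups by how they interact with their centralisers and with the decomposition of $\Gamma\cG$ as an amalgam over $\langle\mathrm{star}(u)\rangle$. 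The valence $\geq 2$ hypothesis prevents degenerate leaves where a vertex group could be absorbed or where partial conjugations degenerate. Concretely, I expect one shows: (i) centralisers of ``sufficiently generic'' elements are parabolic and their structure encodes a vertex and its star; (ii) using girth $\geq 5$, non-adjacent vertices have links meeting in at most a point, which pins down each vertex group up to conjugacy from purely group-theoretic data; hence (iii) any automorphism permutes (conjugacy classes of) vertex groups.

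The geometric route advertised in the introduction is presumably the actual mechanism: one uses the quasi-median Cayley graph (or the Davis complex / right-angled building) on which $\Gamma\cG$ acts, identifies vertex groups with stabilisers of certain combinatorially distinguished ``clique-subcomplexes'' or prisms, and shows that the girth/valence hypotheses make these subcomplexes canonically definable --- e.g. as the maximal product subcomplexes of a given type, or via the pattern of hyperplanes and their crossing relations, which under girth $\geq 5$ is forced to mirror $\Gamma$ faithfully. An automorphism $\varphi$ of $\Gamma\cG$ then must permute these canonical subcomplexes (after twisting the action by $\varphi$), hence permute the conjugacy classes of their stabilisers, i.e. of the vertex groups. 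I would carry this out by: first, fixing the combinatorial description of vertex-group conjugates as stabilisers of canonical objects in the quasi-median graph (citing the earlier sections); second, proving a rigidity lemma that this collection of canonical objects is intrinsically determined by the group (this is where girth $\geq 5$ and valence $\geq 2$ enter, to exclude the small configurations where canonicity fails); third, concluding that $\varphi(G_v)$ is a conjugate of some $G_w$; and finally invoking Theorem~A.

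The main obstacle I anticipate is step two: showing that the family of ``vertex-group subcomplexes'' is canonically defined, i.e. genuinely invariant under \emph{abstract} automorphisms and not merely under the geometric symmetries of the fixed action. One has to rule out exotic automorphisms that mix vertex groups with, say, products of vertex groups living on an edge, or with subgroups arising from the non-abelian internal structure of the $G_v$'s; the hypotheses girth $\geq 5$ (no triangles, no squares) and minimum valence $\geq 2$ are precisely calibrated to kill these possibilities, so the delicate part is a careful case analysis of the local combinatorics of $\Gamma$ around each vertex, combined with the structure theory of parabolic subgroups and their centralisers in $\Gamma\cG$ developed earlier in the paper. Everything else --- deducing the generating set from Theorem~A, checking that local automorphisms and partial conjugations are indeed conjugating --- is essentially formal.
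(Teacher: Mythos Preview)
Your strategy is correct and matches the paper's: reduce to Theorem~A by exhibiting an automorphism-invariant algebraic characterisation of the conjugates of vertex groups. Where you remain speculative, the paper is concrete. Let $\mathcal{M}$ be the collection of maximal subgroups of $\Gamma\cG$ that split non-trivially as direct products, and let $\mathcal{C}$ be the collection of non-trivial intersections of pairs from $\mathcal{M}$; the paper proves that conjugates of vertex groups are exactly the \emph{minimal} elements of $\mathcal{C}$ with respect to inclusion. The key external input is a consequence of Minasyan--Osin: any subgroup of $\Gamma\cG$ that splits as a non-trivial direct product lies in a conjugate of $\langle\Lambda\rangle$ for some join $\Lambda\subset\Gamma$. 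Since $\Gamma$ has girth $\geq 5$, the only joins in $\Gamma$ are stars of vertices, so $\mathcal{M}$ is precisely the set of conjugates of $\langle\mathrm{star}(u)\rangle$, i.e.\ the hyperplane stabilisers in the quasi-median graph $X(\Gamma,\cG)$. Intersections of two hyperplane stabilisers are then computed geometrically via projections of carriers (using that the transversality graph has no induced $4$-cycle, again from girth $\geq 5$): one obtains either a conjugate of $\langle G_u,G_v\rangle$ for an edge $\{u,v\}$, a conjugate of some $G_u$, or the trivial group. The valence $\geq 2$ hypothesis ensures every vertex $u$ has two distinct neighbours $x,y$, so that $G_u=\langle\mathrm{star}(x)\rangle\cap\langle\mathrm{star}(y)\rangle$ indeed appears in $\mathcal{C}$ and is minimal there. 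Your alternative suggestion via centralisers of generic elements is not the route taken and would require separate work.
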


This description of the automorphism group in Theorem B simplifies further in the case where  $\Gamma$ is in addition assumed not to contain any separating star (following \cite{RAAGatomic}, a finite connected graph without vertex of valence $<2$, whose girth is at least $5$,  and that does not contain separating stars is called \textit{atomic}), then we get the following decomposition: 

\begin{corC}
	 \emph{If $\Gamma$ is an atomic graph, then}

	$$\mathrm{Aut}(\Gamma\mathcal{G})
	\simeq \Gamma\mathcal{G} \rtimes \left( \left( \prod\limits_{v \in \Gamma} \mathrm{Aut}(G_v) \right) \rtimes \mathrm{Sym}(\Gamma \mathcal{G}) \right), $$
	\emph{where $\mathrm{Sym}(\Gamma \mathcal{G})$ is an explicit subgroup of the automorphism group of  $\Gamma$.}
\end{corC}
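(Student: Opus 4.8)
The plan is to upgrade Theorem B, which presents $\mathrm{Aut}(\Gamma\cG)$ as the subgroup generated by partial conjugations and local automorphisms, into the asserted iterated semidirect product. Write $\mathrm{Inn}(\Gamma\cG)$ for the group of inner automorphisms (normal in $\mathrm{Aut}(\Gamma\cG)$) and $\mathrm{Loc}(\Gamma\cG)$ for the subgroup generated by local automorphisms; observe that $\mathrm{Loc}(\Gamma\cG)$ is exactly the set of automorphisms permuting the collection $\{G_v\}_v$ of vertex groups, since such an automorphism induces a graph automorphism $\sigma$ of $\Gamma$ (commutation of vertex groups detects adjacency) together with isomorphisms $G_v\to G_{\sigma(v)}$. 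First I would exploit that $\Gamma$ has no separating star: then $\Gamma\setminus\mathrm{star}(u)$ is connected, and it is non-empty because an atomic graph has no cone vertex (a cone vertex would create a triangle, against girth $\geq 5$ and valence $\geq 2$). Thus $(u,\Gamma\setminus\mathrm{star}(u),h)$ is the only type of partial conjugation at $u$, and one checks directly that $\iota(h)=(u,\Gamma\setminus\mathrm{star}(u),h)\circ c_h$, where $c_h\in\mathrm{Loc}(\Gamma\cG)$ acts by $x\mapsto hxh^{-1}$ on $G_u$ and trivially on the other vertex groups. Hence every partial conjugation lies in the subgroup $\mathrm{Inn}(\Gamma\cG)\cdot\mathrm{Loc}(\Gamma\cG)$, and every $\iota(h)$ with $h$ in a vertex group lies in $\langle\text{partial conjugations, local automorphisms}\rangle$; combining this with Theorem B yields $\mathrm{Aut}(\Gamma\cG)=\mathrm{Inn}(\Gamma\cG)\cdot\mathrm{Loc}(\Gamma\cG)$.

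Second, I would promote this to a semidirect product by checking $\mathrm{Inn}(\Gamma\cG)\cap\mathrm{Loc}(\Gamma\cG)=\{1\}$. Suppose $\iota(g)\in\mathrm{Loc}(\Gamma\cG)$, with induced graph automorphism $\sigma$. Applying $\iota(g)$ to $G_v$ gives $gG_vg^{-1}=G_{\sigma(v)}$, and since distinct vertex groups of a graph product are never conjugate (a standard consequence of normal forms, and also transparent from the quasi-median geometry of the Cayley graph), we get $\sigma=\mathrm{id}$ and then $g\in N_{\Gamma\cG}(G_v)$ for every $v$. Using $N_{\Gamma\cG}(G_v)=\langle\mathrm{star}(v)\rangle$ together with the parabolic intersection property $\langle\mathrm{star}(v)\rangle\cap\langle\mathrm{star}(w)\rangle=\langle\mathrm{star}(v)\cap\mathrm{star}(w)\rangle$, we conclude $g\in\langle\bigcap_v\mathrm{star}(v)\rangle=\langle\emptyset\rangle=\{1\}$, again because $\Gamma$ has no cone vertex. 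Thus $g=1$ and $\iota(g)=\mathrm{id}$, so the intersection is trivial. (It is convenient and harmless to assume the vertex groups non-trivial throughout this step.) Since $\mathrm{Inn}(\Gamma\cG)$ is normal, this gives $\mathrm{Aut}(\Gamma\cG)=\mathrm{Inn}(\Gamma\cG)\rtimes\mathrm{Loc}(\Gamma\cG)$ with the conjugation action. Finally $\Gamma\cG$ has trivial centre, because the centre of a graph product is the product of the centres of the vertex groups at its cone vertices and $\Gamma$ has none; hence $\mathrm{Inn}(\Gamma\cG)\simeq\Gamma\cG$ and the action of $\mathrm{Loc}(\Gamma\cG)$ on it becomes the tautological action of local automorphisms on $\Gamma\cG$.

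It then remains to split $\mathrm{Loc}(\Gamma\cG)$ itself. Sending a local automorphism to its induced graph automorphism defines a homomorphism $\mathrm{Loc}(\Gamma\cG)\to\mathrm{Aut}(\Gamma)$ (well-defined since, once more, conjugate vertex groups coincide), with image the subgroup $\mathrm{Sym}(\Gamma\cG)$ of graph automorphisms $\sigma$ satisfying $G_v\simeq G_{\sigma(v)}$ for all $v$, and with kernel $\{\varphi\in\mathrm{Aut}(\Gamma\cG):\varphi(G_v)=G_v\ \forall v\}\simeq\prod_v\mathrm{Aut}(G_v)$. To split it, partition the vertices according to the isomorphism type of their vertex group, fix for each $v$ an isomorphism $\psi_v\colon G_v\to H_{[v]}$ onto a chosen model $H_{[v]}$ of that type, and send $\sigma\in\mathrm{Sym}(\Gamma\cG)$ to the local automorphism restricting to $\psi_{\sigma(v)}^{-1}\circ\psi_v\colon G_v\to G_{\sigma(v)}$ on each vertex group; the identity $\psi_{\tau\sigma(v)}^{-1}\psi_v=(\psi_{\tau\sigma(v)}^{-1}\psi_{\sigma(v)})(\psi_{\sigma(v)}^{-1}\psi_v)$ shows this is a homomorphic section. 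Therefore $\mathrm{Loc}(\Gamma\cG)\simeq\big(\prod_v\mathrm{Aut}(G_v)\big)\rtimes\mathrm{Sym}(\Gamma\cG)$, and substituting into $\mathrm{Aut}(\Gamma\cG)\simeq\Gamma\cG\rtimes\mathrm{Loc}(\Gamma\cG)$ gives the claimed decomposition.

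As for difficulty: the genuinely hard input is Theorem B, and the present argument is essentially bookkeeping. The points most deserving of care are the two rigidity facts feeding the triviality of $\mathrm{Inn}(\Gamma\cG)\cap\mathrm{Loc}(\Gamma\cG)$ — non-conjugacy of distinct vertex groups, and triviality of $\bigcap_v N_{\Gamma\cG}(G_v)$ — together with the elementary but slightly fiddly construction of the section of $\mathrm{Loc}(\Gamma\cG)\to\mathrm{Sym}(\Gamma\cG)$; both rigidity statements are standard features of graph products and, in this paper's framework, can be read off from the quasi-median geometry used to establish Theorem B.
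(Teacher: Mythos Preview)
Your proof is correct and follows the same route as the paper. The paper derives $\mathrm{Aut}(\Gamma\mathcal{G})=\mathrm{Inn}(\Gamma\mathcal{G})\rtimes\mathrm{Loc}(\Gamma\mathcal{G})$ at the start of Section~\ref{section:atomic} from Theorem~\ref{thm:mainGPgeneral} together with Lemma~\ref{fact_2} (which is precisely your Step~2), and leaves the further splitting of $\mathrm{Loc}(\Gamma\mathcal{G})$ implicit; you have simply filled in the details the paper omits, in particular the explicit verification that the unique partial conjugation at $u$ lies in $\mathrm{Inn}(\Gamma\mathcal{G})\cdot\mathrm{Loc}(\Gamma\mathcal{G})$ and the construction of the section $\mathrm{Sym}(\Gamma\mathcal{G})\to\mathrm{Loc}(\Gamma\mathcal{G})$.
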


Actually, we obtain a stronger statement characterising isomorphisms between graph products of groups (see Theorem \ref{thm:ConjIsom}), which in the case of right-angled Coxeter groups is strongly related to the so-called \textit{strong rigidity} of these groups, and to the famous isomorphism problem for general Coxeter groups, see \cite{RACGrigidity}. 

It should be noted that while the previous theorems impose conditions on the underlying graph, it can be used to obtain information about more general Coxeter or Artin groups. Indeed, if the vertex groups in our graph product are (arbitrary) Coxeter groups, then the resulting graph product is again a Coxeter group (with a possibly much wilder underlying graph), and Corollary C can thus be interpreted as a form of strong rigidity of these Coxeter groups \textit{relative to their vertex groups}: up to conjugation \textit{and automorphisms of the vertex groups}, an automorphism of the graph product comes from a (suitable) isometry of the underlying graph.\\

The  explicit computation in Corollary C can be used to study the subgroups of such automorphism groups. In particular, since satisfying the Tits alternative is a property stable under graph products \cite{AM} and under extensions, one can deduce from Corollary C a combination theorem for the Tits Alternative for such automorphism groups. \\

We mention also an application of this circle of ideas to the study of automorphism groups of  graph products of finite groups, with no requirement on the underlying graph. The following result was only known for graph products of \textit{cyclic} groups by work of Corredor--Gutierrez \cite{AutGPabelianSet}:

\begin{thmD}
\emph{If all the groups of $\cG$ are finite, then the subgroup of conjugating automorphisms of $\Gamma\cG$  has finite index in $\mathrm{Aut}(\Gamma \mathcal{G})$. }
\end{thmD}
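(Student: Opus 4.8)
The goal is to show that when every vertex group is finite, the subgroup $\mathrm{Conj}(\Gamma\cG)$ of conjugating automorphisms has finite index in $\mathrm{Aut}(\Gamma\cG)$. The natural strategy is to exhibit a finite-index subgroup of $\mathrm{Aut}(\Gamma\cG)$ that is contained in $\mathrm{Conj}(\Gamma\cG)$, by constructing a homomorphism (or at least a coset map) from $\mathrm{Aut}(\Gamma\cG)$ to a finite set whose ``kernel'' lies in $\mathrm{Conj}(\Gamma\cG)$. The key observation is that when the vertex groups are finite, the graph product $\Gamma\cG$ has only finitely many conjugacy classes of finite subgroups, and more importantly the vertex groups $G_v$ are among the maximal finite subgroups in a controlled way: by the structure theory of graph products (every finite subgroup of $\Gamma\cG$ is conjugate into a subgroup of the form $\langle \mathrm{clique} \rangle = \prod_{v\in c} G_v$ for a clique $c$ of $\Gamma$), there is a canonical combinatorial object — say, the poset of conjugacy classes of ``parabolic'' finite subgroups generated by cliques — on which $\mathrm{Aut}(\Gamma\cG)$ acts.

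\smallskip

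First I would make precise the set of ``special'' finite subgroups: for each clique $c$ of $\Gamma$, the standard parabolic $P_c := \langle G_v : v\in c\rangle \cong \prod_{v\in c} G_v$ is finite, and every finite subgroup of $\Gamma\cG$ is contained in a conjugate of some $P_c$ (this is standard for graph products and follows from the action on the Davis complex / quasi-median Cayley graph discussed in the paper, where finite subgroups have fixed points). Next I would argue that an automorphism $\varphi$ permutes the conjugacy classes of the $P_c$'s: indeed $\varphi(P_c)$ is again a finite subgroup, hence conjugate into some $P_{c'}$; a rank/cardinality argument (or an argument via the lattice of parabolics) upgrades ``conjugate into'' to ``conjugate to'' and shows the assignment $[P_c]\mapsto [\varphi(P_c)]$ is a bijection of the finite set of conjugacy classes of clique-parabolics, respecting inclusion. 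This yields a homomorphism $\rho : \mathrm{Aut}(\Gamma\cG) \to \mathrm{Aut}(\Gamma)$ onto (a subgroup of) the graph automorphism group — using that the $1$-element cliques are exactly the vertices and the combinatorial data of which parabolics sit inside which recovers the edges of $\Gamma$. Since $\Gamma$ is finite, $\mathrm{Aut}(\Gamma)$ is finite, so $K := \ker\rho$ has finite index.

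\smallskip

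It then remains to show $K \subseteq \mathrm{Conj}(\Gamma\cG)$, i.e. every automorphism fixing all conjugacy classes $[P_v]$ of vertex groups is a conjugating automorphism. This is almost the definition — a conjugating automorphism is precisely one sending each $G_v$ to a conjugate of \emph{some} $G_w$; here we get that $\varphi(G_v)$ is conjugate to some subgroup of $P_w$ for the vertex $w$ determined by $\rho(\varphi)$, and one needs to promote ``conjugate into $P_w$'' to ``conjugate to $G_w$''. For this I would use cardinality: $|\varphi(G_v)| = |G_v|$, and since $\varphi$ is an automorphism it must in fact send the conjugacy class of the \emph{maximal} finite subgroup structure correctly; more directly, since $\rho(\varphi)$ is a graph automorphism and $\varphi(P_c)$ is conjugate to $P_{\rho(\varphi)(c)}$ for \emph{every} clique $c$, applying this to the singleton clique $\{v\}$ gives $\varphi(G_v)$ conjugate to $G_{\rho(\varphi)(v)}$ exactly. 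Hence $K\subseteq \mathrm{Conj}(\Gamma\cG)$ and $[\mathrm{Aut}(\Gamma\cG):\mathrm{Conj}(\Gamma\cG)] \le [\mathrm{Aut}(\Gamma\cG):K] \le |\mathrm{Aut}(\Gamma)| < \infty$.

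\smallskip

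\textbf{Main obstacle.} The delicate point is the rigidity step: showing that an automorphism genuinely permutes the conjugacy classes of clique-parabolics \emph{preserving the inclusion poset}, so that it descends to a graph automorphism. Merely knowing that $\varphi(P_c)$ embeds into a conjugate of some $P_{c'}$ is not enough a priori — one could imagine a vertex group of order $4$ mapping into the parabolic of a clique spanned by two vertices of order $2$. Ruling this out, and more generally pinning down the image of each $G_v$ up to conjugacy, is where one must invoke the fine structure theory: the action on the quasi-median graph / Davis complex (already developed in this paper) shows that a finite subgroup has a well-defined minimal parabolic ``support'', and $\varphi$ being a group automorphism forces it to respect this support combinatorics. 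I would lean on the paper's geometric machinery — in particular the characterisation of parabolic subgroups as stabilisers of combinatorial subcomplexes and the recognition of vertex groups among them — to carry out this identification cleanly, rather than attempting a purely combinatorial/normal-form argument.
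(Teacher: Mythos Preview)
Your approach has a genuine gap at exactly the point you flag as the ``main obstacle,'' and the vague appeal to the paper's geometric machinery does not close it. The homomorphism $\rho : \mathrm{Aut}(\Gamma\cG) \to \mathrm{Aut}(\Gamma)$ you want simply does not exist in general: there is no reason an automorphism should send the conjugacy class of a vertex group to the conjugacy class of a vertex group, and the paper itself gives a counterexample. Take $\Gamma$ the path $b\!-\!a\!-\!c$ with all vertex groups $\mathbb{Z}/2$, so $\Gamma\cG = \langle a\rangle \oplus (\langle b\rangle * \langle c\rangle)$; the map $a\mapsto a,\ b\mapsto ab,\ c\mapsto c$ is an automorphism sending $G_b$ to $\langle ab\rangle$, which is not conjugate to any $G_v$. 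Worse, this $\varphi$ already \emph{fixes} each maximal clique parabolic $P_{\{a,b\}}$ and $P_{\{a,c\}}$ on the nose, so even restricting your map to maximal cliques and passing to its kernel does not land you inside the conjugating automorphisms. The ``support combinatorics'' you hope to invoke are not automorphism-invariant, because a general automorphism does not act on the quasi-median graph or the Davis complex.

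The paper's argument supplies the missing idea. One first passes, as you do, to the finite-index subgroup $H \leq \mathrm{Aut}(\Gamma\cG)$ fixing each conjugacy class of \emph{maximal} finite subgroups (these are exactly the conjugates of $P_c$ for $c$ a maximal clique, and crucially they are self-normalising). The new step is that for each such maximal $F$ there is a well-defined homomorphism $\Psi_F : H \to \mathrm{Out}(F)$, sending $\varphi$ to the class of $(\iota(g)^{-1}\varphi)|_F$ where $\varphi(F)=gFg^{-1}$; self-normalisation makes this independent of $g$. Since each $F$ is finite, $\mathrm{Out}(F)$ is finite, so $K := \bigcap_F \ker\Psi_F$ has finite index. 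Now for $\varphi\in K$ and any vertex $u$, choose a maximal $F \supseteq G_u$: the condition $\Psi_F(\varphi)=1$ means $\varphi|_F$ agrees with conjugation by some element of $\Gamma\cG$, hence $\varphi(G_u)$ is genuinely conjugate to $G_u$. In the counterexample above, $\varphi|_{P_{\{a,b\}}}$ is the non-inner automorphism $b\mapsto ab$ of $(\mathbb{Z}/2)^2$, so $\varphi\notin K$ --- the $\mathrm{Out}(F)$ step is precisely what filters it out.
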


As an interesting application, we are able to determine precisely when a graph product of finite groups has a finite outer automorphism group. See Corollary \ref{cor:OutFinite} for a precise statement.


\paragraph{Geometry of the automorphism group.} While the previous results give us information about the algebraic structure of the automorphism groups of graph products, the geometric point of view used in this article also allows us to obtain some information about their geometry. \\

The first property we investigate is the  notion of \textit{acylindrical hyperbolicity}
introduced by Osin in \cite{OsinAcyl}, which unifies several known classes of groups with `negatively curved' features such as relatively hyperbolic groups and mapping class groups (we refer to \cite{OsinSurvey} for more information). One of the most striking consequences of the acylindrical hyperbolicity of a group is its \emph{SQ-universality} \cite{DGO}, that is, every countable group embeds into a quotient of the group we are looking at. 
Loosely speaking, such groups are thus very far from being simple. 

For general graph products, we obtain the following: 

\begin{thmE}
		\emph{If $\Gamma$ is an atomic graph and if $\mathcal{G}$ is collection of \textit{finitely generated} groups, then $\mathrm{Aut}(\Gamma\cG)$ is acylindrically hyperbolic.}
\end{thmE}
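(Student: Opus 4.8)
The plan is to produce a nonelementary acylindrical action of $\mathrm{Aut}(\Gamma\cG)$ on a hyperbolic space by bootstrapping from the action of $\Gamma\cG$ itself, exploiting the rigidity provided by Corollary C. Recall that for an atomic graph, Corollary C gives $\mathrm{Aut}(\Gamma\cG) \simeq \Gamma\cG \rtimes Q$, where $Q = \left( \prod_{v} \mathrm{Aut}(G_v) \right) \rtimes \mathrm{Sym}(\Gamma\cG)$ is a group built from the (discrete) automorphism groups of the vertex groups and a finite permutation-type group. In particular $\Gamma\cG$ sits inside $\mathrm{Aut}(\Gamma\cG)$ as a normal subgroup with quotient $Q$. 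The first step is therefore to invoke a known criterion (in the spirit of the work of Minasyan--Osin on groups acting on trees, or more directly a statement of the form: if $N \trianglelefteq G$ is acylindrically hyperbolic and $G/N$ satisfies a suitable smallness/finiteness condition relative to the hyperbolic structure, then $G$ is acylindrically hyperbolic). Concretely, I would show that $\Gamma\cG$ — being a graph product over an atomic graph of finitely generated groups, hence in particular not virtually cyclic and with $\Gamma$ neither complete nor a single point nor a disjoint union of two vertices — is itself acylindrically hyperbolic; this follows from the quasi-median geometry discussed in the introduction, or from Minasyan--Osin's results on graph products, and the fact that atomic graphs are connected with no dominating vertex forces the Davis complex / quasi-median complex to be genuinely hyperbolic-like along suitable directions.

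The second and main step is to upgrade the acylindrical action of $\Gamma\cG$ on its hyperbolic model space $X$ to an action of the full $\mathrm{Aut}(\Gamma\cG)$. Here is where the geometric heart of the paper enters: one shows that the action of $\Gamma\cG$ on $X$ (the relevant hyperbolic space extracted from the quasi-median Cayley graph or the Davis complex) is canonical enough that every automorphism of $\Gamma\cG$ induces an isometry of $X$ — precisely because Corollary C tells us every automorphism is, up to the inner part, a local automorphism coming from a symmetry of $\Gamma$ together with automorphisms of vertex groups, and such data manifestly act on the combinatorial structure underlying $X$. Thus the $\Gamma\cG$-action extends to an $\mathrm{Aut}(\Gamma\cG)$-action on the same hyperbolic space $X$, with the normal subgroup $\Gamma\cG$ still acting acylindrically and with loxodromic elements having unbounded, WPD-type behaviour. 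Then one applies the standard fact (Osin, Dahmani--Guirardel--Osin) that if $N \trianglelefteq G$ contains a loxodromic WPD element for a cobounded $G$-action on a hyperbolic space, and $N$ is not virtually cyclic (so the action is nonelementary), then $G$ is acylindrically hyperbolic; equivalently, $\Gamma\cG$ acylindrically hyperbolic and normal in $\mathrm{Aut}(\Gamma\cG)$ already forces acylindrical hyperbolicity of $\mathrm{Aut}(\Gamma\cG)$ provided $\Gamma\cG$ is not a proper subgroup-with-extra-commuting-stuff pathology — which it is not, since $\Gamma\cG$ has trivial centre for atomic $\Gamma$ and hence is not ``small'' inside its automorphism group.

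The step I expect to be the main obstacle is the extension of the action and the verification of acylindricity (or at least WPD) for the enlarged group: one must check that enlarging from $\Gamma\cG$ to $\mathrm{Aut}(\Gamma\cG)$ does not destroy the properness estimates, i.e. that the stabilisers in $\mathrm{Aut}(\Gamma\cG)$ of pairs of far-apart points of $X$ remain controlled. This is where the finite generation hypothesis on the vertex groups is used: it guarantees that $\mathrm{Aut}(G_v)$, and more importantly the ``rotative'' part of $Q$ acting on $X$, interacts with the combinatorial geometry in a locally finite way, so that the extra group elements contribute only boundedly to the relevant coarse stabilisers. A clean way to package this is: first establish that $\mathrm{Aut}(\Gamma\cG)$ acts on $X$ with $\Gamma\cG$ acting acylindrically and the quotient $Q$ acting ``elliptically'' / with bounded orbits on the relevant part, then cite the combination result (e.g. via Osin's characterisation through hyperbolically embedded subgroups: $\Gamma\cG$ hyperbolically embedded in itself implies, after extension, a hyperbolically embedded subgroup in $\mathrm{Aut}(\Gamma\cG)$, and having a proper infinite hyperbolically embedded subgroup is equivalent to acylindrical hyperbolicity). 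Alternatively, and perhaps most efficiently given the structure already available, one argues directly: $\Gamma\cG \trianglelefteq \mathrm{Aut}(\Gamma\cG)$ with $\Gamma\cG$ acylindrically hyperbolic and not co-amenably trivial, and $\mathrm{Aut}(\Gamma\cG)$ does not contain $\Gamma\cG$ as a ``degenerate'' normal subgroup (no nontrivial finite normal subgroup issues, $E(\Gamma\cG)$ behaves well), hence by the normal-subgroup criterion $\mathrm{Aut}(\Gamma\cG)$ is acylindrically hyperbolic. The delicate verification is exactly that $\Gamma\cG$ acts acylindrically with respect to the extended action, not merely the original one — controlling the contribution of $\mathrm{Sym}(\Gamma\cG)$ and $\prod_v \mathrm{Aut}(G_v)$ to coarse stabilisers is the crux.
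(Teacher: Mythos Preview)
Your broad outline --- extend the $\Gamma\mathcal{G}$-action to $\mathrm{Aut}(\Gamma\mathcal{G})$ via Corollary C, then verify some form of WPD/acylindricity for the enlarged action --- matches the paper's architecture. The action does extend to the Davis complex exactly because every automorphism is an inner automorphism composed with a local automorphism, and local automorphisms act by permuting cosets $g\langle\Lambda\rangle \mapsto \varphi(g)\varphi(\langle\Lambda\rangle)$.

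However, two things in your proposal need correction. First, the ``normal subgroup criterion'' you invoke does not exist in the direction you state it: Osin's result says that infinite normal subgroups of acylindrically hyperbolic groups are acylindrically hyperbolic, not the converse. You do eventually retreat to the correct formulation (WPD for the $G$-action, not just the $N$-action), but this is precisely the point that requires work, not something one can cite away.

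Second, and more importantly, your account of how finite generation enters is too vague to constitute an argument. You write that finite generation makes $\mathrm{Aut}(G_v)$ interact with the geometry ``in a locally finite way'', but $\mathrm{Aut}(G_v)$ can be infinite even for finitely generated $G_v$, and $\mathrm{Loc}(\Gamma\mathcal{G})$ fixes the basepoint of the Davis complex, so coarse stabilisers in $\mathrm{Aut}(\Gamma\mathcal{G})$ are genuinely larger than in $\Gamma\mathcal{G}$. The paper's mechanism is concrete and different from what you sketch: one builds a specific element $g \in \Gamma\mathcal{G}$ whose reduced word uses every generator of every vertex group, arranged so that the normal form is rigid (consecutive syllables in non-adjacent vertex groups). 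Then any $\varphi \in \mathrm{Loc}(\Gamma\mathcal{G})$ with $\varphi(g)=g$ must fix every syllable, hence every generator, hence is the identity. This yields two points of the Davis complex whose $\mathrm{Aut}(\Gamma\mathcal{G})$-stabilisers intersect trivially, and one concludes via a criterion for actions on CAT(0) cube complexes (essential, non-elementary, irreducible, no free face, plus a pair of points with finite common stabiliser implies acylindrical hyperbolicity). Finite generation is used exactly to make this $g$ a finite word; the paper's remark afterwards shows the theorem genuinely fails without some such hypothesis.
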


Let us mention that prior to this result, very little was known about the acylindrical hyperbolicity of the automorphism group of a graph product, even in the case of right-angled Artin groups. At one end of the RAAG spectrum, $\mathrm{Aut}(\bbZ^n)= \mathrm{GL}_n(\bbZ)$ is a higher rank lattice for $n \geq 3$, and thus does not have non-elementary actions on hyperbolic spaces by a recent result of Haettel \cite{HaettelRigidity}. The situation is less clear for $\mathrm{Aut}(F_n)$: while it is known that $\mathrm{Out}(F_n)$ is acylindrically hyperbolic for $n \geq 2$ \cite{BestvinaFeighnOutFnHyp}, the case of  $\mathrm{Aut}(F_n)$ seems to be open. For right-angled Artin groups whose outer automorphism group is finite, such as right-angled Artin group over atomic graphs, the problem boils down to the question of the acylindrical hyperbolicity of the underlying group, for which a complete answer is known \cite{MinasyanOsinTrees}. \\

Another property of a more geometric nature group that can be investigated from our perspective is \textit{Kazhdan's property (T)}. 
Property (T) for a group imposes for instance strong restrictions on the possible homomorphisms starting from that group and plays a fundamental role in several rigidity statements, including  Margulis' superrigidity. 
We only possess a fragmented picture of the status of property (T) for automorphism groups of right-angled Artin groups. At one end of the RAAG spectrum, $\mathrm{Aut}(\bbZ^n)= \mathrm{GL}_n(\bbZ)$ is known to have property (T) for $n \geq 3$. In the opposite direction, it is known that $\mathrm{Aut}(F_n)$ does not have property (T) for $n =2$ and $3$ \cite{McCoolPropTaut, GrunewaldLubotzkyAutF, BogopolskiVikentievAutF}; that it has property (T) for $n \geq 5$ by very recent results of \cite{Aut5PropT, AutFnPropT}; and the case $n=4$ is still open. About more general right-angled Artin groups, a few general criteria can be found in \cite{RAAGandT, VastAutRAAG}. (See also \cite{VastAutRACG} for right-angled Coxeter groups.)
We obtain the following result:
\begin{thmF}
\emph{If $\Gamma$ is an atomic graph, then $\mathrm{Aut}(\Gamma\cG)$ does not have Property (T).}
\end{thmF}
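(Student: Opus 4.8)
The plan is to leverage Corollary C together with a standard hereditary/permanence argument: property (T) passes to quotients, and a group with property (T) that surjects onto an infinite group $Q$ forces $Q$ to have property (T) as well; moreover property (T) is stable under extensions in the sense that if $N$ and $G/N$ have (T) then so does $G$, but what I really need is the converse direction for quotients. By Corollary C, for $\Gamma$ atomic we have $\mathrm{Aut}(\Gamma\cG) \simeq \Gamma\cG \rtimes \big( (\prod_{v} \mathrm{Aut}(G_v)) \rtimes \mathrm{Sym}(\Gamma\cG)\big)$. The key observation is that $\mathrm{Aut}(\Gamma\cG)$ surjects onto $\Gamma\cG$ itself: indeed, the inner automorphism map $\iota : \Gamma\cG \to \mathrm{Aut}(\Gamma\cG)$ realises $\Gamma\cG$ (modulo its centre) as a normal subgroup, and because $\Gamma$ is atomic — connected, girth $\geq 5$, no separating stars, no valence-$<2$ vertices — the centre of $\Gamma\cG$ is trivial and $\Gamma\cG$ is a direct factor's complement in the described semidirect product, so projection onto the $\Gamma\cG$ factor is a well-defined surjective homomorphism $\pi : \mathrm{Aut}(\Gamma\cG) \twoheadrightarrow \Gamma\cG$.

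First I would recall the relevant stability facts: property (T) is inherited by quotients (if $G$ has (T) and $G \twoheadrightarrow Q$, then $Q$ has (T)). So it suffices to exhibit one infinite quotient of $\mathrm{Aut}(\Gamma\cG)$ that fails property (T). Second, I would argue that $\Gamma\cG$ itself fails property (T) when $\Gamma$ is atomic. An atomic graph has at least, say, a vertex $v$ whose star is non-separating, but more importantly an atomic graph is not complete (it has girth $\geq 5$, hence contains a cycle of length $\geq 5$, so there are non-adjacent vertices); therefore $\Gamma$ has a proper non-empty partition into two non-adjacent pieces — actually the cleanest route is: since $\Gamma$ is connected but not complete, $\Gamma\cG$ splits as a non-trivial amalgamated free product over a proper subgraph-subgroup (take a vertex $v$; then $\Gamma\cG = \langle \mathrm{star}(v)\rangle \ast_{\langle \mathrm{link}(v)\rangle} \langle \Gamma \setminus \{v\}\rangle$ using that $\mathrm{star}(v)$ does not exhaust $\Gamma$ and $\mathrm{link}(v) \neq \mathrm{star}(v)$). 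A group splitting non-trivially as an amalgam acts on a tree without global fixed point, hence has the Haagerup-type obstruction to (T) — concretely, such a group has a non-trivial action on a simplicial tree with unbounded orbits, so by the standard fact (groups with (T) have property (FA), Watatani / Alperin–Serre) it cannot have property (T), provided the amalgam is non-trivial, i.e. the edge group is proper in both factors. I would verify properness: $\langle \mathrm{link}(v)\rangle \subsetneq \langle\mathrm{star}(v)\rangle$ since $G_v$ is non-trivial, and $\langle\mathrm{link}(v)\rangle \subsetneq \langle \Gamma\setminus\{v\}\rangle$ since $\Gamma$ is connected with $\Gamma \neq \mathrm{star}(v)$ (atomicity forbids $v$ from being adjacent to everything, as that would make $\mathrm{star}(v) = \Gamma$ and contradict girth $\geq 5$ unless $\Gamma$ is tiny, which is excluded).

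Putting it together: $\mathrm{Aut}(\Gamma\cG)$ surjects onto $\Gamma\cG$ via $\pi$, and $\Gamma\cG$ acts on a tree without global fixed point (infinite orbits), so $\mathrm{Aut}(\Gamma\cG)$ acts on the same tree via $\pi$ without global fixed point. A group with property (T) has property (FA), so $\mathrm{Aut}(\Gamma\cG)$ does not have property (T). I expect the main obstacle to be the clean verification that Corollary C genuinely yields a \emph{surjection} onto $\Gamma\cG$ (as opposed to merely containing it as a subgroup) — this hinges on the semidirect-product structure and on $Z(\Gamma\cG)$ being trivial for atomic $\Gamma$, both of which should follow from results established earlier in the paper, but the bookkeeping of which factor is normal and which is the quotient needs care. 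A secondary subtlety is making sure the amalgam decomposition of $\Gamma\cG$ is non-trivial; this is where atomicity (specifically connectedness, non-completeness forced by girth $\geq 5$, and non-triviality of vertex groups — here one may need $|G_v| \geq 2$, which is part of the standing hypothesis that $\cG$ is a collection of groups, implicitly non-trivial) does the work. An alternative that avoids the amalgam discussion entirely: note $\Gamma\cG$ surjects onto $\Gamma' \cG'$ where all vertex groups are replaced by $\bbZ/2$, i.e. onto the right-angled Coxeter group $W_\Gamma$, which for $\Gamma$ atomic (hence not a join, in fact not complete) is infinite and virtually splits, hence fails (T); composing $\mathrm{Aut}(\Gamma\cG) \twoheadrightarrow \Gamma\cG \twoheadrightarrow W_\Gamma$ gives an infinite quotient without (T). Either route closes the argument.
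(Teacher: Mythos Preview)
Your approach has a genuine gap at the very first step: the semidirect product structure of Corollary~C does \emph{not} give a surjection of $\mathrm{Aut}(\Gamma\cG)$ onto $\Gamma\cG$. In any semidirect product $N \rtimes H$ the normal factor $N$ is the kernel, and the quotient is $H$; the set-theoretic projection $(n,h)\mapsto n$ is a group homomorphism if and only if the action of $H$ on $N$ is trivial, i.e.\ the product is direct. Here $\Gamma\cG \cong \mathrm{Inn}(\Gamma\cG)$ is the normal factor and $\mathrm{Loc}(\Gamma\cG)$ acts on it nontrivially (a local automorphism $\varphi$ sends $\iota(g)$ to $\iota(\varphi(g))$), so there is no such quotient map. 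Concretely, already for $\Gamma\cG = W_\Gamma$ a right-angled Coxeter group over an atomic $\Gamma$, no homomorphism $\mathrm{Aut}(W_\Gamma)\to W_\Gamma$ restricts to the identity on $\mathrm{Inn}(W_\Gamma)$. You flagged this as the ``main obstacle''; it is in fact fatal to the strategy as written. Your alternative route (compose with $\Gamma\cG \twoheadrightarrow W_\Gamma$) inherits the same problem, and has an additional defect: the map $\Gamma\cG \to W_\Gamma$ requires each $G_v$ to surject onto $\bbZ/2$, which fails for arbitrary vertex groups (take any $G_v$ perfect, or of odd order), while Theorem~F imposes no hypothesis on the $G_v$.

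The paper avoids looking for a quotient altogether. Instead it shows (Lemma~\ref{prop:extendT}) that the action of $\Gamma\cG$ on its Davis complex $D(\Gamma,\cG)$ \emph{extends} to an action of $\mathrm{Aut}(\Gamma\cG)$, via $\iota(g)\varphi \cdot hH := g\varphi(h)\varphi(H)$; this uses precisely the decomposition $\mathrm{Aut}(\Gamma\cG)=\mathrm{Inn}(\Gamma\cG)\rtimes\mathrm{Loc}(\Gamma\cG)$, but as an action extension rather than a projection. Since $\Gamma\cG$ already acts with unbounded orbits on the CAT(0) cube complex $D(\Gamma,\cG)$, so does $\mathrm{Aut}(\Gamma\cG)$, and Niblo--Roller's theorem finishes. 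The moral is that the semidirect structure lets you \emph{extend actions} of the normal subgroup, not project onto it.
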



We emphasize that this result does not assume any knowledge of the vertex groups of  the graph product, or the size of its outer automorphism group. In particular, by allowing vertex groups to be arbitrary right-angled Artin groups, this result provides a very large class of right-angled Artin groups whose automorphism groups do not have property (T).\\

\paragraph{Structure of the paper.} Let us now detail the strategy and structure of this article. In Section 2, we recall a few general definitions and statements about graph products, before introducing the two main complexes studied in this paper:  the Davis complex of a graph product of groups $\Gamma\cG$, and a certain Cayley graph $X(\Gamma, \cG)$ of the graph product that has a particularly rich combinatorial geometry (namely, a \textit{quasi-median} geometry). Ideally, one would like to show that the action of the graph product on one of these complexes extends to an action of the automorphism group. However, this does not hold in general: in the case of right-angled Artin groups for instance, the presence of transvections shows that conjugacy classes of vertex groups are not preserved by automorphisms in general.  In Section 3, a different action is considered, namely the action of $\Gamma\cG$ on the \textit{transversality graph} associated to the graph  $X(\Gamma, \cG)$, and we show that this action extends to an action of Aut$(\Gamma\cG)$. (This transversality graph turns out to be naturally isomorphic to the intersection graph of parallelism classes of hyperplanes in the Davis complex.) This action allows us to prove  the central  result of our article: the characterisation of conjugating automorphisms stated in Theorem A.   The algebraic structure of certain automorphism groups is also proved in this section (Theorems B and D). Finally, Section 4 focuses on the case of graph products of groups over atomic graphs. We first prove that the action of the graph product on its Davis complex extends to an action of its automorphism group. Such a rich action on a CAT(0) cube complex is then used to prove Theorems E and F.\\

\paragraph{The point of view of quasi-median graphs.} We take a moment to justify the point of view adopted in this article, and in particular the central role played by the quasi-median geometry of some Cayley graph of  a graph product. This is a very natural object associated to the group, and its geometry turns out to be both similar and simpler than that of the (perhaps more familiar) Davis complex. Quasi-median graphs have been studied in great detail by the first author \cite{Qm}. However, we wish to emphasize that \textbf{we provide in this article self-contained proofs of all the combinatorial/geometric results about this graph, in order to avoid relying on the (yet unpublished, at the time of writing) manuscript} \cite{Qm}. In particular, no prerequisite on quasi-median geometry is needed to read this article.

Let us explain further the advantages of this (quasi-median) graph over the Davis complex.  First, the geodesics of this graph encode the normal forms of group elements, which makes its geometry more natural and easier to work with; see Section \ref{section:CubicalLikeGeom} for more details. Moreover, although a quasi-median graph is not the $1$-skeleton of a CAT(0) cube complex, it turns out to have essentially the same type of geometry.  More precisely, \emph{hyperplanes} may be defined in quasi-median graphs in a similar fashion, and so that the geometry reduces to the combinatorics of hyperplanes, as for CAT(0) cube complexes; see for instance Theorem \ref{thm:QmHyperplanes} below. Roughly speaking, quasi-median graphs may  be thought of as `almost' CAT(0) cube complexes in which hyperplanes cut the space into at least two pieces but possibly more.  The analogies between these classes of spaces go much further, and we refer to \cite[Section 2]{Qm} for a dictionary between concepts/results in CAT(0) cube complexes  and their quasi-median counterparts. Hyperplanes in the quasi-median graph turn out to be easier to work with than hyperplanes in the Davis complex, due to the absence of parallel hyperplanes, which makes some of the arguments simpler and cleaner. Hyperplanes in this quasi-median graph are closely related to the \emph{tree-walls} of the Davis complex introduced by M. Bourdon in \cite{Bourdon} for certain graph products of groups, and used in a previous version of this article (not intended for publication) \cite{GPcycle}.

  Quasi-median graphs provide a convenient combinatorial framework that encompasses and unifies many of the tools used to study graph products until now:  the normal forms proved in E. Green's thesis \cite{GreenGP} (see also \cite{GPvanKampen} for a more geometric approach), 
 the action on a right-angled building with an emphasis on the combinatorics of certain subspaces (tree-walls) \cite{Bourdon, TW, CapraceTW, UnivRAbuildings}, the  action on a CAT(0) cube complex (Davis complex), etc.
An indirect goal of this article is thus to convince the reader that the quasi-median graph associated to a graph product of groups provides a rich and natural combinatorial setting to study this group, and ought to be investigated further. Finally, let us mention that decomposing non-trivially as a graph product can be characterised by the existence of an appropriate action on a quasi-median graph, see \cite[Corollary 10.57]{Qm}. Thus, quasi-median geometry is in a sense `the' natural geometry associated with graph products.


\medskip \noindent


\paragraph{Acknowledgements.} We are grateful to Olga Varghense for having communicated to us a proof of Corollary \ref{cor:OutFinite}, replacing a wrong argument in a preliminary version. The first author thanks the university of Vienna for its hospitality in December 2015, where this project originated; and from March to May 2018 where he was a visitor, supported by the Ernst Mach Grant ICM-2017-06478 under the supervision of Goulnara Arzhantseva. The second author thanks the MSRI for its support during the program `Geometric Group Theory' in 2016, Grant  DMS-1440140, and the Isaac Newton Institute for its support during the program `Non-positive curvature group actions and cohomology' in 2017, EPSRC Grant  EP/K032208/1, during which part of this research was conducted. The second author was partially supported by the ERC grant 259527,   the Lise Meitner FWF project M 1810-N25, and the EPSRC New Investigator Award EP/S010963/1.


\section{Geometries associated to graph products of groups}\label{sec:geom}

\subsection{Generalities about graph products}\label{section:GP}\label{sec:generalities}

Given a simplicial graph $\Gamma$, whose set of vertices is denoted by $V(\Gamma)$, and a collection of groups $\mathcal{G}=\{ G_v \mid v \in V(\Gamma) \}$ (the \textit{vertex groups}),  the \emph{graph product} $\Gamma \mathcal{G}$ is defined as the quotient
\begin{center}
$\left( \underset{v \in V(\Gamma)}{\ast} G_v \right) / \langle \langle gh=hg, \ h \in G_u, g \in G_v, \{ u,v \} \in E(\Gamma) \rangle \rangle$,
\end{center}
where $E(\Gamma)$ denotes the set of edges of $\Gamma$. Loosely speaking, it is obtained from the disjoint union of all the $G_v$'s, called the \emph{vertex groups}, by requiring that two adjacent vertex groups  commute. Notice that, if $\Gamma$ has no edges, $\Gamma \mathcal{G}$ is the free product of the groups of $\mathcal{G}$; on the other hand, if $\Gamma$ is a complete graph, then $\Gamma \mathcal{G}$ is the direct sum of the groups of $\mathcal{G}$. Therefore, a graph product may be thought of as an interpolation between free and direct products. Graph products also include two classical families of groups: If all the vertex groups are infinite cyclic, $\Gamma \mathcal{G}$ is known as a \emph{right-angled Artin group}; and if all the vertex groups are cyclic of order two, then $\Gamma \mathcal{G}$ is known as a \emph{right-angled Coxeter group}.

\medskip \noindent
\textbf{Convention.} In all the article, we will assume for convenience that the groups of $\mathcal{G}$ are non-trivial. Notice that it is not a restrictive assumption, since a graph product with some trivial factors can be described as a graph product over a smaller graph all of whose factors are non-trivial. 

\medskip \noindent
If $\Lambda$ is an \emph{induced} subgraph of $\Gamma$ (ie., two vertices of $\Lambda$ are adjacent in $\Lambda$ if and only if they are adjacent in $\Gamma$), then the subgroup, which we denote by $\langle \Lambda \rangle$, generated by the vertex groups corresponding to the vertices of $\Lambda$ is naturally isomorphic to the graph product $\Lambda \mathcal{G}_{|\Lambda}$, where $\mathcal{G}_{|\Lambda}$ denotes the subcollection of $\mathcal{G}$ associated to the set of vertices of $\Lambda$. This observation is for instance a consequence of the normal form described below.\\

\textbf{For the rest of Section \ref{sec:geom}, we fix a  finite simplicial graph $\Gamma$ and a collection $\cG$ of groups  indexed by  $V(\Gamma)$. }

\paragraph{Normal form.} A \emph{word} in $\Gamma \mathcal{G}$ is a product $g_1 \cdots g_n$ where $n \geq 0$ and where, for every $1 \leq i \leq n$, $g_i$ belongs to $G_i$ for some $G_i \in \mathcal{G}$; the $g_i$'s are the \emph{syllables} of the word, and $n$ is the \emph{length} of the word. Clearly, the following operations on a word does not modify the element of $\Gamma \mathcal{G}$ it represents:
\begin{itemize}
	\item[(O1)] delete the syllable $g_i=1$;
	\item[(O2)] if $g_i,g_{i+1} \in G$ for some $G \in \mathcal{G}$, replace the two syllables $g_i$ and $g_{i+1}$ by the single syllable $g_ig_{i+1} \in G$;
	\item[(O3)] if $g_i$ and $g_{i+1}$ belong to two adjacent vertex groups, switch them.
\end{itemize}
A word is \emph{reduced} if its length cannot be shortened by applying these elementary moves. Given a word $g_1 \cdots g_n$ and some $1\leq i<n$, if  the vertex group associated to $g_i$ is adjacent to each of the vertex groups of $g_{i+1}, ..., g_n$, then the words $g_1 \cdots g_n$ and $g_1\cdots g_{i-1} \cdot g_{i+1} \cdots g_n \cdot g_i$ represent the same element of $\Gamma\cG$; We say that $g_i$ \textit{shuffles to the right}. Analogously, one can define the notion of a syllable shuffling to the left. If $g=g_1 \cdots g_n$ is a reduced word and $h$ is a syllable, then a reduction of the product $gh$ is given by
\begin{itemize}
	\item $g_1 \cdots g_n$ if $h=1$;
	\item $g_1 \cdots g_{i-1} \cdot g_{i+1} \cdots g_n$ if $g_i$ and $h$ belong to the same vertex group,  $g_i$ shuffles to the right and $g_i=h^{-1}$;
	\item $g_1 \cdots g_{i-1} \cdot g_{i+1} \cdots g_n \cdot (g_ih)$ if $g_i$ and $h$ belong to the same vertex group, $g_i$ shuffles to the right, $g_i \neq h^{-1}$ and $(g_ih)$ is thought of as a single syllable.
\end{itemize}
In particular, every element of $\Gamma \mathcal{G}$ can be represented by a reduced word, and this word is unique up to applying the operation (O3). It is worth noticing that a reduced word has also minimal length with respect to the generating set $\bigcup\limits_{u\in V(\Gamma)} G_u$. We refer to \cite{GreenGP} for more details (see also \cite{GPvanKampen} for a more geometric approach). We will also need the following definition:

\begin{definition}
Let $g \in \Gamma\cG$. The \emph{head} of $g$, denoted by $\mathrm{head}(g)$, is the collection of the first syllables appearing in the reduced words representing $g$. Similarly, the \emph{tail} of $g$, denoted by $\mathrm{tail}(g)$, is the collection of the last syllables appearing in the reduced words representing $g$. 
\end{definition}

\paragraph{Some vocabulary.} We conclude this paragraph with a few definitions about graphs used in the article. 
\begin{itemize}
	\item A subgraph $\Lambda \subset \Gamma$ is a \emph{join} if there exists a partition $V(\Lambda)= A \sqcup B$, where $A$ and $B$ are both non-empty, such that any vertex of $A$ is adjacent to any vertex of $B$.
	\item Given a vertex $u \in V(\Gamma)$, its \emph{link}, denoted by $\mathrm{link}(u)$, is the subgraph generated by the neighbors of $u$.
	\item More generally, given a subgroup $\Lambda \subset \Gamma$, its \emph{link}, denoted by $\mathrm{link}(\Lambda)$, is the subgraph generated by the vertices of $\Gamma$ which are adjacent to all the vertices of~$\Lambda$.
	\item Given a vertex $u \in V(\Gamma)$, its \emph{star}, denoted by $\mathrm{star}(u)$, is the subgraph generated by $\mathrm{link}(u) \cup \{u\}$.
	\item More generally, given a subgraph $\Lambda \subset \Gamma$, its \emph{star}, denoted by $\mathrm{star}(\Lambda)$, is the subgraph generated by $\mathrm{link}(\Lambda) \cup \Lambda$. 
\end{itemize}

\subsection{The quasi-median graph associated to a graph product of groups}\label{section:CubicalLikeGeom}

\noindent
This section is dedicated to the geometry of the following Cayley graph  of $\Gamma\cG$:
$$X(\Gamma, \mathcal{G}) : = \mathrm{Cayl} \left( \Gamma \mathcal{G}, \bigcup\limits_{u \in V(\Gamma)} G_u \backslash \{1 \} \right),$$
ie., the graph whose vertices are the elements of the groups $\Gamma \mathcal{G}$ and whose edges link two distinct vertices $x,y \in \Gamma \mathcal{G}$ if $y^{-1}x$ is a non-trivial element of some vertex group. Like in any Cayley graph, edges of $X(\Gamma, \mathcal{G})$ are labelled by generators, namely by elements of vertex groups. By extension, paths in $X(\Gamma, \mathcal{G})$ are naturally labelled by words of generators. In particular, geodesics in $X(\Gamma, \mathcal{G})$ correspond to words of minimal length. More precisely:

\begin{prop}\label{prop:distinGP}
Fix two vertices $g,h \in X(\Gamma, \mathcal{G})$. If $s_1 \cdots s_n$ is a reduced word representing $g^{-1}h$, then 
$$g, \ gs_1, \ gs_1s_2, \ldots, gs_1 \cdots s_{n-1}, \ gs_1 \cdots s_{n-1}s_n = h$$
define a geodesic in $X(\Gamma, \mathcal{G})$ from $g$ to $h$. Conversely, if $s_1, \ldots, s_n$ is the sequence of elements of $\Gamma \mathcal{G}$ labelling the edges of a geodesic in $X(\Gamma, \mathcal{G})$ from $g$ to $h$, then $s_1 \cdots s_n$ is a reduced word representing $g^{-1}h$. As a consequence, the distance in $X(\Gamma, \mathcal{G})$ between $g$ and $h$ coincides with the length $|g^{-1}h|$ of any reduced word representing $g^{-1}h$.
\end{prop}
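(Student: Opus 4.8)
The plan is to establish both directions by relating paths in $X(\Gamma,\cG)$ to words in the generating set $\bigcup_{u} G_u \setminus \{1\}$, using the fact (recalled in Section~\ref{sec:generalities}) that reduced words have minimal length among all words representing a given element. First I would recall that in any Cayley graph $\mathrm{Cayl}(H,S)$, an edge-path from $g$ to $h$ of length $n$ corresponds exactly to a factorization $g^{-1}h = s_1 \cdots s_n$ with each $s_i \in S$, the path being $g, gs_1, gs_1s_2, \ldots, gs_1\cdots s_n = h$; conversely every such factorization gives such a path. Hence the distance $d_{X}(g,h)$ equals the minimal length of a word in $\bigcup_u G_u\setminus\{1\}$ representing $g^{-1}h$.

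For the first assertion, suppose $s_1 \cdots s_n$ is a reduced word representing $g^{-1}h$. Each $s_i$ is a non-trivial element of a vertex group, hence lies in $\bigcup_u G_u\setminus\{1\}$, so the sequence $g, gs_1, \ldots, gs_1\cdots s_n = h$ is indeed an edge-path in $X(\Gamma,\cG)$ of length $n$. To see it is geodesic, I invoke the normal form theory: a reduced word has minimal length with respect to the generating set $\bigcup_u G_u$, and moreover one checks that this minimum cannot be beaten by words that use $1$ as a syllable (deleting trivial syllables via (O1) only shortens a word). Therefore $n = |g^{-1}h|$ equals $d_X(g,h)$, and the path is a geodesic.

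For the converse, let $s_1, \ldots, s_n$ label the edges of a geodesic from $g$ to $h$; then $s_1 \cdots s_n$ is a word in $\bigcup_u G_u\setminus\{1\}$ representing $g^{-1}h$ of length $n = d_X(g,h)$. If this word were not reduced, one of the moves (O1)--(O3) would apply; (O3) preserves length, but after finitely many (O3)-moves some (O1) or (O2) move must become available (this is precisely the content of "reduced" being well-defined), producing a strictly shorter word — hence a shorter path — contradicting geodesicity. Thus $s_1 \cdots s_n$ is reduced. The final consequence, $d_X(g,h) = |g^{-1}h|$, then follows from either direction.

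The main obstacle, and the only genuinely non-routine point, is the clean handling of the syllable $1$: one must argue that allowing trivial syllables (which are not edges of $X(\Gamma,\cG)$, since edges join \emph{distinct} vertices) does not create shorter representatives than reduced words do. This is immediate from move (O1), but it is the place where the correspondence between "minimal length word over $\bigcup_u G_u$" and "minimal length word over $\bigcup_u G_u \setminus \{1\}$" has to be made explicit. Everything else is the standard Cayley-graph dictionary together with the normal form results of \cite{GreenGP} quoted above.
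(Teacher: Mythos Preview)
Your argument is correct and is exactly the standard one; note that the paper does not actually supply a proof of this proposition, treating it as an immediate consequence of the normal-form facts recalled just before (reduced words have minimal length with respect to $\bigcup_u G_u$, and the Cayley-graph dictionary between paths and words). Your handling of the converse is fine: since each $s_i$ labels an edge it is non-trivial, so (O1) is never the first applicable move, and ``not reduced'' by definition means the length can be shortened via (O1)--(O3), yielding a shorter path and a contradiction.
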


\paragraph{The Cayley graph as a complex of prisms.} The first thing we want to highlight is that the Cayley graph $X(\Gamma, \mathcal{G})$ has naturally the structure of a \emph{complex of prisms}. We begin by giving a few definitions:

\begin{definition}
Let $X$ be a graph. A \emph{clique} of $X$ is a maximal complete subgraph. A \emph{prism} $P \subset X$ is an induced subgraph which decomposes as a product of cliques of $X$ in the following sense: There exist cliques $C_1, \ldots, C_k \subset P$ of $X$ and a bijection between the vertices of $P$ and the $k$-tuples of vertices $C_1 \times \cdots \times C_k$ such that two vertices of $P$ are linked by an edge if and only if the two corresponding $k$-tuples differ on a single coordinate. The number of factors, namely $k$, is referred to as the \emph{cubical dimension} of $P$. More generally, the \emph{cubical dimension} of $X$ is the highest cubical dimension of its prisms.
\end{definition}

\noindent
The first observation is that cliques of $X(\Gamma, \mathcal{G})$ correspond to cosets of vertex groups.

\begin{lemma}\label{lem:CliqueStab}
The cliques of $X(\Gamma, \mathcal{G})$ coincide with the cosets $gG_u$, where $g \in \Gamma \mathcal{G}$ and $u \in V(\Gamma)$.
\end{lemma}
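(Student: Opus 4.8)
The plan is to prove the two inclusions separately. First I would show that every coset $gG_u$ is a clique, i.e. a maximal complete subgraph. That it is complete is immediate from the definition of the edges: any two distinct elements $gh_1, gh_2 \in gG_u$ satisfy $(gh_1)^{-1}(gh_2) = h_1^{-1}h_2 \in G_u \setminus \{1\}$, so they are joined by an edge. Maximality requires showing that no vertex outside $gG_u$ is adjacent to all of $gG_u$. Suppose $x \notin gG_u$ is adjacent to both $g$ and $gh$ for some $h \in G_u \setminus \{1\}$; then $g^{-1}x \in G_v \setminus\{1\}$ for some vertex $v$, and $(gh)^{-1}x = h^{-1}(g^{-1}x) \in G_w \setminus \{1\}$ for some vertex $w$. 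Using the normal form (Proposition \ref{prop:distinGP}), I would analyze the reduced word for $h^{-1} \cdot (g^{-1}x)$: since $h^{-1} \in G_u$ and $g^{-1}x \in G_v$, this product has length $1$ only if $v = u$ (in which case $g^{-1}x \in G_u$, contradicting $x \notin gG_u$), and otherwise has length $2$, contradicting that it lies in a single vertex group. Hence no such $x$ exists and $gG_u$ is maximal complete.

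Conversely I would show every clique has the form $gG_u$. Let $C$ be a clique and pick a vertex $g \in C$; after left-translating by $g^{-1}$ (which is a graph automorphism of the Cayley graph), we may assume $1 \in C$. Every neighbour of $1$ in $X(\Gamma,\cG)$ is an element $s$ of some $G_v \setminus \{1\}$. I claim all vertices of $C$ lie in a common vertex group. Indeed, take two neighbours $s \in G_v\setminus\{1\}$ and $t \in G_w \setminus\{1\}$ of $1$ that are also adjacent to each other: then $s^{-1}t$ must be a non-trivial element of a single vertex group. If $v \neq w$, the reduced word $s^{-1}t$ (with $s^{-1} \in G_v$, $t \in G_w$) has length $2$ unless $v$ and $w$ are adjacent in $\Gamma$ — but even if $v,w$ are adjacent, $s^{-1}t$ shuffles to a word of length $2$ in distinct vertex groups, which cannot represent an element of a single vertex group. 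So $v = w$, i.e. any two adjacent neighbours of $1$ lie in the same vertex group; since $C$ is complete, all its non-identity vertices lie in one fixed $G_u$, so $C \subseteq G_u \cup\{1\} = G_u$. Maximality of $C$ forces $C = G_u$, since we already showed $G_u$ is a clique.

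The main obstacle — really the only subtle point — is the careful bookkeeping with the normal form when two syllables belong to adjacent vertex groups: one must check that a length-$2$ reduced word whose two syllables lie in \emph{distinct} (even if adjacent) vertex groups genuinely cannot represent an element of a single vertex group. This follows from the uniqueness of reduced words up to the shuffling operation (O3): applying (O3) to a two-syllable word in distinct vertex groups only swaps the two syllables, never merges them, so the word stays reduced of length $2$, whereas an element of a single vertex group has reduced length $\le 1$. With that lemma in hand, both inclusions go through cleanly, and I would finish by noting that the left-translation argument shows the general clique is $gG_u$ rather than just $G_u$.
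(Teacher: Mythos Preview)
Your proof is correct and follows essentially the same approach as the paper: both arguments hinge on the normal-form observation that a product of two syllables from \emph{distinct} vertex groups is reduced of length~$2$ and hence cannot lie in a single vertex group. The paper packages this as a standalone fact (``the edges of a triangle are labelled by a common vertex group'') and then deduces maximality of $gG_u$ via a coset-containment argument ($gG_u \subset hG_v \Rightarrow u=v$), whereas you prove maximality of $gG_u$ directly and then reuse the same normal-form computation for the converse; the difference is purely organisational.
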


\begin{proof}
First of all, observe that the edges of a triangle of $X(\Gamma, \mathcal{G})$ are labelled by elements of $\Gamma \mathcal{G}$ that  belong to the same vertex group. Indeed, if the vertices $x,y,z \in X(\Gamma,\mathcal{G})$ generate a triangle, then $z^{-1}x$, $z^{-1}y$ and $y^{-1}x$ are three non-trivial elements of vertex groups such that $z^{-1}x = (z^{-1}y) \cdot (y^{-1}x)$. Of course, the product $(z^{-1}y) \cdot (y^{-1}x)$ cannot be reduced, which implies that $(z^{-1}y)$ and $(y^{-1}x)$ belong to the same vertex group, say $G_u$. From the previous equality, it follows that $z^{-1}x$ belongs to $G_u$ as well, concluding the proof of our claim. We record its statement for future use:

\begin{fact}\label{fact:Triangle}
In $X(\Gamma, \mathcal{G})$, the edges of a triangle are labelled by elements of a common vertex group.
\end{fact}

\noindent
As a consequence, the edges of any complete subgraph of $X(\Gamma, \mathcal{G})$ are all labelled by elements of the same vertex group. Thus, we have proved that any clique of $X(\Gamma, \mathcal{G})$ is generated by $gG_u$ for some $g \in \Gamma \mathcal{G}$ and $u \in V(\Gamma)$. Conversely, fix some $g \in \Gamma \mathcal{G}$ and $u \in V(\Gamma)$. By definition of $X(\Gamma, \mathcal{G})$, it is clearly a complete subgraph, and, if $C$ denotes a clique containing $gG_u$, we already know that $C=hG_v$ for some $h \in \Gamma \mathcal{G}$ and $v \in V(\Gamma)$. Since
$$\langle G_u,G_v \rangle = \left\{ \begin{array}{cl} G_u \times G_v & \text{if $u$ and $v$ are adjacent in $\Gamma$} \\ G_u \ast G_u & \text{if $u$ and $v$ are non-adjacent and distinct} \\ G_u=G_v & \text{if $u=v$} \end{array} \right.,$$
it follows from the inclusion $gG_u \subset C = hG_v$ that $u=v$. Finally, since two cosets of the same subgroup either coincide or are disjoint, we conclude that $gG_u=C$ is a clique of $X(\Gamma, \mathcal{G})$. 
\end{proof}

\noindent
Next, we observe that prisms of $X(\Gamma, \mathcal{G})$ correspond to cosets of subgroups generated by complete subgraphs of $\Gamma$.

\begin{lemma}\label{lem:PrismGP}
The prisms of $X(\Gamma, \mathcal{G})$ coincide with the cosets $g \langle \Lambda \rangle$ where $g \in \Gamma \mathcal{G}$ and where $\Lambda \subset \Gamma$ is a complete subgraph.
\end{lemma}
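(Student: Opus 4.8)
The plan is to show both inclusions: first that every coset $g\langle\Lambda\rangle$ with $\Lambda$ complete is a prism, and then that every prism of $X(\Gamma,\mathcal{G})$ arises this way. For the first direction, fix a complete subgraph $\Lambda$ with vertices $u_1,\dots,u_k$. Then $\langle\Lambda\rangle \simeq G_{u_1}\times\cdots\times G_{u_k}$ by the remark on induced subgraphs in Section \ref{section:GP}, and inside $X(\Gamma,\mathcal{G})$ the subgraph spanned by $g\langle\Lambda\rangle$ is spanned by the cliques $gg'G_{u_i}$ ($g'\in\langle\Lambda\rangle$). I would verify directly from the definition of $X(\Gamma,\mathcal{G})$ that two vertices $gh$ and $gh'$ of $g\langle\Lambda\rangle$ are adjacent iff $h^{-1}h'$ lies in a single factor $G_{u_i}$, i.e.\ iff $h$ and $h'$ differ in exactly one coordinate under the product decomposition; this is where one uses that distinct vertices in a common $G_{u_i}$ are never adjacent to each other via a shorter relation, together with the fact (from the normal form / Proposition \ref{prop:distinGP}) that $|h^{-1}h'|$ counts syllables. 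Also one must check the subgraph is \emph{induced}: if $gh,gh'$ are adjacent in $X(\Gamma,\mathcal{G})$ then $h^{-1}h'$ is a nontrivial element of some $G_v$, and since $h,h'\in\langle\Lambda\rangle$ one gets $v\in V(\Lambda)$, so the edge already lies in $g\langle\Lambda\rangle$. This realises $g\langle\Lambda\rangle$ as a product of the $k$ cliques $gG_{u_i}$ (translated appropriately), hence as a prism.

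For the converse, let $P\subset X(\Gamma,\mathcal{G})$ be a prism, written as a product of cliques $C_1\times\cdots\times C_k$. By Lemma \ref{lem:CliqueStab} each $C_i$ is a coset of a vertex group; after translating $P$ by an element of $\Gamma\mathcal{G}$ we may assume $1\in P$, and then (choosing the base vertex to be $1$ in each factor) we may write $C_i = G_{u_i}$ for vertices $u_1,\dots,u_k$ of $\Gamma$. The key point to establish is that the $u_i$ are pairwise distinct and pairwise adjacent, i.e.\ that $\{u_1,\dots,u_k\}$ spans a complete subgraph $\Lambda$, and that $P=\langle\Lambda\rangle$. Distinctness: if $u_i=u_j$ for $i\ne j$ then picking nontrivial $g_i\in C_i$, $g_j\in C_j$, the vertex corresponding to the tuple differing from $1$ in coordinates $i$ and $j$ would be $g_ig_j\in G_{u_i}$, which is then adjacent to $1$ — contradicting that in a prism vertices differing in two coordinates are non-adjacent. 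Adjacency: take nontrivial $g_i\in C_i$, $g_j\in C_j$ with $i\ne j$; the tuple differing from $1$ in coordinates $i,j$ represents the group element $g_ig_j$, and the prism structure forces this vertex to be at distance $2$ from $1$ with the edges $(1,g_i)$ and $(g_i, g_ig_j)$ present but $(1,g_ig_j)$ absent. Examining the normal form of $g_ig_j$ via Proposition \ref{prop:distinGP}: it has length $2$ (since $u_i\ne u_j$), and the only way $g_ig_j$ can also be written starting with the syllable $g_j$ (which is forced by the existence of the edge from the tuple that is $g_j$ in coordinate $j$) is that $g_i$ shuffles past $g_j$, i.e.\ $u_i$ and $u_j$ are adjacent in $\Gamma$. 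Once $\Lambda$ is complete, the vertex set of $P$ is exactly $\{$products of one element from each $G_{u_i}\} = \langle\Lambda\rangle$, and by the first half of the argument this coset is already a prism, so $P = \langle\Lambda\rangle$; undoing the initial translation gives $P = g\langle\Lambda\rangle$.

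I expect the main obstacle to be the adjacency step in the converse: extracting from the abstract product-of-cliques structure the conclusion that the corresponding vertices $u_i$ must be \emph{adjacent} in $\Gamma$ (as opposed to merely distinct). This requires carefully matching the combinatorial constraint "the vertex differing in coordinates $i$ and $j$ is joined to the coordinate-$i$ vertex and to the coordinate-$j$ vertex by edges of $P$" with the syllable-shuffling behaviour of reduced words guaranteed by the normal form, and ruling out that a two-syllable word $g_ig_j$ with $u_i\not\sim u_j$ could admit the required alternative reduced expression. All the needed input — uniqueness of reduced words up to (O3), and the geodesic description in Proposition \ref{prop:distinGP} — is already available, so this should go through, but it is the one place where the argument is not purely formal.
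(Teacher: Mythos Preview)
Your proposal is correct and follows essentially the same route as the paper: identify the cliques through a base vertex via Lemma~\ref{lem:CliqueStab}, then use the square forced by the prism structure together with the normal form (Proposition~\ref{prop:distinGP}) to conclude that the corresponding vertices of $\Gamma$ are adjacent. One small presentational wrinkle: in your adjacency step you name the $(i,j)$-vertex ``$g_ig_j$'' before this is established --- strictly speaking you only know it is some vertex $x$ with $x=g_is=g_jt$ for syllables $s,t$, and it is the uniqueness of reduced words of length two that then forces $\{s,t\}=\{g_j,g_i\}$ and $u_i\sim u_j$; the paper handles this by calling the fourth vertex $x$ and arguing directly that $g^{-1}x=ab$ with $a,b$ in adjacent vertex groups (recorded as Fact~\ref{fact:Square}).
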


\begin{proof}
If $g \in \Gamma \mathcal{G}$ and if $\Lambda \subset \Gamma$ is a complete subgraph, then $g \langle \Lambda \rangle$ is the product of the cliques $gG_u$ where $u \in \Lambda$. A fortiori, $g \langle\Lambda \rangle$ is a prism. Conversely, let $P$ be a prism of $X(\Gamma, \mathcal{G})$. Fix a vertex $g \in P$ and let $\mathcal{C}$ be a collection of cliques all containing $g$ such that $P$ is the product of the cliques of $\mathcal{C}$. As a consequence of Lemma \ref{lem:CliqueStab}, there exists a subgraph $\Lambda \subset \Gamma$ such that $\mathcal{C}= \{ g G_u \mid u\in \Lambda \}$. Fix two distinct vertices $u,v \in \Lambda$ and two elements $a \in G_u$, $b\in G_v$. Because $P$ is a prism, the edges $(g,ga)$ and $(g,gb)$ generate a square in $X(\Gamma, \mathcal{G})$. Let $x$ denote its fourth vertex. It follows from Proposition \ref{prop:distinGP} that $g^{-1}x$ has length two and that the geodesics from $g$ to $x$ are labelled by the reduced words representing $g^{-1}x$. As $g$ and $x$ are opposite vertices in a square, there exist two geodesics between them. The only possibility is that $g^{-1}x=ab$ and that $a$ and $b$ belong to adjacent vertex groups, so that $g$, $ga$, $gb$ and $gab=gba$ are the vertices of our square. A fortiori, $u$ and $v$ are adjacent in $\Gamma$. The following is a consequence of our argument, which we record for future use:

\begin{fact}\label{fact:Square}
Two edges of $X(\Gamma,\mathcal{G})$ sharing an endpoint generate a square if and only if they are labelled by adjacent vertex groups. If so, two opposite sides of the square are labelled by the same element of $\Gamma \mathcal{G}$.
\end{fact}

\noindent
Thus, we have proved that $\Lambda$ is a complete subgraph of $\Gamma$. Since the prisms $P$ and $g \langle \Lambda \rangle$ both coincide with the product of the cliques of $\mathcal{C}$, we conclude that $P=g \langle \Lambda \rangle$, proving our lemma.
\end{proof}

\noindent
An immediate consequence of Lemma \ref{lem:PrismGP} is the following statement:

\begin{cor}\label{cor:cubdim}
 The cubical dimension of $X(\Gamma, \mathcal{G})$ is equal to $\mathrm{clique}(\Gamma)$, the maximal cardinality of a complete subgraph of $\Gamma$.
\end{cor}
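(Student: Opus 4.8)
The plan is to deduce Corollary \ref{cor:cubdim} directly from Lemma \ref{lem:PrismGP}, which already identifies the prisms of $X(\Gamma,\mathcal{G})$ with the cosets $g\langle\Lambda\rangle$ for $\Lambda$ a complete subgraph of $\Gamma$. Recall that the cubical dimension of $X(\Gamma,\mathcal{G})$ is by definition the supremum of the cubical dimensions of its prisms, and that the cubical dimension of a prism $g\langle\Lambda\rangle$ is the number of cliques into which it factors.

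First I would observe that, by Lemma \ref{lem:PrismGP} together with the description of $g\langle\Lambda\rangle$ as the product of the cliques $gG_u$, $u\in\Lambda$, a prism associated to a complete subgraph $\Lambda$ decomposes as a product of exactly $|V(\Lambda)|$ cliques (here using the Convention that all vertex groups are non-trivial, so that each factor $gG_u$ is a genuine clique with at least two vertices and the factorisation is into precisely $|V(\Lambda)|$ non-trivial factors — the cubical dimension of the prism is therefore $|V(\Lambda)|$). Hence the cubical dimension of $X(\Gamma,\mathcal{G})$ equals the supremum of $|V(\Lambda)|$ over all complete subgraphs $\Lambda\subset\Gamma$, which is exactly $\mathrm{clique}(\Gamma)$.

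For the two inequalities: on the one hand, given any complete subgraph $\Lambda$ with $|V(\Lambda)|=\mathrm{clique}(\Gamma)$, the prism $\langle\Lambda\rangle$ (take $g=1$) realises a prism of cubical dimension $\mathrm{clique}(\Gamma)$, so $\mathrm{cubdim}(X(\Gamma,\mathcal{G}))\ge\mathrm{clique}(\Gamma)$. On the other hand, any prism $P$ of $X(\Gamma,\mathcal{G})$ is of the form $g\langle\Lambda\rangle$ with $\Lambda$ complete, hence has cubical dimension $|V(\Lambda)|\le\mathrm{clique}(\Gamma)$; taking the supremum over all prisms gives $\mathrm{cubdim}(X(\Gamma,\mathcal{G}))\le\mathrm{clique}(\Gamma)$.

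There is essentially no obstacle here — this is a bookkeeping corollary — but the one point requiring a touch of care is the claim that the factorisation of $g\langle\Lambda\rangle$ is into exactly $|V(\Lambda)|$ factors rather than fewer. This is where one must invoke that the vertex groups are non-trivial (so no factor collapses) and that distinct vertices of $\Lambda$ give distinct cliques through $g$ (a consequence of Lemma \ref{lem:CliqueStab} and the fact that a coset $gG_u$ determines $u$). With these in hand the identification of the number of factors with $|V(\Lambda)|$ is immediate, and the corollary follows. I would keep the written proof to a single sentence or two, citing Lemma \ref{lem:PrismGP} and noting that a complete subgraph on $k$ vertices yields prisms of cubical dimension $k$ and conversely.
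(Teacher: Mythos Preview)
Your proposal is correct and matches the paper's approach exactly: the paper simply states the corollary as ``an immediate consequence of Lemma~\ref{lem:PrismGP}'' with no further proof, and your argument spells out precisely the bookkeeping that makes it immediate. The care you take with the non-triviality of vertex groups (ensuring the prism $g\langle\Lambda\rangle$ genuinely has cubical dimension $|V(\Lambda)|$) is appropriate and implicit in the paper's convention.
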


\paragraph{Hyperplanes.} Now, our goal is to define \emph{hyperplanes} in $X(\Gamma, \mathcal{G})$ and to show that they behave essentially in the same way as hyperplanes in CAT(0) cube complexes. 

\begin{definition}\label{def:hyp}
A \emph{hyperplane} of $X(\Gamma, \cG)$ is a class of edges with respect to the transitive closure of the relation claiming that two edges which are opposite in a square or which belong to a common triangle are equivalent. The \emph{carrier} of a hyperplane $J$, denoted by $N(J)$, is the subgraph of $X(\Gamma, \cG)$ generated by $J$. Two hyperplanes $J_1$ and $J_2$ are \emph{transverse} if they intersect a square along two distinct pairs of opposite edges, and they are \emph{tangent} if they are distinct, not transverse and if their carriers intersect.
\end{definition}
\begin{figure}[H]
\begin{center}
\includegraphics[trim={0 16.5cm 10cm 0},clip,scale=0.6]{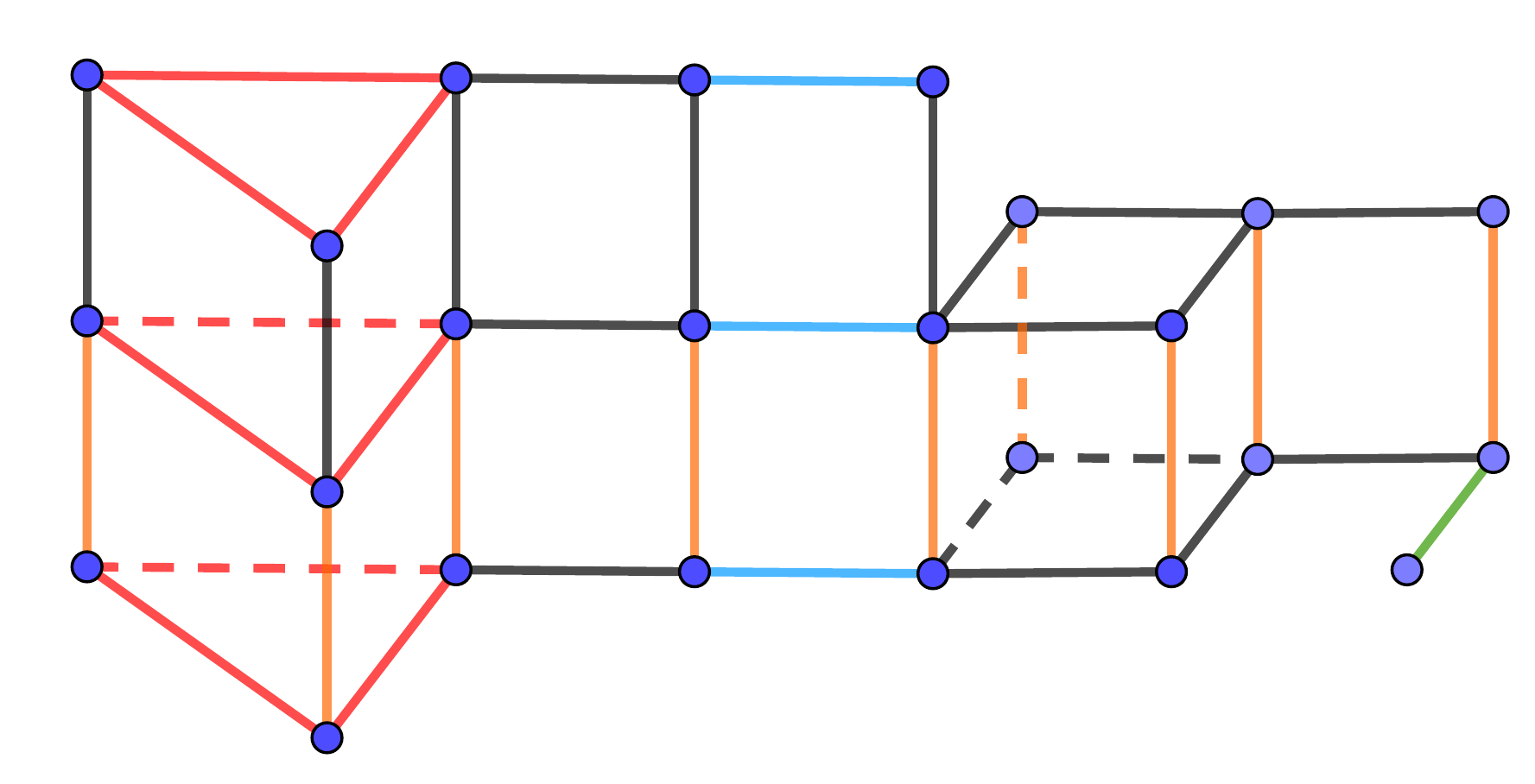}
\caption{Four hyperplanes in a quasi-median graph, colored in red, blue, green, and orange. The orange hyperplane is transverse to the blue and red hyperplanes. The green and orange hyperplanes are tangent.}
\label{figure3}
\end{center}
\end{figure}

\noindent
We refer to Figure \ref{figure3} for examples of hyperplanes in a graph. We begin by describing the hyperplanes of $X(\Gamma, \mathcal{G})$. For convenience, for every vertex $u \in V(\Gamma)$ we denote by $J_u$ the hyperplane which contains all the edges of the clique $G_u$. Our description of the hyperplanes of $X(\Gamma, \mathcal{G})$ is the following:

\begin{thm}\label{thm:HypStab}
For every hyperplane $J$ of $X(\Gamma, \mathcal{G})$, there exist some $g \in \Gamma \mathcal{G}$ and $u \in V(\Gamma)$ such that $J=gJ_u$. Moreover, $N(J)= g \langle \mathrm{star}(u) \rangle$ and $\mathrm{stab}(J)= g \langle \mathrm{star}(u) \rangle g^{-1}$. 
\end{thm}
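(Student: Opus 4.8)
The plan is to understand a hyperplane by tracking the labels of its edges and using the normal-form/geodesic description from Proposition~\ref{prop:distinGP}. First I would establish the key combinatorial claim: \emph{all edges of a fixed hyperplane $J$ carry labels lying in a single vertex group $G_u$}. This follows from Fact~\ref{fact:Triangle} (edges in a common triangle share a vertex group) and Fact~\ref{fact:Square} (opposite edges of a square carry the same element, in particular the same vertex group), since the hyperplane relation is generated by exactly these two moves; hence the "label type" $u \in V(\Gamma)$ is a hyperplane invariant. Translating by a group element (left multiplication is a label-preserving automorphism of $X(\Gamma,\cG)$ acting on edges), I may assume $J$ contains an edge of the clique $G_u = 1\cdot G_u$, i.e. an edge incident to the identity vertex; and then it suffices to show $J = J_u$, $N(J_u) = \langle \mathrm{star}(u)\rangle$, and $\mathrm{stab}(J_u) = \langle\mathrm{star}(u)\rangle$, the general case following by conjugating/translating by $g$.

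Next I would prove $N(J_u) \supseteq \langle \mathrm{star}(u)\rangle$ and the reverse inclusion. For $\supseteq$: given $w \in \langle\mathrm{star}(u)\rangle$, write a reduced word for $w$ using only syllables from $\mathrm{star}(u)$; since $u$ is adjacent to every other vertex of $\mathrm{star}(u)$, the syllable in $G_u$ (if present) shuffles freely, so $w$ and $wh$ (for $h \in G_u\setminus\{1\}$) are joined by an edge labelled in $G_u$, and one checks this edge is equivalent to an edge of $G_u$ at the identity by sliding it across squares (using Fact~\ref{fact:Square}) and triangles down to the identity — the shuffling of the $G_u$-syllable to the front is exactly a sequence of square-moves. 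For $\subseteq$: any edge of $J_u$ is obtained from an edge of the clique $G_u$ by a finite chain of square/triangle moves; I would show by induction on the length of such a chain that every vertex met stays inside $\langle\mathrm{star}(u)\rangle$. The crucial point is: if $(x,y)$ is an edge with $y^{-1}x \in G_u\setminus\{1\}$ and $x \in \langle\mathrm{star}(u)\rangle$, and this edge is opposite to $(x',y')$ in a square, then the other two edges of the square are labelled by some $v$ adjacent to $u$ (Fact~\ref{fact:Square}), so $x', y' \in \langle\mathrm{star}(u)\rangle$ as well; similarly for triangle moves (Fact~\ref{fact:Triangle}), the third vertex differs by an element of $G_u$. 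Thus $N(J_u) = \langle\mathrm{star}(u)\rangle$, and in particular distinct hyperplanes of label type $u$ incident to different cosets $g\langle\mathrm{star}(u)\rangle$ cannot coincide, which also forces $J = J_u$ in the normalized situation and pins down $gJ_u$ in general.

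Finally, for the stabiliser: clearly $\langle\mathrm{star}(u)\rangle$ preserves $J_u$ since it acts on $N(J_u) = \langle\mathrm{star}(u)\rangle$ preserving the edges labelled in $G_u$ (left multiplication by an element of $\mathrm{star}(u)$ permutes these edges — here one uses that conjugation issues don't arise because we act by left translation). Conversely, if $g \in \mathrm{stab}(J_u)$ then $g$ maps $N(J_u)$ to itself, so $g\langle\mathrm{star}(u)\rangle = \langle\mathrm{star}(u)\rangle$ as subsets of $\Gamma\cG$, whence $g \in \langle\mathrm{star}(u)\rangle$. For a general hyperplane $J = gJ_u$ one conjugates: $N(gJ_u) = gN(J_u) = g\langle\mathrm{star}(u)\rangle$ and $\mathrm{stab}(gJ_u) = g\,\mathrm{stab}(J_u)\,g^{-1} = g\langle\mathrm{star}(u)\rangle g^{-1}$.

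I expect the main obstacle to be the inductive argument that a hyperplane cannot "escape" the coset $g\langle\mathrm{star}(u)\rangle$ — i.e. the reverse inclusion $N(J_u)\subseteq\langle\mathrm{star}(u)\rangle$ — since one must rule out that a chain of square- and triangle-moves drifts outside, and this is exactly where the geodesic/normal-form control from Proposition~\ref{prop:distinGP} (no reduction between a $G_u$-syllable and syllables not adjacent to $u$) does the real work; everything else is bookkeeping with Facts~\ref{fact:Triangle} and~\ref{fact:Square} and equivariance under left translation.
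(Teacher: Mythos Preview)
Your proposal is correct and follows essentially the same route as the paper: the paper packages your two inclusions for $N(J_u)$ as the equivalence $(i)\Leftrightarrow(ii)$ of Proposition~\ref{prop:EdgesDualHyp}, proving $(i)\Rightarrow(ii)$ by exactly your induction on a chain of square/triangle moves, and the reverse direction by the same shuffling-to-the-front via squares; the stabiliser computation then follows just as you describe. The only additional content in the paper's version is a third equivalent condition in Proposition~\ref{prop:EdgesDualHyp} phrased in terms of projections onto the clique $G_u$ (Lemma~\ref{lem:ProjClique}), which is not needed for Theorem~\ref{thm:HypStab} itself but is exploited later (e.g.\ in Proposition~\ref{prop:rotstabinX}).
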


\noindent
The key step in proving Theorem \ref{thm:HypStab} is the following characterisation:

\begin{prop}\label{prop:EdgesDualHyp}
Fix a vertex $u \in V(\Gamma)$ and two adjacent vertices $x,y \in X(\Gamma ,\mathcal{G})$. The following statements are equivalent:
\begin{itemize}
	\item[(i)] the edge $(x,y)$ is dual to the hyperplane $J_u$;
	\item[(ii)] $x \in \langle \mathrm{star}(u) \rangle$ and $x^{-1}y \in G_u$;
	\item[(iii)] the projections of $x$ and $y$ onto the clique $G_u$ are distinct.
\end{itemize}
\end{prop}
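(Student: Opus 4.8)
The plan is to establish the two equivalences (i)~$\Leftrightarrow$~(ii) and (ii)~$\Leftrightarrow$~(iii) separately, working throughout with the normal form of Section~\ref{section:GP}, the description of cliques and squares in Lemma~\ref{lem:CliqueStab} and Fact~\ref{fact:Square}, and the fact (Fact~\ref{fact:Triangle}) that the edges of a triangle carry labels in a common vertex group. Throughout I use that, since $\{u\}$ and $\mathrm{link}(u)$ form a join in $\mathrm{star}(u)$, one has $\langle \mathrm{star}(u)\rangle \cong \langle \mathrm{link}(u)\rangle \times G_u$; in particular every $x\in\langle\mathrm{star}(u)\rangle$ has a unique expression $x=ha$ with $h\in\langle\mathrm{link}(u)\rangle$ and $a\in G_u$.

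For (ii)~$\Rightarrow$~(i): assuming $x\in\langle\mathrm{star}(u)\rangle$ and $x^{-1}y\in G_u$, the edge $(x,y)$ lies in the clique $xG_u$, and writing $x=ha$ as above we have $xG_u=hG_u$. Fix a reduced expression $h=h_1\cdots h_m$ with each $h_i$ a syllable in a vertex group attached to a neighbour of $u$. I then ``slide'' the clique $hG_u$ down to $G_u$ one syllable at a time: for $g=h_1\cdots h_{i-1}$ and distinct $b_1,b_2\in G_u$, the vertices $gb_1,\,gb_2,\,gh_ib_1=gb_1h_i,\,gh_ib_2=gb_2h_i$ span a square (Fact~\ref{fact:Square}, since the label $h_i$ and the label in $G_u$ are attached to adjacent vertices), in which $(gb_1,gb_2)$ and $(gh_ib_1,gh_ib_2)$ are opposite edges. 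Hence the edges of $hG_u$, $h_1\cdots h_{m-1}G_u,\dots, G_u$ all lie in a single hyperplane, which is by definition $J_u$; so $(x,y)$ is dual to $J_u$. For (i)~$\Rightarrow$~(ii), a witness to $(x,y)\in J_u$ is a chain of edges $e_0,e_1,\dots,e_k=(x,y)$ with $e_0$ an edge of the clique $G_u$ and consecutive edges either opposite in a square or lying in a common triangle. I prove by induction on $i$ that each $e_i=(p,q)$ satisfies $p\in\langle\mathrm{star}(u)\rangle$ and $p^{-1}q\in G_u$: this holds for $e_0$; if $e_{i+1}$ shares a triangle with $e_i$ then by Fact~\ref{fact:Triangle} all three edges of the triangle carry labels in $G_u$ and its third vertex lies in $\langle\mathrm{star}(u)\rangle$; if $e_{i+1}$ is opposite $e_i$ in a square, then by Fact~\ref{fact:Square} the transverse sides carry labels in a vertex group $G_v$ with $v$ adjacent to $u$, hence $v\in\mathrm{star}(u)$, while the side opposite $e_i$ carries the same label $p^{-1}q\in G_u$; in both cases the hypothesis propagates. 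Applying this to $e_k$ yields (ii).

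For (ii)~$\Leftrightarrow$~(iii), I first record the explicit form of the projection onto the clique $G_u$: for $g\in\Gamma\cG$ this projection is the unique syllable of $\mathrm{head}(g)$ lying in $G_u$ if such a syllable exists, and $1$ otherwise. This follows from Proposition~\ref{prop:distinGP} by comparing $|b^{-1}g|$ for $b\in G_u$, using that $\Gamma$ is simplicial (so $u$ is not adjacent to itself and two $G_u$-syllables never commute past each other). Granting this, (ii)~$\Rightarrow$~(iii) is immediate: with $x=ha$ as above, $y=h(ac)$ where $c=x^{-1}y\in G_u\smallsetminus\{1\}$, so the projections of $x$ and $y$ are $a$ and $ac\neq a$. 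For (iii)~$\Rightarrow$~(ii) I argue by contraposition. If $x\notin\langle\mathrm{star}(u)\rangle$, or if $x\in\langle\mathrm{star}(u)\rangle$ but $c=x^{-1}y\notin G_u$, then appending the single syllable $c$ on the right of a reduced word for $x$ (and performing the reduction described after the definition of the normal form) does not change which $G_u$-syllable, if any, can be brought to the front: if $x$ already has a $G_u$-syllable $a$ in its head then $a$ stays in the head of $y$ (the remaining part of $x$ is nontrivial, since otherwise $x\in\langle\mathrm{star}(u)\rangle$ and $c\in G_u$), and if it does not, then no $G_u$-syllable of $y$ can migrate to the front either (this would force $x\in\langle\mathrm{link}(u)\rangle$, contradicting the case assumption). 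Hence the projections of $x$ and $y$ agree, i.e.\ $\neg$(iii).

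The main obstacle is the bookkeeping in this last step: one must verify carefully, case by case and via the reduction rule for $g\cdot h$ with $h$ a syllable, that the head of $y=x\cdot(x^{-1}y)$ ``sees'' the vertex group $G_u$ in exactly the same way as the head of $x$ whenever (ii) fails. By contrast, the square-sliding argument for (ii)~$\Rightarrow$~(i) and the triangle/square dichotomy in (i)~$\Rightarrow$~(ii) are routine once Facts~\ref{fact:Triangle} and~\ref{fact:Square} are available.
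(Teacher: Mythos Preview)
Your argument is correct and takes essentially the same route as the paper: the implications (i)$\Rightarrow$(ii), (ii)$\Rightarrow$(iii), and your square-sliding (ii)$\Rightarrow$(i) match the paper's verbatim, the latter simply being packaged inside the paper's proof of (iii)$\Rightarrow$(i) rather than isolated. Your contrapositive $\neg$(ii)$\Rightarrow\neg$(iii) is exactly the head-of-$y$ analysis the paper invokes (equally tersely) via Lemma~\ref{lem:ProjClique} at the start of its (iii)$\Rightarrow$(i) step, so the only difference is that you organise the cycle as two biconditionals rather than three one-way implications.
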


\noindent
The third point requires an explanation. The \emph{projection} of a vertex onto a clique refers to the unique vertex of the clique which minimises the distance to our initial vertex. The existence of such a projection is justified by the following lemma:

\begin{lemma}\label{lem:ProjClique}
Fix a vertex $u \in V(\Gamma)$ and let $g \in X(\Gamma, \mathcal{G})$. There exists a unique vertex of the clique $G_u$ minimising the distance to $g$, namely $\left\{ \begin{array}{cl} 1 & \text{if $\mathrm{head}(g) \cap G_u = \emptyset$} \\ \mathrm{head}(g) \cap G_u & \text{if $\mathrm{head}(g) \cap G_u \neq \emptyset$} \end{array} \right.$.
\end{lemma}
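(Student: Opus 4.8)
The plan is to translate everything into the normal form for $\Gamma\mathcal{G}$ and the explicit syllable-reduction procedure recalled above. Set $n=|g|$, so that $d(g,1)=|g^{-1}|=n$ since the inverse of a reduced word is reduced, and $d(g,s)=|s^{-1}g|$ for every $s\in G_u$ by Proposition \ref{prop:distinGP}. Prepending the single syllable $s^{-1}$ to a reduced word for $g$ changes the length by at most $1$, and, by the mirror image of the reduction procedure stated above, one has $|s^{-1}g|\le n$ only when $s^{-1}$ can be cancelled against, or merged with, a syllable of $g$ that lies in $G_u$ and shuffles to the front of a reduced word for $g$ --- that is, only when $\mathrm{head}(g)\cap G_u\neq\emptyset$. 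I would first record the preliminary fact that $|\mathrm{head}(g)\cap G_u|\le 1$, which is what makes the statement of the lemma meaningful: if two distinct non-trivial syllables $a,b\in G_u$ were both first syllables of reduced words for $g$, then since any two reduced words for $g$ differ only by shuffles (O3) and $u$ is not adjacent to itself, the occurrence of $a$ could never be transposed with $b$ and would permanently prevent $b$ from reaching the front, a contradiction.

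Next I would treat the case $\mathrm{head}(g)\cap G_u=\emptyset$. Here, for any $s\in G_u\setminus\{1\}$ and any reduced word $g_1\cdots g_n$ representing $g$, the word $s^{-1}g_1\cdots g_n$ admits no reduction (no $G_u$-syllable of $g$ reaches the front), hence $d(g,s)=|s^{-1}g|=n+1>n=d(g,1)$; so $1$ is the unique closest vertex of the clique $G_u$.

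For the remaining case $\mathrm{head}(g)\cap G_u=\{t\}$ with $t\neq 1$, I would fix a reduced word $g=tg_2\cdots g_n$. Deleting the leading syllable of a reduced word again gives a reduced word, so $d(g,t)=|t^{-1}g|=|g_2\cdots g_n|=n-1$, whereas $d(g,1)=n$. The crucial observation is that $\mathrm{head}(g_2\cdots g_n)\cap G_u=\emptyset$: otherwise some syllable $g_i\in G_u$ with $i\ge 2$ could be shuffled to the front of $g_2\cdots g_n$, producing a reduced word for $g$ beginning with the two adjacent $G_u$-syllables $t,g_i$, which could be combined via (O2) --- contradicting $|g|=n$. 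Applying the case already settled to the element represented by $g_2\cdots g_n$ (whose head misses $G_u$) and the non-trivial syllable $s^{-1}t\in G_u$, we get that $s^{-1}g=(s^{-1}t)g_2\cdots g_n$ is reduced of length $n$ for every $s\in G_u\setminus\{1,t\}$, so $d(g,s)=n>n-1=d(g,t)$; hence $t=\mathrm{head}(g)\cap G_u$ is the unique closest vertex.

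The part I expect to be most delicate is this last bookkeeping with shuffles: pinning down exactly when prepending a syllable to a reduced word shortens it, and in particular the claim $\mathrm{head}(g_2\cdots g_n)\cap G_u=\emptyset$. There is no new idea needed beyond Green's normal form; the whole argument rests on the repeated use of the fact that a vertex of $\Gamma$ is never adjacent to itself, so that two syllables from the same vertex group can never be transposed and must eventually be combined.
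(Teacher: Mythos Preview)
Your proof is correct and follows essentially the same approach as the paper's: both reduce to the computation $|s^{-1}g|$ via normal forms, split into the two cases according to whether $\mathrm{head}(g)$ meets $G_u$, and in the second case write $g=tg'$ with $\mathrm{head}(g')\cap G_u=\emptyset$ to see that $(s^{-1}t)g'$ is reduced. You simply make explicit two points the paper leaves to the reader, namely that $|\mathrm{head}(g)\cap G_u|\le 1$ and that $\mathrm{head}(g_2\cdots g_n)\cap G_u=\emptyset$.
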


\begin{proof}
Suppose that $\mathrm{head}(g) \cap G_u = \emptyset$. Then, for every $h \in G_u$, one has 
$$d(g,h)=|h^{-1}g|= |g| +1 =d(g,1)+1>d(g,1)$$ 
since the product $h^{-1}g$ is necessarily reduced. It shows that $1$ is the unique vertex of $G_u$ minimising the distance to $g$. Next, suppose that $\mathrm{head}(g) \cap G_u \neq \emptyset$. Thus,  we can write $g$ as a reduced product $hg'$ where $h$ belongs to $G_u \backslash \{ 1 \}$ and where $g' \in \Gamma \mathcal{G}$ satisfies $\mathrm{head}(g') \cap G_u = \emptyset$. Notice that $\mathrm{head}(g) \cap G_u = \{h\}$. Then 
$$d(g,k) = |k^{-1}g| = |(k^{-1}h) \cdot g'| = |g'| +1 = d(g,h) +1>d(g,h)$$
for every $k \in G_u \backslash \{h \}$ since the product $(k^{-1}h) \cdot g'$ is necessarily reduced. This proves that $h$ is the unique vertex of $G_u$ minimising the distance to $g$.
\end{proof}

\begin{proof}[Proof of Proposition \ref{prop:EdgesDualHyp}.]
Suppose that $(i)$ holds. There exists a sequence of edges
$$(x_1,y_1), \ (x_2,y_2), \ldots, (x_{n-1},y_{n-1}), \ (x_n,y_n)=(x,y)$$
such that $(x_1,y_1) \subset G_u$, and such that, for every $1 \leq i \leq n-1$, the edges $(x_i,y_i)$ and $(x_{i+1},y_{i+1})$ either belong to the same triangle or are opposite in a square. We argue by induction over $n$. If $n=1$, there is nothing to prove. Now suppose that $x_{n-1} \in \langle \mathrm{star}(u) \rangle$ and that $x_{n-1}^{-1}y_{n-1} \in G_u$. If $(x_{n-1},y_{n-1})$ and $(x_n,y_n)$ belong to the same triangle, it follows from Fact \ref{fact:Triangle} that $x_n \in \langle \mathrm{star}(u) \rangle$ and $x_n^{-1}y_n \in G_u$. Otherwise, if $(x_{n-1},y_{n-1})$ and $(x_n,y_n)$ are opposite sides in a square, we deduce from Fact \ref{fact:Triangle} that there exists some $a \in \langle \mathrm{link}(u) \rangle$ such that either $\left\{ \begin{array}{l} x_n=x_{n-1}a \\ y_n = y_{n-1}a \end{array} \right.$ or $\left\{ \begin{array}{l} x_n=y_{n-1}a \\ y_n = x_{n-1}a \end{array} \right.$. As a consequence, $x_n \in \langle \mathrm{star}(u) \rangle$ and $x_n^{-1}y_n \in G_u$. Thus, we have proved the implication $(i) \Rightarrow (ii)$. 

\medskip \noindent
Now, suppose that $(ii)$ holds. There exists some $\ell \in G_u \backslash \{ 1 \}$ such that $y=x \ell$, and, since $\langle \mathrm{star}(u) \rangle = G_u\times \langle \mathrm{link}(u) \rangle$, we can write $x$ as a reduced product $ab$ for some $a \in G_u$ and $b \in \langle \mathrm{link}(u) \rangle$. Notice that $y$ is represented by the reduced product $(a \ell) \cdot b$. We deduce from Lemma \ref{lem:ProjClique} that the projections of $x$ and $y$ onto the clique $G_u$ are $a$ and $a \ell$ respectively. They are distinct since $\ell \neq 1$. Thus, we have proved the implication $(ii) \Rightarrow (iii)$.

\medskip \noindent
Suppose that $(iii)$ holds. Write $x$ as a product $a \cdot x_1 \cdots x_n$, where $a \in G_u$ and $x_1, \ldots, x_n \in \Gamma \mathcal{G}$ are generators, such that $a=1$ and $x_1 \cdots x_n$ is reduced if $\mathrm{head}(g) \cap G_u = \emptyset$, and such that $a \cdot x_1 \cdots x_n$ is reduced otherwise; notice that in the latter case, $\mathrm{head}(g) \cap G_u = \{a \}$. According to Lemma \ref{lem:ProjClique}, the projection of $x$ onto the clique $G_u$ is $a$. Next, because $x$ and $b$ are adjacent, there exists a generator $b \in \Gamma \mathcal{G}$ such that $y = xb$. Since $x$ and $y$ must have different projections onto the clique $G_u$, we deduce from Lemma \ref{lem:ProjClique} that necessarily $b$ shuffles to the left in the product $x_1 \cdots x_n \cdot b$ (see Section \ref{section:GP} for the definition) and belongs to $G_u$. (In this case, the projection of $y$ onto the clique $G_u$ is $ab$, which distinct from $a$ since $b \neq 1$.) As a consequence, the $x_i$'s belong to $\langle \mathrm{link}(u) \rangle$. Finally, it is sufficient to notice that any two consecutive edges of the sequence
$$(a,ab), \ (ax_1,abx_1),  \ldots, (ax_1 \cdots x_{n-1},abx_1\cdots x_{n-1}), \ (ax_1 \cdots x_n, abx_1 \cdots x_n)= (x,y)$$
are opposite sides of a square in order to deduce that $(x,y)$ and $(a,ab) \subset G_u$ are dual to the same hyperplane, namely $J_u$. Thus, we have proved the implication $(iii) \Rightarrow (i)$.  
\end{proof}

\begin{proof}[Proof of Theorem \ref{thm:HypStab}.]
Let $J$ be a hyperplane of $X(\Gamma, \mathcal{G})$. Fixing a clique $C$ dual to $J$, we know from Lemma \ref{lem:CliqueStab} that there exist $g \in \Gamma \mathcal{G}$ and $u \in V(\Gamma)$ such that $C=gG_u$, hence $J=gJ_u$. It is a consequence of Proposition \ref{prop:EdgesDualHyp} that a vertex of $X(\Gamma,\mathcal{G})$ belongs to $N(J_u)$ if and only if it belongs to $\langle \mathrm{star}(u) \rangle$, so $N(J)=gN(J_u)=g \langle \mathrm{star}(u) \rangle$. 

\medskip \noindent
It remains to show that $\mathrm{stab}(J)= g \langle \mathrm{star}(u) \rangle g^{-1}$. Fix a non-trivial element $a \in G_u$. Then the hyperplane dual to the edge $(g,ga)$ is $J$. If $h \in \mathrm{stab}(J)$, then $J$ must be also dual to $h \cdot (g,ga)$, and we deduce from Proposition \ref{prop:EdgesDualHyp} that $hg$ must belong to $g \langle \mathrm{star}(u)$, hence $h \in g \langle \mathrm{star}(u) \rangle g^{-1}$. Conversely, if $h$ belongs to $g \langle \mathrm{star}(u) \rangle g^{-1}$, then $hg \in g \langle \mathrm{star}(u) \rangle$ and $(hg)^{-1}(ga) \in G_u$ so that Proposition \ref{prop:EdgesDualHyp} implies that $gJ_u=J$ is the hyperplane dual to the edge $h \cdot (g,ga)$, hence $hJ=J$ since these two hyperplanes turn out to be dual to the same edge. This concludes the proof of the theorem.
\end{proof}

\noindent
It is worth noticing that, as a consequence of Theorem \ref{thm:HypStab}, the hyperplanes of $X(\Gamma, \mathcal{G})$ are naturally labelled by $V(\Gamma)$. More precisely, since any hyperplane $J$ of $X(\Gamma, \mathcal{G})$ is a translate of some $J_u$, we say that the corresponding vertex $u \in V(\Gamma)$ \emph{labels} $J$. Equivalently, by noticing that the edges of $X(\Gamma, \mathcal{G})$ are naturally labelled by vertices of $\Gamma$, the vertex of $\Gamma$ labelling a hyperplane coincides with the common label of all its edges (as justified by Facts \ref{fact:Triangle} and \ref{fact:Square}). Let us record the following elementary but quite useful statement:

\begin{lemma}\label{lem:transverseimpliesadj}
Two transverse hyperplanes of $X(\Gamma, \mathcal{G})$ are labelled by adjacent vertices of $\Gamma$, and two tangent hyperplanes of $X(\Gamma, \mathcal{G})$ are labelled by distinct vertices of $\Gamma$. 
\end{lemma}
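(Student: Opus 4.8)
The plan is to prove both statements directly from the definitions, using the structural results already established: Theorem \ref{thm:HypStab} (hyperplanes are translates of the $J_u$, with carrier $g\langle\mathrm{star}(u)\rangle$), Fact \ref{fact:Triangle} and Fact \ref{fact:Square} (edge labels in triangles/squares), and the observation that every hyperplane carries a well-defined label in $V(\Gamma)$.

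First I would handle the transverse case. Suppose $J_1$ and $J_2$ are transverse, labelled by $u_1$ and $u_2$ respectively. By definition of transversality, there is a square $Q$ of $X(\Gamma,\cG)$ two pairs of whose opposite edges are dual to $J_1$ and $J_2$. So $Q$ has an edge $e_1$ dual to $J_1$ and an edge $e_2$ dual to $J_2$ sharing an endpoint. Since the label of a hyperplane is the common label of its edges, $e_1$ is labelled by $u_1$ and $e_2$ by $u_2$. By Fact \ref{fact:Square}, two edges sharing an endpoint span a square if and only if their labels are adjacent vertex groups; hence $u_1$ and $u_2$ are adjacent in $\Gamma$. (In particular $u_1 \neq u_2$, since $\Gamma$ has no loops.) This is essentially immediate once Fact \ref{fact:Square} is invoked.

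Next I would treat the tangent case, which is the part requiring a genuine argument. Suppose $J_1$ and $J_2$ are tangent, so they are distinct, not transverse, and $N(J_1)\cap N(J_2)\neq\emptyset$; say $x$ is a common vertex of the two carriers. Assume for contradiction that $J_1$ and $J_2$ carry the same label $u$. By Theorem \ref{thm:HypStab}, write $J_1=g_1 J_u$ and $J_2 = g_2 J_u$ with $N(J_i)=g_i\langle\mathrm{star}(u)\rangle$. Since $x\in g_1\langle\mathrm{star}(u)\rangle\cap g_2\langle\mathrm{star}(u)\rangle$, the cosets $g_1\langle\mathrm{star}(u)\rangle$ and $g_2\langle\mathrm{star}(u)\rangle$ of the same subgroup coincide, so after replacing $g_2$ by $g_1$ we may assume $g_1 = g_2 =: g$, and translating by $g^{-1}$ we reduce to the case $g=1$, i.e. both $J_1$ and $J_2$ equal $J_u$. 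But a hyperplane is determined by its label together with its carrier coset (again by Theorem \ref{thm:HypStab}, the hyperplane $gJ_u$ is recovered from the clique $gG_u$, and $gG_u\subset g\langle\mathrm{star}(u)\rangle$ is the unique clique labelled $u$ through... ) — more carefully, since $J_1$ and $J_2$ are both translates of $J_u$ with the same carrier $\langle\mathrm{star}(u)\rangle$ and a hyperplane equals the class of edges of any of its cliques, and every clique labelled $u$ inside $\langle\mathrm{star}(u)\rangle = G_u\times\langle\mathrm{link}(u)\rangle$ lies in the hyperplane $J_u$, we get $J_1 = J_u = J_2$, contradicting $J_1\neq J_2$. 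Hence tangent hyperplanes have distinct labels.

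The main obstacle I expect is making the last step of the tangent argument fully rigorous: one must argue that a hyperplane is uniquely determined by (the coset which is) its carrier together with its label — equivalently, that inside a single carrier $g\langle\mathrm{star}(u)\rangle$ all edges labelled $u$ belong to one hyperplane. This follows from Proposition \ref{prop:EdgesDualHyp}: an edge $(x,y)$ with $x\in g\langle\mathrm{star}(u)\rangle$ and $x^{-1}y\in gG_ug^{-1}$... (after conjugating, with $x\in\langle\mathrm{star}(u)\rangle$ and $x^{-1}y\in G_u$) is dual to $J_u$. So any two $u$-labelled edges inside the carrier are dual to the same hyperplane, which forces $J_1=J_2$. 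Once this is spelled out, both claims of Lemma \ref{lem:transverseimpliesadj} are established; neither requires any finiteness or connectivity hypothesis on $\Gamma$, only the structure theory of the quasi-median graph developed above.
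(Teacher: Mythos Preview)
Your proof is correct. For the transverse case, you do exactly what the paper does: invoke Fact~\ref{fact:Square}.

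For the tangent case, your argument is valid but takes a slightly longer route than the paper's. You identify the two carriers as cosets of $\langle\mathrm{star}(u)\rangle$, argue they coincide, and then prove (via Proposition~\ref{prop:EdgesDualHyp}) that any hyperplane labelled $u$ with carrier $\langle\mathrm{star}(u)\rangle$ must equal $J_u$. This works, though note there is an even shorter justification available once the carriers coincide: since $\mathrm{stab}(J_u)=\langle\mathrm{star}(u)\rangle$ by Theorem~\ref{thm:HypStab}, any translate $gJ_u$ with $g\in\langle\mathrm{star}(u)\rangle$ is $J_u$ itself.

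The paper bypasses the carrier argument entirely. It picks a vertex $g\in N(J_1)\cap N(J_2)$ and observes that through $g$ there are cliques $C_1, C_2$ dual to $J_1, J_2$ respectively; by Lemma~\ref{lem:CliqueStab} these are $gG_{u_1}$ and $gG_{u_2}$. If $u_1=u_2$ then $C_1=C_2$, forcing $J_1=J_2$ (a clique determines a unique hyperplane), contradicting distinctness. This is a bit more direct because it avoids having to argue that the carrier plus the label determines the hyperplane --- the clique alone already does. Your approach is perfectly fine, just slightly heavier machinery for the same conclusion.
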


\begin{proof}
The assertion about transverse hyperplanes is a direct consequence of Fact \ref{fact:Square}. Now, let $J_1$ and $J_2$ be two tangent hyperplanes, and let $u_1,u_2 \in V(\Gamma)$ denote their labels respectively. Since these two hyperplanes are tangent, there exists a vertex $g \in X(\Gamma, \mathcal{G})$ which belongs to both $N(J_1)$ and $N(J_2)$. Fix two cliques, say $C_1$ and $C_2$ respectively, containing $g$ and dual to $J_1$ and $J_2$. According to Lemma \ref{lem:CliqueStab}, we have $C_1=gG_{u_1}$ and $C_2=gG_{u_2}$. Clearly, $u_1$ and $u_2$ must be distinct, since otherwise $C_1$ and $C_2$ would coincide, contradicting the fact that $J_1$ and $J_2$ are tangent. Therefore, our two hyperplanes $J_1$ and $J_2$ are indeed labelled by distinct vertices of $\Gamma$. 
\end{proof}

\noindent
Now we want to focus on the second goal of this paragraph by showing that hyperplanes of $X(\Gamma, \mathcal{G})$ are closely related to its geometry. Our main result is the following:

\begin{thm}\label{thm:QmHyperplanes}
The following statements hold:
\begin{itemize}
	\item For every hyperplane $J$, the graph $X(\Gamma, \mathcal{G}) \backslash \backslash J$ is disconnected. Its connected components are called \emph{sectors}.
	\item Carriers of hyperplanes are convex.
	\item For any two vertices $x,y \in X(\Gamma, \mathcal{G})$, $d(x,y)= \# \{ \text{hyperplanes separating $x$ and $y$} \}$.
	\item A path in $X(\Gamma, \mathcal{G})$ is a geodesic if and only if it intersects each hyperplane at most once.
\end{itemize}
\end{thm}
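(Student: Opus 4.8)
The plan is to deduce all four statements from the explicit description of hyperplanes (Theorem \ref{thm:HypStab}) together with the dictionary between geodesics and reduced words (Proposition \ref{prop:distinGP}). The organizing principle is that a hyperplane $J = gJ_u$ is determined by a coset $g\langle\mathrm{star}(u)\rangle$, and that crossing $J$ along an edge corresponds, via Proposition \ref{prop:EdgesDualHyp}, to changing the projection to the clique $gG_u$. First I would prove the separation statement: fix $J = gJ_u$; after translating by $g^{-1}$ we may assume $J = J_u$. Since $\langle\mathrm{star}(u)\rangle = G_u \times \langle\mathrm{link}(u)\rangle$, every vertex $x$ has a well-defined projection $p_u(x) \in G_u$ to the clique $G_u$ (Lemma \ref{lem:ProjClique}); I claim the fibers of $p_u$ are exactly the sectors. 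Indeed, an edge $(x,y)$ not dual to $J_u$ has, by Proposition \ref{prop:EdgesDualHyp}(iii), $p_u(x) = p_u(y)$, so $p_u$ is constant on each component of $X \backslash\backslash J_u$; conversely, if $p_u(x) = p_u(y) = a$, one shuffles $a^{-1}x$ and $a^{-1}y$ into $G_u \cdot \langle\mathrm{link}(u)\rangle$-free-of-$G_u$ form and concatenates geodesics inside the sector, using Proposition \ref{prop:distinGP} to check no edge of the resulting path lies in $J_u$. Since $|G_u| \geq 2$, there are at least two fibers, hence $X \backslash\backslash J_u$ is disconnected.

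Next I would establish convexity of carriers. By Theorem \ref{thm:HypStab}, $N(J_u) = \langle\mathrm{star}(u)\rangle$, and more generally $N(gJ_u) = g\langle\mathrm{star}(u)\rangle$ is a coset of a standard parabolic subgroup. So it suffices to show that for any induced subgraph $\Lambda \subseteq \Gamma$, the subset $\langle\Lambda\rangle \subseteq X(\Gamma,\mathcal{G})$ is convex; then translate. This is a routine normal-form argument: if $x, y \in \langle\Lambda\rangle$ then $x^{-1}y \in \langle\Lambda\rangle$, so a reduced word for $x^{-1}y$ uses only syllables from vertex groups $G_v$ with $v \in \Lambda$, and by Proposition \ref{prop:distinGP} the corresponding geodesic stays inside $\langle\Lambda\rangle$ $-$ and since reduced words are unique up to (O3)-shuffles, which do not leave $\langle\Lambda\rangle$, every geodesic from $x$ to $y$ does so as well. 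Apply this to $\Lambda = \mathrm{star}(u)$ and translate by $g$ to conclude.

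For the third and fourth statements I would argue together. Given a path $\gamma$ from $x$ to $y$ labelled by a word $w = s_1 \cdots s_n$ in the generators $\bigcup G_u \setminus \{1\}$, each edge of $\gamma$ is dual to a hyperplane, and I would first observe (tracking projections as above) that whenever $\gamma$ crosses a hyperplane $J$ and later crosses it again, the subword between the two crossings can be used, via the rewriting moves (O1)--(O3), to cancel a syllable $-$ precisely, two edges dual to the same $gJ_u$ appearing in a geodesic would force the corresponding syllable to shuffle together and cancel by the reduction procedure described in Section \ref{section:CubicalLikeGeom}, contradicting that a geodesic word is reduced. Hence a geodesic crosses each hyperplane at most once, giving "geodesic $\Rightarrow$ crosses each hyperplane $\leq 1$ time"; combined with the fact that any path from $x$ to $y$ must cross every hyperplane separating $x$ from $y$ at least once (immediate from the separation statement, since $\gamma$ connects the two sides), we get $d(x,y) \geq \#\{\text{hyperplanes separating } x,y\}$. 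For the reverse inequality and the converse direction of the last bullet, I would show that distinct edges of a geodesic are dual to distinct hyperplanes \emph{and} that each such hyperplane separates $x$ from $y$: if $\gamma$ is a geodesic then $n = |x^{-1}y| = d(x,y)$, each of the $n$ edges is dual to a hyperplane, these $n$ hyperplanes are pairwise distinct by the no-recrossing claim, and each separates the endpoints because removing one edge of the reduced word cannot leave a reduced word of the same length $-$ so $x$ and $y$ land in different sectors. This yields $d(x,y) \leq \#\{\text{hyperplanes separating } x,y\}$, hence equality, and it shows a path crossing each hyperplane at most once has length $\leq \#\{\text{separating hyperplanes}\} = d(x,y)$, so it is geodesic.

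The main obstacle I anticipate is the bookkeeping in the no-recrossing lemma: making precise the claim that if a reduced word labels a path recrossing a hyperplane $gJ_u$, then the two crossing syllables can be brought together by shuffles and cancelled. One clean way to handle this is to prove the contrapositive via projections $-$ track the function "projection onto the clique $gG_u$" along the path and show that along a geodesic it is \emph{monotone} (changes exactly once, from the $x$-side value to the $y$-side value), using Lemma \ref{lem:ProjClique} and Proposition \ref{prop:distinGP} at each step; recrossing would force this projection to change back, which an analysis of reduced words forbids. This localizes the entire difficulty to a statement about reduced words in a single coset $g\langle\mathrm{star}(u)\rangle = gG_u \times g\langle\mathrm{link}(u)\rangle$, where it is elementary.
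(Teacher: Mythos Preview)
Your proposal is correct and follows essentially the same route as the paper's proof: both arguments rest on Proposition~\ref{prop:distinGP}, Proposition~\ref{prop:EdgesDualHyp}, and Theorem~\ref{thm:HypStab}, and both derive the third and fourth bullets together from the claim that a geodesic cannot recross a hyperplane. The only noteworthy difference is tactical: for the no-recrossing step the paper takes an innermost pair of crossings of $J$, invokes the already-proved convexity of $N(J)$ to place the intermediate subgeodesic inside $g\langle\mathrm{star}(u)\rangle$, and reads off that the intermediate syllables lie in $\langle\mathrm{link}(u)\rangle$, hence commute with the two $G_u$-syllables, contradicting reducedness --- whereas you propose to track the clique-projection $p_u$ along the geodesic and argue it changes at most once. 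These are the same argument in different clothing (as you yourself observe when you say the difficulty ``localizes to a statement about reduced words in a single coset $g\langle\mathrm{star}(u)\rangle$''); the paper's phrasing via convexity is slightly more economical because it reuses bullet two directly rather than reproving its content through projection bookkeeping.
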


\noindent
In this statement, we denoted by $X(\Gamma, \mathcal{G}) \backslash \backslash J$, where $J$ is a hyperplane, the graph obtained from $X(\Gamma, \mathcal{G})$ by removing the interiors of the edges of $J$.

\begin{proof}[Proof of Theorem \ref{thm:QmHyperplanes}.]
Let $J$ be a hyperplane of $X(\Gamma, \mathcal{G})$. Up to translating $J$ by an element of $\Gamma \mathcal{G}$, we may suppose without loss of generality that $J=J_u$ for some $u \in V(\Gamma)$. Fix a non-trivial element $a \in G_u$. We claim that the vertices $1$ and $a$ are separated by $J_u$. Indeed, if $x_1, \ldots, x_n$ define a path from $1$ to $a$ in $X(\Gamma,\mathcal{G})$, there must exist some $1 \leq i \leq n-1$ such that the projections of $x_i$ and $x_{i+1}$ onto the clique $G_u$ are distinct, since the projections of $1$ and $a$ onto $G_u$ are obviously $1$ and $a$ respectively and are distinct. It follows from Proposition \ref{prop:EdgesDualHyp} that the edge $(x_i,x_{i+1})$ is dual to $J_u$. This concludes the proof of the first point in the statement of our theorem.

\medskip \noindent
The convexity of carriers of hyperplanes is a consequence of the characterisation of geodesics given by Proposition \ref{prop:distinGP} and of the description of carriers given by Theorem~\ref{thm:HypStab}. 

\medskip \noindent
Let $\gamma$ be a geodesic between two vertices $x,y \in X(\Gamma, \mathcal{G})$. Suppose by contradiction that it intersects a hyperplane at least twice. Let $e_1, \ldots, e_n$ be the sequence of edges corresponding to $\gamma$. Fix two indices $1 \leq i <j \leq n$ such that $e_i$ and $e_j$ are dual to the same hyperplane, say $J$, and such that the subpath $\rho = e_{i+1} \cup \cdots \cup e_{j-1}$ intersects each hyperplane at most once and does not intersect $J$. Notice that, as a consequence of the convexity of $N(J)$, this subpath must be contained in the carrier $N(J)$. Therefore, any hyperplane dual to an edge of $\rho$ must be transverse to $J$. It follows that, if $x_i$ denotes the generator labelling the edge $e_i$ for every $1 \leq i \leq n$, then $x_i$ shuffles to the end in the product $x_i \cdot x_{i+1} \cdots x_{j-1}$; moreover, $x_i$ and $x_j$ belong to the same vertex group. Consequently, the product $x_1 \cdots x_n$ is not reduced, contradicting the fact that $\gamma$ is a geodesic according to Proposition \ref{prop:distinGP}. Thus, we have proved that a geodesic in $X(\Gamma, \mathcal{G})$ intersects each hyperplane at most once. This implies the inequality
$$d(x,y) \leq \# \{ \text{hyperplanes separating $x$ and $y$}\}.$$
The reverse inequality is clear since any path from $x$ to $y$ must intersect each hyperplane separating $x$ and $y$, proving the equality. As a consequence, if a path between $x$ and $y$ intersects each hyperplane at most once, then its length coincides with the number of hyperplanes separating $x$ and $y$, which coincides itself with the distance between $x$ and $y$. A fortiori, such a path must be a geodesic. This concludes the proof of the third and fourth points in the statement of the theorem. 
\end{proof}

\paragraph{Projections on parabolic subgroups.} We saw in Lemma \ref{lem:ProjClique} that it is possible to project naturally vertices of $X(\Gamma, \mathcal{G})$ onto a given clique. Now, we want to extend this observation to a wider class of subgraphs. More precisely, if $\Lambda$ is a subgraph of $\Gamma$, then we claim that vertices of $X(\Gamma, \mathcal{G})$ project onto the subgraph $\langle \Lambda \rangle$. This covers cliques but also carriers of hyperplanes according to Theorem \ref{thm:HypStab}. Before stating and proving the main result of this paragraph, we would like to emphasize that, as a consequence of Proposition \ref{prop:distinGP}, such a subgraph $\langle \Lambda \rangle$ is necessarily convex.

\begin{prop}\label{prop:ProjHyp}
Fix a subgraph $\Lambda \subset \Gamma$ and a vertex $g \in X(\Gamma, \mathcal{G})$. There exists a unique vertex $x$ of $\langle \Lambda \rangle$ minimising the distance to $g$. Moreover, any hyperplane separating $g$ from $x$ separates $g$ from $\langle \Lambda \rangle$ (ie., $g$ and $\langle \Lambda \rangle$ lie in distinct sectors delimited by the hyperplane). 
\end{prop}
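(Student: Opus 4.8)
The plan is to exploit the characterisation of geodesics and distances in terms of hyperplanes provided by Theorem \ref{thm:QmHyperplanes}, together with the convexity of $\langle \Lambda \rangle$ noted just before the statement. First I would reduce to the case where $g \notin \langle \Lambda \rangle$ (otherwise $x = g$ works and the second assertion is vacuous). The key reduction is to understand which hyperplanes can separate $g$ from a vertex of $\langle \Lambda \rangle$. Using the labelling of hyperplanes by vertices of $\Gamma$ (Theorem \ref{thm:HypStab}) and the description of geodesics by normal forms (Proposition \ref{prop:distinGP}), I would write $g$ as a reduced word and identify the ``syllables outside $\Lambda$'' that must be crossed no matter which vertex of $\langle \Lambda \rangle$ we target; concretely, define $x$ to be the element obtained from a reduced expression of $g$ by deleting all syllables in $G_v$ for $v \in \Lambda$ that can be shuffled to the appropriate end, i.e. keep only the ``$\langle\Lambda\rangle$-part'' of $g$ in the sense of the normal form. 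The claim is that this $x$ lies in $\langle \Lambda \rangle$ and that $g = x \cdot (x^{-1}g)$ is reduced with $x^{-1}g$ containing no syllable in a vertex group $G_v$, $v\in\Lambda$, that shuffles to its front.

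The main steps, in order, would be: (1) existence — produce the candidate $x \in \langle \Lambda \rangle$ as above and show via Proposition \ref{prop:distinGP} that $d(g,x) \le d(g, y)$ for every $y \in \langle \Lambda \rangle$, by checking that the set of hyperplanes separating $g$ from $x$ is contained in the set separating $g$ from $y$ (every such hyperplane is labelled by a vertex not in $\mathrm{star}$-reachable position relative to $\Lambda$ and must therefore be crossed on any path from $g$ into $\langle\Lambda\rangle$); (2) this containment of separating-hyperplane sets is exactly the second assertion of the proposition, so it comes for free once (1) is set up correctly; (3) uniqueness — if $x'$ is another closest vertex of $\langle \Lambda \rangle$, then since $\langle \Lambda \rangle$ is convex and $d(g,x) = d(g,x')$, a geodesic from $g$ to $x'$ followed by a geodesic from $x'$ to $x$ inside $\langle\Lambda\rangle$ would have to re-cross some hyperplane separating $g$ from both, contradicting the fourth point of Theorem \ref{thm:QmHyperplanes}; more directly, any hyperplane separating $x$ from $x'$ lies ``between'' them inside the convex set $\langle\Lambda\rangle$ and hence separates $g$ from exactly one of them, forcing $d(g,x)\ne d(g,x')$ unless $x=x'$.

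I expect the main obstacle to be Step (1): pinning down precisely the candidate vertex $x$ and proving the inclusion of separating-hyperplane sets. The delicate point is the interaction of the shuffling moves (O3) with the subgraph $\Lambda$ — one needs that the ``$\langle\Lambda\rangle$-part'' of $g$ is well-defined independently of the chosen reduced word, and that a hyperplane labelled by some $v \notin \Lambda$ which separates $g$ from $x$ genuinely cannot be avoided by routing to a different target $y \in \langle\Lambda\rangle$. This is essentially a normal-form bookkeeping argument: any path from $g$ to $y$ reads a word equal to $g^{-1}y$, and since $y \in \langle\Lambda\rangle$ its reduced form involves only syllables from $\Lambda$, so every syllable of $g$ outside $\Lambda$ (that does not cancel, which it cannot since it cannot shuffle past the $\Lambda$-syllables to meet an inverse) contributes a hyperplane that must be crossed. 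Once this is made precise, uniqueness and the second assertion follow cleanly from the hyperplane machinery of Theorem \ref{thm:QmHyperplanes}, so I would allocate most of the writing effort to Step (1).
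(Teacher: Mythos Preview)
Your approach is valid but genuinely different from the paper's. The paper does not construct the minimiser: it simply picks any $x\in\langle\Lambda\rangle$ minimising the distance to $g$ (existence being trivial), fixes a geodesic $[g,x]$, and argues by contradiction that this geodesic contains no ``bad edge'', i.e.\ no edge whose dual hyperplane crosses $\langle\Lambda\rangle$. Taking the bad edge closest to $x$, one shows its label shuffles to the tail of $g^{-1}x$ and lies in $\langle\Lambda\rangle$, producing a strictly closer point $xs^{-1}\in\langle\Lambda\rangle$. This gives the second assertion directly, and uniqueness follows exactly as in your second sketch. Your constructive route via the prefix decomposition $g=x\cdot q$ with $x\in\langle\Lambda\rangle$ and $\mathrm{head}(q)$ disjoint from the $G_v$, $v\in\Lambda$, also works and has the advantage of exhibiting the projection explicitly; the paper's argument is shorter and sidesteps the well-definedness of that prefix.

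One imprecision to fix in your Step~(1): you suggest that the hyperplanes separating $g$ from $x$ are ``labelled by a vertex not in $\mathrm{star}$-reachable position relative to $\Lambda$'' and that it is the syllables of $g$ \emph{outside} $\Lambda$ that contribute the hyperplanes to be crossed. This is not right: the word $q=x^{-1}g$ may well contain $\Lambda$-labelled syllables (just not in its head), so some hyperplanes separating $g$ from $x$ are labelled inside $\Lambda$. What you actually need is that for every $y\in\langle\Lambda\rangle$ the product $(y^{-1}x)\cdot q$ is reduced --- which holds because every tail syllable of $y^{-1}x\in\langle\Lambda\rangle$ lies in some $G_v$ with $v\in\Lambda$ while no head syllable of $q$ does --- so that the geodesic from $x$ to $g$ is a suffix of a geodesic from $y$ to $g$, and the hyperplane-set inclusion follows. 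Once phrased this way your plan goes through; just do not conflate ``labelled by a vertex outside $\Lambda$'' with ``disjoint from $\langle\Lambda\rangle$''.
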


\begin{proof}
Fix a vertex $x \in \langle \Lambda \rangle$ minimising the distance to $g$, and a geodesic $[g,x]$ from $g$ to $x$. We say that an edge of $[g,x]$ is \emph{bad} if the hyperplane dual to it crosses $\langle \Lambda \rangle$. Let $e$ be the bad edge of $[g,x]$ which is closest to $x$. As a consequence, the edges of $[g,x]$ between $e$ and $x$ have their hyperplanes which are disjoint from $\langle \Lambda \rangle$. This implies that these hyperplanes are all transverse to the hyperplane $J$ dual to $e$, so that, as a consequence of Lemma \ref{lem:transverseimpliesadj}, the syllable $s$ of $g^{-1}x$ labelling $e$ belongs to the tail of $g^{-1}x$. Moreover, the fact that $J$ crosses $\langle \Lambda \rangle$ implies that it is labelled by a vertex of $\Lambda$, hence $s \in \langle \Lambda \rangle$. We deduce from Proposition \ref{prop:distinGP} that there exists a geodesic from $g$ to $x$ whose last edge is labelled by $s$. Since $s$ and $x$ both belong to $\langle \Lambda \rangle$, it follows that the penultimate vertex along our geodesic, namely $xs^{-1}$, belongs to $\langle \Lambda \rangle$ and satisfies $d(g,xs^{-1}) <d(g,x)$, contradicting the definition of $x$. Thus, we have proved that a geodesic from $g$ to $x$ does not contain any bad edge. In other words, any hyperplane separating $g$ from $x$ separates $g$ from~$\langle \Lambda \rangle$.  This proves the second assertion of our proposition. 

\medskip \noindent
Now, suppose that $y \in \langle \Lambda \rangle$ is a second vertex minimising the distance to $g$. If $x$ and $y$ are distinct, then there exists a hyperplane $J$ separating them. Because such a hyperplane necessarily crosses $\langle \Lambda \rangle$, we deduce from the first paragraph of our proof that $J$ does not separate $g$ from $x$; similarly, $J$ does not separate $g$ from $y$. But this implies that $J$ does not separate $x$ and $y$, contradicting the choice of $J$. This proves that $x$ and $y$ necessarily coincide, concluding the proof of our proposition.
\end{proof}

\noindent
Below, we record several easy consequences of Proposition \ref{prop:ProjHyp}.

\begin{cor}\label{cor:hypsepprojections}
Let  $\Lambda$ be a subgraph of $\Gamma$ and let $x,y \in X(\Gamma, \mathcal{G})$ be two vertices. The hyperplanes separating the projections of $x$ and $y$ onto $\langle \Lambda \rangle$ are precisely the hyperplanes separating $x$ and $y$ which intersect $\langle \Lambda \rangle$. In particular, any hyperplane separating these projections also separates $x$ and $y$. 
\end{cor}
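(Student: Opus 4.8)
The statement to be proved is Corollary~\ref{cor:hypsepprojections}: for a subgraph $\Lambda \subset \Gamma$ and vertices $x,y \in X(\Gamma,\cG)$, the hyperplanes separating the projections $p_\Lambda(x)$ and $p_\Lambda(y)$ onto $\langle \Lambda \rangle$ are exactly the hyperplanes that separate $x$ and $y$ \emph{and} cross $\langle \Lambda \rangle$. The plan is to prove the two inclusions separately, using Proposition~\ref{prop:ProjHyp} as the only nontrivial input together with the basic hyperplane calculus of Theorem~\ref{thm:QmHyperplanes} (in particular that distance equals the number of separating hyperplanes, and that a hyperplane separates two vertices if and only if every path between them crosses it).

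\textbf{First inclusion.} Suppose $J$ separates $p_\Lambda(x)$ from $p_\Lambda(y)$. Since both projections lie in $\langle \Lambda \rangle$, which is connected, any edge-path between them inside $\langle \Lambda \rangle$ must cross $J$; hence $J$ crosses $\langle \Lambda \rangle$. It remains to see that $J$ separates $x$ and $y$. Here I would concatenate a geodesic $[x, p_\Lambda(x)]$, a geodesic $[p_\Lambda(x), p_\Lambda(y)]$ lying in $\langle \Lambda \rangle$, and a geodesic $[p_\Lambda(y), y]$. By Proposition~\ref{prop:ProjHyp}, no hyperplane crossing $\langle \Lambda \rangle$ separates $x$ from $p_\Lambda(x)$ (such a hyperplane would fail to separate $g$ from $\langle \Lambda \rangle$), so $J$ is crossed an even number of times — in fact zero times — by $[x,p_\Lambda(x)]$; likewise zero times by $[p_\Lambda(y), y]$; and exactly once by $[p_\Lambda(x), p_\Lambda(y)]$. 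Thus the concatenated path crosses $J$ exactly once, so its two endpoints $x$ and $y$ lie in different sectors of $J$, i.e.\ $J$ separates $x$ and $y$. (One should phrase this as ``$J$ is crossed an odd number of times by \emph{some} $x$--$y$ path, hence separates them,'' which is the clean way to avoid worrying about the concatenation being geodesic.)

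\textbf{Second inclusion.} Conversely, suppose $J$ separates $x$ and $y$ and crosses $\langle \Lambda \rangle$. By Proposition~\ref{prop:ProjHyp}, any hyperplane crossing $\langle \Lambda \rangle$ does not separate $x$ from $p_\Lambda(x)$, nor $y$ from $p_\Lambda(y)$; in particular $J$ does not. Since $J$ separates $x$ and $y$ but separates neither $x$ from $p_\Lambda(x)$ nor $y$ from $p_\Lambda(y)$, a parity/consistency argument along the path $x \to p_\Lambda(x) \to p_\Lambda(y) \to y$ forces $J$ to separate $p_\Lambda(x)$ and $p_\Lambda(y)$: the side of $J$ containing $x$ equals the side containing $p_\Lambda(x)$, the side containing $y$ equals the side containing $p_\Lambda(y)$, and these two sides are distinct. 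The final sentence of the corollary (``any hyperplane separating these projections also separates $x$ and $y$'') is then immediate from the first inclusion.

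\textbf{Main obstacle.} There is no deep obstacle: Proposition~\ref{prop:ProjHyp} does essentially all the work, and what remains is a bookkeeping argument about which side of each hyperplane a vertex lies on. The one point requiring slight care is justifying that a hyperplane crossed an odd number of times by a (not necessarily geodesic) path between two vertices must separate them — this follows because the sectors of $J$ partition $X(\Gamma,\cG) \setminus\!\setminus J$ and each edge crossing $J$ switches between the two sides adjacent to that edge of $J$; since carriers are convex (Theorem~\ref{thm:QmHyperplanes}) there are exactly two sides and the parity argument is valid. I would state this as a short auxiliary observation at the start of the proof and then apply it twice.
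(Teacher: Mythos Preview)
Your proof is correct and follows essentially the same approach as the paper's: both directions reduce immediately to Proposition~\ref{prop:ProjHyp} plus the observation that if $J$ does not separate $x$ from $p_\Lambda(x)$ nor $y$ from $p_\Lambda(y)$, then $J$ separates $x,y$ if and only if it separates $p_\Lambda(x),p_\Lambda(y)$. The paper's write-up is just the terse version of your sector-tracking argument.

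One caveat: your ``Main obstacle'' paragraph asserts that a hyperplane delimits \emph{exactly two} sectors, and that therefore the general parity argument (odd number of crossings $\Rightarrow$ separation) is valid. This is false in the quasi-median setting: as noted in the introduction of the paper, hyperplanes of $X(\Gamma,\cG)$ may delimit more than two sectors (this is precisely where quasi-median graphs differ from CAT(0) cube complexes), and convexity of carriers does not change that. With three or more sectors a path can cross $J$ an odd number of times and return to its starting sector. Fortunately your actual argument never uses this: in the first inclusion you show the concatenated path crosses $J$ \emph{exactly once}, and in the second inclusion you argue directly via ``same sector as'' relations. So the proof stands; just drop the incorrect auxiliary observation and keep the direct sector-tracking you already wrote in the second inclusion (which is exactly what the paper does for both directions).
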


\begin{proof}
Let $x',y' \in \langle \Lambda \rangle$ denote respectively the projections of $x$ and $y$ onto $\langle \Lambda \rangle$. If $J$ is a hyperplane separating $x'$ and $y'$ then it has to cross $\langle \Lambda \rangle$. As a consequence of Proposition \ref{prop:ProjHyp}, $J$ cannot separate $x$ and $x'$ nor $y$ and $y'$. Therefore, it has to separate $x$ and $y$. Conversely, suppose that $J$ is a hyperplane separating $x$ and $y$ which intersects $\langle \Lambda \rangle$. Once again according to Proposition \ref{prop:ProjHyp}, $J$ cannot separate $x$ and $x'$ nor $y$ and $y'$. Therefore, it has to separate $x'$ and $y'$. This concludes the proof of our lemma. 
\end{proof}

\begin{cor}\label{cor:diamproj}
Let  $\Lambda, \Xi \subset \Gamma$ be two subgraphs and let $g,h \in \Gamma \mathcal{G}$. The diameter of the projection of $g \langle \Lambda \rangle$ onto $h \langle \Xi \rangle$ is at most the number of hyperplanes intersecting both $g\langle \Lambda \rangle$ and $h\langle \Xi \rangle$.
\end{cor}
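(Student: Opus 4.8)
The plan is to reduce the statement to the separation properties of hyperplanes already established, in particular Corollary \ref{cor:hypsepprojections} and Proposition \ref{prop:ProjHyp}. Write $P := g\langle \Lambda \rangle$ and $Q := h\langle \Xi \rangle$, and let $\pi_Q : X(\Gamma,\mathcal{G}) \to Q$ denote the nearest-point projection onto $Q$, which is well-defined since $Q$ is convex (it is a coset of a parabolic subgraph, hence convex by Proposition \ref{prop:distinGP}) and by Proposition \ref{prop:ProjHyp}. The diameter of $\pi_Q(P)$ is, by the fourth point of Theorem \ref{thm:QmHyperplanes} (or rather its third point), the maximum over $p_1, p_2 \in P$ of the number of hyperplanes separating $\pi_Q(p_1)$ and $\pi_Q(p_2)$. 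So it suffices to bound, for any fixed pair $p_1, p_2 \in P$, the number of hyperplanes separating their projections.

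The key step is the following claim: any hyperplane $J$ separating $\pi_Q(p_1)$ from $\pi_Q(p_2)$ must intersect both $P$ and $Q$. That $J$ intersects $Q$ is immediate: $\pi_Q(p_1)$ and $\pi_Q(p_2)$ both lie in $Q$, which is connected, so any hyperplane separating two of its vertices crosses it. For the other half, apply Corollary \ref{cor:hypsepprojections} with the subgraph playing the role of $\langle \Lambda \rangle$ being (a translate of) $\Xi$, i.e. to the pair $p_1, p_2$ and the convex set $Q$: a hyperplane separates $\pi_Q(p_1)$ and $\pi_Q(p_2)$ if and only if it separates $p_1$ and $p_2$ \emph{and} crosses $Q$. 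In particular such a $J$ separates $p_1$ from $p_2$. Now both $p_1$ and $p_2$ lie in $P$, which is connected, so $J$ must cross $P$ as well. Hence $J$ is counted among the hyperplanes intersecting both $P$ and $Q$, and since distinct separating hyperplanes are distinct, the number of hyperplanes separating $\pi_Q(p_1)$ from $\pi_Q(p_2)$ is at most the number of hyperplanes meeting both $P$ and $Q$. Taking the supremum over $p_1, p_2$ gives the desired bound on the diameter.

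I do not expect a genuine obstacle here: the statement is essentially a repackaging of Corollary \ref{cor:hypsepprojections}, and the only thing to be slightly careful about is the bookkeeping of which convex set is in the role of $\langle \Lambda \rangle$ in the cited corollary — one wants to project onto $Q$ and detect hyperplanes crossing $Q$, so the corollary is applied "to $Q$", while $P$ enters only through connectedness of the pair $p_1,p_2$. One should also note explicitly that Corollary \ref{cor:hypsepprojections} is stated for subgraphs of the form $\langle \Lambda \rangle$ rather than arbitrary cosets $h\langle \Xi \rangle$, so a preliminary remark (or a translation by $h^{-1}$, using that the $\Gamma\mathcal{G}$-action permutes hyperplanes and preserves the combinatorial structure) is needed to legitimately apply it to $Q$; this is routine.
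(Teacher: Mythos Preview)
Your proof is correct and follows essentially the same approach as the paper: both arguments apply Corollary~\ref{cor:hypsepprojections} to conclude that any hyperplane separating two projected points must separate the original points (hence cross $P$) and cross $Q$, then invoke the distance formula from Theorem~\ref{thm:QmHyperplanes}. Your observation about needing a translation by $h^{-1}$ to apply Corollary~\ref{cor:hypsepprojections} to a general coset is a nice piece of care that the paper glosses over.
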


\begin{proof}
For convenience, let $p : X(\Gamma, \mathcal{G}) \to h \langle \Xi \rangle$ denote the projection onto $h \langle \Xi \rangle$. Let $D$ denote the number (possibly infinite) of hyperplanes intersecting both $g\langle \Lambda \rangle$ and $h\langle \Xi \rangle$. We claim that, for every vertices $x,y \in g \langle \Lambda \rangle$, the distance between $p(x)$ and $p(y)$ is at most $D$. Indeed, as a consequence of Corollary \ref{cor:hypsepprojections}, any hyperplane separating $p(x)$ and $p(y)$ separates $x$ and $y$, so that any hyperplane separating $p(x)$ and $p(y)$ must intersect both $g\langle \Lambda \rangle$ and $h\langle \Xi \rangle$. Consequently, the diameter of $p(g \langle \Lambda \rangle)$ is at most $D$.
\end{proof}

\begin{cor}\label{cor:minseppairhyp}
Let  $\Lambda, \Xi \subset \Gamma$ be two subgraphs and let $g,h \in \Gamma \mathcal{G}$ be two elements. Fix two vertices $x \in g \langle \Lambda \rangle$ and $y \in h \langle \Xi \rangle$ minimising the distance between $g\langle \Lambda \rangle$ and $h\langle \Xi \rangle$. The hyperplanes separating $x$ and $y$ are precisely those separating $g\langle \Lambda \rangle$ and $h\langle \Xi \rangle$. 
\end{cor}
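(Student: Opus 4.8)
The plan is to show the double inclusion between the set of hyperplanes separating $x$ and $y$ and the set of hyperplanes separating $g\langle\Lambda\rangle$ and $h\langle\Xi\rangle$. One inclusion is immediate: any hyperplane separating the two subgraphs $g\langle\Lambda\rangle$ and $h\langle\Xi\rangle$ must in particular separate the two points $x\in g\langle\Lambda\rangle$ and $y\in h\langle\Xi\rangle$. So the real content is the reverse: every hyperplane $J$ separating $x$ and $y$ separates the whole of $g\langle\Lambda\rangle$ from the whole of $h\langle\Xi\rangle$.

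So suppose $J$ separates $x$ and $y$ but, for contradiction, does not separate $g\langle\Lambda\rangle$ from $h\langle\Xi\rangle$. Then $J$ must cross at least one of the two subgraphs, say $g\langle\Lambda\rangle$ (the case of $h\langle\Xi\rangle$ being symmetric). Let $x'$ be the projection of $y$ onto $g\langle\Lambda\rangle$; by Proposition \ref{prop:ProjHyp} applied to the subgraph $g\langle\Lambda\rangle$ (translating by $g^{-1}$ so that it has the form $\langle\Lambda\rangle$), since $J$ crosses $g\langle\Lambda\rangle$ it cannot separate $y$ from $x'$. Hence $J$ separates $x$ and $x'$, and both $x$ and $x'$ lie in $g\langle\Lambda\rangle$. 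The key geometric point is then that $x'$ is at least as close to $y$ as $x$ is: indeed, by Corollary \ref{cor:hypsepprojections} (or directly from Proposition \ref{prop:ProjHyp}), the hyperplanes separating $x'$ and $y$ are a subset of those separating $x$ and $y$ — precisely, $J$ itself is one of the hyperplanes separating $x$ and $y$ but not $x'$ and $y$ — so $d(x',y)<d(x,y)$, and moreover $x'\in g\langle\Lambda\rangle$ while $y\in h\langle\Xi\rangle$, contradicting the minimality of the pair $(x,y)$ realizing the distance between the two subgraphs.

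The main obstacle, and the point that needs care, is pinning down the strict inequality $d(x',y)<d(x,y)$: one must argue that replacing $x$ by the projection $x'$ of $y$ strictly decreases the distance to $y$ whenever some separating hyperplane crosses $g\langle\Lambda\rangle$. This follows cleanly from Theorem \ref{thm:QmHyperplanes}, which identifies $d(\cdot,\cdot)$ with the number of separating hyperplanes, combined with Proposition \ref{prop:ProjHyp}: every hyperplane separating $x'$ from $y$ also separates $x$ from $y$ (since it must cross $g\langle\Lambda\rangle$, being between two of its points $x',x$... more simply, apply Corollary \ref{cor:hypsepprojections} with the roles of the points chosen so that $x'$ is literally the projection), while $J$ separates $x$ from $y$ but not $x'$ from $y$; hence the count strictly drops. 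A small subtlety is that $x$ need not itself be the projection of $y$ onto $g\langle\Lambda\rangle$ a priori — but since $x$ already minimizes $d(g\langle\Lambda\rangle, h\langle\Xi\rangle)$ and $x'$ would do strictly better, we never need that; the contradiction with minimality closes the argument. Running the symmetric argument when $J$ crosses $h\langle\Xi\rangle$ instead (projecting $x$ onto $h\langle\Xi\rangle$) completes the proof.
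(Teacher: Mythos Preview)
Your argument is correct and rests on the same tool as the paper (Proposition~\ref{prop:ProjHyp}), but the paper's route is shorter: it simply observes that since the pair $(x,y)$ realises the minimum distance, $x$ \emph{is} the projection of $y$ onto $g\langle\Lambda\rangle$ and $y$ \emph{is} the projection of $x$ onto $h\langle\Xi\rangle$ (both by the uniqueness clause of Proposition~\ref{prop:ProjHyp}); two applications of that proposition then show that any $J$ separating $x$ and $y$ is disjoint from both subgraphs, hence separates them.

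Your remark that ``$x$ need not itself be the projection of $y$ onto $g\langle\Lambda\rangle$ a priori'' is in fact mistaken: minimality of $d(x,y)$ over all pairs forces $x$ to minimise $d(\cdot,y)$ over $g\langle\Lambda\rangle$, so $x=x'$ by uniqueness. Recognising this collapses your contradiction argument (introducing $x'$, comparing hyperplane counts, invoking strict inequality) to the paper's two-line direct proof. Nothing is wrong with what you wrote, but the detour is avoidable.
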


\begin{proof}
Let $J$ be a hyperplane separating $x$ and $y$. Notice that $x$ is the projection of $y$ onto $g\langle \Lambda \rangle$, and similarly $y$ is the projection of $x$ onto $h\langle \Xi \rangle$. By applying Proposition \ref{prop:ProjHyp} twice, it follows that $J$ is disjoint from both $g\langle \Lambda \rangle$ and $h\langle \Xi \rangle$. Consequently, $J$ separates $g\langle \Lambda \rangle$ and $h\langle \Xi \rangle$. Conversely, it is clear that any hyperplane separating $g\langle \Lambda \rangle$ and $h\langle \Xi \rangle$ also separates $x$ and $y$.
\end{proof}

\begin{cor}
Let  $\Lambda, \Xi \subset \Gamma$ be two subgraphs and let $g,h \in \Gamma \mathcal{G}$ be two elements. If $g \langle \Lambda \rangle \cap h \langle \Xi \rangle = \emptyset$ then there exists a hyperplane separating $g\langle \Lambda \rangle$ and $h\langle \Xi \rangle$. 
\end{cor}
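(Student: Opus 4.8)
The plan is to reduce the statement to Corollaries \ref{cor:minseppairhyp} and Theorem \ref{thm:QmHyperplanes}, so that essentially no new work is needed. First I would fix a pair of vertices $x \in g \langle \Lambda \rangle$ and $y \in h \langle \Xi \rangle$ realising the distance between $g\langle \Lambda \rangle$ and $h\langle \Xi \rangle$. Such a pair exists because $X(\Gamma, \mathcal{G})$ is connected (so all the relevant distances are finite) and a non-empty set of non-negative integers has a minimum; here one should note that these two subgraphs are non-empty, since $g \in g\langle \Lambda \rangle$ and $h \in h \langle \Xi \rangle$.

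Next I would observe that $x \neq y$: indeed, if $x = y$, then this common vertex lies in $g\langle \Lambda \rangle \cap h\langle \Xi \rangle$, contradicting the hypothesis that this intersection is empty. Hence $d(x,y) \geq 1$. By the third point of Theorem \ref{thm:QmHyperplanes}, $d(x,y)$ equals the number of hyperplanes separating $x$ and $y$, so there is at least one hyperplane $J$ separating $x$ and $y$.

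Finally, by Corollary \ref{cor:minseppairhyp}, the hyperplanes separating $x$ and $y$ are precisely those separating $g\langle \Lambda \rangle$ and $h\langle \Xi \rangle$. Therefore $J$ separates $g\langle \Lambda \rangle$ and $h\langle \Xi \rangle$, which is exactly the conclusion. There is no real obstacle here: the entire content has been front-loaded into Proposition \ref{prop:ProjHyp} and Corollary \ref{cor:minseppairhyp}; the only mild care needed is in justifying that the minimising pair $(x,y)$ exists (finiteness of distances, which follows from connectedness of $X(\Gamma, \mathcal{G})$) and, if one wants to be fully rigorous, that $g\langle\Lambda\rangle$ and $h\langle\Xi\rangle$ are non-empty subgraphs.
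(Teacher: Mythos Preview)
Your proposal is correct and follows essentially the same approach as the paper: pick a distance-minimising pair $(x,y)$, observe $x\neq y$ by disjointness, take any hyperplane separating them, and invoke Corollary~\ref{cor:minseppairhyp} to conclude it separates the two subgraphs. The only difference is that you spell out the existence of the minimising pair and explicitly cite Theorem~\ref{thm:QmHyperplanes} for the existence of a separating hyperplane between $x$ and $y$, which the paper leaves implicit.
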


\begin{proof}
Fix two vertices $x \in g \langle \Lambda \rangle$ and $y \in h \langle \Xi \rangle$ minimising the distance between $g\langle \Lambda \rangle$ and $h\langle \Xi \rangle$. Because these two subgraphs are disjoint, $x$ and $y$ must be distinct. According to Corollary \ref{cor:minseppairhyp}, taking a hyperplane separating $x$ and $y$ provides the desired hyperplane.
\end{proof}

\paragraph{Hyperplane stabilisers.}

\noindent
A useful tool when working with the Cayley graph $X(\Gamma, \mathcal{G})$ is the notion of \emph{rotative-stabiliser}.

\begin{definition}
Let $\Gamma$ be a simplicial graph and $\mathcal{G}$ a collection of groups indexed by $V(\Gamma)$. Given a hyperplane $J$ of $X(\Gamma, \mathcal{G})$, its \emph{rotative-stabiliser} is the following subgroup of $\Gamma \mathcal{G}$:
$$\mathrm{stab}_{\circlearrowleft}(J) := \bigcap\limits_{\text{$C$ clique dual to $J$}} \mathrm{stab}(C).$$
\end{definition}

\noindent
We begin by describing rotative-stabilisers of hyperplanes in $X(\Gamma, \mathcal{G})$. More precisely, our first main result is the following:

\begin{prop}\label{prop:rotstabinX}
The rotative-stabiliser of a hyperplane $J$ of $X(\Gamma, \mathcal{G})$ coincides with the stabiliser of any clique dual to $J$. Moreover, $\mathrm{stab}_{\circlearrowleft}(J)$ acts freely and transitively on the set of sectors delimited by $J$, and it stabilises each sector delimited by the hyperplanes transverse to $J$; in particular, it stabilises the hyperplanes transverse to $J$. 
\end{prop}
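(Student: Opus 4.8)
\textbf{Proof plan for Proposition \ref{prop:rotstabinX}.}

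The plan is to reduce everything to the case $J = J_u$ using equivariance of the constructions, and then to exploit the product decomposition $\langle \mathrm{star}(u) \rangle = G_u \times \langle \mathrm{link}(u) \rangle$ together with Theorem \ref{thm:HypStab}. First I would recall that, by Theorem \ref{thm:HypStab}, $J = gJ_u$ for some $g \in \Gamma\cG$ and $u \in V(\Gamma)$, so by conjugating by $g$ it suffices to treat $J = J_u$; here $\mathrm{stab}(J_u) = \langle \mathrm{star}(u) \rangle$ and the cliques dual to $J_u$ are exactly the cosets $a\langle \mathrm{link}(u)\rangle \cdot G_u = aG_u$ for $a \in \langle \mathrm{link}(u) \rangle$ (using Proposition \ref{prop:EdgesDualHyp}: a vertex lies in $N(J_u)$ iff it lies in $\langle \mathrm{star}(u)\rangle = G_u \times \langle \mathrm{link}(u)\rangle$, and the clique through such a vertex $ab$ with $a \in G_u$, $b \in \langle \mathrm{link}(u)\rangle$ is $bG_u$).

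Next I would compute $\mathrm{stab}_\circlearrowleft(J_u)$ directly. An element $h \in \langle \mathrm{star}(u)\rangle$ stabilises the clique $bG_u$ (for $b \in \langle\mathrm{link}(u)\rangle$) iff $h (bG_u) = bG_u$, i.e. $b^{-1}hb \in G_u$; writing $h = xy$ with $x \in G_u$ and $y \in \langle\mathrm{link}(u)\rangle$ (these commute), $b^{-1}hb = x\,(b^{-1}yb)$, so stabilising $bG_u$ forces $y$ to lie in $\langle\mathrm{link}(u)\rangle \cap \langle\mathrm{star}(u)\rangle$-conjugate conditions; ranging over all $b$ (in particular $b=1$ already forces $y \in \langle\mathrm{link}(u)\rangle$ to centralise... ) I would conclude $y = 1$, hence $\mathrm{stab}_\circlearrowleft(J_u) = G_u$, which is precisely $\mathrm{stab}(G_u)$ — i.e. the stabiliser of the clique $G_u$ dual to $J_u$. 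Since $G_u$ is normal in $\langle\mathrm{star}(u)\rangle = \mathrm{stab}(J_u)$ and $\mathrm{stab}(J_u)$ acts transitively on the cliques dual to $J_u$ (as they are the $\langle\mathrm{star}(u)\rangle$-translates of $G_u$), the subgroup $\bigcap_C \mathrm{stab}(C)$ equals $\mathrm{stab}(C_0)$ for any fixed dual clique $C_0$; this gives the first assertion.

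For the action on sectors: the sectors delimited by $J_u$ are, by Proposition \ref{prop:EdgesDualHyp} and Lemma \ref{lem:ProjClique}, in bijection with the vertices of the clique $G_u$ via the projection map $\pi_{G_u}$ (two vertices lie in the same sector iff they have the same projection onto $G_u$ — this needs a short argument: if their projections agree then a geodesic between them crosses no edge dual to $J_u$ by Proposition \ref{prop:EdgesDualHyp}, and conversely). The group $G_u = \mathrm{stab}_\circlearrowleft(J_u)$ acts on $G_u$ (the clique) by left multiplication, which is free and transitive, and this is equivariant with $\pi_{G_u}$, giving a free transitive action on sectors. Finally, if $K$ is a hyperplane transverse to $J_u$, then by Lemma \ref{lem:transverseimpliesadj} its label $v$ is adjacent to $u$, so $K = aJ_v$ for some $a \in \langle \mathrm{star}(u)\rangle$ (one checks a transverse hyperplane meets $N(J_u) = \langle\mathrm{star}(u)\rangle$), and since $G_u$ commutes with $G_v$ and stabilises $\langle\mathrm{star}(u)\rangle$ setwise while fixing the relevant projections, left multiplication by $G_u$ sends $N(K)$ to itself and preserves each sector of $K$ — concretely, for $g \in G_u$ and an edge $e$ dual to $K$, the projections onto any clique labelled $v$ are unchanged by left translation by $g$ because $g$ commutes past the $\langle\mathrm{link}(v)\rangle$-part.

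The main obstacle I anticipate is the sector-identification step: making precise and rigorous the claim that sectors of $J_u$ correspond bijectively to vertices of the clique $G_u$ (via $\pi_{G_u}$) and that $\mathrm{stab}_\circlearrowleft(J_u)$ acts freely transitively — this requires combining Proposition \ref{prop:EdgesDualHyp}, Lemma \ref{lem:ProjClique}, and the fourth point of Theorem \ref{thm:QmHyperplanes} carefully, and similarly the claim that $G_u$ preserves each sector of a transverse hyperplane needs the commutation $[G_u, G_v] = 1$ to be used exactly at the level of projections onto cliques labelled $v$.
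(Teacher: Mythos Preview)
Your proposal is correct and follows essentially the same route as the paper: reduce to $J=J_u$, identify $\mathrm{stab}_\circlearrowleft(J_u)=G_u$, put sectors of $J_u$ in bijection with vertices of the clique $G_u$ via the projection of Lemma~\ref{lem:ProjClique} and Proposition~\ref{prop:EdgesDualHyp}, and use the commutation $[G_u,G_v]=1$ for the transverse part.

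Two places where you work harder than necessary. First, for the rotative-stabiliser computation, you do not need to run your $h=xy$ argument ``ranging over all $b$'': since each dual clique is $bG_u$ with $b\in\langle\mathrm{link}(u)\rangle$ and $b$ commutes with $G_u$, one has $\mathrm{stab}(bG_u)=bG_ub^{-1}=G_u$ for every such $b$, so all dual cliques share the same stabiliser $G_u$ and the intersection is immediate. Second, for the transverse case, the paper does a cleaner reduction: rather than keeping $J=J_u$ fixed and writing an arbitrary transverse $K$ as $aJ_v$ with an extraneous $a\in\langle\mathrm{star}(u)\rangle$, it translates the \emph{pair} $(J,K)$ so that $1\in N(J)\cap N(K)$, giving $J=J_u$ and $K=J_v$ simultaneously with $u,v$ adjacent; then Lemma~\ref{lem:ProjClique} applied to the single clique $G_v$ shows directly that $x$ and $gx$ have the same projection for $g\in G_u$, and Proposition~\ref{prop:EdgesDualHyp} finishes. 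Your version is workable but, as you anticipated, the bookkeeping with $a$ and with ``any clique labelled $v$'' is where imprecision could creep in.
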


\begin{proof}
Let $J$ be a hyperplane of $X(\Gamma, \mathcal{G})$. Up to translating $J$ by some element of $\Gamma \mathcal{G}$, we may suppose without loss of generality that $J=J_u$ for some $u \in V(\Gamma)$. As a consequence of Proposition \ref{prop:EdgesDualHyp}, the cliques of $X(\Gamma, \mathcal{G})$ dual to $J_u$ correspond to the cosets $g G_u$ where $g \in \langle \mathrm{link}(u) \rangle$. Clearly, they all have the same stabiliser, namely $G_u$. This proves the first assertion of our proposition.

\medskip \noindent
Next, if follows from Proposition \ref{prop:EdgesDualHyp} that two vertices of $X(\Gamma, \mathcal{G})$ belong to the same sector delimited by $J_u$ if and only if they have the same projection onto the clique $G_u$. Therefore, the collection of sectors delimited by $J_u$ is naturally in bijection with the vertices of the clique $G_u$. Since $\mathrm{stab}_\circlearrowleft(J_u)=G_u$ acts freely and transitively on the vertices of the clique $G_u$, it follows that this rotative-stabiliser acts freely and transitively on the set of sectors delimited by $G_u$. 

\medskip \noindent
Finally, let $J_1$ and $J_2$ be two transverse hyperplanes. Up to translating $J_1$ and $J_2$ by an element of $\Gamma \mathcal{G}$, we may suppose without loss of generality that the vertex $1$ belongs to $N(J_1) \cap N(J_2)$. As a consequence, there exist vertices $u,v \in V(\Gamma)$ such that $J_1=J_u$ and $J_2=J_v$. According to Lemma \ref{lem:transverseimpliesadj}, $u$ and $v$ are adjacent in $\Gamma$. As a by-product, one gets the following statement, which we record for future use:

\begin{fact}\label{fact:RotStabCom}
The rotative-stabilisers of two transverse hyperplanes of $X(\Gamma, \mathcal{G})$ commute, ie., any element of one rotative-stabiliser commutes with any element of the other.
\end{fact}

\noindent 
For every vertex $x \in X(\Gamma, \mathcal{G})$ and every element $g \in \mathrm{stab}_\circlearrowleft(J_u)=G_u$, we deduce from Lemma \ref{lem:ProjClique} that $x$ and $gx$ have the same projection onto the clique $G_v$ since the vertex groups $G_u$ and $G_v$ commute. Because two vertices of $X(\Gamma, \mathcal{G})$ belong to the same sector delimited by $J_v$ if and only if they have the same projection onto the clique $G_v$, according to Proposition~\ref{prop:EdgesDualHyp}, we conclude that $\mathrm{stab}_\circlearrowleft(J_u)$ stabilises each sector delimited by $J_v$.
\end{proof}

\noindent
We also record the following preliminary lemma which will be used later.

\begin{lemma}\label{lem:shortendist}
Let $x \in X(\Gamma, \mathcal{G})$ be a vertex and let $J,H$ be two hyperplanes of $X(\Gamma, \mathcal{G})$. Suppose that $J$ separates $x$ from $H$ and let $g \in \mathrm{stab}_\circlearrowleft(J)$ denote the unique element sending $H$ into the sector delimited by $J$ which contains $x$. Then $d(1,N(gH))<d(1,N(H))$.
\end{lemma}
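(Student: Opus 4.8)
The idea is to compare the carriers $N(H)$ and $N(gH)$ by tracking how the hyperplane $J$ sits relative to each of them and to the base vertex $1$. The key structural input is Lemma~\ref{lem:transverseimpliesadj} together with Proposition~\ref{prop:rotstabinX}: the element $g$ rotates the clique dual to $J$ but fixes (setwise) every hyperplane transverse to $J$, and in particular $J$ itself is fixed. So $gH$ is again a hyperplane, and since $g$ stabilises $N(J)$, the relative position of $J$ with respect to $N(gH)$ is essentially the image under $g$ of its relative position with respect to $N(H)$.

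First I would record the hypotheses carefully. Since $J$ separates $x$ from $H$, the carrier $N(H)$ lies entirely in a single sector $S$ delimited by $J$, and $x$ lies in a different sector. By Proposition~\ref{prop:rotstabinX}, $\mathrm{stab}_\circlearrowleft(J)$ acts freely and transitively on the set of sectors delimited by $J$, so there is indeed a unique $g \in \mathrm{stab}_\circlearrowleft(J)$ carrying $S$ onto the sector $S_x$ containing $x$; this is the $g$ in the statement, and $gN(H) = N(gH)$ lies in $S_x$. The crucial observation is then that $J$ separates $1$ from $N(H)$ but $J$ does \emph{not} separate $1$ from $N(gH)$: indeed $1$ lies in $S_x$ (because $x$ and $1$ are in the same sector — wait, this needs care). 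Actually the cleanest route: let $y \in N(H)$ be the vertex of $N(H)$ closest to $1$, realising $d(1,N(H))$. Consider the geodesic $[1,y]$. Because $J$ separates $x$ from $H \supseteq$ a point of $N(H)$... let me instead argue directly that $J$ separates $1$ from $N(H)$: this is where I expect the main subtlety, since a priori $1$ could lie in the same sector as $N(H)$. But $1$ is a vertex of $N(J)$ (it lies on the clique dual to $J$, after translating so $J = J_u$), so $1$ is adjacent to all sectors; the statement "$J$ separates $x$ from $H$" must be interpreted as $x$ and $N(H)$ lying in distinct sectors. I would then project: let $y'$ be the projection of $1$ onto $N(H)$ and $z'$ the projection of $1$ onto $N(gH) = gN(H)$. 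Since $g \in \mathrm{stab}_\circlearrowleft(J) \subseteq \mathrm{stab}(J)$ fixes $J$ and all hyperplanes transverse to $J$, and since $1 \in N(J)$ with $g$ fixing the sector structure "around" $1$ appropriately, one gets $gy' $ and $z'$ comparable, and $d(1,N(gH)) \le d(g^{-1}\cdot 1, N(H))$-type bounds.

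The honest core of the argument I would run: use Corollary~\ref{cor:minseppairhyp} to identify $d(1,N(H))$ with the number of hyperplanes separating $1$ from $N(H)$, and likewise $d(1,N(gH))$ with the number separating $1$ from $N(gH)$. Now $J$ itself separates $1$ from $N(H)$ (by the interpretation of the hypothesis, after noting $1 \in N(J)$ lies in a sector adjacent to but distinct from $S$ — more precisely $1$ lies in the sector containing $x$, since $x$ and $1$ can be taken in the same sector; if not one replaces $1$ by its projection and tracks the correction). By contrast $J$ does not separate $1$ from $N(gH)$, since $gN(H) \subseteq S_x \ni 1$. Every other hyperplane $K$ separating $1$ from $N(gH)$ is either transverse to $J$ or disjoint from $N(J)$; applying $g^{-1}$ (which stabilises $J$, hence preserves the sets "separating $1$ from $-$" up to the action) shows $g^{-1}K$ separates $1$ from $N(H)$, and this assignment $K \mapsto g^{-1}K$ is injective. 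Hence
\[
d(1,N(gH)) = \#\{K : K \text{ separates } 1 \text{ from } N(gH)\} \le \#\{K \ne J : K \text{ separates } 1 \text{ from } N(H)\} = d(1,N(H)) - 1 < d(1,N(H)).
\]
The main obstacle is the bookkeeping around the basepoint $1$: because $1 \in N(J)$ it is not strictly "on one side" of $J$, so I must be careful to interpret "$J$ separates $x$ from $H$" as a statement about sectors and to pin down which sector $1$ belongs to (one may assume WLOG, via the transitive action of $\Gamma\mathcal{G}$ and Proposition~\ref{prop:rotstabinX}, that $x$ and $1$ lie in a common sector, or else replace $1$ by a suitable projection and absorb the bounded error). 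Once that is fixed, the counting argument via Corollary~\ref{cor:minseppairhyp} and the fact that $g$ fixes $J$ and its transverse hyperplanes closes the proof.
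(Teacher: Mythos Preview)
First, you are right to be uneasy about the roles of $x$ and $1$: the statement as printed contains a typo, and the conclusion should read $d(x,N(gH)) < d(x,N(H))$. The paper's own argument works entirely with $x$, and the only application of the lemma later on is with $x=1$. So your attempts to place $1$ in a particular sector of $J$, or to assume ``WLOG'' that $x$ and $1$ share a sector, are chasing a phantom; just replace $1$ by $x$ throughout.

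With that correction in place, your hyperplane-counting strategy is a different route from the paper's, but the execution has a real gap. You assert that for every hyperplane $K$ separating $x$ from $N(gH)$, the hyperplane $g^{-1}K$ separates $x$ from $N(H)$. This is not automatic: $g^{-1}$ moves the basepoint, so what you actually get is that $g^{-1}K$ separates $g^{-1}x$ from $N(H)$. Concretely, take $K \subset S_x$ not transverse to $J$ with $K$ separating $x$ from $J$ (so $N(gH)$ lies between $J$ and $K$, on the $J$-side of $K$). Then $g^{-1}K \subset S$, and both $x \in S_x$ and $N(H)$ lie in the sector of $g^{-1}K$ containing $N(J)$; hence $g^{-1}K$ does \emph{not} separate $x$ from $N(H)$. (In this situation $K$ itself does separate $x$ from $N(H)$, so the counting argument can be repaired by a case split --- send $K \mapsto K$ when $K$ is transverse to $J$ or separates $x$ from $J$, and $K \mapsto g^{-1}K$ otherwise --- but you have to actually carry out and check that case analysis, including injectivity.)

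By contrast, the paper's proof is a two-line path construction that avoids any hyperplane bookkeeping. Let $y$ be the projection of $x$ onto $N(H)$ and fix a geodesic $[x,y]$; since $J$ separates $x$ from $H$ this geodesic contains an edge $[a,b]$ dual to $J$, with $a$ on the $x$-side. Because $g$ stabilises the clique containing $[a,b]$ and sends $b$'s sector to $a$'s sector, one has $gb=a$, so $[x,a] \cup g[b,y]$ is a path from $x$ to $gy \in N(gH)$ of length $d(x,y)-1$. Hence $d(x,N(gH)) \le d(x,N(H)) - 1$.
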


\begin{proof}
Let $y \in N(H)$ denote the projection of $x$ onto $N(H)$ and fix a geodesic $[x,y]$ between $x$ and $y$. Because $J$ separates $x$ from $H$, $[x,y]$ must contain an edge $[a,b]$ dual to $J$. Let $[x,a]$ and $[b,y]$ denote the subpath of $[x,y]$ between $x$ and $a$, and $b$ and $y$, respectively. Notice that $gb=a$ since $g$ stabilises the clique containing $[a,b]$ and sends the sector delimited by $J$ which contains $H$ (and a fortiori $b$) to the sector delimited by $J$ which contains $x$ (and a fortiori $a$). As a consequence, $[x,a] \cup g [b,y]$ defines a path from $x$ to $gN(H)$ of length $d(x,y)-1$, so that 
$$d(1,gN(H)) \leq d(x,y)-1 = d(1,N(H))-1,$$
concluding the proof.
\end{proof}

\noindent
One feature of rotative-stabilisers is that they can be used to play ping-pong. As an illustration, we prove a result that will be fundamental in Section~\ref{section:ConjAut}. Let us first introduce some notation:

\begin{definition}\label{def:peripheral}
 A collection of hyperplanes $\mathcal{J}$ of $X(\Gamma, \mathcal{G})$ is \emph{peripheral} if, for every $J_1,J_2 \in \mathcal{J}$, $J_1$ does not separate $1$ from $J_2$. 
\end{definition}

\begin{lemma}\label{lem:pingpong}
Fix a collection of hyperplanes $\mathcal{J}$, and, for every $J \in \mathcal{J}$, let $S(J)$ denote the sector delimited by $J$ that contains $1$. If $\mathcal{J}$ is peripheral, then $g \notin \bigcap\limits_{J \in \mathcal{J}} S(J)$ for every non-trivial $g \in \langle \mathrm{stab}_\circlearrowleft (J) \mid J \in \mathcal{J} \rangle$. 
\end{lemma}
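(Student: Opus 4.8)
\textbf{Plan for the proof of Lemma \ref{lem:pingpong}.}

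The plan is to run a ping-pong argument using the rotative-stabilisers $\mathrm{stab}_\circlearrowleft(J)$, $J \in \mathcal{J}$, exploiting the peripherality hypothesis to control how elements move the basepoint $1$ across hyperplanes. First I would record the basic geometric fact that will drive everything: if $g \in \mathrm{stab}_\circlearrowleft(J)$ is non-trivial, then $g$ permutes the sectors delimited by $J$ freely and transitively (Proposition \ref{prop:rotstabinX}), so $g \cdot S(J) \neq S(J)$, i.e. $g \notin S(J)$ already when $g$ lies in a single rotative-stabiliser. Moreover, by Fact \ref{fact:RotStabCom} together with the last sentence of Proposition \ref{prop:rotstabinX}, an element of $\mathrm{stab}_\circlearrowleft(J)$ fixes (setwise) every sector delimited by any hyperplane transverse to $J$. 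The peripherality of $\mathcal{J}$ means that for $J_1 \neq J_2$ in $\mathcal{J}$, the hyperplane $J_1$ does not separate $1$ from $J_2$; combined with the first point, I would want to deduce that $J_1$ and $J_2$ are either transverse or "nested with $1$ on the outside", and in particular that $N(J_2) \subseteq \overline{S(J_1)}$ (the closed sector), so that an element of $\mathrm{stab}_\circlearrowleft(J_1)$ does not merely fix $S(J_1)$ but also sends the whole sub-configuration $\{S(J) : J \in \mathcal{J}, J \neq J_1\}$ into the closure of $S(J_1)$.

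The heart of the argument is then a normal-form / reduced-word induction. Write a non-trivial $g \in \langle \mathrm{stab}_\circlearrowleft(J) \mid J \in \mathcal{J} \rangle$ as a product $g = g_1 g_2 \cdots g_n$ with $g_i \in \mathrm{stab}_\circlearrowleft(J_{k_i}) \setminus \{1\}$ and consecutive indices distinct; choose such an expression with $n$ minimal. I would show by induction on $n$ that $g_1 \cdots g_n \cdot 1 \notin \bigcap_{J \in \mathcal{J}} S(J)$ — more precisely, that it lies in the complement of $S(J_{k_1})$, by tracking the hyperplane $J_{k_1}$ and showing it separates $1$ from $g\cdot 1$. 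The key step: reading the word from the right, $g_n$ moves $1$ out of $S(J_{k_n})$ but (using the configuration established above) keeps it inside $\overline{S(J)}$ for every $J \neq J_{k_n}$ in $\mathcal{J}$; then $g_{n-1}$ moves the result out of $S(J_{k_{n-1}})$ while not pushing it back across $J_{k_n}$ (because $g_{n-1}$ either stabilises the sectors of $J_{k_n}$ when the hyperplanes are transverse, or $J_{k_n}$ lies on the far side of $J_{k_{n-1}}$ from $1$ when they are nested). Iterating, the point $g_j \cdots g_n \cdot 1$ stays on the far side of each $J_{k_i}$ for $i < j$ that has already been "activated", and in particular $g \cdot 1$ ends up separated from $1$ by $J_{k_1}$, hence $g \cdot 1 \notin S(J_{k_1}) \supseteq \bigcap_{J \in \mathcal{J}} S(J)$.

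The main obstacle I anticipate is the bookkeeping in the nested versus transverse case analysis: I must be careful that when I apply $g_i$, the previously-crossed hyperplanes $J_{k_{i+1}}, \dots, J_{k_n}$ are not carried back to the $1$-side of $J_{k_i}$. For transverse pairs this is exactly the "stabilises each sector delimited by transverse hyperplanes" clause of Proposition \ref{prop:rotstabinX}; for nested pairs I need peripherality to guarantee the correct orientation, namely that moving out of $S(J_{k_i})$ can only push points deeper into the half-space bounded by $J_{k_{i+1}}$ not containing $1$ — and here Lemma \ref{lem:shortendist}, or a direct argument with carriers and the convexity of $N(J)$ from Theorem \ref{thm:QmHyperplanes}, gives the needed monotonicity. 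A clean way to package this, which I would adopt, is to define for each $J \in \mathcal{J}$ the "outer" half-space $S(J)^c$ and show that the semigroup generated by the $\mathrm{stab}_\circlearrowleft(J) \setminus \{1\}$ maps $\bigcap_{J} S(J)$ into $\bigcup_{J} S(J)^c$, proving the statement for $g\cdot 1$ and hence (taking the vertex $1$) for $g$ itself.
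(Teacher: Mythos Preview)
Your overall strategy matches the paper's: both run a ping-pong argument on reduced words built from the rotative-stabilisers, using exactly the three facts you isolate from Proposition~\ref{prop:rotstabinX} (free transitive action on own sectors, preservation of sectors of transverse hyperplanes, and the peripherality configuration). The paper packages the normal form by introducing an auxiliary graph product $\Phi\mathcal{H}$ whose vertex set is $\mathcal{J}$, with edges for transverse pairs, and inducts on the length of a reduced word there; your ``minimal $n$ with consecutive indices distinct'' amounts to the same thing.

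There is, however, a genuine error in the invariant you propose to maintain. In the non-transverse case you claim that applying $g_{n-1}$ does ``not push the point back across $J_{k_n}$'', justified by ``$J_{k_n}$ lies on the far side of $J_{k_{n-1}}$ from $1$''. Both statements are wrong. Peripherality says precisely the opposite: $J_{k_{n-1}}$ does \emph{not} separate $1$ from $J_{k_n}$, so $J_{k_n}$ lies on the \emph{same} side of $J_{k_{n-1}}$ as $1$, i.e.\ $N(J_{k_n}) \subset S(J_{k_{n-1}})$ and hence $R(J_{k_{n-1}}) \subset S(J_{k_n})$. Consequently $g_{n-1}g_n\cdot 1 \in R(J_{k_{n-1}}) \subset S(J_{k_n})$: the point \emph{does} cross back over $J_{k_n}$. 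So the invariant ``$g_j\cdots g_n\cdot 1$ stays on the far side of every already-activated $J_{k_i}$'' is false.

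The fix is to track a weaker invariant. The paper tracks: $g \in R(J)$ for \emph{some} $J$ whose syllable lies in the head of the word. In the transverse case $g_1$ preserves $R(J)$; in the non-transverse case $g_1$ sends $R(J)$ into $R(J_{k_1})$; either way one lands in $R(J')$ for a head hyperplane of the longer word, and this suffices. (Your target conclusion $g\cdot 1\in R(J_{k_1})$ is in fact also true, but to get it you must strengthen the induction to: the point lies in $R(J)$ exactly for the head hyperplanes and in $S(J)$ for all others, then use that $J_{k_1}$ is never a head hyperplane of $g_2\cdots g_n$ by reducedness.)
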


\begin{proof}
For every $J \in \mathcal{J}$, let $R(J)$ denote the union of all the sectors delimited by $J$ that do not contain the vertex $1$. In order to prove our lemma we have  to show that $g \in \bigcup\limits_{J \in \mathcal{J}} R(J)$ for every non-trivial element $g \in \langle \mathrm{stab}_\circlearrowleft(J) \mid J \in \mathcal{J} \rangle$. We deduce from Proposition \ref{prop:rotstabinX} that:
\begin{itemize}
	\item If $J_1,J_2 \in \mathcal{J}$ are two transverse hyperplanes, then $g \cdot R(J_1) = R(J_1)$ for every $g \in \mathrm{stab}_\circlearrowleft (J_2)$;
	\item If $J_1,J_2 \in \mathcal{J}$ are two distinct hyperplanes which are not transverse, then $g \cdot R(J_1)$ is contained in $R(J_2)$ for every non-trivial $g \in \mathrm{stab}_\circlearrowleft(J_2)$.
	\item For every hyperplane $J \in \mathcal{J}$ and every non-trivial element $g \in \mathrm{stab}_\circlearrowleft (J)$, $g$ belongs to $R(J)$.
\end{itemize}
Let $\Phi$ be the graph whose vertex-set is $\mathcal{J}$ and whose edges connect two transverse hyperplanes, and set $\mathcal{H}= \{ G_J= \mathrm{stab}_\circlearrowleft(J) \mid J \in \mathcal{J} \}$. As a consequence of Fact \ref{fact:RotStabCom}, we have a natural surjective morphism $\phi : \Phi \mathcal{H} \to \langle \mathrm{stab}_\circlearrowleft(J) \mid J \in \mathcal{J} \rangle$. It follows that a non-trivial element $g \in \langle \mathrm{stab}_\circlearrowleft(J) \mid J \in \mathcal{J} \rangle$ can be represented as a non-empty and reduced word in $\Phi \mathcal{H}$, say $w$ such that $\phi(w)=g$. We claim that $g$ belongs to $R(J)$ for some vertex $J \in V(\Phi)$ such that $\mathrm{head}(w)$ contains a syllable of $G_J$. 

\medskip \noindent
We argue by induction on the length of $w$. If $w$ has length one, then $w \in G_J \backslash \{ 1 \}$ for some $J \in V(\Phi)$. Our third point above implies that $g \in R(J)$. Now, suppose that $w$ has length at least two. Write $w=ab$ where $a$ is the first syllable of $w$ and $b$ the rest of the word. Thus, $a \in G_J \backslash \{ 1 \}$ for some $J \in V(\Phi)$. We know from our induction hypothesis that $\phi(b) \in R(I)$ where $I$ is a vertex of $\Phi$ such that $\mathrm{head}(b)$ contains a syllable of $G_I$. Notice that $I \neq J$ since otherwise the word $w=ab$ would not be reduced. Two cases may happen: either $I$ and $J$ are not adjacent in $\Phi$, so that our second point above implies that $g = \phi(ab) \in \phi(a) \cdot R(I) \subset R(J)$; or $I$ and $J$ are adjacent, so that our first point above implies that $g= \phi(ab) \in \phi(a) \cdot R(I) = R(I)$. It is worth noticing that, in the former case, $\mathrm{head}(w)$ contains a syllable of $G_J$, namely $a$; in the latter case, we know that we can write $b$ as a reduced product $cd$ where $c$ is a syllable of $G_I$, hence $w=ab=acd=cad$ since $a$ and $c$ belong to the commuting vertex groups $G_I$ and $G_J$, which implies that $\mathrm{head}(w)$ contains a syllable of $G_I$. This concludes the proof of our lemma.
\end{proof}

\subsection{The Davis complex associated to a graph product of groups}\label{section:Davis}

In this section, we recall an important complex associated to a graph product of groups, whose structure will be used in Section \ref{section:atomic}.

\begin{definition}[Davis complex]\label{def:Davis}
	The \textit{Davis complex} $D(\Gamma, \cG)$ associated to the graph product $\Gamma\cG$ is defined as follows: 
	\begin{itemize}
		\item Vertices correspond to left cosets of the form $g \langle \Lambda \rangle$ for $g\in \Gamma\cG$ and $\Lambda \subset \Gamma$ a (possibly empty) complete subgraph.
		\item For every $g\in \Gamma\cG$ and complete subgraphs $\Lambda_1, \Lambda_2 \subset \Gamma$ that differ by exactly one vertex $v$, one puts an edge between the vertices $g\langle \Lambda_1\rangle$ and $g \langle \Lambda_2\rangle$. The vertex $v$ is called the \textit{label} of that edge.
		\item One obtains a cubical complex from this graph by adding for every $k \geq 2$ a $k$-cube for every  subgraph isomorphic to the $1$-skeleton of a $k$-cube.
		\end{itemize}
This complex comes with an action of $\Gamma\cG$: The group $\Gamma\cG$ acts on the vertices by left multiplication on  left cosets, and this action extends to the whole complex.
	\end{definition}

If all the local groups $G_v$ are cyclic of order $2$, then $\Gamma\cG$ is a right-angled Coxeter group (generally denoted $W_\Gamma$), and $D(\Gamma, \cG)$   is the standard Davis complex associated to a Coxeter group in that case.  The Davis complex associated to a general graph product has a similarly rich combinatorial geometry. More precisely:

\begin{thm}[{\cite{DavisBuildingsCAT0}}]\label{Davis_building}
The Davis complex $D(\Gamma, \cG)$  is a CAT(0) cube complex. 
\qed
\end{thm}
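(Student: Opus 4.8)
The plan is to establish the two hypotheses of Gromov's combinatorial criterion: a cube complex is CAT(0) if and only if it is simply connected and the link of every vertex is a flag simplicial complex. First one should check that $D(\Gamma,\cG)$ really is a cube complex, i.e.\ that the attaching maps of cubes are injective and that any two cubes meet along a (possibly empty) common face. This is immediate from the coset combinatorics of $\Gamma\cG$: the maximal cubes are the intervals $[g\langle \Xi\rangle, g\langle \Xi'\rangle]$ with $\Xi\subseteq \Xi'$ complete subgraphs, each a combinatorial $|V(\Xi')\setminus V(\Xi)|$-cube with deepest corner $g\langle \Xi'\rangle$; distinctness of the $2^k$ vertices and the fact that $g\langle \Lambda_1\rangle \cap h\langle \Lambda_2\rangle$ is empty or a coset of $\langle \Lambda_1\cap \Lambda_2\rangle$ both follow from the normal form (cf.\ Proposition~\ref{prop:distinGP}).

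For the link condition, fix a vertex $v=g\langle \Lambda\rangle$. The edges of $D(\Gamma,\cG)$ issuing from $v$ are of two kinds: \emph{ascending} edges $v$--$g\langle \Lambda\cup\{w\}\rangle$, one for each $w\in \mathrm{link}(\Lambda)$ (note $V(\Lambda)\cap \mathrm{link}(\Lambda)=\emptyset$); and \emph{descending} edges $v$--$g\delta\langle \Lambda\setminus\{w\}\rangle$, one for each $w\in V(\Lambda)$ and each element of $G_w$, indexing the $G_w$-coordinate of $\delta\in\langle \Lambda\rangle$ under the splitting $\langle \Lambda\rangle=G_w\times\langle \Lambda\setminus\{w\}\rangle$ valid since $\Lambda$ is complete. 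Using only that a $2$-cube of $D(\Gamma,\cG)$ carries exactly two labels, one checks: two ascending edges (labels $w_1\ne w_2$) span a square iff $w_1\sim_\Gamma w_2$; an ascending and a descending edge always span a square (any $w\in\mathrm{link}(\Lambda)$ is adjacent to every vertex of $\Lambda$); two descending edges in distinct coordinates always span a square (the downward moves commute inside the direct product $\langle \Lambda\rangle$); two descending edges in the same coordinate never do. Analysing higher cubes at $v$ in the same way shows that
$$\mathrm{Lk}(v)\;=\;\mathrm{Flag}\bigl(\mathrm{link}(\Lambda)\bigr)\ast\Bigl(\underset{w\in V(\Lambda)}{\ast}\ \overline{G_w}\Bigr),$$
where $\mathrm{Flag}(-)$ is the flag complex of an induced subgraph of $\Gamma$ and $\overline{G_w}$ is the discrete set on $|G_w|$ vertices. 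Each factor is flag and a join of flag complexes is flag, so the link condition holds.

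The main obstacle is simple connectedness. Here the clean statement is that the cosets $g\langle \Lambda\rangle$ of parabolic subgroups, ordered by inclusion, organise $\Gamma\cG$ into a right-angled building of type $(\Gamma,\cG)$ whose Davis realisation is exactly $D(\Gamma,\cG)$; contractibility (a fortiori simple connectedness) then follows from Davis's theorem \cite{DavisBuildingsCAT0} that Davis realisations of buildings are CAT(0). If a self-contained argument is wanted, one can instead induct on $|V(\Gamma)|$: when $\Gamma$ is a single vertex, $D(\Gamma,\cG)$ is a star (a tree), and when $\Gamma$ is complete, $D(\Gamma,\cG)$ is a finite product of such stars, hence CAT(0) in both cases; for general $\Gamma$, picking any $v\in V(\Gamma)$ the amalgam decomposition $\Gamma\cG=\langle \Gamma\setminus\{v\}\rangle \ast_{\langle \mathrm{link}(v)\rangle}\langle \mathrm{star}(v)\rangle$ is mirrored by a decomposition of $D(\Gamma,\cG)$ as a tree of spaces over the associated Bass--Serre tree, with vertex spaces translates of $D(\Gamma\setminus\{v\},-)$ and $D(\mathrm{star}(v),-)$ and edge spaces convex copies of $D(\mathrm{link}(v),-)$ (convexity again coming from the normal form). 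Since a tree of CAT(0) cube complexes glued along convex subcomplexes is a simply connected cube complex, the induction closes and, together with the link condition, yields the theorem.
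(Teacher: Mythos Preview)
The paper does not prove this statement at all: it is quoted as a black box from \cite{DavisBuildingsCAT0} with an immediate \qedsymbol. So there is no argument on the paper's side to compare against, and your proposal is strictly more than what the paper offers.

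Your verification of Gromov's link condition is correct. The description of $\mathrm{Lk}(g\langle\Lambda\rangle)$ as $\mathrm{Flag}(\mathrm{link}(\Lambda))\ast\bigl(\ast_{w\in V(\Lambda)}\overline{G_w}\bigr)$ is the right computation, and flagness of joins of flag complexes finishes that half cleanly.

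For simple connectedness your first route, via the right-angled building structure and Davis's contractibility theorem, is literally the citation the paper makes, so at that point the two approaches coincide. Your second route, inducting on $|V(\Gamma)|$ through the amalgam $\Gamma\cG=\langle\Gamma\setminus\{v\}\rangle\ast_{\langle\mathrm{link}(v)\rangle}\langle\mathrm{star}(v)\rangle$, is a genuine self-contained alternative and does work, but two points need care if you write it out. First, the induction only advances if $\mathrm{star}(v)\neq\Gamma$; you should pick $v$ that is not a cone vertex, which is possible precisely when $\Gamma$ is not complete, and treat the complete case as the base case you already described. Second, the assertion that $D(\Gamma,\cG)$ is a tree of spaces over the Bass--Serre tree, with vertex and edge spaces the translated copies of $D(\Gamma\setminus\{v\},-)$, $D(\mathrm{star}(v),-)$, $D(\mathrm{link}(v),-)$, is true but not purely formal: one must verify that the full subcomplex on cosets $h\langle\Xi\rangle$ with $h\in g\langle\Gamma\setminus\{v\}\rangle$ and $\Xi\subset\Gamma\setminus\{v\}$ is an isomorphic, convex copy of the smaller Davis complex (and likewise for the other pieces), and that every cube of $D(\Gamma,\cG)$ lies in at least one such piece. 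All of this does follow from the normal form as you say, but it is where the actual content of the inductive argument lives.
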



We  mention here a few useful observations about the action of $\Gamma\cG$ on $D(\Gamma, \cG)$.

\begin{obs}\label{obs:stab} The following holds:
	\begin{itemize}
		\item The action is without inversions, that is, an element of $\Gamma\cG$ fixing a cube of $D(\Gamma, \cG)$ globally fixes that cube pointwise.
		\item The stabiliser of a vertex corresponding to a coset $g\langle \Lambda \rangle$ is the subgroup  $g\langle \Lambda \rangle g^{-1}$ .
		 \item The action of $\Gamma\cG$ on $D(\Gamma, \cG)$ is cocompact, and a strict fundamental domain $K$ for this action is given by the subcomplex spanned by cosets of the form $\langle \Lambda\rangle $ (that is, cosets associated to the identity element of $\Gamma\cG$). 
		\end{itemize}
	\end{obs}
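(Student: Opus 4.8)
The plan is to derive all three items from one elementary observation: the $\Gamma\cG$-invariance of a \emph{type function}. Send a vertex $g\langle\Lambda\rangle$ of $D(\Gamma,\cG)$ to the complete subgraph $\Lambda\subseteq\Gamma$. This is well defined, because a left coset $g\langle\Lambda\rangle$ determines the subgroup $\langle\Lambda\rangle$, which in turn determines $\Lambda$ — a standard consequence of the normal form of Section \ref{section:GP} (the vertices of $\Lambda$ are exactly those $v$ with $G_v\subseteq\langle\Lambda\rangle$). It is moreover $\Gamma\cG$-invariant, since $h\cdot g\langle\Lambda\rangle=(hg)\langle\Lambda\rangle$ carries the same label $\Lambda$. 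I would also record at the outset the shape of cubes coming from Definition \ref{def:Davis} (see also \cite{DavisBuildingsCAT0}): a $k$-cube of $D(\Gamma,\cG)$ is a subcomplex of the form $\{\,g\langle\Lambda\rangle : \Lambda_-\subseteq\Lambda\subseteq\Lambda_+\,\}$ for some $g\in\Gamma\cG$ and complete subgraphs $\Lambda_-\subseteq\Lambda_+$ with $|\Lambda_+\setminus\Lambda_-|=k$; in particular the vertices of a single cube have pairwise distinct types, and a cube is determined by its vertex set.

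For the first item, suppose $h\in\Gamma\cG$ stabilises a cube $Q$. Then $h$ permutes the vertices of $Q$ while preserving their types; as these types are pairwise distinct, $h$ fixes each vertex of $Q$, hence fixes $Q$ pointwise (a combinatorial automorphism of a cube fixing its $0$-skeleton is the identity). The second item is immediate from the definition of the action: $h$ fixes $g\langle\Lambda\rangle$ if and only if $g^{-1}hg\in\langle\Lambda\rangle$, that is, if and only if $h\in g\langle\Lambda\rangle g^{-1}$.

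For the third item, note first that $K$ has finitely many vertices — one for each complete subgraph of the finite graph $\Gamma$ — so $K$ is a finite, hence compact, subcomplex. Every cube $\{g\langle\Lambda\rangle : \Lambda\in[\Lambda_-,\Lambda_+]\}$ of $D(\Gamma,\cG)$ is the $g$-translate of the cube $\{\langle\Lambda\rangle : \Lambda\in[\Lambda_-,\Lambda_+]\}$, which lies in $K$; hence $\Gamma\cG\cdot K=D(\Gamma,\cG)$ and the action is cocompact. To see that $K$ is a \emph{strict} fundamental domain, suppose two cells $c,c'$ of $K$ satisfy $hc=c'$ for some $h\in\Gamma\cG$. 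They are cubes whose vertex-type sets are intervals $[\Lambda_-,\Lambda_+]$ and $[\Lambda_-',\Lambda_+']$ in the poset of complete subgraphs; since $h$ preserves types these intervals coincide, so $c$ and $c'$ have the same vertex set, hence $c=c'$. Thus $K$ meets each $\Gamma\cG$-orbit of cells exactly once.

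The content here is genuinely light: once the type function and the interval description of cubes are in hand, all three statements are formal, and the main (modest) obstacle is simply to state these preliminaries precisely. The only real inputs are the fact that a parabolic subgroup $\langle\Lambda\rangle$ of a graph product remembers $\Lambda$, and the identification of the cubes of the Davis complex with intervals in the lattice of complete subgraphs — both part of the standard theory of \cite{DavisBuildingsCAT0} and of Section \ref{section:GP} — so there is no substantive difficulty in the observation itself.
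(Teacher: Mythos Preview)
Your proof is correct. The paper states this as an observation without proof, so there is nothing to compare against; your argument via the $\Gamma\cG$-invariant type function $g\langle\Lambda\rangle\mapsto\Lambda$ and the interval description of cubes is precisely the standard way one verifies these facts, and each step is sound.
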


The CAT(0) geometry of the Davis complex can be used for instance to recover the structure of finite subgroups of graph products due to Green \cite{GreenGP}, which will be used in Section \ref{section:applicationfinite}:

\begin{lemma}[{\cite[Lemma 4.5]{GreenGP}}]\label{lem:finitesub}
	 A finite subgroup of $\Gamma \mathcal{G}$ is contained in  a conjugate of the form $g\langle \Lambda \rangle g^{-1}$ for $g \in \Gamma \mathcal{G}$ and $\Lambda \subset \Gamma$ a complete subgraph.  
\end{lemma}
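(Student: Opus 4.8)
The plan is to use the CAT(0) geometry of the Davis complex $D(\Gamma,\cG)$ established in Theorem \ref{Davis_building}, together with the classical fact that a finite group acting on a complete CAT(0) space has a global fixed point. First I would recall that by Observation \ref{obs:stab}, the action of $\Gamma\cG$ on $D(\Gamma,\cG)$ is without inversions and that the vertex stabilisers are exactly the conjugates $g\langle\Lambda\rangle g^{-1}$ for $g\in\Gamma\cG$ and $\Lambda\subset\Gamma$ a complete subgraph. Thus it suffices to show that any finite subgroup $F\leq\Gamma\cG$ fixes a vertex of $D(\Gamma,\cG)$.

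The key step is the Bruhat--Tits fixed point theorem: a finite group acting by isometries on a complete CAT(0) space fixes the circumcentre of any bounded orbit, hence has a global fixed point. Applying this to the action of $F$ on the (complete) CAT(0) cube complex $D(\Gamma,\cG)$ — taking a single $F$-orbit of a vertex, which is bounded since $F$ is finite — we obtain a point $p\in D(\Gamma,\cG)$ fixed by all of $F$. This point lies in the interior of some (unique) cube $\sigma$, and since $F$ preserves $\sigma$ as a set, the no-inversions property from Observation \ref{obs:stab} forces $F$ to fix $\sigma$ pointwise; in particular $F$ fixes each vertex of $\sigma$. Taking any such vertex, which corresponds to a coset $g\langle\Lambda\rangle$ with $\Lambda$ complete, and using the description of vertex stabilisers, we conclude $F\subseteq g\langle\Lambda\rangle g^{-1}$.

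The main obstacle — really the only subtle point — is justifying that $F$ genuinely fixes a \emph{cube} and not merely an interior point, but this is handled cleanly by the no-inversions statement already recorded in Observation \ref{obs:stab} (an element fixing a cube setwise fixes it pointwise). An alternative, purely combinatorial route avoiding CAT(0) geometry would be to use the hyperplane structure: $F$ fixes the circumcentre, or one could bisect using hyperplanes — but invoking Theorem \ref{Davis_building} and the Bruhat--Tits argument is the cleanest path and keeps the proof short. One should also note that $D(\Gamma,\cG)$ is complete as a CAT(0) cube complex with finitely many isometry types of cells (here even finite-dimensional, since $\Gamma$ is finite), so the Bruhat--Tits hypotheses are satisfied.
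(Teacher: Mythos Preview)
Your proposal is correct and follows essentially the same argument as the paper: invoke the CAT(0) fixed-point theorem (via Theorem \ref{Davis_building} and completeness from finite-dimensionality) to get a fixed point, then use the no-inversions property from Observation \ref{obs:stab} to upgrade this to a fixed vertex, whose stabiliser is the desired conjugate $g\langle\Lambda\rangle g^{-1}$. The paper's proof is slightly terser but the logic is identical.
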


\begin{proof}
	Let $H \leq \Gamma \mathcal{G}$ be a finite subgroup.  Since $D(\Gamma, \cG)$ is a (finite-dimensional, hence complete) CAT(0) cube complex by Theorem \ref{Davis_building}, the CAT(0) fixed-point theorem \cite[Corollary II.2.8]{BridsonHaefliger} implies that $H$ fixes a point of $D(\Gamma, \mathcal{G})$. Since the action is without inversions, it follows that $H$ fixes a vertex, that is, $H$ is contained in a conjugate of the form $g\langle \Lambda \rangle g^{-1}$ for $g \in \Gamma \mathcal{G}$ and $\Lambda \subset \Gamma$ a complete subgraph. 
\end{proof}
	
	\begin{remark}
		It is also possible to give a proof of Lemma \ref{lem:finitesub} using the geometry of the quasi-median graph $X(\Gamma,\mathcal{G})$ introduced in Section \ref{section:CubicalLikeGeom}. Indeed, if $H$ is a finite subgroup of  $\Gamma \mathcal{G}$, then it follows from \cite[Theorem 2.115]{Qm} that $H$ stabilises a prism of  $X(\Gamma,\mathcal{G})$, so that the conclusion of Lemma \ref{lem:finitesub} then follows from Lemma \ref{lem:PrismGP}.
	\end{remark}

\subsection{Relation between the Davis complex and the quasi-median graph}\label{sec:dual}

There is a very close link between the Davis complex and the quasi-median graph $X(\Gamma,\mathcal{G})$, which we now explain. \\

Let us consider the set of $\Gamma\cG$-translates of the fundamental domain $K$ introduced in Section \ref{section:Davis}. Two such translates  $gK$ and $hK$ share a codimension $1$ face if and only if $gh^{-1} $ is a non-trivial element of some vertex group $G_v$. Thus, the set of such translates can be seen as a chamber system over the vertex set $V(\Gamma)$ that defines a building with underlying Coxeter group $W_\Gamma$, as explained in \cite[Paragraph 5]{DavisBuildingsCAT0}. Moreover, the Davis complex is then quasi-isometric to the geometric realisation  of that  building (see \cite[Paragraph 8]{DavisBuildingsCAT0} for the geometric realisation of a building).

When thought of as a complex of chambers associated to a building, a very interesting graph associated to the Davis complex is its dual graph. This is the simplicial graph whose vertices correspond to the $\Gamma\cG$-translates of the fundamental domain $K$, and such that two vertices $gK$ and $hK$ are joined by an edge if and only if the translates share a codimension $1$ face, or in other words if $gh^{-1} $ is a non-trivial element of some local group $G_v$.  Since the action of $\Gamma\cG$ is free and transitive on the set of $\Gamma\cG$-translates of $K$, this dual graph can also be described as the simplicial graph whose vertices are elements of $\Gamma\cG$ and such that two elements $g, g'\in \Gamma\cG$ are joined by an edge if and only if there is some non-trivial element $s \in G_v$ of some vertex group such that $g' = gs$. Thus, this dual graph is exactly the quasi-median graph $X(\Gamma, \cG)$. \\

\section{Descriptions of automorphism groups}

\noindent
In this section, we study an important class of automorphisms of a graph product of groups. \textbf{Until the end of Section \ref{section:algebraiccharacterisation}, we fix a finite simplicial graph $\Gamma$ and a collection of groups $\mathcal{G}$  indexed by $V(\Gamma)$.} Applications to particular classes of graphs products will then be given in Sections \ref{section:applicationfinite} and \ref{section:applicationgirth}.

\begin{definition}
A \emph{conjugating automorphism} of $\Gamma \mathcal{G}$ is an automorphism $\varphi : \Gamma \mathcal{G} \to \Gamma \mathcal{G}$ satisfying the following property: For every $G \in \mathcal{G}$, there exist $H \in \mathcal{G}$ and $g \in \Gamma \mathcal{G}$ such that $\varphi(G)=gHg^{-1}$. 
	Let $\mathrm{ConjAut}_{\Gamma, \cG}(\Gamma \mathcal{G})$ be the subgroup of conjugating automorphisms of $\Gamma\cG$. In order to lighten notations, we will simply denote it $\mathrm{ConjAut}(\Gamma \mathcal{G})$ in the rest of this article, by a slight abuse of notation  that we comment on below.
\end{definition}

\begin{remark} 
We emphasize that the subgroup $\mathrm{ConjAut}_{\Gamma, \cG}(\Gamma \mathcal{G})$ of $\mathrm{Aut}(\Gamma \mathcal{G})$ heavily depends on the chosen decomposition of $\Gamma \mathcal{G}$ as a graph product under consideration, and not just on the group $\Gamma \mathcal{G}$ itself. For instance, let $\Gamma \mathcal{G}$ be the graph product corresponding to a single edge whose endpoints are both labelled by $\mathbb{Z}$, and let $\Phi \mathcal{H}$ be the graph product corresponding to a single vertex labelled by $\mathbb{Z}^2$. Both $\Gamma \mathcal{G}$ and $\Phi \mathcal{H}$ are isomorphic to $\mathbb{Z}^2$, but $\mathrm{ConjAut}_{\Gamma, \cG}(\Gamma \mathcal{G})$ is finite while $\mathrm{ConjAut}_{\Phi, \mathcal{H}}(\Phi \mathcal{H}) = \mathrm{GL}(2, \mathbb{Z})$.

Thus, writing $\mathrm{ConjAut}(\Gamma \mathcal{G})$  instead of $\mathrm{ConjAut}_{\Gamma, \cG}(\Gamma \mathcal{G})$  is indeed an abuse of notation. However, in this article we will only consider a single graph product decomposition at a time, so this lighter notation  shall not lead to confusion.
\end{remark}

Our goal in this section is to find a simple and natural generating set for $\mathrm{ConjAut}(\Gamma \mathcal{G})$. For this purpose we need the following definitions:

\begin{definition}
We define the following automorphisms of $\Gamma\cG$:
\begin{itemize}
	\item Given an isometry $\sigma : \Gamma \to \Gamma$ and a collection of isomorphisms $\Phi = \{ \varphi_u : G_u \to G_{\sigma(u)} \mid u \in V(\Gamma) \}$, the \emph{local automorphism} $(\sigma, \Phi)$ is the automorphism of $\Gamma \mathcal{G}$ induced by $$\left\{ \begin{array}{ccc} \bigcup\limits_{u \in V(\Gamma)} G_u & \to & \Gamma \mathcal{G} \\ g & \mapsto & \text{$\varphi_u(g)$ if $g \in G_u$} \end{array} \right..$$ The group of local automorphisms of $\Gamma \mathcal{G}$ is denoted by $\mathrm{Loc}(\Gamma \mathcal{G})$. Again, this subgroup should be denoted $\mathrm{Loc}_{\Gamma, \cG}(\Gamma \mathcal{G})$ as it depends on the chosen decomposition as a graph product, but we will use the same abuse of notation as above. Also, we denote by $\mathrm{Loc}^0(\Gamma \mathcal{G})$ the group of local automorphisms satisfying $\sigma = \mathrm{Id}$. Notice that $\mathrm{Loc}^0(\Gamma \mathcal{G})$ is a finite-index subgroup of $\mathrm{Loc}(\Gamma \mathcal{G})$ naturally isomorphic to the direct sum $\bigoplus\limits_{u \in V(\Gamma)} \mathrm{Aut}(G_u)$. 
	\item Given a vertex $u \in V(\Gamma)$, a connected component $\Lambda$ of $\Gamma \backslash \mathrm{star}(u)$ and an element $h \in G_u$, the \emph{partial conjugation} $(u, \Lambda,h)$ is the automorphism of $\Gamma \mathcal{G}$ induced by $$\left\{ \begin{array}{ccc} \bigcup\limits_{u \in V(\Gamma)} G_u & \to & \Gamma \mathcal{G} \\ g & \mapsto & \left\{ \begin{array}{cl} g & \text{if $g \notin \langle \Lambda \rangle$} \\ hgh^{-1} & \text{if $g \in \langle \Lambda \rangle$} \end{array} \right. \end{array} \right. .$$ Notice that an inner automorphism of $\Gamma \mathcal{G}$ is always a product of partial conjugations. 
\end{itemize}
We denote by $\mathrm{ConjP}(\Gamma \mathcal{G})$ the subgroup of $\mathrm{Aut}(\Gamma \mathcal{G})$ generated by the inner automorphisms, the local automorphisms, and the partial conjugations. Again, this subgroup should be denoted $\mathrm{ConjP}_{\Gamma, \cG}(\Gamma \mathcal{G})$ as it depends on the chosen decomposition as a graph product, but we will use the same abuse of notation as above.
\end{definition}

\noindent
It is clear that the inclusion $\mathrm{ConjP}(\Gamma \mathcal{G}) \subset \mathrm{ConjAut}(\Gamma \mathcal{G})$ holds. The main result of this section is that the reverse inclusion also holds. Namely:

\begin{thm}\label{thm:conjugatingauto}
The group of conjugating automorphisms of $\Gamma \mathcal{G}$ coincides with $\mathrm{ConjP}(\Gamma \mathcal{G})$. 
\end{thm}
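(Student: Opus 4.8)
The plan is to show that an arbitrary conjugating automorphism $\varphi$ of $\Gamma\cG$ lies in $\mathrm{ConjP}(\Gamma\cG)$ by exploiting the action of $\Gamma\cG$ on the quasi-median Cayley graph $X(\Gamma,\cG)$ and, in particular, the combinatorics of its hyperplanes. The key point is that a conjugating automorphism permutes the conjugacy classes of vertex groups, hence by Theorem~\ref{thm:HypStab} and Proposition~\ref{prop:rotstabinX} it permutes the set of rotative-stabilisers of hyperplanes of $X(\Gamma,\cG)$ (up to conjugacy); in other words $\varphi$ induces a bijection of the set of cliques of $X(\Gamma,\cG)$ (equivalently of hyperplanes), compatible with the transversality/tangency structure. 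The strategy is to ``normalise'' $\varphi$ step by step by composing with elements of $\mathrm{ConjP}(\Gamma\cG)$ until it becomes the identity. Concretely, after composing with a suitable inner automorphism we may assume $\varphi(G_v)$ is a conjugate of some vertex group, and by the commutation relations governing $\langle G_u, G_v\rangle$ (recalled in the proof of Lemma~\ref{lem:CliqueStab}) the induced map on $V(\Gamma)$ must be a graph isometry $\sigma$; composing with the local automorphism $(\sigma^{-1},\mathrm{Id})$ we reduce to the case where $\varphi(G_v)$ is a conjugate of $G_v$ for every $v$.

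The heart of the argument is then to show that such a $\varphi$ — one that sends each $G_v$ to a conjugate $g_v G_v g_v^{-1}$ — is a product of partial conjugations and an element of $\mathrm{Loc}^0(\Gamma\cG)$. First I would fix a vertex $v$, post-compose with the inner automorphism $\iota(g_v^{-1})$ adjusted appropriately, and use that conjugating $G_v$ into itself is governed by $N_{\Gamma\cG}(G_v) = G_v \times \langle\mathrm{link}(v)\rangle$ (a consequence of Proposition~\ref{prop:distinGP}); the ``$\langle\mathrm{link}(v)\rangle$'' part is harmless since it centralises $G_v$, so after a further adjustment one can arrange $\varphi|_{G_v} = \mathrm{id}$ and, more importantly, track how $\varphi$ moves the \emph{other} vertex groups. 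The combinatorial input is that for $u$ adjacent to $v$ the vertex groups commute, which forces the conjugating element $g_u$ to interact with $g_v$ in a controlled way, whereas for $u$ in a component $\Lambda$ of $\Gamma\setminus\mathrm{star}(v)$ the element of $G_v$ by which $G_u$ is conjugated can be ``peeled off'' as a partial conjugation $(v,\Lambda,\cdot)$. Iterating this peeling — organised along a traversal of $\Gamma$, using connectedness of the relevant components and the ping-pong/peripherality machinery of Lemmas~\ref{lem:pingpong} and~\ref{lem:shortendist} to guarantee that the conjugating elements can be reduced in length at each stage — one strips $\varphi$ of all its ``conjugation content'' and is left with an automorphism fixing each vertex group setwise, i.e.\ an element of $\mathrm{Loc}^0(\Gamma\cG)\subset\mathrm{ConjP}(\Gamma\cG)$.

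The main obstacle I anticipate is the bookkeeping in this peeling step: making precise, and well-founded, the induction that removes one partial conjugation at a time. The subtlety is that conjugating $G_u$ by an element $h\in G_v$ affects not just $G_u$ but every vertex group in the component $\Lambda$ of $\Gamma\setminus\mathrm{star}(v)$ containing $u$, and these components for different vertices $v$ overlap intricately; one must choose the order of operations (say, by non-decreasing ``complexity'' of the conjugating elements $g_u$, measured by word length in $X(\Gamma,\cG)$) so that each composition with a partial conjugation genuinely simplifies $\varphi$ and the process terminates. This is exactly where the hyperplane geometry of $X(\Gamma,\cG)$ — convexity of carriers (Theorem~\ref{thm:QmHyperplanes}), the projection lemmas (Proposition~\ref{prop:ProjHyp} and its corollaries), and the length-reduction Lemma~\ref{lem:shortendist} — does the real work, converting an otherwise delicate combinatorial-group-theory argument into a clean statement about separating hyperplanes. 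The reverse inclusion $\mathrm{ConjP}(\Gamma\cG)\subseteq\mathrm{ConjAut}(\Gamma\cG)$ is immediate since local automorphisms, partial conjugations, and inner automorphisms each visibly send vertex groups to conjugates of vertex groups.
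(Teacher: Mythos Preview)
Your proposal is on the right track and uses the same essential ingredients as the paper --- the hyperplane geometry of $X(\Gamma,\cG)$, Lemma~\ref{lem:shortendist} to justify that a well-chosen partial conjugation simplifies things, and Lemma~\ref{lem:pingpong} to pin down the endgame --- but the organisation differs and one step is under-justified.

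The paper does \emph{not} first extract an isometry $\sigma$ of $\Gamma$ and reduce to the vertex-preserving case. Instead it formalises the action of $\mathrm{ConjAut}(\Gamma\cG)$ on the \emph{transversality graph} $T(\Gamma,\cG)$ (Fact~\ref{fact:ConjAutacts}) and introduces a single global complexity
\[
\|\varphi\| \;=\; \sum_{u\in V(\Gamma)} d\bigl(1,\,N(\varphi\cdot J_u)\bigr).
\]
The whole ``bookkeeping in the peeling step'' that you correctly flag as the main obstacle is absorbed into two clean lemmas: if $\|\varphi\|\ge 1$ then $\varphi\cdot\cJ$ is not peripheral (Claim~\ref{claim:peripheral}, via Lemma~\ref{lem:pingpong}), so some $\varphi\cdot J_a$ separates $1$ from some $\varphi\cdot J_b$, and composing with the corresponding partial conjugation strictly decreases $\|\varphi\|$ (Lemma~\ref{lem:shortencomplexity}, via Lemma~\ref{lem:shortendist}); and if $\|\varphi\|=0$ then $\varphi$ is already a local automorphism with an associated isometry $s:\Gamma\to\Gamma$ (Lemma~\ref{lem:complexityzero}). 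So $\sigma$ appears only at the very end, once the complexity is zero.

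Your preliminary step --- ``the induced map on $V(\Gamma)$ must be a graph isometry $\sigma$'' --- is true, but not for the reason you cite: Lemma~\ref{lem:CliqueStab} only describes cliques, not how commuting \emph{conjugates} of vertex groups relate to adjacency. What you actually need here is Proposition~\ref{prop:factorgraphalg} (the identification of the factor graph with the transversality graph), plus a short argument that the type map $\sigma$ is surjective (e.g.\ via the retractions $\Gamma\cG\to G_w$). Once you have that machinery in place you are effectively reproducing the paper's setup, and at that point the complexity function above gives a much cleaner induction than the ad hoc ``traverse $\Gamma$ and peel'' scheme you describe.
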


\noindent
In Sections \ref{section:applicationfinite} and \ref{section:applicationgirth}, we deduce a description of automorphism groups of specific classes of graph products.

\subsection{Action of $\mathrm{ConjAut}(\Gamma \mathcal{G})$ on the transversality graph}\label{section:algebraiccharacterisation}

\noindent
In this section,  our goal is to extract from the quasi-median graph associated to a given graph product a graph on which the group of conjugating automorphisms acts.  \textbf{Let us recall that, until the end of Section \ref{section:algebraiccharacterisation}, we fix a finite simplicial graph $\Gamma$ and a collection of groups $\mathcal{G}$  indexed by $V(\Gamma)$.} 

\begin{definition}
The \emph{transversality graph} $T(\Gamma, \mathcal{G})$ is the graph whose vertices are the hyperplanes of $X(\Gamma, \mathcal{G})$ and whose edges connect two hyperplanes whenever they are transverse. 
\end{definition}

Note that the transversality graph is naturally isomorphic to the crossing graph of the Davis complex $D(\Gamma, \cG)$, that is, the simplicial graph whose vertices are parallelism classes of hyperplanes of $D(\Gamma, \cG)$ and whose edges correspond to transverse (classes of) hyperplanes. One of the advantages of working with the transversality graph instead is that it does not require to talk about parallelism classes or to choose particular representatives in proofs, which will make some of the arguments in this section simpler.

\noindent
From this definition, it is not clear at all that the group of conjugating automorphisms acts on the corresponding transversality graph. To solve this problem, we will state and prove an algebraic characterisation of the transversality graph. This description is the following:

\begin{definition}
The \emph{factor graph} $F(\Gamma, \mathcal{G})$ is the graph whose vertices are the conjugates of the vertex groups and whose edges connect two conjugates whenever they commute (ie., every element of one subgroup commutes with every element of the other one).
\end{definition}

\noindent
The main result of this section is the following algebraic characterisation:

\begin{prop}\label{prop:factorgraphalg}
The map
$$\left\{ \begin{array}{ccc} T(\Gamma, \mathcal{G}) & \to & F(\Gamma, \mathcal{G}) \\ J & \mapsto & \mathrm{stab}_\circlearrowleft(J) \end{array} \right.$$
induces a graph isomorphism $T(\Gamma, \mathcal{G}) \to F(\Gamma, \mathcal{G})$.
\end{prop}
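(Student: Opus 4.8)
The plan is to exhibit the map $J \mapsto \mathrm{stab}_\circlearrowleft(J)$ as a well-defined bijection on vertices that preserves and reflects the edge relation. First I would check that the map sends vertices to vertices: by Theorem \ref{thm:HypStab}, every hyperplane $J$ is a translate $gJ_u$, and by Proposition \ref{prop:rotstabinX} its rotative-stabiliser coincides with the stabiliser of any dual clique, hence (by Lemma \ref{lem:CliqueStab} and its proof) equals $gG_ug^{-1}$, which is genuinely a conjugate of a vertex group — a vertex of $F(\Gamma,\mathcal{G})$. For injectivity, suppose $\mathrm{stab}_\circlearrowleft(J_1)=\mathrm{stab}_\circlearrowleft(J_2)$ with $J_i=g_iJ_{u_i}$; then $g_1G_{u_1}g_1^{-1}=g_2G_{u_2}g_2^{-1}$, and I would use the standard fact that in a graph product a conjugate of a vertex group determines its vertex (so $u_1=u_2$) together with the normal-form description of when $g_1G_{u}g_1^{-1}=g_2G_{u}g_2^{-1}$ (namely $g_1^{-1}g_2\in \langle\mathrm{star}(u)\rangle$, by reasoning as in the proof of Lemma \ref{lem:ProjClique}/Theorem \ref{thm:HypStab}) to conclude $g_1J_u=g_2J_u$, i.e. $J_1=J_2$. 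Surjectivity is immediate: a conjugate $gG_ug^{-1}$ is the rotative-stabiliser of $gJ_u$.

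Next I would handle the edges. One direction is already essentially available: Fact \ref{fact:RotStabCom} says that if $J_1,J_2$ are transverse then $\mathrm{stab}_\circlearrowleft(J_1)$ and $\mathrm{stab}_\circlearrowleft(J_2)$ commute elementwise, so a transversality edge maps to a commutation edge of $F(\Gamma,\mathcal{G})$. For the converse I must show that if two conjugates $g_1G_{u_1}g_1^{-1}$ and $g_2G_{u_2}g_2^{-1}$ commute elementwise then the corresponding hyperplanes $g_1J_{u_1}$ and $g_2J_{u_2}$ are transverse. Here I would argue: elementwise commutation of these two (non-abelian-or-not, but non-trivial) subgroups forces, via the normal form, that after conjugating we may assume $1\in N(g_1J_{u_1})\cap N(g_2J_{u_2})$ — i.e. there is a common vertex in both carriers, because two commuting conjugates of vertex groups must lie in a common $\langle\mathrm{star}\rangle$-type coset — and then $u_1,u_2$ are distinct (the subgroups are distinct since a vertex group is non-abelian in general... more carefully: if $u_1=u_2$ and the carriers meet, the cliques coincide by Lemma \ref{lem:CliqueStab}, contradicting distinctness of the subgroups) and adjacent in $\Gamma$ (else $\langle G_{u_1},G_{u_2}\rangle$ would be a free product, in which no non-trivial element of one factor commutes with a non-trivial element of the other). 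Two hyperplanes labelled by adjacent vertices whose carriers share a vertex are transverse, by Fact \ref{fact:Square}, which gives the edge in $T(\Gamma,\mathcal{G})$.

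The main obstacle I anticipate is the step where elementwise commutation of $g_1G_{u_1}g_1^{-1}$ and $g_2G_{u_2}g_2^{-1}$ is converted into the geometric statement ``the carriers intersect, and the labels are adjacent.'' The clean way to do this is to reduce to the case $g_2=1$ (replace both hyperplanes by $g_2^{-1}$-translates) and then analyse, using the normal form and the head/tail machinery of Section \ref{sec:generalities}, exactly which conjugates $gG_ug^{-1}$ commute elementwise with a fixed vertex group $G_v$: writing $g$ in normal form and pushing syllables around, one sees that commutation of the whole conjugate with all of $G_v$ forces $g$ (up to the ambiguity in $\langle\mathrm{star}(u)\rangle$) to lie in $\langle\mathrm{link}(v)\rangle$, with $u\in\mathrm{link}(v)$ — which is precisely the transversality condition after unwinding Proposition \ref{prop:EdgesDualHyp}. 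Alternatively, and perhaps more in the spirit of this paper, I would phrase the whole argument purely in terms of hyperplanes: commuting rotative-stabilisers act on $X(\Gamma,\mathcal{G})$ in a way incompatible with the two hyperplanes being tangent or nested (one would move the other's carrier off itself, by Lemma \ref{lem:shortendist}-type reasoning), leaving transversality as the only option. Once this dichotomy is nailed down, assembling the isomorphism is routine.
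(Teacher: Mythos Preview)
Your outline is correct, but it takes a more algebraic route than the paper and leans on structural facts about graph products (that a conjugate of a vertex group determines the vertex, that the normaliser of $G_u$ is $\langle\mathrm{star}(u)\rangle$, that the centraliser of $G_v$ sits inside $\langle\mathrm{star}(v)\rangle$) which, while true, are not established at this point in the paper; the relevant statement is essentially Lemma~\ref{lem:conjincluded}, proved only later. Your approach~(a) for the edge-reflecting direction can be pushed through along these lines, but it is heavier than necessary.

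The paper stays entirely inside the hyperplane geometry of $X(\Gamma,\mathcal{G})$ and thereby avoids these external inputs. For injectivity it proves the stronger Fact~\ref{fact:disctingrotativestab}: distinct hyperplanes have rotative-stabilisers with \emph{trivial} intersection. The argument is one line --- a clique dual to $J_2$ lies in a single sector of $J_1$, and any non-trivial element of $\mathrm{stab}_\circlearrowleft(J_1)$ moves that sector by Proposition~\ref{prop:rotstabinX}, so cannot fix $J_2$. For the direction ``not transverse $\Rightarrow$ rotative-stabilisers do not commute'' the paper runs a short ping-pong: pick non-trivial $g\in\mathrm{stab}_\circlearrowleft(J_1)$ and $h\in\mathrm{stab}_\circlearrowleft(J_2)$; then $J_1$ separates $J_2$ from $gJ_2$, next $J_2$ separates $J_1$ from $hgJ_2$, so both $J_1$ and $J_2$ separate $hgJ_2$ from $ghJ_2$, whence $gh\neq hg$. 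Your alternative~(b) is groping toward exactly this, but the invocation of ``Lemma~\ref{lem:shortendist}-type reasoning'' is not quite the mechanism --- the point is the free-and-transitive action on sectors from Proposition~\ref{prop:rotstabinX}, used to track where $J_2$ lands under successive applications of $g$ and $h$. The payoff of the paper's approach is self-containment: everything follows from Proposition~\ref{prop:rotstabinX} without appeal to normalisers or centralisers in $\Gamma\mathcal{G}$.
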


\begin{proof}
Because the rotative-stabilisers of a hyperplane is indeed a conjugate of a vertex group, according to Lemma \ref{lem:CliqueStab} and Proposition\ref{prop:rotstabinX}, our map is well-defined. Let $G \in \mathcal{G}$ be a vertex group and let $g \in \Gamma \mathcal{G}$. Then $gGg^{-1}$ is the stabiliser of the clique $gG$, and we deduce from Proposition \ref{prop:rotstabinX} that it is also the rotative-stabilisers of the hyperplane dual to $gG$. Consequently, our map is surjective. To prove its injectivity, it is sufficient to show that two distinct hyperplanes $J_1,J_2$ which are not transverse have different rotative-stabilisers. More generally, we want to prove the following observation:

\begin{fact}\label{fact:disctingrotativestab}
The rotative-stabilisers of two distinct hyperplanes $J_1,J_2$ of $X(\Gamma, \mathcal{G})$ have a trivial intersection.
\end{fact}

\noindent
Indeed, if we fix a clique $C$ dual to $J_2$, then it must be entirely contained in a sector delimited by $J_1$. But, if $g \in \mathrm{stab}_\circlearrowleft(J_1)$ is non-trivial, then it follows from Proposition \ref{prop:rotstabinX} that $g$ does not stabilise the sector delimited by $J_1$ which contains $C$. Hence $gJ_2 \neq J_2$. A fortiori, $g$ does not belong to the rotative-stabilisers of $J_2$, proving our fact.

\medskip \noindent
In order to conclude the proof of our proposition, it remains to show that two distinct hyperplanes $J_1$ and $J_2$ of $X(\Gamma,\mathcal{G})$ are transverse if and only if their rotative-stabilisers commute. 

\medskip \noindent
Suppose first that $J_1$ and $J_2$ are not transverse. Let $g \in \mathrm{stab}_\circlearrowleft(J_1)$ and $h \in \mathrm{stab}_\circlearrowleft(J_2)$ be two non-trivial elements. Since we know from Proposition \ref{prop:rotstabinX} that $g$ does not stabilise the sector delimited by $J_1$ which contains $J_2$, necessarily $J_1$ separates $J_2$ and $gJ_2$. Similarly, we deduce that $J_2$ separates $J_1$ and $hgJ_2$; and that $J_1$ separates $J_2$ and $gJ_2$. Therefore, both $J_1$ and $J_2$ separate $hgJ_2$ and $gJ_2=ghJ_2$. A fortiori, $hgJ_2 \neq ghJ_2$ so that $gh \neq hg$. Thus, we have proved that the rotative-stabilisers of $J_1$ and $J_2$ do not commute.

\medskip \noindent
Next, suppose that $J_1$ and $J_2$ are transverse. Up to translating by an element of $\Gamma \mathcal{G}$, we may suppose suppose without loss of generality that the vertex $1$ belongs to $N(J_1) \cap N(J_2)$. As a consequence, there exist vertices $u,v \in V(\Gamma)$ such that $J_1=J_u$ and $J_2=J_v$. We know from Lemma \ref{lem:transverseimpliesadj} that $u$ and $v$ are adjacent. Therefore, $\mathrm{stab}_\circlearrowleft(J_1)= G_u$ and $\mathrm{stab}_{\circlearrowleft} (J_2)= G_v$ commute.
\end{proof}

\noindent
Interestingly, if $\varphi : \Gamma \mathcal{G} \to \Phi \mathcal{H}$ is an isomorphism between two graph products that sends vertex groups to conjugates of vertex groups, then $\varphi$ naturally defines an isometry
$$\left\{ \begin{array}{ccc} F(\Gamma, \mathcal{G}) & \to & F(\Phi,\mathcal{H}) \\ H & \mapsto & \varphi(H) \end{array} \right..$$
By transferring this observation to transversality graphs through the isomorphism providing by Proposition \ref{prop:factorgraphalg}, one gets the following statement:

\begin{fact}\label{fact:ConjIsomacts}
Let $\Gamma, \Phi$ be two simplicial graphs and $\mathcal{G}, \mathcal{H}$ two collections of groups indexed by $V(\Gamma),V(\Phi)$ respectively. Suppose that $\varphi  : \Gamma \mathcal{G} \to \Phi \mathcal{H}$ is an isomorphism between two graph products which sends vertex groups to conjugates of vertex groups. Then $\varphi$ defines an isometry $T(\Gamma, \mathcal{G}) \to T(\Phi, \mathcal{H})$ via
$$J \mapsto \text{hyperplane whose rotative-stabiliser is $\varphi(\mathrm{stab}_\circlearrowleft(J))$}.$$
\end{fact}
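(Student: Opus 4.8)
The plan is to reduce the statement to the algebraic characterisation of Proposition \ref{prop:factorgraphalg}. Applied to $\Gamma\cG$ it identifies $T(\Gamma,\cG)$ with the factor graph $F(\Gamma,\cG)$ via $J \mapsto \mathrm{stab}_\circlearrowleft(J)$, and applied to $\Phi\cH$ it identifies $T(\Phi,\cH)$ with $F(\Phi,\cH)$. Hence it suffices to show that $\varphi$ induces a graph isomorphism $F(\Gamma,\cG) \to F(\Phi,\cH)$ sending a conjugate of a vertex group to its $\varphi$-image; the desired isometry of transversality graphs is then obtained by conjugating this isomorphism by the two identifications above, and unwinding the definitions shows that $J$ is sent to the unique hyperplane of $X(\Phi,\cH)$ whose rotative-stabiliser is $\varphi(\mathrm{stab}_\circlearrowleft(J))$.

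First I would check that $\varphi$ maps the vertex set of $F(\Gamma,\cG)$ bijectively onto that of $F(\Phi,\cH)$. By hypothesis, for each $v \in V(\Gamma)$ there are $w(v) \in V(\Phi)$ and $g_v \in \Phi\cH$ with $\varphi(G_v) = g_v H_{w(v)} g_v^{-1}$. Thus for any $g \in \Gamma\cG$ one has $\varphi(gG_vg^{-1}) = (\varphi(g)g_v) H_{w(v)} (\varphi(g)g_v)^{-1}$, a conjugate of a vertex group of $\Phi\cH$; so $\varphi$ sends conjugates of vertex groups of $\Gamma\cG$ to conjugates of vertex groups of $\Phi\cH$. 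Applying the same reasoning to the hypothesis-respecting isomorphism $\varphi^{-1}$ (which also sends vertex groups to conjugates of vertex groups, since an isomorphism between graph products that does so on vertex groups has this property for its inverse as well) shows this map is a bijection on vertices.

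Next, since $\varphi$ is a group isomorphism, two subgroups $A,B \leq \Gamma\cG$ commute elementwise if and only if $\varphi(A),\varphi(B)$ do. Therefore the vertex bijection above sends edges of $F(\Gamma,\cG)$ to edges of $F(\Phi,\cH)$ and non-edges to non-edges, i.e.\ it is a graph isomorphism. Composing with the isomorphisms of Proposition \ref{prop:factorgraphalg} for $\Gamma\cG$ and for $\Phi\cH$ yields a graph isomorphism, hence an isometry, $T(\Gamma,\cG) \to T(\Phi,\cH)$, and by construction a hyperplane $J$ is sent to the hyperplane of $X(\Phi,\cH)$ with rotative-stabiliser $\varphi(\mathrm{stab}_\circlearrowleft(J))$.

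I do not expect a genuine obstacle here: the argument is a short transport-of-structure through Proposition \ref{prop:factorgraphalg}. The only point deserving attention is well-definedness of the target map, namely that $\varphi(\mathrm{stab}_\circlearrowleft(J))$ is actually realised as the rotative-stabiliser of a (necessarily unique) hyperplane of $X(\Phi,\cH)$; this is precisely the surjectivity (for existence) and injectivity (for uniqueness) of the map of Proposition \ref{prop:factorgraphalg} applied to $\Phi\cH$, combined with the first step showing $\varphi(\mathrm{stab}_\circlearrowleft(J))$ is a conjugate of a vertex group of $\Phi\cH$.
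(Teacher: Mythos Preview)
Your argument is correct and follows exactly the paper's approach: the paper simply observes that $\varphi$ induces an isometry $F(\Gamma,\cG)\to F(\Phi,\cH)$ via $H\mapsto\varphi(H)$ and then transfers this through the isomorphism of Proposition~\ref{prop:factorgraphalg}. Your write-up is in fact more detailed than the paper's one-line justification; the only point left unargued in either treatment is your parenthetical that $\varphi^{-1}$ also sends vertex groups to conjugates of vertex groups, but surjectivity of the induced map is never used in the paper's subsequent applications.
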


\noindent
In the case $\Gamma= \Phi$, by transferring the action $\mathrm{ConjAut}(\Gamma \mathcal{G}) \curvearrowright F(\Gamma, \mathcal{G})$ defined by:
$$\left\{ \begin{array}{ccc} \mathrm{ConjAut}(\Gamma \mathcal{G}) & \to & \mathrm{Isom}(F(\Gamma, \mathcal{G})) \\ \varphi & \mapsto & \left( H \mapsto \varphi(H) \right) \end{array} \right.$$
to the transversality graph $T(\Gamma, \mathcal{G})$, one gets the following statement:

\begin{fact}\label{fact:ConjAutacts}
The group $\mathrm{ConjAut}(\Gamma \mathcal{G})$ acts on the transversality graph $T(\Gamma, \mathcal{G})$ via
$$\left\{ \begin{array}{ccc} \mathrm{ConjAut}(\Gamma \mathcal{G}) & \to & \mathrm{Isom}(T(\Gamma, \mathcal{G})) \\ \varphi & \mapsto & \left( J \mapsto \text{hyperplane whose rotative-stabiliser is $\varphi( \mathrm{stab}_\circlearrowleft(J))$} \right) \end{array} \right. .$$
Moreover, if $\Gamma \mathcal{G}$ is centerless and if we identify the subgroup of inner automorphism $\mathrm{Inn}(\Gamma \mathcal{G}) \leq \mathrm{ConjAut}(\Gamma \mathcal{G})$ with $\Gamma \mathcal{G}$ via
$$\left\{ \begin{array}{ccc} \Gamma \mathcal{G} & \to & \mathrm{Inn}(\Gamma \mathcal{G}) \\ g & \mapsto & (x \mapsto gxg^{-1}) \end{array} \right.,$$
then the action $\mathrm{ConjAut}(\Gamma \mathcal{G}) \curvearrowright T(\Gamma, \mathcal{G})$ extends the natural action $\Gamma \mathcal{G} \curvearrowright T(\Gamma, \mathcal{G})$ induced by $\Gamma \mathcal{G} \curvearrowright X(\Gamma,\mathcal{G})$.
\end{fact}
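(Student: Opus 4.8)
The plan is to check three things in turn: that the formula $\varphi\mapsto(H\mapsto\varphi(H))$ defines an action of $\mathrm{ConjAut}(\Gamma\mathcal{G})$ on the factor graph $F(\Gamma,\mathcal{G})$, that this action transports to $T(\Gamma,\mathcal{G})$ via Proposition \ref{prop:factorgraphalg}, and that under the stated identification it restricts on $\mathrm{Inn}(\Gamma\mathcal{G})$ to the natural action coming from $\Gamma\mathcal{G}\curvearrowright X(\Gamma,\mathcal{G})$. Indeed the first two points are just the special case $\Gamma=\Phi$ of Fact \ref{fact:ConjIsomacts} together with the observation that the assignment is a homomorphism, but I would spell out the short verification for completeness.

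First, a conjugating automorphism $\varphi$ sends each vertex group to a conjugate of a vertex group by definition, hence it sends an arbitrary conjugate $hGh^{-1}$ of a vertex group to $\varphi(h)\varphi(G)\varphi(h)^{-1}$, again a conjugate of a vertex group; so $\varphi$ permutes the vertices of $F(\Gamma,\mathcal{G})$. Since $\varphi$ is a bijective homomorphism, two subgroups of $\Gamma\mathcal{G}$ commute elementwise if and only if their $\varphi$-images do, so $\varphi$ preserves and reflects adjacency in $F(\Gamma,\mathcal{G})$; as $\varphi^{-1}$ is again conjugating, $\varphi$ induces an isometry of $F(\Gamma,\mathcal{G})$. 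The assignment $\varphi\mapsto(H\mapsto\varphi(H))$ is visibly a homomorphism $\mathrm{ConjAut}(\Gamma\mathcal{G})\to\mathrm{Isom}(F(\Gamma,\mathcal{G}))$. Conjugating this action by the graph isomorphism $J\mapsto\mathrm{stab}_\circlearrowleft(J)$ of Proposition \ref{prop:factorgraphalg} yields the action on $T(\Gamma,\mathcal{G})$ described in the statement; here $\varphi\cdot J$ is well defined because, by the injectivity part of Proposition \ref{prop:factorgraphalg}, there is a unique hyperplane whose rotative-stabiliser equals $\varphi(\mathrm{stab}_\circlearrowleft(J))$.

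For the last assertion, suppose $\Gamma\mathcal{G}$ is centerless, so $g\mapsto(x\mapsto gxg^{-1})$ identifies $\Gamma\mathcal{G}$ with $\mathrm{Inn}(\Gamma\mathcal{G})\leq\mathrm{ConjAut}(\Gamma\mathcal{G})$. Fix $g\in\Gamma\mathcal{G}$ and a hyperplane $J$ of $X(\Gamma,\mathcal{G})$. The action $\Gamma\mathcal{G}\curvearrowright X(\Gamma,\mathcal{G})$ sends $J$ to $gJ$, and since the cliques dual to $gJ$ are exactly the $g$-translates of the cliques dual to $J$, the definition of the rotative-stabiliser gives $\mathrm{stab}_\circlearrowleft(gJ)=g\,\mathrm{stab}_\circlearrowleft(J)\,g^{-1}$. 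On the other hand the inner automorphism $\iota(g)\colon x\mapsto gxg^{-1}$ sends $\mathrm{stab}_\circlearrowleft(J)$ to $g\,\mathrm{stab}_\circlearrowleft(J)\,g^{-1}$ as well. Hence $gJ$ is the unique hyperplane with rotative-stabiliser $\iota(g)(\mathrm{stab}_\circlearrowleft(J))$, i.e. $\iota(g)$ acts on $T(\Gamma,\mathcal{G})$ by $J\mapsto gJ$, which is precisely the natural action induced by $\Gamma\mathcal{G}\curvearrowright X(\Gamma,\mathcal{G})$. This shows that the $\mathrm{ConjAut}(\Gamma\mathcal{G})$-action extends the $\Gamma\mathcal{G}$-action.

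The argument is entirely formal once one has Proposition \ref{prop:factorgraphalg}; the only point worth spelling out carefully is the equivariance $\mathrm{stab}_\circlearrowleft(gJ)=g\,\mathrm{stab}_\circlearrowleft(J)\,g^{-1}$ together with the uniqueness (injectivity) statement, since these are exactly what force the two actions to agree on inner automorphisms. I do not expect any genuine obstacle here.
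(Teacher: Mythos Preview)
Your proof is correct and follows exactly the approach the paper takes: the paper simply states that the action on $T(\Gamma,\mathcal{G})$ is obtained by transporting the obvious action $\varphi\cdot H=\varphi(H)$ on $F(\Gamma,\mathcal{G})$ through the isomorphism of Proposition~\ref{prop:factorgraphalg}, without spelling out any details. You have in fact written out more than the paper does, in particular the verification that $\mathrm{stab}_\circlearrowleft(gJ)=g\,\mathrm{stab}_\circlearrowleft(J)\,g^{-1}$ for the ``moreover'' clause, which the paper leaves entirely implicit.
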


\subsection{Characterisation of conjugating automorphisms}\label{section:ConjAut}

\noindent
This section is dedicated to the proof of Theorem \ref{thm:conjugatingauto}, which will be based on the transversality graph introduced in the previous section. In fact, we will prove a stronger statement, namely:

\begin{thm}\label{thm:ConjIsom}
Let $\Gamma, \Phi$ be two simplicial graphs and $\mathcal{G},\mathcal{H}$ two collections of groups indexed by $V(\Gamma),V(\Phi)$ respectively. Suppose that there exists an isomorphism $\varphi : \Gamma \mathcal{G} \to \Phi \mathcal{H}$ which sends vertex groups of $\Gamma \mathcal{G}$ to conjugates of vertex groups of $\Phi \mathcal{H}$. Then there exist an automorphism $\alpha \in \mathrm{ConjP}(\Gamma \mathcal{G})$ and an isometry $s : \Gamma \to \Phi$ such that $\varphi \alpha$ sends isomorphically $G_u$ to $H_{s(u)}$ for every $u \in V(\Gamma)$. 
\end{thm}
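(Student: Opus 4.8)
The plan is to use the action of $\mathrm{ConjAut}(\Gamma\cG)$ (more precisely, of the group $\mathrm{ConjP}(\Gamma\cG)$ of partial conjugations, local automorphisms and inner automorphisms) on the transversality graph $T(\Gamma,\cG)$ to "normalise" $\varphi$ step by step until it carries each vertex group $G_u$ isomorphically onto a vertex group $H_{s(u)}$ rather than merely onto a conjugate of one. First I would record what the isomorphism $\varphi$ induces combinatorially: by Fact \ref{fact:ConjIsomacts}, $\varphi$ defines an isometry $T(\Gamma,\cG)\to T(\Phi,\cH)$, hence (via Proposition \ref{prop:factorgraphalg}) an isometry $F(\Gamma,\cG)\to F(\Phi,\cH)$ sending the conjugate $gG_ug^{-1}$ to $\varphi(g)\varphi(G_u)\varphi(g)^{-1}$. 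Using the combinatorial description of hyperplanes (Theorem \ref{thm:HypStab}) one sees that the vertices of $T(\Gamma,\cG)$ carry a natural $V(\Gamma)$-labelling (the label of a hyperplane) and that transversality corresponds to adjacency of labels (Lemma \ref{lem:transverseimpliesadj}); I would argue that the isometry induced by $\varphi$ respects these label-partitions, so that it induces a well-defined map $V(\Gamma)\to V(\Phi)$, which I expect to be a graph isometry $s:\Gamma\to\Phi$ (surjectivity/injectivity coming from $\varphi$ being an isomorphism together with Fact \ref{fact:disctingrotativestab}). Precomposing $\varphi$ with a local automorphism realising $s^{-1}$ (this uses that $G_u\cong H_{s(u)}$, which follows since $\varphi$ maps $G_u$ onto a conjugate of some $H_w$ and $s$ is precisely $u\mapsto w$), I may assume $s=\mathrm{Id}$, i.e. $\Gamma=\Phi$ and $\varphi(G_u)$ is a conjugate of $H_u$ for every $u$.

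Next I would fix, for each $u\in V(\Gamma)$, an element $g_u\in\Gamma\cH$ with $\varphi(G_u)=g_u H_u g_u^{-1}$; $g_u$ is well-defined modulo the normaliser $\langle\mathrm{star}(u)\rangle$ of $H_u$ (by Theorem \ref{thm:HypStab}, the stabiliser of the hyperplane $J_u$). Choosing $g_u$ of minimal length in its coset $g_u\langle\mathrm{star}(u)\rangle$, one can write the "head" of $g_u$ and keep track of which hyperplanes separate $1$ from $g_uN(J_u)$. The goal is to show that, after composing $\varphi$ with a suitable product of partial conjugations, all the $g_u$ can be taken trivial. The mechanism for this is Lemma \ref{lem:shortendist} (and the ping-pong Lemma \ref{lem:pingpong}): if some hyperplane $J$ separates $1$ from the carrier $N(g_uJ_u)$, then applying an element of $\mathrm{stab}_\circlearrowleft(J)$ — which, being a rotative-stabiliser, is a conjugate of a vertex group and acts on $T(\Gamma,\cG)$ via partial conjugations — strictly decreases the distance $d(1,N(g_uJ_u))$, while simultaneously being compatible with (not destroying) the already-normalised vertex groups. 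The key structural point is that the collection of "defects" $\{$hyperplanes separating $1$ from $g_uJ_u\}$ can be organised into a peripheral collection in the sense of Definition \ref{def:peripheral}, so that the ping-pong lemma guarantees the partial conjugations we apply compose to an honest automorphism in $\mathrm{ConjP}(\Gamma\cG)$ and do not create new defects elsewhere. Iterating this descent (the total defect is a finite non-negative integer that strictly decreases) terminates with all $g_u$ trivial, i.e. $\varphi\alpha(G_u)=H_u$ for every $u$.

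Finally, once $\varphi\alpha$ stabilises each vertex group setwise, it restricts to an automorphism $G_u\to H_u$ for each $u$ — which is exactly the conclusion, with $s=\mathrm{Id}$ (and the general $s$ reinstated from the first reduction). The main obstacle I anticipate is the bookkeeping in the descent step: one must simultaneously shorten $g_u$ for a given $u$ \emph{and} verify that the partial conjugation used does not lengthen the representatives $g_v$ of the other vertex groups $v\ne u$, nor undo the normalisation of the vertex groups already handled. This is precisely where the peripherality condition and Lemma \ref{lem:pingpong} do the work: by only ever conjugating by elements of rotative-stabilisers of hyperplanes in a carefully chosen peripheral family — those hyperplanes lying "between" $1$ and the various carriers $N(g_uJ_u)$, ordered by separation from $1$ — one ensures the composite of all these partial conjugations is a well-controlled element of $\mathrm{ConjP}(\Gamma\cG)$, and that each move is a genuine simplification. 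Making this ordering and the induction on total defect precise, and checking that no vertex already placed at its "home" position is disturbed, will be the technical heart of the argument.
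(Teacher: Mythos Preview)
Your overall shape is right --- induce an isometry of transversality graphs, define a complexity measuring how far the images $\varphi\cdot J_u$ are from $1$, and reduce it by composing with elements of $\mathrm{ConjP}(\Gamma\mathcal{G})$ --- but the descent step has a genuine gap, and your use of peripherality is inverted relative to what is actually needed.

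The gap is in the sentence ``if some hyperplane $J$ separates $1$ from the carrier $N(g_uJ_u)$, then applying an element of $\mathrm{stab}_\circlearrowleft(J)$ \ldots\ acts on $T(\Gamma,\mathcal{G})$ via partial conjugations''. An arbitrary such $J$ has $\mathrm{stab}_\circlearrowleft(J)$ equal to some conjugate $hH_wh^{-1}$, and there is no reason its $\varphi$-preimage lies in a vertex group of $\Gamma\mathcal{G}$ (the hypothesis on $\varphi$ only goes one way). Even after your reduction to an automorphism, composing $\varphi$ on the right with an inner automorphism $\iota(z)$ translates \emph{all} of the $\varphi\cdot J_v$ by $\varphi(z)$ simultaneously; it cannot move one closer to $1$ while fixing the others. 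To move only a prescribed subset you need an honest partial conjugation $(a,\Lambda,y)$ of $\Gamma\mathcal{G}$, and then $(\varphi\alpha)\cdot J_v=\varphi(y)\cdot(\varphi\cdot J_v)$ for $v\in\Lambda$ with $\varphi(y)\in\varphi(G_a)=\mathrm{stab}_\circlearrowleft(\varphi\cdot J_a)$. So the separating hyperplane you use \emph{must itself be one of the $\varphi\cdot J_a$}, not an arbitrary hyperplane between $1$ and a carrier. The paper secures exactly this via the contrapositive of Lemma~\ref{lem:pingpong}: if the family $\{\varphi\cdot J_u\}$ were peripheral then (ping-pong) already $1\in\bigcap_u N(\varphi\cdot J_u)$, so positive complexity forces some $\varphi\cdot J_a$ to separate $1$ from some $\varphi\cdot J_b$. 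One then checks that every $\varphi\cdot J_v$ with $v$ in the component $\Lambda$ of $\Gamma\setminus\mathrm{star}(a)$ containing $b$ lies in the same sector of $\varphi\cdot J_a$ as $\varphi\cdot J_b$ (they are linked by a chain of transversalities avoiding $\varphi\cdot J_a$), so Lemma~\ref{lem:shortendist} applies to each and the \emph{total} complexity strictly drops. No per-vertex bookkeeping or ``not disturbing already-normalised vertices'' is required; your proposed organisation of the defects into a peripheral family is not what makes the argument work.

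A smaller issue: extracting the isometry $s:\Gamma\to\Phi$ at the outset is shakier than you suggest. Lemma~\ref{lem:transverseimpliesadj} only says transverse implies adjacent labels, not the converse, so from $\varphi$ alone you get that $s$ preserves adjacency, not that it reflects it; and ``precomposing with a local automorphism realising $s^{-1}$'' does not literally make sense when $\Gamma\ne\Phi$, since local automorphisms are automorphisms of $\Gamma\mathcal{G}$. The paper avoids both problems by postponing $s$ until the complexity is zero: then $\varphi\cdot J_u=J_{s(u)}$ with no conjugation, and both directions of the isometry property are immediate from the fact that $\varphi$ induces an isometry of transversality graphs.
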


It should be noted that the proof of Theorem \ref{thm:ConjIsom} we give below is constructive. Namely, given two graph products $\Gamma \mathcal{G}$ and $\Phi \mathcal{H}$, it is possible to construct  an algorithm that determines whether a given isomorphism $\varphi : \Gamma\mathcal{G} \to \Phi \mathcal{H}$ 
sends vertex groups to conjugates of vertex groups. In such a case, that algorithm also  produces a list of partial conjugations $(u_1, \Lambda_1,h_1)$, $\ldots$, $(u_k, \Lambda_k,h_k) \in \mathrm{Aut}(\Gamma \mathcal{G})$ and an isometry $s: \Gamma \to \Phi$ such that $\varphi \circ (u_1, \Lambda_1,h_1) \cdots (u_k, \Lambda_k,h_k)$ sends $G_u$ isomorphically to $H_{s(u)}$ for every $u\in V(\Gamma)$. However, since such algorithmic considerations are beyond the scope of this article, we will not develop this further.\\

Recall from Fact \ref{fact:ConjIsomacts} that such an isomorphism $\varphi : \Gamma \mathcal{G} \to \Phi \mathcal{H}$ defines an isometry $T(\Gamma, \mathcal{G}) \to T(\Phi, \mathcal{H})$ via
$$J  \mapsto \text{hyperplane whose rotative-stabiliser is $\varphi(\mathrm{stab}_\circlearrowleft(J))$}.$$
By setting $\mathcal{J}= \{ J_u \mid u \in V(\Gamma)\}$, we define the \emph{complexity} of $\varphi$ by
$$\| \varphi \| = \sum\limits_{J \in \mathcal{J}} d(1, N(\varphi \cdot J)),$$
where $N(\cdot)$ denotes the carrier of a hyperplane; see Definition \ref{def:hyp}. Theorem \ref{thm:ConjIsom} will be an easy consequence of the following two lemmas:

\begin{lemma}\label{lem:shortencomplexity}
Let $\varphi$ be as in the statement of Theorem \ref{thm:ConjIsom}. If $\| \varphi \| \geq 1$ then there exists some automorphism $\alpha \in \mathrm{ConjP}(\Gamma \mathcal{G})$ such that $\| \varphi \alpha \| < \| \varphi \|$.
\end{lemma}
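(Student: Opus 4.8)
The plan is to reduce the complexity $\|\varphi\|$ by precomposing $\varphi$ with a carefully chosen partial conjugation. Suppose $\|\varphi\|\geq 1$; then some hyperplane $J_u$ (with $u\in V(\Gamma)$) satisfies $d(1,N(\varphi\cdot J_u))\geq 1$, i.e. the vertex $1$ does not lie on the carrier of the image hyperplane. First I would choose, among all hyperplanes of the form $\varphi\cdot J_v$ ($v\in V(\Gamma)$) whose carrier does not contain $1$, one whose carrier is \emph{farthest} from $1$ — or more precisely, one such that no other hyperplane $\varphi\cdot J_w$ separates $1$ from $\varphi\cdot J_v$. (Such a hyperplane exists by finiteness of the relevant set and the structure of separation in the quasi-median graph.) Call this hyperplane $H = \varphi\cdot J_v$, and let $K$ be a hyperplane of $X(\Gamma,\cG)$ separating $1$ from $H$, chosen adjacent to $N(H)$; write $K = w J_a$ for suitable $w$ and $a\in V(\Gamma)$.

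The key point is that $K$ is then an image hyperplane of a conjugate of a vertex group under $\varphi$ — more precisely, $\mathrm{stab}_\circlearrowleft(K) = \varphi(P)$ for some conjugate $P$ of a vertex group $G_b$, since the isometry of Fact \ref{fact:ConjIsomacts} is a bijection on vertices (it is induced by $\varphi$ acting on conjugates of vertex groups via Proposition \ref{prop:factorgraphalg}). I would then let $g\in\mathrm{stab}_\circlearrowleft(K)$ be the unique element (Proposition \ref{prop:rotstabinX}) sending $H$ into the sector of $K$ containing $1$, and apply Lemma \ref{lem:shortendist} to get $d(1,N(gH)) < d(1,N(H))$. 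The conjugation by $g$ corresponds, on the algebra side, to precomposing $\varphi$ with the inner automorphism $\iota(\varphi^{-1}(g^{-1}))$ of $\Gamma\cG$ — or, after decomposing $g$ appropriately using that $P$ is a conjugate of a vertex group and writing the conjugating element in normal form, with a product of partial conjugations and an inner automorphism, all of which lie in $\mathrm{ConjP}(\Gamma\cG)$. Set $\alpha$ to be this element.

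It then remains to check that $\|\varphi\alpha\| < \|\varphi\|$. The effect of precomposing with $\alpha$ is to translate all the image hyperplanes $\varphi\cdot J_w$ by $g$ (this is where the extremal choice of $H$ matters): I would argue that applying $g$ strictly decreases $d(1, N(\cdot))$ for $H$ itself by Lemma \ref{lem:shortendist}, while it does not increase this quantity for any other $\varphi\cdot J_w$. For the latter, the maximality of $H$ in the separation order ensures that $K$ does not separate $1$ from $\varphi\cdot J_w$ for any $w$, so $g\in\mathrm{stab}_\circlearrowleft(K)$ either fixes $\varphi\cdot J_w$ or moves it without increasing the distance from $1$ to its carrier — one checks this via Proposition \ref{prop:rotstabinX} (for hyperplanes transverse to $K$, $g$ stabilises the relevant sectors; for the rest, the carrier stays on the same side of $K$ as $1$, and one uses convexity of carriers from Theorem \ref{thm:QmHyperplanes}). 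Summing over $\cJ$ gives the strict inequality.

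The main obstacle I anticipate is the last step: controlling what happens to \emph{all} the other image hyperplanes $\varphi\cdot J_w$ simultaneously when we apply $g$. One has to be careful that moving $H$ closer to $1$ does not inadvertently push some other carrier farther away; this is exactly why the extremal (``peripheral-like'') choice of $H$ — no $\varphi\cdot J_w$ separates $1$ from $H$ — is essential, and making that argument precise using the ping-pong-style separation analysis of Section \ref{section:CubicalLikeGeom} (in particular Proposition \ref{prop:rotstabinX} and Lemma \ref{lem:shortendist}, and the convexity of carriers) is the technical heart of the proof. A secondary point requiring care is the bookkeeping that translates ``conjugation by $g\in\mathrm{stab}_\circlearrowleft(K)$'' into a genuine element $\alpha\in\mathrm{ConjP}(\Gamma\cG)$: one needs that $g$, being an element of a conjugate $h G_b h^{-1}$ of a vertex group, yields a product of partial conjugations (for the $h$-part) and an honest inner automorphism coming from $G_b$, all of which are by definition in $\mathrm{ConjP}(\Gamma\cG)$.
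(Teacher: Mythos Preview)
Your approach has a genuine gap, and it stems from choosing the wrong separating hyperplane. You pick an arbitrary hyperplane $K$ separating $1$ from $H=\varphi\cdot J_v$, note that $\mathrm{stab}_\circlearrowleft(K)=\varphi(P)$ for some \emph{conjugate} $P$ of a vertex group, and then effectively use the inner automorphism by $\varphi^{-1}(g)$. But an inner automorphism translates \emph{every} image hyperplane $\varphi\cdot J_w$ by $g$, and your claimed control on the other summands fails: your extremal condition ``no $\varphi\cdot J_w$ separates $1$ from $H$'' does \emph{not} imply ``$K$ separates $1$ from no $\varphi\cdot J_w$''. Concretely, any $\varphi\cdot J_w$ lying in the same sector of $K$ as $1$ (and not transverse to $K$) gets pushed by $g$ into a different sector, so $d(1,N(g\cdot\varphi\cdot J_w))$ can strictly increase --- for instance from $0$ to at least $1$ whenever $1\in N(\varphi\cdot J_w)$. (Incidentally, ``farthest from $1$'' and ``no $\varphi\cdot J_w$ separates $1$ from it'' are opposite conditions; the latter means \emph{closest} in the separation order.)

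The paper avoids this by a different, and crucial, choice of separating hyperplane. Claim~\ref{claim:peripheral} (proved via the ping-pong Lemma~\ref{lem:pingpong}) says that $\|\varphi\|\geq 1$ forces $\varphi\cdot\mathcal J$ to be non-peripheral: some $\varphi\cdot J_a$ \emph{itself} separates $1$ from some $\varphi\cdot J_b$. Then $\mathrm{stab}_\circlearrowleft(\varphi\cdot J_a)=\varphi(G_a)$ on the nose, so the required element $y=\varphi^{-1}(x)$ lies in $G_a$, and one takes $\alpha$ to be the \emph{partial} conjugation $(a,\Lambda,y)$ for the component $\Lambda$ of $\Gamma\setminus\mathrm{star}(a)$ containing $b$. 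This $\alpha$ fixes $J_u$ for $u\notin\Lambda$ and sends $J_u\mapsto yJ_u$ for $u\in\Lambda$; and one checks, via a path in $\Lambda$ from $u$ to $b$, that every $\varphi\cdot J_u$ with $u\in\Lambda$ lies in the same sector of $\varphi\cdot J_a$ as $\varphi\cdot J_b$, so Lemma~\ref{lem:shortendist} applies to each of them. Only summands that strictly decrease are touched; the others are literally unchanged. The ``controlling all other hyperplanes'' problem you flag as the main obstacle is thus sidestepped entirely by using a partial conjugation rather than an inner one --- and that is only possible because the separating hyperplane was arranged to be an image $\varphi\cdot J_a$.
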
 

\begin{lemma}\label{lem:complexityzero}
Let $\varphi$ be as in the statement of Theorem \ref{thm:ConjIsom}. If $\| \varphi \| =0$ then there exists an isometry $s : \Gamma \to \Phi$ such that $\varphi$ sends isomorphically $G_u$ to $H_{s(u)}$ for every $u \in V(\Gamma)$. 
\end{lemma}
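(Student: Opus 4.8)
The plan is to unpack what $\|\varphi\|=0$ tells us and then match up vertex groups on the two sides. Since $\|\varphi\| = \sum_{u\in V(\Gamma)} d(1, N(\varphi\cdot J_u))$ is a sum of non-negative integers, $\|\varphi\|=0$ forces $d(1,N(\varphi\cdot J_u))=0$, i.e. $1 \in N(\varphi\cdot J_u)$, for every $u \in V(\Gamma)$. By Theorem \ref{thm:HypStab} applied in $X(\Phi,\mathcal{H})$, a hyperplane whose carrier contains the base vertex $1$ must be of the form $H_{s(u)}$ for some vertex $s(u) \in V(\Phi)$; this defines a map $s : V(\Gamma) \to V(\Phi)$. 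By construction of the isometry $T(\Gamma,\mathcal{G}) \to T(\Phi,\mathcal{H})$ from Fact \ref{fact:ConjIsomacts}, the hyperplane $\varphi\cdot J_u$ is characterised by $\mathrm{stab}_\circlearrowleft(\varphi\cdot J_u) = \varphi(\mathrm{stab}_\circlearrowleft(J_u))$. By Proposition \ref{prop:rotstabinX}, $\mathrm{stab}_\circlearrowleft(J_u) = G_u$ and $\mathrm{stab}_\circlearrowleft(H_{s(u)}) = H_{s(u)}$; hence $\varphi(G_u) = H_{s(u)}$ for every $u$, and $\varphi$ restricts to an isomorphism $G_u \to H_{s(u)}$.

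It remains to check that $s$ is an isometry $\Gamma \to \Phi$. First, $s$ is injective: if $s(u)=s(v)$ with $u \neq v$, then $J_u$ and $J_v$ are distinct hyperplanes of $X(\Gamma,\mathcal{G})$ — their labels differ, so they are distinct — whereas $\varphi\cdot J_u$ and $\varphi\cdot J_v$ would both equal $H_{s(u)}$, contradicting that $\varphi$ acts by an isometry (in particular bijectively) on the transversality graph. Since $\varphi^{-1}$ is also an isomorphism sending vertex groups to conjugates of vertex groups, the same argument produces a map $s' : V(\Phi) \to V(\Gamma)$, and one checks $s' \circ s = \mathrm{Id}$ and $s \circ s' = \mathrm{Id}$ using $\varphi^{-1}(H_{s(u)}) = G_u$; thus $s$ is a bijection on vertex sets. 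Finally, adjacency is preserved in both directions: $u,v$ are adjacent in $\Gamma$ iff $J_u, J_v$ are transverse in $X(\Gamma,\mathcal{G})$ (here we use that $J_u, J_v$ always have a common carrier vertex, namely $1$, so by Lemma \ref{lem:transverseimpliesadj} they are either transverse or tangent, and they are transverse precisely when $u,v$ are adjacent, by Fact \ref{fact:Square}), iff $G_u$ and $G_v$ commute, iff $\varphi(G_u)=H_{s(u)}$ and $\varphi(G_v)=H_{s(v)}$ commute, iff $s(u),s(v)$ are adjacent in $\Phi$. Hence $s$ is a graph isomorphism, and in particular an isometry $\Gamma \to \Phi$.

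I expect the only genuinely delicate point to be the clean identification of $\varphi\cdot J_u$ with a \emph{standard} hyperplane $H_{s(u)}$ — that is, making sure the condition $1 \in N(\varphi\cdot J_u)$ really does pin down the label via Theorem \ref{thm:HypStab} (one must note that $g\langle\mathrm{star}(w)\rangle$ contains $1$ iff $g \in \langle\mathrm{star}(w)\rangle$, in which case $gJ_w = J_w$). Everything else is bookkeeping with the dictionary between hyperplanes, rotative-stabilisers, and conjugates of vertex groups already established in Proposition \ref{prop:factorgraphalg} and Fact \ref{fact:ConjIsomacts}.
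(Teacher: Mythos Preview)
Your argument is largely sound and close to the paper's, but the surjectivity step contains a genuine gap. You assert that ``$\varphi^{-1}$ is also an isomorphism sending vertex groups to conjugates of vertex groups'' and then invoke ``the same argument'' to produce $s'$. Neither part is justified: the hypothesis of Theorem~\ref{thm:ConjIsom} only assumes this property for $\varphi$, not for $\varphi^{-1}$, and even granting it you would additionally need $\|\varphi^{-1}\|=0$ to rerun the first paragraph --- something you have not established. So as written, the bijectivity of $s$ is unproved.

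The paper handles surjectivity by a direct one-line argument that avoids $\varphi^{-1}$ entirely: since the $G_u$ generate $\Gamma\mathcal{G}$ and $\varphi$ is onto, the subgroups $\varphi(G_u)=H_{s(u)}$ generate $\Phi\mathcal{H}$; but $\langle H_{s(u)} : u \in V(\Gamma)\rangle = \langle \mathrm{im}(s)\rangle$, and this equals $\Phi\mathcal{H}$ only if $\mathrm{im}(s)=V(\Phi)$. Once surjectivity is in hand, injectivity is automatic (distinct $J_u$'s go to distinct hyperplanes under the isometry of transversality graphs, exactly as you note), and the adjacency-preservation follows immediately since $\varphi$ sends (non-)transverse hyperplanes to (non-)transverse hyperplanes --- your more detailed argument via commutation is correct but unnecessary. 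The rest of your proof, including the identification of $\varphi\cdot J_u$ with a standard hyperplane via $1\in N(\varphi\cdot J_u)$, matches the paper.
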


\noindent
Before turning to the proof of these two lemmas, we need the following observation:

\begin{claim}\label{claim:peripheral}
Let $\varphi$ be as in the statement of Theorem \ref{thm:ConjIsom}. If $\varphi \cdot \mathcal{J}$ is peripheral (see Definition \ref{def:peripheral}) then $\| \varphi \| =0$. 
\end{claim}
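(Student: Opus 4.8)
The plan is to prove the contrapositive: assuming $\| \varphi \| \geq 1$, I will show that $\varphi \cdot \mathcal{J}$ is not peripheral. Since $\| \varphi \| = \sum_{u \in V(\Gamma)} d(1, N(\varphi \cdot J_u))$ is a sum of non-negative integers, $\| \varphi \| \geq 1$ means that $1 \notin N(H_{u_0})$ for some $u_0 \in V(\Gamma)$, where I write $H_u := \varphi \cdot J_u$ for the corresponding hyperplane of $X(\Phi, \mathcal{H})$.

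The first ingredient is a reformulation of peripherality via Lemma \ref{lem:pingpong}. Note that the rotative-stabiliser of $H_u$ is $\varphi(\mathrm{stab}_\circlearrowleft(J_u)) = \varphi(G_u)$ (by Fact \ref{fact:ConjIsomacts} and Proposition \ref{prop:rotstabinX}), and that since $\varphi$ is onto and the $G_u$ generate $\Gamma \mathcal{G}$, the subgroups $\varphi(G_u)$ generate $\Phi \mathcal{H}$. So if $\varphi \cdot \mathcal{J}$ is peripheral, Lemma \ref{lem:pingpong} (applied in $X(\Phi, \mathcal{H})$ to the collection $\varphi \cdot \mathcal{J}$) tells us that no non-trivial element of $\Phi \mathcal{H} = \langle \mathrm{stab}_\circlearrowleft(H) \mid H \in \varphi \cdot \mathcal{J} \rangle$ lies in $\bigcap_{H \in \varphi \cdot \mathcal{J}} S(H)$, where $S(H)$ is the sector delimited by $H$ containing $1$; since $1$ lies in each $S(H)$, this means $\bigcap_{H \in \varphi \cdot \mathcal{J}} S(H) = \{ 1 \}$.

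Now I bring in the projection onto carriers. Let $y \in N(H_{u_0})$ be the projection of $1$ onto $N(H_{u_0})$: this exists and is unique by Proposition \ref{prop:ProjHyp} (applied after translating the carrier $N(H_{u_0})$, which by Theorem \ref{thm:HypStab} is a coset of a standard parabolic subgroup, to that parabolic subgroup). Since $d(1, y) = d(1, N(H_{u_0})) \geq 1$ we have $y \neq 1$, hence by the previous paragraph there is some $u_1 \in V(\Gamma)$ with $y \notin S(H_{u_1})$, that is, $H_{u_1}$ separates $1$ from $y$. By the second assertion of Proposition \ref{prop:ProjHyp}, every hyperplane separating $1$ from $y$ separates $1$ from $N(H_{u_0})$, and a fortiori from $H_{u_0}$; thus $H_{u_1}$ separates $1$ from $H_{u_0}$. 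Finally $u_1 \neq u_0$: if $H_{u_0}$ separated $1$ from $y$, then by Proposition \ref{prop:ProjHyp} it would separate $1$ from its own carrier $N(H_{u_0})$, which is impossible because $N(H_{u_0})$ meets every sector delimited by $H_{u_0}$ (the rotative-stabiliser $\mathrm{stab}_\circlearrowleft(H_{u_0})$ preserves $N(H_{u_0})$ and, by Proposition \ref{prop:rotstabinX}, acts transitively on the set of sectors, of which there are at least two). Hence $H_{u_1}$ and $H_{u_0}$ are distinct hyperplanes belonging to $\varphi \cdot \mathcal{J}$ with $H_{u_1}$ separating $1$ from $H_{u_0}$, contradicting the peripherality of $\varphi \cdot \mathcal{J}$.

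I do not expect a genuine obstacle here: once the reformulation $\bigcap_{H} S(H) = \{1\}$ is in place via Lemma \ref{lem:pingpong}, the rest is a short manipulation of Proposition \ref{prop:ProjHyp}. The two points needing a little care are purely bookkeeping: verifying that Lemma \ref{lem:pingpong} is invoked in the target graph $X(\Phi, \mathcal{H})$ with rotative-stabilisers that genuinely generate all of $\Phi \mathcal{H}$, and checking that the second application of Proposition \ref{prop:ProjHyp} (ruling out $u_1 = u_0$) really does yield a contradiction, which reduces to the elementary fact that the carrier of a hyperplane is never contained in a single sector.
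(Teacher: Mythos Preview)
Your proof is correct and follows essentially the same route as the paper: both use Lemma \ref{lem:pingpong} to deduce that the rotative-stabilisers of the hyperplanes in $\varphi\cdot\mathcal{J}$ generate all of $\Phi\mathcal{H}$ and hence, under peripherality, that $\bigcap_{J\in\varphi\cdot\mathcal{J}} S(J)=\{1\}$. The paper then simply asserts that this forces $1\in\bigcap_{J\in\varphi\cdot\mathcal{J}} N(J)$ and hence $\|\varphi\|=0$, whereas you spell out this last implication carefully via the projection onto a carrier and Proposition \ref{prop:ProjHyp}; your contrapositive packaging and the extra detail on the step $\bigcap S(J)=\{1\}\Rightarrow 1\in\bigcap N(J)$ are the only differences.
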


\begin{proof}
Notice that 
$$\langle \mathrm{stab}_\circlearrowleft (J) \mid J \in \varphi \cdot \mathcal{J} \rangle= \varphi \left( \langle \mathrm{stab}_\circlearrowleft (J) \mid J \in \mathcal{J} \rangle \right) = \Phi \mathcal{H}.$$
We deduce from Lemma \ref{lem:pingpong} that, if we denote by $S(J)$ the sector delimited by $\varphi \cdot J$ which contains $1$, then $g \notin \bigcap\limits_{J \in \varphi \cdot \mathcal{J}} S(J)$ for every $g \in \Phi \mathcal{H} \backslash \{ 1 \}$. Since the action $\Phi \mathcal{H} \curvearrowright X(\Phi, \mathcal{H})$ is vertex-transitive, it follows that $\bigcap\limits_{J \in \varphi \cdot \mathcal{J}} S(J)= \{1\}$, which implies that $1 \in \bigcap\limits_{J \in \varphi \cdot \mathcal{J}} N(J)$, and finally that $\| \varphi \|=0$, proving our claim.
\end{proof}

\begin{proof}[Proof of Lemma \ref{lem:shortencomplexity}.]
We deduce from Claim \ref{claim:peripheral} that $\varphi \cdot \mathcal{J}$ is not peripheral, ie., there exist two distinct vertices $a,b \in V(\Gamma)$ such that $\varphi \cdot J_a$ separates $1$ from $\varphi \cdot J_b$. Notice that $a$ and $b$ are not adjacent in $\Gamma$ as the hyperplanes $\varphi \cdot J_a$ and $\varphi \cdot J_b$ are not transverse. Let $x \in \mathrm{stab}_\circlearrowleft (\varphi \cdot J_a)$ be the element sending $\varphi \cdot J_b$ into the sector delimited by $\varphi \cdot J_a$ which contains $1$. Notice that $\mathrm{stab}_\circlearrowleft (\varphi \cdot J_a)= \varphi \left( \mathrm{stab}_\circlearrowleft(J_a) \right)$, so there exists some $y \in \mathrm{stab}_\circlearrowleft(J_a)$ such that $x= \varphi(y)$. 

\medskip \noindent
Now, let $\alpha$ denote the partial conjugation of $\Gamma \mathcal{G}$ which conjugates by $y$ the vertex groups of the connected component $\Lambda$ of $\Gamma \backslash \mathrm{star}(a)$ which contains $b$. According to Fact \ref{fact:ConjAutacts}, $\alpha$ can be thought of as an isometry of $T(\Gamma, \mathcal{G})$. We claim that $\| \varphi \alpha \| < \| \varphi \|$. Notice that $\varphi \alpha \cdot J_u= \varphi \cdot J_u$ for every $u \notin \Lambda$, and that
$$\varphi \alpha \cdot J_u = \varphi \cdot yJ_u = \varphi(y) \cdot \varphi \cdot J_u = x \cdot \varphi \cdot J_u$$
for every $u \in \Lambda$. Therefore, in order to prove the inequality $\| \varphi \alpha \| < \| \varphi \|$, it is sufficient to show that $d(1,N(x \cdot \varphi \cdot J_u))< d(1,N(\varphi \cdot J_u))$ for every $u \in \Lambda$. It is a consequence of Lemma \ref{lem:shortendist} and of the following observation:

\begin{fact}
For every $u \in \Lambda$, $\varphi \cdot J_u$ is contained in the sector delimited by $\varphi \cdot J_a$ which contains $\varphi \cdot J_b$.
\end{fact}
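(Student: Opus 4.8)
The plan is to establish the inclusion $\varphi \cdot J_u \subset S$, where $S$ denotes the sector delimited by $\varphi \cdot J_a$ which contains $\varphi \cdot J_b$, by using the labelling of hyperplanes by vertices of $\Gamma$ together with the combinatorial structure of sectors. First I would record the two defining facts of the situation: the vertex $u$ lies in the connected component $\Lambda$ of $\Gamma \backslash \mathrm{star}(a)$ containing $b$, and $\varphi \cdot J_a$ separates $1$ from $\varphi \cdot J_b$, with $a$ and $b$ non-adjacent in $\Gamma$ (so $\varphi \cdot J_a$ and $\varphi \cdot J_b$ are not transverse, by Lemma \ref{lem:transverseimpliesadj} applied on the $\Phi\mathcal{H}$ side). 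The key point is that since $u \in \Lambda$, there is a path in $\Gamma$ from $b$ to $u$ avoiding $\mathrm{star}(a)$; transferring this into $\Phi\mathcal{H}$ via $\varphi$ should give a chain of hyperplanes from $\varphi \cdot J_b$ to $\varphi \cdot J_u$, none of which is transverse to $\varphi \cdot J_a$ (because consecutive vertices along the path are joined by an edge of $\langle \Lambda \cup \{b\}\rangle$, a subgroup commuting trivially with the relevant data, and none of the vertices on the path lies in $\mathrm{star}(a)$).

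Concretely, I would argue as follows. Pick a non-trivial element $h \in G_u$; then the coset $\varphi(G_u) = g H_w g^{-1}$ for some $w \in V(\Phi)$ and some $g \in \Phi\mathcal{H}$, and the clique dual to $\varphi \cdot J_u$ is $gH_w$, which is contained in $\langle \varphi(\mathrm{star}(u)\mathcal{G})\rangle$ translated appropriately. Since $u \in \Lambda$ and $\Lambda$ is a connected component of $\Gamma \backslash \mathrm{star}(a)$, the subgroup $\langle \Lambda \rangle$ is generated by vertex groups none of which is $G_a$ or adjacent to $G_a$; under $\varphi$, $\langle \mathrm{star}(a)\mathcal{G}\cup \Lambda \rangle = \langle \mathrm{star}(a)\mathcal{G}\rangle \ast \langle \Lambda\rangle$-type structure is respected up to conjugacy. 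The cleanest route is to use that sectors are convex subgraphs of $X(\Phi,\mathcal{H})$ (they are the connected components of the complement of a hyperplane, and carriers are convex by Theorem \ref{thm:QmHyperplanes}) and that a hyperplane $K$ fails to be contained in $S$ only if $\varphi \cdot J_a$ separates $1$ from $K$ or $K$ is transverse to $\varphi\cdot J_a$; I would rule out the latter by the labelling argument (the label of $\varphi \cdot J_u$ would then need to be adjacent to that of $\varphi \cdot J_a$, forcing $u$ into a position incompatible with $u \in \Lambda$), and rule out the former by observing that any hyperplane separating $1$ from $\varphi\cdot J_u$ and transverse-free with respect to $\varphi\cdot J_a$ must, by the path-connectedness of $\Lambda$ and induction along the path from $b$ to $u$, lie on the same side as $\varphi\cdot J_b$.

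The main obstacle I anticipate is making precise the transfer "a path in $\Gamma$ from $b$ to $u$ avoiding $\mathrm{star}(a)$ yields a chain of hyperplanes in $X(\Phi,\mathcal{H})$ all on the $\varphi\cdot J_b$-side of $\varphi\cdot J_a$". The subtlety is that $\varphi$ only sends vertex groups to \emph{conjugates} of vertex groups, so the images $\varphi(G_v)$ for $v$ along the path are a priori scattered; one must use that $\langle \Lambda \rangle$ is a single parabolic subgroup of $\Gamma\mathcal{G}$, hence $\varphi(\langle\Lambda\rangle)$ is contained in a single sector complement structure on the $\Phi\mathcal{H}$-side, and that $\varphi$ being an isomorphism preserves commutation and the non-transversality relation (via Proposition \ref{prop:factorgraphalg} and Fact \ref{fact:ConjIsomacts}). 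Once one knows $\varphi(\langle \Lambda\rangle)$ together with $\varphi\cdot J_b$ all lie in $S$ — which follows because $\varphi \cdot J_a$ does not separate $1$ from $\varphi(\langle\Lambda\rangle)$, as $\langle\Lambda\rangle$ and $G_a$ generate a free product (so the hyperplane $J_a$ does not separate $1$ from $\langle \Lambda\rangle$ in $X(\Gamma,\mathcal{G})$, and this is preserved) — the fact that $\varphi\cdot J_u$ is dual to a clique inside $\varphi(\langle\Lambda\rangle)$'s carrier closes the argument.
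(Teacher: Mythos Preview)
Your first paragraph already contains the correct idea and even names the right tool (Fact~\ref{fact:ConjIsomacts}), but you then abandon it for a route through $\varphi(\langle\Lambda\rangle)$ that does not work as stated. Two concrete problems. First, your dichotomy ``$K$ fails to be contained in $S$ only if $\varphi\cdot J_a$ separates $1$ from $K$ or $K$ is transverse to $\varphi\cdot J_a$'' is backwards: $S$ is the sector \emph{not} containing $1$ (since $\varphi\cdot J_a$ separates $1$ from $\varphi\cdot J_b$), so $K\not\subset S$ means $K$ is transverse to $\varphi\cdot J_a$ or $K$ lies in some sector containing $1$, i.e.\ $\varphi\cdot J_a$ does \emph{not} separate $1$ from $K$. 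Second, your argument via $\varphi(\langle\Lambda\rangle)$ is circular or empty: $\varphi(\langle\Lambda\rangle)$ is a subgroup of $\Phi\mathcal{H}$ containing $1$, so ``$\varphi\cdot J_a$ does not separate $1$ from $\varphi(\langle\Lambda\rangle)$'' is trivially true and says nothing; and the claim that the property ``$J_a$ does not separate $1$ from $\langle\Lambda\rangle$'' is ``preserved'' by $\varphi$ has no justification, since $\varphi$ is not known to act on $X(\Phi,\mathcal{H})$ itself, only on the transversality graph.

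The paper's proof is exactly the short argument you sketched but did not execute: take a path $u=u_1,\ldots,u_n=b$ in $\Gamma$ avoiding $\mathrm{star}(a)$. Since $\varphi$ is an isometry $T(\Gamma,\mathcal{G})\to T(\Phi,\mathcal{H})$, consecutive $\varphi\cdot J_{u_i}$ are transverse (because consecutive $u_i$ are adjacent in $\Gamma$), and no $\varphi\cdot J_{u_i}$ is transverse to $\varphi\cdot J_a$ (because no $u_i$ is adjacent to $a$, hence $J_{u_i}$ and $J_a$ are not transverse). Two transverse hyperplanes, neither transverse to a third, have intersecting carriers and hence lie in the same sector delimited by the third; inducting along the chain puts all the $\varphi\cdot J_{u_i}$ in the sector containing $\varphi\cdot J_b$. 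No parabolic subgroups, no free-product structure, no tracking of conjugating elements are needed: the entire transfer happens at the level of the transversality graph, which is precisely what Fact~\ref{fact:ConjIsomacts} is for.
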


\noindent
Let $u \in \Lambda$ be a vertex. By definition of $\Lambda$, there exists a path 
$$u_1=u, \ u_2, \ldots, u_{n-1}, \ u_n=b$$
in $\Gamma$ that is disjoint from $\mathrm{star}(a)$. This yields a path
$$\varphi \cdot J_{u_1}=\varphi \cdot J_u, \ \varphi \cdot J_{u_2}, \ldots, \varphi \cdot J_{u_{n-1}}, \ \varphi \cdot J_{u_n}=\varphi \cdot J_b$$
in the transversality graph $T(\Gamma, \mathcal{G})$. As a consequence of Lemma \ref{lem:transverseimpliesadj}, none of these hyperplanes are transverse to $\varphi \cdot J_a$, which implies that they are all contained in the same sector delimited by $\varphi \cdot J_a$, namely the one containing $\varphi \cdot J_b$, proving our fact. This concludes the proof of our lemma.
\end{proof}

\begin{proof}[Proof of Lemma \ref{lem:complexityzero}.]
If $\| \varphi \|=0$ then $1 \in \bigcap\limits_{J \in \varphi \cdot \mathcal{J}} N(J)$, which implies that $\varphi \cdot \mathcal{J} \subset \mathcal{K}$ where $\mathcal{K}= \{ J_u \mid u \in V(\Phi) \}$. Let $\Lambda$ be the subgraph of $\Phi$ such that $\varphi \cdot \mathcal{J}= \{ J_u \mid u \in V(\Lambda)$. Notice that 
$$\langle \mathrm{stab}_\circlearrowleft (J) \mid J \in \varphi \cdot \mathcal{J} \rangle= \varphi \left( \langle \mathrm{stab}_\circlearrowleft (J) \mid J \in \mathcal{J} \rangle \right) = \Phi \mathcal{H}.$$
Therefore, one has
$$G_u= \mathrm{stab}_\circlearrowleft (J_u) \leq \Phi \mathcal{H} = \langle \mathrm{stab}_\circlearrowleft(J) \mid J \in \varphi \cdot \mathcal{J} \rangle = \langle \Lambda \rangle$$
for every $u \in V(\Phi)$, hence $\Lambda = \Phi$. This precisely means that $\varphi \cdot \mathcal{J} = \mathcal{K}$. In other words, there exists a map $s : V(\Gamma) \to V(\Phi)$ such that $\varphi$ sends isomorphically $G_u$ to $H_{s(u)}$ for every $u \in V(\Gamma)$. Notice that $s$ necessarily defines an isometry since $\varphi$ sends two (non-)transverse hyperplanes to two (non-)transverse hyperplanes.
\end{proof}

\begin{proof}[Proof of Theorem \ref{thm:ConjIsom}.]
Given our isomorphism $\varphi : \Gamma \mathcal{G} \to \Phi \mathcal{H}$, we apply Lemma \ref{lem:shortencomplexity} iteratively to find automorphisms $\alpha_1, \ldots, \alpha_m \in \mathrm{ConjP}(\Gamma \mathcal{G})$ such that $\| \varphi \alpha_1 \cdots \alpha_m \| =0$. Set $\alpha = \alpha_1 \cdots \alpha_m$. By Lemma \ref{lem:complexityzero},  there exists an isometry $s : \Gamma \to \Phi$ such that $\varphi$ sends isomorphically $G_u$ to $H_{s(u)}$ for every $u \in V(\Gamma)$. 
\end{proof}

\begin{proof}[Proof of Theorem \ref{thm:conjugatingauto}.]
An immediate consequence of Theorem \ref{thm:ConjIsom} is that $\mathrm{ConjAut}(\Gamma \mathcal{G})= \mathrm{Loc}(\Gamma \mathcal{G}) \cdot \mathrm{ConjP}(\Gamma \mathcal{G})$. As $\mathrm{Loc}(\Gamma \mathcal{G})$ is contained in $\mathrm{ConjP}(\Gamma \mathcal{G})$, it follows that $\mathrm{ConjP}(\Gamma \mathcal{G})$ and $\mathrm{ConjAut}(\Gamma \mathcal{G})$ coincide.
\end{proof}

\subsection{Application to graph products of finite groups}\label{section:applicationfinite}

\noindent
In this section, we focus on graph products of finite groups. This includes, for instance, right-angled Coxeter groups. The main result of this section, which will be deduced from Theorem \ref{thm:conjugatingauto}, is the following:

\begin{thm}\label{thm:GPfinitegroups}
Let $\Gamma$ be a finite simplicial graph and $\mathcal{G}$ a collection of finite groups indexed by $V(\Gamma)$. Then the subgroup generated by partial conjugations has finite index in $\mathrm{Aut}(\Gamma \mathcal{G})$. 
\end{thm}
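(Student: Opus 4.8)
The plan is to leverage Theorem~\ref{thm:conjugatingauto}, which identifies the group $\mathrm{ConjP}(\Gamma\mathcal{G})$ generated by inner automorphisms, local automorphisms and partial conjugations with the full group $\mathrm{ConjAut}(\Gamma\mathcal{G})$ of conjugating automorphisms. Since all vertex groups of $\mathcal{G}$ are finite, the local automorphism group $\mathrm{Loc}(\Gamma\mathcal{G})$ is itself finite: it sits in an extension of a subgroup of $\mathrm{Aut}(\Gamma)$ (which is finite because $\Gamma$ is finite) by $\mathrm{Loc}^0(\Gamma\mathcal{G}) \cong \bigoplus_{u\in V(\Gamma)} \mathrm{Aut}(G_u)$, a finite direct sum of finite groups. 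Consequently $\mathrm{ConjP}(\Gamma\mathcal{G})$ is generated by partial conjugations together with a \emph{finite} set of local automorphisms (and inner automorphisms, which are products of partial conjugations anyway); so the subgroup $P$ generated by partial conjugations has finite index in $\mathrm{ConjP}(\Gamma\mathcal{G}) = \mathrm{ConjAut}(\Gamma\mathcal{G})$. It therefore suffices to prove that $\mathrm{ConjAut}(\Gamma\mathcal{G})$ has finite index in $\mathrm{Aut}(\Gamma\mathcal{G})$, which is exactly the content of Theorem D; this is the step that carries the real mathematical weight.

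To prove that $\mathrm{ConjAut}(\Gamma\mathcal{G})$ has finite index in $\mathrm{Aut}(\Gamma\mathcal{G})$, the idea is to exhibit a finite set of conjugacy classes of subgroups of $\Gamma\mathcal{G}$ that must be permuted (up to conjugacy) by every automorphism, and in which the conjugacy classes of vertex groups form a distinguished, automorphism-invariant subcollection. The natural candidate is the collection of conjugacy classes of \emph{finite subgroups that are maximal among finite subgroups}. By Lemma~\ref{lem:finitesub}, every finite subgroup of $\Gamma\mathcal{G}$ is contained in a conjugate of some $\langle\Lambda\rangle$ with $\Lambda$ a complete subgraph of $\Gamma$; since the vertex groups (hence all the $G_v$ with $v\in\Lambda$) are finite and $\Lambda$ is complete, $\langle\Lambda\rangle = \prod_{v\in\Lambda} G_v$ is itself finite. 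So the maximal finite subgroups are exactly the conjugates $g\langle\Lambda\rangle g^{-1}$ for $\Lambda$ a \emph{maximal} complete subgraph of $\Gamma$, and there are only finitely many conjugacy classes of them (one for each maximal clique of $\Gamma$). Any automorphism $\varphi$ of $\Gamma\mathcal{G}$ permutes the conjugacy classes of maximal finite subgroups, hence induces a permutation of the (finite) set of maximal cliques of $\Gamma$; after passing to the finite-index subgroup fixing each such class, $\varphi$ sends each $\langle\Lambda\rangle$ to a conjugate of itself.

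The crux is then to recover the vertex groups inside these finite parabolic subgroups. Fix a maximal clique $\Lambda$; then $\langle\Lambda\rangle = \prod_{v\in\Lambda} G_v$ is a finite group with a canonical direct product decomposition. One wants to argue that the factors $G_v$ — or at least the partition they induce — are recognizable intrinsically, so that an automorphism fixing the conjugacy class of $\langle\Lambda\rangle$ must (after a further finite-index reduction, using the finiteness of $\mathrm{Aut}(\langle\Lambda\rangle)$ and of the number of ways to reconcile decompositions coming from overlapping maximal cliques) send each $G_v$ to a conjugate of a vertex group. The honest obstacle is that a finite direct product need not have a unique decomposition into directly indecomposable factors matching the $G_v$ (e.g. $G_v$ itself may decompose, or two isomorphic factors can be recombined), and factors shared between different maximal cliques must be matched consistently. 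The way around this is to observe that the \emph{decomposition of $\langle\Lambda\rangle$ as a product indexed by the vertices of $\Lambda$} is pinned down by how $\langle\Lambda\rangle$ sits relative to the other $\langle\Lambda'\rangle$: the intersection $\langle\Lambda\rangle \cap \langle\Lambda'\rangle = \langle\Lambda\cap\Lambda'\rangle$ recovers sub-products, and an automorphism respecting all these (finitely many) parabolics respects the vertex-indexed factorization up to the finite ambiguity controlled by $\prod_v \mathrm{Aut}(G_v)$ and permutations of equal factors. Intersecting all these finite-index conditions yields a finite-index subgroup of $\mathrm{Aut}(\Gamma\mathcal{G})$ consisting of conjugating automorphisms, which is what we need. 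Combining with the first paragraph, the subgroup generated by partial conjugations has finite index in $\mathrm{Aut}(\Gamma\mathcal{G})$.
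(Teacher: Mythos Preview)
Your overall strategy matches the paper's: classify maximal finite subgroups as conjugates of $\langle\Lambda\rangle$ for maximal cliques $\Lambda$, pass to a finite-index subgroup $H\leq\mathrm{Aut}(\Gamma\mathcal{G})$ preserving each such conjugacy class, and then argue that (a further finite-index subgroup of) $H$ lies in $\mathrm{ConjAut}(\Gamma\mathcal{G})$. The reduction to finite index of the partial-conjugation subgroup inside $\mathrm{ConjP}(\Gamma\mathcal{G})$ via finiteness of $\mathrm{Loc}(\Gamma\mathcal{G})$ is also what the paper does.

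The genuine gap is in your final paragraph. You want to recover the vertex groups $G_u$ from the way the various $\langle\Lambda\rangle$ intersect, but your argument implicitly treats ``$\varphi$ preserves the conjugacy class of each $\langle\Lambda\rangle$'' as if it were ``$\varphi$ preserves each $\langle\Lambda\rangle$ on the nose''. Concretely, if $\varphi(\langle\Lambda\rangle)=g\langle\Lambda\rangle g^{-1}$ and $\varphi(\langle\Lambda'\rangle)=h\langle\Lambda'\rangle h^{-1}$ with $g\neq h$, then $\varphi(\langle\Lambda\cap\Lambda'\rangle)=g\langle\Lambda\rangle g^{-1}\cap h\langle\Lambda'\rangle h^{-1}$, and nothing you have said forces this to be a conjugate of $\langle\Lambda\cap\Lambda'\rangle$, let alone a consistent one across all pairs. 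So the ``finite ambiguity controlled by $\prod_v\mathrm{Aut}(G_v)$ and permutations'' is not actually established; the conjugating elements floating around are not reconciled.

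The paper sidesteps this entirely. Instead of analysing intersections, it uses that each maximal finite subgroup $F$ is self-normalising (a consequence of Lemma~\ref{lem:conjincluded}) to get a well-defined homomorphism $\Psi_F:H\to\mathrm{Out}(F)$, sending $\varphi$ to the class of $(\iota(g)^{-1}\varphi)_{|F}$ where $\varphi(F)=gFg^{-1}$. Since $F$ is finite, $\mathrm{Out}(F)$ is finite, so $K=\bigcap_F\ker\Psi_F$ has finite index. For $\varphi\in K$ and any $u\in V(\Gamma)$, pick a maximal clique $\Lambda\ni u$; then $\varphi$ acts on $F=\langle\Lambda\rangle$ (after conjugating back) by an \emph{inner} automorphism of $F$, which automatically sends $G_u\leq F$ to a conjugate of $G_u$. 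This is exactly the missing mechanism: you never need to match up conjugators across different cliques, because the inner-ness of the induced automorphism of a single $F$ already pins down $G_u$ up to conjugacy.
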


\noindent
Before turning to the proof of the theorem, we will need the following preliminary result about general graph products of groups: 

\begin{lemma}\label{lem:conjincluded}
Let $\Gamma$ be a simplicial graph and $\mathcal{G}$ a collection of groups indexed by $V(\Gamma)$. Fix two subgraphs $\Lambda_1,\Lambda_2 \subset \Gamma$ and an element $g \in \Gamma \mathcal{G}$. If $\langle \Lambda_1 \rangle \subset g \langle \Lambda_2 \rangle g^{-1}$, then $\Lambda_1 \subset \Lambda_2$ and $g \in \langle \Lambda_1 \rangle \cdot \langle \mathrm{link}(\Lambda_1) \rangle \cdot \langle \Lambda_2 \rangle$.
\end{lemma}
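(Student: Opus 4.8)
The statement is purely about the combinatorics of parabolic subgroups of a graph product, so the cleanest approach is to use the geometry of the quasi-median graph $X(\Gamma,\mathcal{G})$ that has just been developed, in particular the description of prisms (Lemma~\ref{lem:PrismGP}), hyperplanes (Theorem~\ref{thm:HypStab}) and the projection results of Proposition~\ref{prop:ProjHyp} and its corollaries. I would translate the hypothesis $\langle \Lambda_1 \rangle \subset g \langle \Lambda_2 \rangle g^{-1}$ into the statement that the convex subgraph $\langle \Lambda_1 \rangle$ of $X(\Gamma,\mathcal{G})$ (which contains the vertex $1$) is contained in the convex subgraph $g \langle \Lambda_2 \rangle$; here I use that parabolic subgroups are convex in $X(\Gamma,\mathcal{G})$ by Proposition~\ref{prop:distinGP}.

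First I would prove the inclusion $\Lambda_1 \subset \Lambda_2$. For each vertex $u \in \Lambda_1$, pick a non-trivial $a \in G_u$; then $1$ and $a$ both lie in $\langle \Lambda_1 \rangle \subset g\langle \Lambda_2\rangle$, so the edge between them (a clique labelled $u$) lies in $g\langle\Lambda_2\rangle$, hence the hyperplane $J_u$ it spans crosses $g\langle\Lambda_2\rangle$. By Theorem~\ref{thm:HypStab} a hyperplane crossing $g\langle\Lambda_2\rangle = g\langle\Lambda_2\rangle$ must be labelled by a vertex of $\Lambda_2$ (this is the analogue of the fact, used implicitly in the proof of Proposition~\ref{prop:ProjHyp}, that the hyperplanes crossing a parabolic subgraph $\langle\Lambda\rangle$ are exactly the translates $hJ_v$ with $v\in\Lambda$, $h\in\langle\Lambda\rangle$), so $u \in \Lambda_2$. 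This gives $\Lambda_1 \subset \Lambda_2$, and in particular $\langle\Lambda_1\rangle \subset \langle\Lambda_2\rangle$, so both $1$ and $\langle\Lambda_1\rangle$ sit inside the two convex subgraphs $\langle\Lambda_2\rangle$ and $g\langle\Lambda_2\rangle$.

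Next I would pin down $g$. Let $x$ be the projection of $1$ onto $g\langle\Lambda_2\rangle$, which exists and is unique by Proposition~\ref{prop:ProjHyp}; write $g\langle\Lambda_2\rangle = x\langle\Lambda_2\rangle$. Every hyperplane separating $1$ from $x$ separates $1$ from $g\langle\Lambda_2\rangle$, hence (since $\langle\Lambda_1\rangle\subset g\langle\Lambda_2\rangle$) separates $1$ from $\langle\Lambda_1\rangle$; by the description of hyperplanes crossing $\langle\Lambda_1\rangle$ just recalled, such a hyperplane is \emph{not} labelled by a vertex of $\Lambda_1$, and moreover, being disjoint from the convex set $\langle\Lambda_1\rangle$ which contains $1$, it must be either labelled by a vertex of $\mathrm{link}(\Lambda_1)$ (if transverse to all of $\langle\Lambda_1\rangle$, i.e. crossed by some geodesic from $1$ through $\langle\Lambda_1\rangle$) or separate $\langle\Lambda_1\rangle$ entirely from the `far' part. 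Reading off the syllables of $x$ from a geodesic $[1,x]$ and ordering them so that the syllables in $\langle\Lambda_1\rangle$ come first, then those in $\langle\mathrm{link}(\Lambda_1)\rangle$, then the remaining ones — using Proposition~\ref{prop:distinGP} and the shuffling moves — yields a reduced decomposition $x = x_1 x_2 x_3$ with $x_1 \in \langle\Lambda_1\rangle$, $x_2\in\langle\mathrm{link}(\Lambda_1)\rangle$, $x_3\in\langle\Lambda_2\rangle$ (the last because the remaining hyperplanes cross $g\langle\Lambda_2\rangle$, hence are labelled in $\Lambda_2$). Since $x\langle\Lambda_2\rangle = x_1 x_2\langle\Lambda_2\rangle$ (absorbing $x_3$), this gives $g\langle\Lambda_2\rangle g^{-1} = x_1 x_2 \langle\Lambda_2\rangle x_2^{-1} x_1^{-1}$, i.e. $g$ and $x_1x_2$ differ by an element of $\langle\Lambda_2\rangle$, so $g \in \langle\Lambda_1\rangle\cdot\langle\mathrm{link}(\Lambda_1)\rangle\cdot\langle\Lambda_2\rangle$ as claimed.

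\textbf{Main obstacle.} The delicate point is the bookkeeping in the last paragraph: showing that the syllables of the projection $x$ can be partitioned cleanly into a piece in $\langle\Lambda_1\rangle$, a piece in $\langle\mathrm{link}(\Lambda_1)\rangle$, and a piece in $\langle\Lambda_2\rangle$, using only the shuffling relations. This requires arguing carefully that every hyperplane separating $1$ from $x$ and \emph{not} labelled in $\mathrm{link}(\Lambda_1)$ is in fact disjoint from $\langle\Lambda_1\rangle$ and lies `beyond' it, so that its syllable can be shuffled past all of $\langle\Lambda_1\rangle$; and symmetrically that the part labelled in $\Lambda_2$ can be absorbed on the right. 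An alternative, possibly cleaner, route to the same conclusion is a purely algebraic induction on $|g|$ using the normal form of Section~\ref{section:GP}: if the first syllable $s$ of $g$ lies in a vertex group $G_v$ with $v\notin\Lambda_2$, one shows conjugating by $s^{-1}$ on the left either shortens $g$ or forces $v\in\mathrm{link}(\Lambda_1)$, peeling off syllables one at a time; I would present whichever of the two is shorter to write, but the geometric argument has the advantage of reusing the machinery already set up.
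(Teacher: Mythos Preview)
Your very first step is wrong: the group-theoretic inclusion $\langle\Lambda_1\rangle \subset g\langle\Lambda_2\rangle g^{-1}$ does \emph{not} translate into the subgraph inclusion $\langle\Lambda_1\rangle \subset g\langle\Lambda_2\rangle$ in $X(\Gamma,\mathcal{G})$. Take $\Gamma$ to be a single edge $u$---$v$, set $\Lambda_1=\Lambda_2=\{v\}$, and let $g$ be any non-trivial element of $G_u$. Then $gG_vg^{-1}=G_v$ since $G_u$ and $G_v$ commute, so the hypothesis holds; but the coset $gG_v$ does not contain the vertex~$1$, so $G_v\not\subset gG_v$ as subgraphs. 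Everything downstream in your argument uses this false inclusion: in the first part you place $1$ and $a$ inside $g\langle\Lambda_2\rangle$, and in the second part you deduce that a hyperplane separating $1$ from its projection onto $g\langle\Lambda_2\rangle$ ``separates $1$ from $\langle\Lambda_1\rangle$'' --- which is impossible anyway, since $1\in\langle\Lambda_1\rangle$.

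What the paper does instead is to extract the correct geometric consequence of the hypothesis: since $\langle\Lambda_1\rangle$ and $g\langle\Lambda_2\rangle g^{-1}$ are the \emph{stabilisers} of the subgraphs $\langle\Lambda_1\rangle$ and $g\langle\Lambda_2\rangle$, one shows (using rotative stabilisers, Proposition~\ref{prop:rotstabinX}) that every hyperplane crossing $\langle\Lambda_1\rangle$ also crosses $g\langle\Lambda_2\rangle$. This immediately gives $\Lambda_1\subset\Lambda_2$ by reading off labels. For the second conclusion the paper takes a pair $x\in\langle\Lambda_1\rangle$, $y\in g\langle\Lambda_2\rangle$ at minimal distance; by Corollary~\ref{cor:minseppairhyp} the hyperplanes along a geodesic $[x,y]$ separate the two subgraphs, hence are transverse to every hyperplane crossing $\langle\Lambda_1\rangle$, hence (Lemma~\ref{lem:transverseimpliesadj}) are labelled in $\mathrm{link}(\Lambda_1)$. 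Writing $g$ as a path $1\to x\to y\to g$ then yields the decomposition. Your projection idea can be made to work along these lines, but only after you replace the false subgraph inclusion with the hyperplane-crossing claim.
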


\begin{proof}
Notice that the subgroups $\langle \Lambda_1 \rangle$ and $g \langle \Lambda_2 \rangle g^{-1}$ coincide with the stabiliser of the subgraphs $\langle \Lambda_1 \rangle$ and $g \langle \Lambda_2 \rangle$ respectively. We claim that any hyperplane intersecting $\langle \Lambda_1 \rangle$ also intersects $g \langle \Lambda_2 \rangle$.

\medskip \noindent
Suppose by contradiction that this is not the case, ie., there exists some hyperplane $J$ intersecting $\langle \Lambda_1 \rangle$ but not $g \langle \Lambda_2 \rangle$. Fix a clique $C \subset \langle \Lambda_1 \rangle$ dual to $J$. There exist $u \in \Lambda_1$ and $h \in \langle \Lambda_1 \rangle$ such that $\mathrm{stab}(C)=h \langle u \rangle h^{-1}$. As a consequence, $\mathrm{stab}(C)$, which turns out to coincide with the rotative stabiliser of $J$ according to Proposition \ref{prop:rotstabinX}, stabilises $\langle \Lambda_1 \rangle$. On the other hand, a non-trivial element of $\mathrm{stab}_{\circlearrowleft}(J)$ sends $g \langle \Lambda_2 \rangle$ into a sector that  does not contain $g \langle \Lambda_2 \rangle$: a fortiori, it does not stabilise $g \langle \Lambda_2 \rangle$, contradicting our assumptions. This concludes the proof of our claim.

\medskip \noindent
Because the cliques dual to the hyperplanes intersecting $\langle \Lambda_1 \rangle$ are labelled by vertices of $\Lambda_1$, and similarly that the cliques dual to the hyperplanes intersecting $g \langle \Lambda_2 \rangle$ are labelled by vertices of $\Lambda_2$, we deduce from the previous claim that $\Lambda_1 \subset \Lambda_2$, proving the first assertion of our lemma. 

\medskip \noindent
Next, let $x \in \langle \Lambda_1 \rangle$ and $y \in g \langle \Lambda_2 \rangle$ be two vertices minimising the distance between $\langle \Lambda_1 \rangle$ and $g \langle \Lambda_2 \rangle$. Fix a path $\alpha$ from $1$ to $x$ in $\langle \Lambda_1 \rangle$, a geodesic $\beta$ between $x$ and $y$, and a path $\gamma$ from $y$ to $g$ in $g \langle \Lambda_2 \rangle$. Thus, $g=abc$ where $a,b,c$ are the words labelling the paths $\alpha,\beta,\gamma$. Notice that $a \in \langle \Lambda_1 \rangle$ and $c \in \langle \Lambda_2 \rangle$. The labels of the edges of $\beta$ coincide with the labels of the hyperplanes separating $x$ and $y$, or equivalently (according to Corollary \ref{cor:minseppairhyp}), to the labels of the hyperplanes separating $\langle \Lambda_1 \rangle$ and $g \langle \Lambda_2 \rangle$. But we saw that any hyperplane intersecting $\langle \Lambda_1 \rangle$ intersects $g \langle \Lambda_2 \rangle$ as well, so any hyperplane separating $\langle \Lambda_1 \rangle$ and $g \langle \Lambda_2 \rangle$ must be transverse to any hyperplane intersecting $\langle \Lambda_1 \rangle$. Because the set of  labels of the hyperplanes of $\langle \Lambda_1 \rangle$ is $V(\Lambda_1)$, we deduce from Lemma \ref{lem:transverseimpliesadj} that the vertex of $\Gamma$ labelling an edge of $\beta$ is adjacent to all the vertices of $\Lambda_1$, ie., it belongs to $\mathrm{link}(\Lambda_1)$. Thus, we have proved that $b \in \langle \mathrm{link}(\Lambda_1) \rangle$, hence $g=abc \in \langle \Lambda_1 \rangle \cdot \langle \mathrm{link}(\Lambda_1) \rangle \cdot \langle \Lambda_2 \rangle$. 
\end{proof}

%
%

We now move to the proof of Theorem \ref{thm:GPfinitegroups}. \textbf{For the rest of Section \ref{section:applicationfinite}, we fix a finite simplicial graph $\Gamma$ and a collection of finite groups $\mathcal{G}$  indexed by $V(\Gamma)$.}

\begin{lemma}\label{lem:maxfinitesub}
The maximal finite subgroups of $\Gamma \mathcal{G}$ are exactly the $g \langle \Lambda \rangle g^{-1}$ where $g \in \Gamma \mathcal{G}$ and where $\Lambda \subset \Gamma$ is a maximal complete subgraph of $\Gamma$. 
\end{lemma}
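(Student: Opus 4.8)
The plan is to deduce the statement from two results already available: Green's structure theorem for finite subgroups (Lemma~\ref{lem:finitesub}) and its normal-form refinement (Lemma~\ref{lem:conjincluded}), supplemented by an elementary cardinality argument that promotes certain inclusions of conjugate parabolics to equalities.

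First I would record the basic fact that for a complete subgraph $\Lambda \subset \Gamma$ the parabolic subgroup $\langle \Lambda \rangle$ is the direct product $\prod_{v \in V(\Lambda)} G_v$, hence is finite since each $G_v$ is finite; consequently every conjugate $g \langle \Lambda \rangle g^{-1}$ is a finite subgroup of $\Gamma \mathcal{G}$. Next I would check that when $\Lambda$ is \emph{maximal} among complete subgraphs, $g\langle\Lambda\rangle g^{-1}$ is a maximal finite subgroup: suppose $g \langle \Lambda \rangle g^{-1} \subset K$ with $K$ finite. By Lemma~\ref{lem:finitesub} there exist $h \in \Gamma\mathcal{G}$ and a complete subgraph $\Xi \subset \Gamma$ with $K \subset h \langle \Xi \rangle h^{-1}$, so $\langle \Lambda \rangle \subset (g^{-1}h) \langle \Xi \rangle (g^{-1}h)^{-1}$. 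Lemma~\ref{lem:conjincluded} then gives $\Lambda \subset \Xi$, and maximality of $\Lambda$ among complete subgraphs forces $\Lambda = \Xi$. Thus $\langle \Lambda \rangle \subset (g^{-1}h)\langle \Lambda \rangle (g^{-1}h)^{-1}$ is an inclusion of finite groups of the same cardinality, hence an equality; therefore $g \langle \Lambda \rangle g^{-1} = h \langle \Lambda \rangle h^{-1} = h \langle \Xi \rangle h^{-1} \supset K$, so $K = g \langle \Lambda \rangle g^{-1}$ and maximality follows.

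For the converse, let $H$ be any maximal finite subgroup. By Lemma~\ref{lem:finitesub}, $H \subset g \langle \Lambda \rangle g^{-1}$ for some $g \in \Gamma\mathcal{G}$ and complete subgraph $\Lambda$; since the right-hand side is finite, maximality of $H$ gives $H = g \langle \Lambda \rangle g^{-1}$. Finally, if $\Lambda$ were not maximal among complete subgraphs, one could pick a complete subgraph $\Lambda' \supsetneq \Lambda$, and then $g\langle \Lambda \rangle g^{-1} \subsetneq g \langle \Lambda' \rangle g^{-1}$ would be a strictly larger finite subgroup, contradicting maximality; hence $\Lambda$ is a maximal complete subgraph.

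There is no serious obstacle here; the only point that needs care is that the inclusion $\langle \Lambda \rangle \subset (g^{-1}h)\langle \Xi \rangle (g^{-1}h)^{-1}$ is not \emph{a priori} an equality of groups, so one really needs Lemma~\ref{lem:conjincluded} to identify $\Lambda = \Xi$ before invoking finiteness to upgrade the inclusion to an equality. (Alternatively, the second conclusion of Lemma~\ref{lem:conjincluded}, namely $g^{-1}h \in \langle \Lambda \rangle \cdot \langle \mathrm{link}(\Lambda) \rangle \cdot \langle \Xi \rangle$, can be used to see directly that the two conjugates coincide, but the cardinality argument is shorter.)
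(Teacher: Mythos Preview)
Your proof is correct and follows essentially the same approach as the paper. The only minor difference is that, once $\Lambda=\Xi$, the paper uses the second conclusion of Lemma~\ref{lem:conjincluded} together with $\mathrm{link}(\Lambda)=\emptyset$ to deduce $h\langle\Lambda\rangle h^{-1}=g\langle\Lambda\rangle g^{-1}$, whereas you use the (equally valid) cardinality argument that an inclusion between conjugate finite groups is an equality; you already note this alternative yourself.
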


\begin{proof}
Let $H \leq \Gamma \mathcal{G}$ be a maximal finite subgroup. By Lemma \ref{lem:finitesub}, there exist $g \in \Gamma \mathcal{G}$ and a complete subgraph  $\Lambda \subset \Gamma$ such that $H$ is contained in $g \langle \Lambda \rangle g^{-1}$. 
Since $H$ is a maximal finite subgroup, then $H= g \langle \Lambda \rangle g^{-1}$ since $g \langle \Lambda \rangle g^{-1}$ is clearly finite. Moreover, the maximality of $H$ implies that $\Lambda$ is a maximal complete subgraph of $\Gamma$. 

\medskip \noindent
Conversely, let $g \in \Gamma \mathcal{G}$ be an element and $\Lambda \subset \Gamma$ a complete subgraph. Clearly, $g\langle \Lambda \rangle g^{-1}$ is finite. Now, suppose that $\Lambda$ is a maximal complete subgraph of $\Gamma$. If $F$ is a finite subgroup of $\Gamma \mathcal{G}$ containing $g \langle \Lambda \rangle g^{-1}$, we know that $F= h \langle \Xi \rangle h^{-1}$ for some $h \in \Gamma \mathcal{G}$ and for some complete subgraph $\Xi \subset \Gamma$, so that we deduce from the inclusion
$$g \langle \Lambda \rangle g^{-1} \leq F = h \langle \Xi \rangle h^{-1}$$
and from Lemma \ref{lem:conjincluded} that $\Lambda \subset \Xi$. By maximality of $\Lambda$, necessarily $\Lambda = \Xi$. We also deduce from Lemma \ref{lem:conjincluded} that $h \in g \langle \Lambda \rangle \cdot \langle \mathrm{link}(\Lambda) \rangle \cdot \langle \Lambda \rangle$. But the maximality of $\Lambda$ implies that $\mathrm{link}(\Lambda)= \emptyset$, hence $h \in g \langle \Lambda \rangle$. Therefore
$$F= h \langle \Xi \rangle h^{-1} =h \langle \Lambda \rangle h^{-1}= g \langle \Lambda \rangle g^{-1}.$$
Thus, we have proved that $g \langle \Lambda \rangle g^{-1}$ is a maximal finite subgroup of $\Gamma \mathcal{G}$. 
\end{proof}

\begin{cor}\label{cor:maxself}
The maximal finite subgroups of $\Gamma \mathcal{G}$ are self-normalising.
\end{cor}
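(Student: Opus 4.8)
The plan is to use the classification of maximal finite subgroups from Lemma \ref{lem:maxfinitesub} together with the structural result Lemma \ref{lem:conjincluded}. By Lemma \ref{lem:maxfinitesub}, every maximal finite subgroup of $\Gamma\mathcal{G}$ has the form $g\langle \Lambda\rangle g^{-1}$ where $g\in\Gamma\mathcal{G}$ and $\Lambda\subset\Gamma$ is a maximal complete subgraph. Since normalisers behave equivariantly under conjugation, i.e. $N_{\Gamma\mathcal{G}}(g\langle\Lambda\rangle g^{-1}) = g\,N_{\Gamma\mathcal{G}}(\langle\Lambda\rangle)\,g^{-1}$, it suffices to treat the case $g=1$ and show $N_{\Gamma\mathcal{G}}(\langle\Lambda\rangle) = \langle\Lambda\rangle$ for a maximal complete subgraph $\Lambda$.

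First I would take an element $h\in\Gamma\mathcal{G}$ with $h\langle\Lambda\rangle h^{-1} = \langle\Lambda\rangle$. In particular $\langle\Lambda\rangle\subset h\langle\Lambda\rangle h^{-1}$, so Lemma \ref{lem:conjincluded} (applied with $\Lambda_1=\Lambda_2=\Lambda$) gives $h\in\langle\Lambda\rangle\cdot\langle\mathrm{link}(\Lambda)\rangle\cdot\langle\Lambda\rangle$. Now I invoke the maximality of $\Lambda$: if $v$ were a vertex of $\mathrm{link}(\Lambda)$, then $v\notin\Lambda$ and $v$ is adjacent to every vertex of $\Lambda$, so $\Lambda\cup\{v\}$ would be a strictly larger complete subgraph, contradicting maximality. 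Hence $\mathrm{link}(\Lambda)=\emptyset$, so $\langle\mathrm{link}(\Lambda)\rangle$ is trivial and $h\in\langle\Lambda\rangle\cdot\langle\Lambda\rangle=\langle\Lambda\rangle$. Therefore $N_{\Gamma\mathcal{G}}(\langle\Lambda\rangle)\subset\langle\Lambda\rangle$, and the reverse inclusion is obvious, giving $N_{\Gamma\mathcal{G}}(\langle\Lambda\rangle)=\langle\Lambda\rangle$. Conjugating back by $g$ yields $N_{\Gamma\mathcal{G}}(g\langle\Lambda\rangle g^{-1}) = g\langle\Lambda\rangle g^{-1}$, as desired.

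I do not expect any serious obstacle here: the corollary is essentially a formal consequence of the two preceding lemmas. The only point requiring a moment's care is the observation that $\mathrm{link}(\Lambda)=\emptyset$ when $\Lambda$ is a maximal complete subgraph — this is exactly the same fact already used at the end of the proof of Lemma \ref{lem:maxfinitesub}, so it can simply be quoted. One should also note that the argument does not use finiteness of the vertex groups beyond what is already packaged into Lemma \ref{lem:maxfinitesub}; the self-normalising property is really a statement about the parabolic subgroups $\langle\Lambda\rangle$ for maximal complete $\Lambda$.
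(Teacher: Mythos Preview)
Your proof is correct and follows exactly the same approach as the paper: invoke Lemma~\ref{lem:maxfinitesub} for the form of maximal finite subgroups, then apply Lemma~\ref{lem:conjincluded} together with the observation that a maximal complete subgraph has empty link. The paper's proof is simply a one-sentence compression of what you wrote out in detail.
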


\begin{proof}
By Lemma \ref{lem:maxfinitesub},  maximal finite subgroups are of the form $g\langle \Lambda \rangle g^{-1}$ for a maximal clique $\Lambda \subset \Gamma$. Since a maximal clique has an empty link, it follows from Lemma \ref{lem:conjincluded} that such subgroups are self-normalising.
\end{proof}

\begin{proof}[Proof of Theorem \ref{thm:GPfinitegroups}.]
As a consequence of Lemma \ref{lem:maxfinitesub}, $\Gamma \mathcal{G}$ contains only finitely many conjugacy classes of maximal finite subgroups, so $\mathrm{Aut}(\Gamma \mathcal{G})$ contains a finite-index subgroup $H$ such that, for every maximal finite subgroup $F \leq \Gamma \mathcal{G}$ and every $\varphi \in H$, the subgroups $\varphi(F)$ and $F$ are conjugate. 

\medskip \noindent
Fix a maximal finite subgroup $F \leq \Gamma \mathcal{G}$. We define a morphism $\Psi_F : H \to \mathrm{Out}(F)$ in the following way. If $\varphi \in H$, there exists some $g \in \Gamma \mathcal{G}$ such that $\varphi(F)=gFg^{-1}$. Set $\Psi_F(\varphi) = \left[ \left( \iota(g)^{-1} \varphi \right)_{|F} \right]$ where $\iota(g)$ denotes the inner automorphism associated to $g$. Notice that, if $h \in \Gamma \mathcal{G}$ is another element such that $\varphi(F)=hFh^{-1}$, then $h=gs$ for some $s \in \Gamma \mathcal{G}$ normalising $F$. In fact, since $F$ is self-normalising according to Corollary \ref{cor:maxself}, $s$ must belong to $F$. Consequently, the automorphisms $\left( \iota(g)^{-1} \varphi \right)_{|F}$ and $\left( \iota(h)^{-1} \varphi \right)_{|F}$ of $F$ have the same image in $\mathrm{Out}(F)$. The conclusion is that $\Psi_F$ is well-defined as a map $H \to \mathrm{Out}(F)$. It remains to show that it is a morphism. So let $\varphi_1,\varphi_2 \in H$ be two automorphisms, and fix two elements $g_1,g_2 \in \Gamma \mathcal{G}$ such that $\varphi_i(F)=g_iFg_i^{-1}$ for $i=1,2$. Notice that $\varphi_1\varphi_2(F)= \varphi_1(g_2) g_1^{-1} F g_1 \varphi_1(g_2)^{-1}$, so that we have:  
$$\Psi_F(\varphi_1\varphi_2) = \left[ \left( \iota(\varphi_1(g_2)g_1^{-1} )^{-1} \varphi_1 \varphi_2 \right)_{|F} \right];$$
consequently,
$$\begin{array}{lcl} \Psi_F(\varphi_1) \Psi_F(\varphi_2) & = & \left[ \left( \iota(g_1)^{-1} \varphi_1 \right)_{|F} \right] \cdot \left[ \left( \iota(g_2)^{-1} \varphi_2 \right)_{|F} \right] = \left[ \left( \iota(g_1)^{-1} \varphi_1 \iota(g_2)^{-1} \varphi_2 \right)_{|F} \right] \\ \\ & = & \left[ \left( \iota(g_1)^{-1} \iota(\varphi_1(g_2))^{-1} \varphi_1 \varphi_2 \right)_{|F} \right] = \Psi_F(\varphi_1 \varphi_2) \end{array}$$
Moreover, it is clear that $\Psi_F(\mathrm{Id})= \mathrm{Id}$. We conclude that $\Psi_F$ defines a morphism $H \to \mathrm{Out}(F)$. Now, we set
$$K= \bigcap \{ \mathrm{ker}(\Psi_F) \mid \text{$F \leq \Gamma \mathcal{G}$ maximal finite subgroup} \}.$$
Notice that $K$ is a finite-index subgroup of $\mathrm{Aut}(\Gamma \mathcal{G})$ since it is the intersection of finitely many finite-index subgroups. We want to show that $K$ is a subgroup of $\mathrm{ConjP}(\Gamma \mathcal{G})$. According to Theorem \ref{thm:conjugatingauto}, it is sufficient to show that, for every $\varphi \in K$ and every $u \in V(\Gamma)$, the subgroups $\varphi(G_u)$ and $G_u$ are conjugate. 

\medskip \noindent
So fix a vertex $u \in V(\Gamma)$ and an automorphism $\varphi \in K$. As a consequence of Lemma~\ref{lem:maxfinitesub}, there exists a maximal finite subgroup $F \leq \Gamma \mathcal{G}$ containing $G_u$. Since $\varphi$ belongs to $H$, there exists some $g \in \Gamma \mathcal{G}$ such that $\varphi(F)=gFg^{-1}$. And by definition of $K$, the automorphism $\left( \iota(g)^{-1} \varphi \right)_{|F}$ must be an inner automorphism of $F$, so there exists some $h \in F$ such that $\varphi(x)= ghxh^{-1}g^{-1}$ for every $x \in F$. In particular, the subgroups $\varphi(G_u)$ and $G_u$ are conjugate in $\Gamma \mathcal{G}$. 

\medskip \noindent
Thus, we have proved that $K$ is a subgroup of $\mathrm{ConjP}(\Gamma \mathcal{G})$. Because $K$ has finite index in $\mathrm{Aut}(\Gamma \mathcal{G})$, we conclude that $\mathrm{ConjP}(\Gamma \mathcal{G})$ is a finite-index subgroup of $\mathrm{Aut}(\Gamma \mathcal{G})$. As
$$\mathrm{ConjP}(\Gamma \mathcal{G})= \langle \text{partial conjugations} \rangle \rtimes \mathrm{Loc}(\Gamma \mathcal{G}),$$
we conclude that the subgroup generated by partial conjugations has finite index in $\mathrm{Aut}(\Gamma \mathcal{G})$ since $\mathrm{Loc}(\Gamma \mathcal{G})$ is clearly finite. 
\end{proof}

\noindent
As an interesting application of Theorem \ref{thm:GPfinitegroups}, we are able to determine precisely when the outer automorphism group of a graph product of finite groups is finite. Before stating the criterion, we need the following definition:

\begin{definition}
A \emph{Separating Intersection of Links} (or \emph{SIL} for short) in $\Gamma$ is the data of two vertices $u,v \in V(\Gamma)$ satisfying $d(u,v) \geq 2$ such that $\Gamma \backslash (\mathrm{link}(u) \cap \mathrm{link}(v))$ has a connected component which does not contain $u$ nor $v$.
\end{definition}

\noindent
Now we are able to state our criterion, which generalises \cite[Theorem 1.4]{AutGPabelian}:

\begin{cor}\label{cor:OutFinite}
The outer automorphism group $\mathrm{Out}(\Gamma \mathcal{G})$ is finite if and only if $\Gamma$ has no SIL.
\end{cor}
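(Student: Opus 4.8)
The plan is to prove both directions by exploiting Theorem~\ref{thm:GPfinitegroups} together with the explicit semidirect product decomposition $\mathrm{ConjP}(\Gamma\mathcal{G}) = \langle \text{partial conjugations}\rangle \rtimes \mathrm{Loc}(\Gamma\mathcal{G})$ and the known generating set. Since $\mathrm{ConjP}(\Gamma\mathcal{G})$ has finite index in $\mathrm{Aut}(\Gamma\mathcal{G})$, and $\mathrm{Inn}(\Gamma\mathcal{G})$ has finite index in $\mathrm{Aut}(\Gamma\mathcal{G})$ if and only if $\mathrm{Out}(\Gamma\mathcal{G})$ is finite, the question reduces to deciding when the quotient $\mathrm{ConjP}(\Gamma\mathcal{G})/\mathrm{Inn}(\Gamma\mathcal{G})$ is finite. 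Because $\mathrm{Loc}(\Gamma\mathcal{G})$ is finite (all vertex groups are finite, so each $\mathrm{Aut}(G_v)$ is finite and $\mathrm{Aut}(\Gamma)$ is finite), this further reduces to understanding the image of $\langle\text{partial conjugations}\rangle$ in $\mathrm{Out}(\Gamma\mathcal{G})$: we must decide when, modulo inner automorphisms, there are only finitely many products of partial conjugations. So the heart of the matter is purely combinatorial: relate the SIL condition on $\Gamma$ to the finiteness of the group generated by the (images in $\mathrm{Out}$ of) partial conjugations.

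For the direction ``no SIL $\Rightarrow$ $\mathrm{Out}$ finite'', I would first record the standard relations among partial conjugations and inner automorphisms. Given a vertex $u$ with components $\Lambda_1,\dots,\Lambda_k$ of $\Gamma\setminus\mathrm{star}(u)$ and $h\in G_u$, the product $(u,\Lambda_1,h)\cdots(u,\Lambda_k,h)$ is the inner automorphism $\iota(h)$; hence modulo $\mathrm{Inn}$, the partial conjugations based at $u$ with a fixed $h$ satisfy a single relation, so the subgroup they generate (mod inner) is a quotient of a product of $(k-1)$ copies of $G_u$, which is finite. The only remaining issue is commutation/interaction between partial conjugations based at \emph{different} vertices $u,v$: if $(u,\Lambda,h)$ and $(v,\Xi,k)$ fail to commute in $\mathrm{Out}$ in a complicated way, infinitely many distinct outer classes could be produced. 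The absence of a SIL is exactly the hypothesis that rules this out: when $d(u,v)\le 1$ the generators interact trivially, and when $d(u,v)\ge 2$ the lack of a SIL forces every component of $\Gamma\setminus(\mathrm{link}(u)\cap\mathrm{link}(v))$ to contain $u$ or $v$, from which one checks that the relevant partial conjugations either commute or can be rewritten (using the inner-automorphism relation above) so that the group they generate modulo $\mathrm{Inn}$ is a quotient of a finite product $\prod_{u}\prod_{i}G_u$. Thus $\mathrm{Out}(\Gamma\mathcal{G})$ is a quotient of the finite group $\mathrm{Loc}(\Gamma\mathcal{G})\ltimes\big(\prod_u G_u^{\,(\text{components})}\big)$, hence finite.

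For the converse ``SIL $\Rightarrow$ $\mathrm{Out}$ infinite'', I would exhibit an explicit element of infinite order (or an infinite family of pairwise non-conjugate automorphisms). Given a SIL with data $u,v$ and a component $\Lambda$ of $\Gamma\setminus(\mathrm{link}(u)\cap\mathrm{link}(v))$ containing neither $u$ nor $v$, pick non-trivial $a\in G_u$, $b\in G_v$ and consider $\psi = (u,\Lambda_u,a)\circ(v,\Lambda_v,b)$ where $\Lambda_u,\Lambda_v$ are the components of $\Gamma\setminus\mathrm{star}(u)$, resp.\ $\Gamma\setminus\mathrm{star}(v)$, containing $\Lambda$ (this is well-defined because $\Lambda$ avoids $\mathrm{star}(u)$ and $\mathrm{star}(v)$, since $\mathrm{link}(u)\cap\mathrm{link}(v)$ is removed and $u,v\notin\Lambda$). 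One checks that $\psi^n$ acts on $\langle\Lambda\rangle$ by conjugation by $(ab)^n$ — or rather by $a^n$ and $b^n$ on the relevant pieces — while acting trivially on $G_u$ and $G_v$ themselves; comparing with inner automorphisms (which, restricted to $G_u$, must conjugate by an element of $G_u$) one sees no power of $\psi$ is inner, essentially because $a,b$ lie in non-adjacent vertex groups so $(ab)$ has infinite order and the conjugating element would have to lie simultaneously in incompatible parabolics. This is basically the argument of \cite[Theorem~1.4]{AutGPabelian} in the cyclic case, and it goes through verbatim once one knows the partial conjugation $(u,\Lambda_u,a)$ is a genuine automorphism, which is part of the setup.

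The main obstacle I anticipate is the bookkeeping in the forward direction: verifying carefully that, when $\Gamma$ has no SIL, \emph{every} relation needed to collapse the group generated by partial conjugations modulo $\mathrm{Inn}$ into a finite quotient actually holds. Concretely, one must analyse the commutator $[(u,\Lambda,h),(v,\Xi,k)]$ for all configurations of $u,v$ and their components, show it is either trivial or again a product of partial conjugations lying in the already-controlled finite piece, and package this into a presentation argument. The SIL hypothesis is precisely the combinatorial input that makes every ``bad'' commutator vanish, but translating ``no component of $\Gamma\setminus(\mathrm{link}(u)\cap\mathrm{link}(v))$ avoids $\{u,v\}$'' into the algebraic statement about partial conjugations requires a somewhat delicate case analysis on where the support component $\Lambda$ sits relative to $\mathrm{star}(u)$ and $\mathrm{star}(v)$.
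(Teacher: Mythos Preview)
Your proposal is correct and follows essentially the same approach as the paper's proof. Two small simplifications are worth noting: for the forward direction, the paper distills the case analysis into the single clean statement that, in the absence of a SIL, any two partial conjugations based at distinct vertices commute in $\mathrm{Out}(\Gamma\mathcal{G})$ (the ``hard'' case $\Lambda=\Lambda_0$, $\Xi=\Xi_0$ being reduced to the easy cases via exactly the inner-automorphism relation you mention), whence the image of the partial conjugations in $\mathrm{Out}$ is a quotient of the finite abelian group $\bigoplus_u \mathrm{PC}(u)$; and for the reverse direction, your components $\Lambda_u$ and $\Lambda_v$ are in fact both equal to $\Lambda$ itself (one checks that a component of $\Gamma\setminus(\mathrm{link}(u)\cap\mathrm{link}(v))$ avoiding $u$ already avoids all of $\mathrm{star}(u)$ and is a full component of $\Gamma\setminus\mathrm{star}(u)$), so $\psi=(u,\Lambda,a)(v,\Lambda,b)$ acts on $\langle\Lambda\rangle$ by conjugation by $(ab)^n$ on the nose, and the infinite-order computation goes through directly.
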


\noindent
The following argument was communicated to us by Olga Varghese.

\begin{proof}[Proof of Corollary \ref{cor:OutFinite}.]
Suppose that $\Gamma$ has no SIL. If $u,v \in V(\Gamma)$ are two distinct vertices, $a \in G_u$ and $b \in G_v$ two elements, and $\Lambda, \Xi$ two connected components of $\Gamma \backslash \mathrm{star}(u)$ and $\Gamma \backslash \mathrm{star}(v)$ respectively, then we claim that the two corresponding partial conjugations $(u, \Lambda,a)$ and $(v,\Xi,b)$ commute in $\mathrm{Out}(\Gamma \mathcal{G})$. For convenience, let $\Lambda_0$ (resp. $\Xi_0$) denote the connected component of $\Gamma \backslash \mathrm{star}(u)$ (resp. $\Gamma \backslash \mathrm{star}(v)$) which contains $v$ (resp. $u$). If $\Lambda \neq \Lambda_0$ or $\Xi \neq \Xi_0$, a direct computation shows that $(u, \Lambda,a)$ and $(v,\Xi,b)$ commute in $\mathrm{Aut}(\Gamma \mathcal{G})$ (see \cite[Lemma 3.4]{AutGPSIL} for more details). So suppose that $\Lambda= \Lambda_0$ and $\Xi=\Xi_0$. If $\Xi_1, \ldots, \Xi_k$ denote the connected components of $\Gamma \backslash \mathrm{star}(v)$ distinct from $\Xi_0$, notice that the product $(v,\Xi_0, b)(v, \Xi_1,b) \cdots (v,\Xi_k,b)$ is trivial in $\mathrm{Out}(\Gamma \mathcal{G})$ since the automorphism coincides with the conjugation by $b$. As we already know that $(u, \Lambda_0, a)$ commutes with $(v, \Xi_1, b) \cdots (v, \Xi_k,b)$ in $\mathrm{Aut}(\Gamma \mathcal{G})$, it follows that the following equalities hold in $\mathrm{Out}(\Gamma \mathcal{G})$:
$$\left[ (u, \Lambda_0, a) , (v, \Xi_0,b) \right]= \left[ (u, \Lambda_0,a) , (v, \Xi_k,b)^{-1} \cdots (v, \Xi_1,b)^{-1} \right] = 1.$$
This concludes the proof of our claim. Consequently, if $\mathrm{PC}(u)$ denotes the subgroup of $\mathrm{Out}(\Gamma \mathcal{G})$ generated by the (images of the) partial conjugations based at $u$ for every vertex $u \in V(\Gamma)$, then the subgroup $\mathrm{PC}$ of $\mathrm{Out}(\Gamma \mathcal{G})$ generated by all the (images of the) partial conjugations is naturally a quotient of $\bigoplus\limits_{u \in V(\Gamma)} \mathrm{PC}(u)$. But each $\mathrm{PC}(u)$ is finite; indeed, it has cardinality at most $c \cdot | G_u |$ where $c$ is the number of connected components of $\Gamma \backslash \mathrm{star}(u)$. Therefore, $\mathrm{PC}$ has to be finite. As $\mathrm{PC}$ has finite index in $\mathrm{Out}(\Gamma \mathcal{G})$ as a consequence of Theorem \ref{thm:GPfinitegroups}, we conclude that $\mathrm{Out}(\Gamma \mathcal{G})$ must be finite.

\medskip \noindent
Conversely, suppose that $\Gamma$ has a SIL. Thus, there exist two vertices $u,v \in V(\Gamma)$ satisfying $d(u,v) \geq 2$ such that $\Gamma \backslash (\mathrm{link}(u) \cap \mathrm{link}(v))$ has a connected component $\Lambda$ which contains neither $u$ nor $v$. Fix two non-trivial elements $a \in G_u$ and $b \in G_v$. A direct computation shows that the product $(u, \Lambda, a)(v, \Lambda, b)$ has infinite order in $\mathrm{Out}(\Gamma \mathcal{G})$. A fortiori, $\mathrm{Out}(\Gamma \mathcal{G})$ must be infinite.
\end{proof}

\begin{remark}
It is worth noticing that, if $\Gamma \mathcal{G}$ is a graph product of finite groups, then $\mathrm{ConjP}(\Gamma \mathcal{G})$ may be a proper subgroup of $\mathrm{Aut}(\Gamma \mathcal{G})$. For instance, 
$$\left\{ \begin{array}{ccc} a & \mapsto & ab \\ b & \mapsto & b \end{array} \right.$$
defines an automorphism of $\mathbb{Z}_2 \ast \mathbb{Z}_2 = \langle a \rangle \ast \langle b \rangle$ which does not belong to $\mathrm{ConjP}(\Gamma \mathcal{G})$. The inclusion $\mathrm{ConjP}(\Gamma \mathcal{G}) \subset \mathrm{Aut}(\Gamma \mathcal{G})$ may also be proper if $\Gamma$ is connected. For instance,
$$\left\{ \begin{array}{ccc} a & \mapsto & a \\ b & \mapsto & ab \\ c & \mapsto & c \end{array} \right.$$
defines an automorphism of $\mathbb{Z}_2 \oplus ( \mathbb{Z}_2 \ast \mathbb{Z}_2) = \langle a \rangle \oplus ( \langle b \rangle \ast \langle c \rangle)$ which does not belong to $\mathrm{ConjP}(\Gamma \mathcal{G})$.
\end{remark}

\subsection{Application to graph products over graphs of large girth}\label{section:applicationgirth}

\noindent
In this section, we focus on automorphism groups of graph products without imposing any restriction on their vertex groups. However, we need to impose more restrictive conditions on the underlying graph than in the previous sections. More precisely, the graphs which will interest us in the sequel are the following:

\begin{definition}
A \emph{molecular graph} is a finite connected simplicial graph without vertex of valence $<2$ and of girth at least $5$.
\end{definition}

\noindent
Molecular graphs generalise \emph{atomic graphs} introduced in \cite{RAAGatomic} (see Section \ref{section:atomic} below for a precise definition) by allowing separating stars. Typically, a molecular graph can be constructed from cycles of length at least five by gluing them together and by connecting them by trees. See Figure \ref{figure4} for an example. 
\begin{figure}[H]
\begin{center}
\includegraphics[trim={0 6cm 19cm 0},clip,scale=0.35]{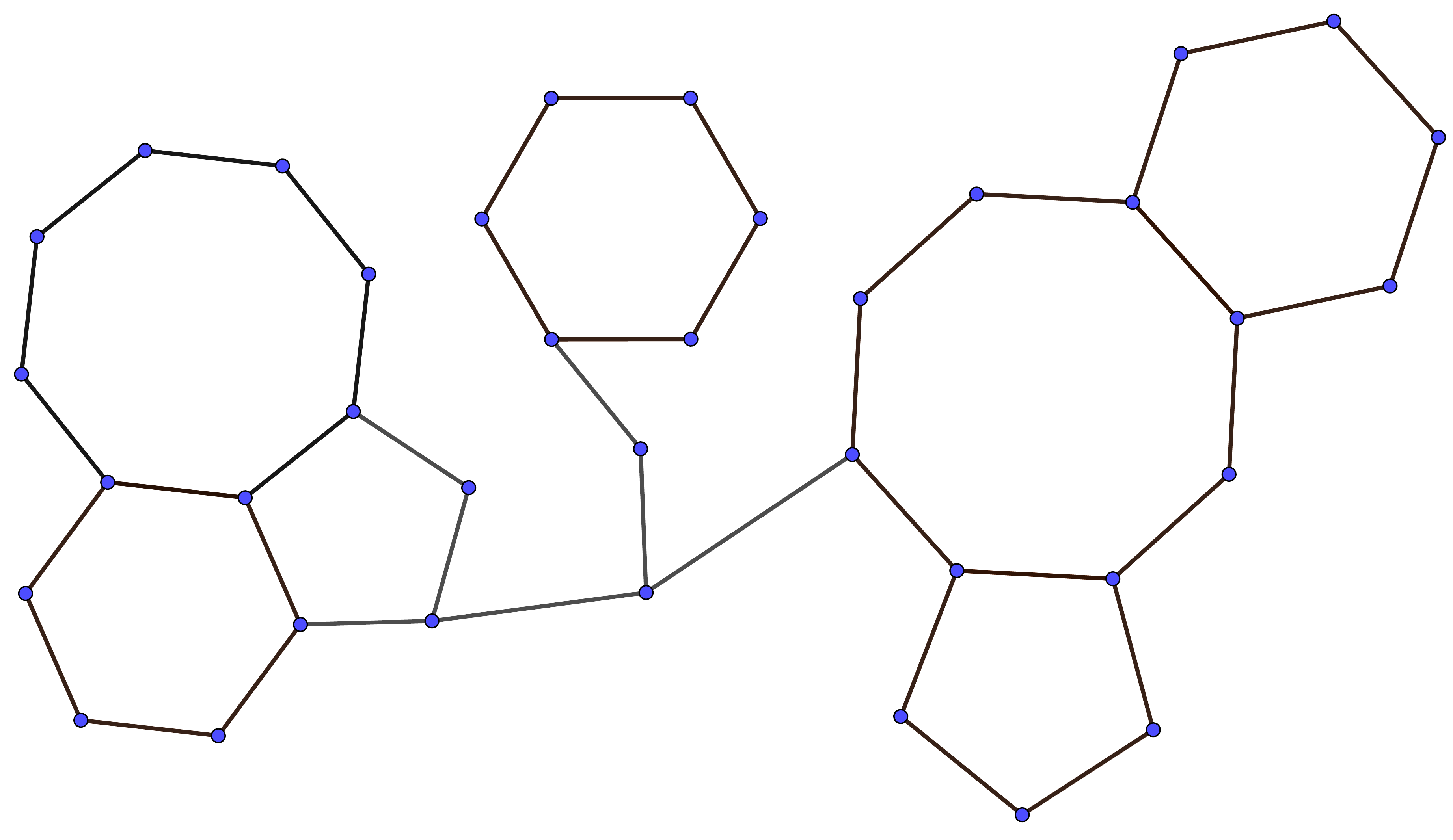}
\caption{A molecular graph.}
\label{figure4}
\end{center}
\end{figure}

\medskip \noindent
The main result of this section is the following statement, which widely extends \cite[Theorems A and D]{GPcycle}. It will be a consequence of Theorem \ref{thm:conjugatingauto}. 

\begin{thm}\label{thm:mainGPgeneral}
Let $\Gamma$ be a molecular graph and $\mathcal{G}$ a collection of groups indexed by $V(\Gamma)$. Then $\mathrm{Aut}(\Gamma \mathcal{G})= \mathrm{ConjP}(\Gamma \mathcal{G})$. 
\end{thm}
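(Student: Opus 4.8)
The plan is to deduce Theorem~\ref{thm:mainGPgeneral} from Theorem~\ref{thm:conjugatingauto}. Since Theorem~\ref{thm:conjugatingauto} identifies $\mathrm{ConjAut}(\Gamma\mathcal{G})$ with $\mathrm{ConjP}(\Gamma\mathcal{G})$, it suffices to show that when $\Gamma$ is molecular, \emph{every} automorphism of $\Gamma\mathcal{G}$ is conjugating, i.e.\ sends each vertex group to a conjugate of a vertex group. The natural strategy is again geometric: an arbitrary $\varphi\in\mathrm{Aut}(\Gamma\mathcal{G})$ gives a new graph product decomposition of the same group, namely with vertex groups $\{\varphi(G_v)\}$ over the same graph $\Gamma$, and I want to show this decomposition is ``the same'' as the original up to conjugacy. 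The point of molecularity (no valence-$<2$ vertices, girth $\geq 5$, connected) is that it should force the quasi-median geometry of $X(\Gamma,\mathcal{G})$, and in particular the combinatorics of cliques/hyperplanes, to be rigid enough that the only way to re-express $\Gamma\mathcal{G}$ as a graph product over $\Gamma$ is the obvious one.

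Concretely, here is the sequence of steps I would carry out. First, given $\varphi$, consider the images $\varphi(G_v)$; each is a subgroup isomorphic to $G_v$, and I want to recognize it, via its commutation pattern and malnormality-type properties, as a parabolic subgroup. The key algebraic input is that in a graph product over a graph of girth $\geq 5$ without low-valence vertices, a vertex group $G_v$ is characterized (up to conjugacy) among subgroups by properties such as: it is its own centralizer's complement in an appropriate sense, its ``link'' structure (the poset of subgroups commuting with parts of it) matches $\mathrm{star}(v)$, and crucially that having girth $\geq 5$ means two distinct vertices share at most one common neighbor, so triangles are rigid and one cannot ``hide'' a transvection-type automorphism. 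Second, I would use the centralizer: in a graph product, $Z_{\Gamma\mathcal{G}}(G_v) = Z(G_v)\times\langle\mathrm{link}(v)\rangle$, and more usefully $\langle\mathrm{star}(v)\rangle$ is the maximal subgroup containing $G_v$ as a direct factor of a specified shape. Since $\varphi$ preserves all such canonically-defined subgroups, $\varphi(\langle\mathrm{star}(v)\rangle)$ is again of this form, forcing $\varphi(G_v)$ to be a conjugate of some $G_w$ with $\mathrm{star}(w)$ matching $\mathrm{star}(v)$ — this is where the girth and minimal-valence hypotheses do the work of ruling out collapses. Third, once $\varphi$ is shown to be conjugating, invoke Theorem~\ref{thm:conjugatingauto} directly to conclude $\varphi\in\mathrm{ConjP}(\Gamma\mathcal{G})$, giving $\mathrm{Aut}(\Gamma\mathcal{G})=\mathrm{ConjAut}(\Gamma\mathcal{G})=\mathrm{ConjP}(\Gamma\mathcal{G})$.

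The main obstacle, and the place where real work is needed, is the second step: showing that any automorphism must send vertex groups to conjugates of vertex groups. This is precisely the failure point for general graphs (transvections in RAAGs), so the argument has to exploit molecularity essentially. I expect the cleanest route is to characterize conjugates of vertex groups intrinsically as the ``maximal'' subgroups in a suitable family defined purely in terms of commutation and the absence of certain free subgroups — for instance, a conjugate of $\langle\mathrm{star}(v)\rangle$ might be recognizable as a maximal subgroup that decomposes as a nontrivial direct product in which one factor is ``directly indecomposable and not further product-extendable'', using that $\mathrm{link}(v)$ in a girth-$\geq 5$ graph with min valence $\geq 2$ is a nonempty graph with no edges (a discrete set of $\geq 2$ points), so $\langle\mathrm{star}(v)\rangle = G_v\times(G_{w_1}\ast\cdots\ast G_{w_k})$ with $k\geq 2$ — a very specific algebraic shape. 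Then $\varphi$ permutes these shapes and the induced permutation of their ``$G_v$-factors'' is the desired assignment $v\mapsto\sigma(v)$; compatibility of the commutation relations forces $\sigma$ to be a graph isometry, and a diagonal argument over a connected graph lets one conjugate to make the conjugating elements consistent. I would structure this as one or two lemmas isolating the algebraic recognition of $\langle\mathrm{star}(v)\rangle$ and of $G_v$ inside it, then a short synthesis. The honest expectation is that this recognition lemma is the technical heart and will require careful bookkeeping with normal forms and the hyperplane machinery of Section~\ref{section:CubicalLikeGeom}, whereas everything downstream of ``$\varphi$ is conjugating'' is immediate from Theorem~\ref{thm:conjugatingauto}.
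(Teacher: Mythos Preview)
Your overall strategy is exactly the paper's: characterise conjugates of vertex groups purely algebraically so that any automorphism must preserve them, then invoke Theorem~\ref{thm:conjugatingauto}. You also correctly identify the first move, namely that the maximal product subgroups of $\Gamma\mathcal{G}$ are precisely the conjugates of $\langle\mathrm{star}(v)\rangle$ (using Lemma~\ref{lem:maxproduct} together with the fact that in a molecular graph the only joins are stars).

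Where your proposal diverges from the paper, and where it has a genuine gap, is in how you extract $G_v$ from $\langle\mathrm{star}(v)\rangle$. You propose to recognise $G_v$ as ``the directly indecomposable factor'' in $G_v\times(G_{w_1}\ast\cdots\ast G_{w_k})$. But nothing in the hypotheses prevents $G_v$ itself from splitting as a nontrivial direct product (the vertex groups are arbitrary), so there is no intrinsic way to single out $G_v$ among the direct factors of $\langle\mathrm{star}(v)\rangle$ by indecomposability alone. The paper sidesteps this entirely: rather than looking \emph{inside} a single $\langle\mathrm{star}(v)\rangle$, it takes \emph{intersections of pairs} of maximal product subgroups. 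In a molecular graph, two distinct neighbours $x,y$ of $w$ are non-adjacent and have $w$ as their unique common neighbour, so $\langle\mathrm{star}(x)\rangle\cap\langle\mathrm{star}(y)\rangle=G_w$; intersecting stabilisers of adjacent vertices gives $\langle G_u,G_v\rangle$. The resulting three-tier poset ($\mathcal{C}$-maximal, $\mathcal{C}$-medium, $\mathcal{C}$-minimal in Proposition~\ref{prop:SubgroupsC}) is automorphism-invariant by construction, and the $\mathcal{C}$-minimal elements are exactly the conjugates of vertex groups --- with no hypothesis whatsoever on the internal structure of the $G_v$. Replacing your factor-recognition step with this intersection argument closes the gap, and the rest of your outline goes through verbatim.
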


\noindent
Before turning to the proof of the theorem, we need to introduce some vocabulary.

\medskip \noindent
Given a simplicial graph $\Gamma$ and a collection of groups $\mathcal{G}$ indexed by $V(\Gamma)$, let $\mathcal{M}= \mathcal{M}(\Gamma, \mathcal{G})$ denote the collection of maximal subgroups of $\Gamma \mathcal{G}$ that decompose non trivially as direct products, and let $\mathcal{C}= \mathcal{C}(\Gamma, \mathcal{G})$ denote the collection of non trivial subgroups of $\Gamma \mathcal{G}$ that can be obtained by intersecting two subgroups of $\mathcal{M}$. A subgroup of $\Gamma \mathcal{G}$ that belongs to $\mathcal{C}$ is
\begin{itemize}
	\item \emph{$\mathcal{C}$-minimal} if it is minimal in $\mathcal{C}$ with respect to the inclusion;
	\item \emph{$\mathcal{C}$-maximal} if it is maximal in $\mathcal{C}$ with respect to the inclusion (or equivalently if it belongs to $\mathcal{M}$);
	\item \emph{$\mathcal{C}$-medium} otherwise.
\end{itemize}
It is worth noticing that these three classes of subgroups of $\Gamma \mathcal{G}$ are preserved by automorphisms. Now, we want to describe more explicitly the structure of theses subgroups. For this purpose, the following general result, which is a consequence of \cite[Corollary 6.15]{MinasyanOsinTrees}, will be helpful:

\begin{lemma}\label{lem:maxproduct}
Let $\Gamma$ be a simplicial graph and $\mathcal{G}$ a collection of groups indexed by $V(\Gamma)$. If a subgroup $H \leq \Gamma \mathcal{G}$ decomposes non-trivially as a product, then there exist an element $g \in \Gamma \mathcal{G}$ and a join $\Lambda \subset \Gamma$ such that $H \subset g \langle \Lambda \rangle g^{-1}$. 
\end{lemma}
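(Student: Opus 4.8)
The plan is to deduce Lemma~\ref{lem:maxproduct} from the structure theory of graph products as special cases of groups acting on trees, which is exactly the content of \cite[Corollary 6.15]{MinasyanOsinTrees}. Recall that a graph product $\Gamma\cG$ over a non-complete graph splits as an amalgamated product $\langle \mathrm{star}(v) \rangle \ast_{\langle \mathrm{link}(v)\rangle} \langle \Gamma \setminus\{v\}\rangle$ for any vertex $v$; more generally, any graph product over a graph that is not a join admits a splitting of this form over a parabolic subgroup. The key point, supplied by \cite[Corollary 6.15]{MinasyanOsinTrees}, is that if a group $G$ acting acylindrically (or more precisely, in the setting of trees with the relevant commutation control) on a tree has a subgroup $H$ that decomposes non-trivially as a direct product, then $H$ is elliptic, i.e. $H$ fixes a vertex of the tree, hence is contained in a conjugate of a vertex stabiliser of the splitting.

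First I would set up the reduction: if $\Gamma$ itself is a join, there is nothing to prove (take $g=1$, $\Lambda=\Gamma$). Otherwise, I would argue by induction on $|V(\Gamma)|$. Pick a vertex $v\in V(\Gamma)$; then $\Gamma\cG = \langle \mathrm{star}(v)\rangle \ast_{\langle\mathrm{link}(v)\rangle} \langle \Gamma\setminus\{v\}\rangle$ acts on the associated Bass--Serre tree $T$. Apply \cite[Corollary 6.15]{MinasyanOsinTrees} to the product decomposition $H = A \times B$ with $A,B$ non-trivial: it forces $H$ to be elliptic in $T$. Therefore $H$ is conjugate into one of the two vertex groups, i.e. $H \subset g\langle \mathrm{star}(v)\rangle g^{-1}$ or $H \subset g\langle \Gamma\setminus\{v\}\rangle g^{-1}$ for some $g\in\Gamma\cG$. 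In the first case, $\mathrm{star}(v)$ is a join (with parts $\{v\}$ and $\mathrm{link}(v)$, unless $\mathrm{link}(v)=\emptyset$, which is excluded once $\Gamma$ is connected with more than one vertex), and we are done. In the second case, $H$ is contained in a conjugate of the graph product over the strictly smaller graph $\Gamma\setminus\{v\}$; since $H$ still decomposes as a non-trivial product inside that conjugate, the induction hypothesis applied to $\langle\Gamma\setminus\{v\}\rangle$ (together with the fact that joins of induced subgraphs of $\Gamma\setminus\{v\}$ are joins of induced subgraphs of $\Gamma$) produces the desired $g'$ and join $\Lambda\subset\Gamma\setminus\{v\}\subset\Gamma$.

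The main obstacle I anticipate is making the application of \cite[Corollary 6.15]{MinasyanOsinTrees} clean: one must check that the relevant edge stabilisers of the splittings used are "small" or "weakly malnormal" in the precise sense required by that corollary, so that a direct-product subgroup is forced to be elliptic rather than merely contained in a subgroup commensurable with an edge group. For graph product splittings over $\langle\mathrm{link}(v)\rangle$ this is exactly the kind of weak malnormality/acylindricity statement that holds (the edge group is a parabolic that, crucially, is itself a proper parabolic, and two distinct conjugates intersect in a smaller parabolic), so citing the MinasyanOsin result should dispatch this; but the bookkeeping of which parabolic subgroup the join lives in, and ensuring the induction terminates at the base cases ($\Gamma$ a single vertex, or $\Gamma$ complete, or $\Gamma$ a join), will require a little care. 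An alternative, possibly cleaner route that avoids induction entirely: invoke \cite[Corollary 6.15]{MinasyanOsinTrees} directly in the form stating that a direct-product subgroup of $\Gamma\cG$ must be contained in a conjugate of a proper parabolic $\langle\Lambda_0\rangle$ with $\Lambda_0$ a join, using that $\Gamma\cG$ acts on its Bass--Serre-type tree decompositions with the required malnormality; then the statement follows immediately. I would present the proof in this second, direct form, citing the corollary for the hard analytic input and only verifying the elementary point that the parabolic in question can be taken to correspond to a join subgraph.
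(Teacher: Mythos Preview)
Your proposal is correct and matches the paper's treatment exactly: the paper does not give a proof of this lemma at all, but simply states it as ``a consequence of \cite[Corollary 6.15]{MinasyanOsinTrees}'' and moves on. Your second route---citing the corollary directly---is precisely what the paper does; your first route (the induction on $|V(\Gamma)|$ via the splitting $\langle\mathrm{star}(v)\rangle\ast_{\langle\mathrm{link}(v)\rangle}\langle\Gamma\setminus\{v\}\rangle$) supplies the details the paper leaves implicit, and is the natural way to unpack that citation.

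One small caveat worth flagging in your write-up: the case $\mathrm{link}(v)=\emptyset$ that you mention in passing is not as innocuous as you suggest. If $\Gamma$ has an isolated vertex $v$ and $G_v$ itself happens to decompose non-trivially as a direct product, then $H=G_v$ is a counterexample to the lemma as literally stated (there is no join subgraph of $\Gamma$ containing $v$). So the statement implicitly assumes something mild---e.g.\ that $\Gamma$ has no isolated vertex, or that one is content with the degenerate case $\Lambda=\{v\}$ when $H$ lands in a single vertex group. This is harmless for the paper's purposes, since the lemma is only ever applied when $\Gamma$ is molecular (leafless, girth $\geq 5$), where every vertex has non-empty link and $\mathrm{star}(v)$ is a genuine join; but if you present the inductive proof you should make this hypothesis explicit rather than dismissing it.
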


\noindent
\textbf{For the rest of Section \ref{section:applicationgirth}, we fix a molecular graph $\Gamma$ and a collection of groups $\mathcal{G}$  indexed by $V(\Gamma)$.} The characterisation of the subgroups of $\mathcal{C}$ which we deduce from the previous lemma and from the quasi-median geometry of $X(\Gamma, \mathcal{G})$ is the following:

\begin{prop}\label{prop:SubgroupsC}
A subgroup $H \leq \Gamma \mathcal{G}$ belongs to $\mathcal{C}$ if and only if:
\begin{itemize}
	\item $H$ is conjugate to $\langle \mathrm{star}(u) \rangle$ for some vertex $u \in V(\Gamma)$; if so, $H$ is $\mathcal{C}$-maximal.
	\item Or $H$ is conjugate to $\langle G_u,G_v \rangle$ for some adjacent vertices $u,v \in V(\Gamma)$; if so, $H$ is $\mathcal{C}$-medium.
	\item Or $H$ is conjugate to $G_u$ for some vertex $u \in V(\Gamma)$; if so, $H$ is $\mathcal{C}$-minimal.
\end{itemize}
\end{prop}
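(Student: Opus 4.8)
The strategy is to identify the maximal direct-product subgroups $\mathcal{M}$ explicitly, then compute pairwise intersections. The starting observation is that $\langle \mathrm{star}(u) \rangle = G_u \times \langle \mathrm{link}(u) \rangle$ decomposes non-trivially as a product whenever $\mathrm{link}(u) \neq \emptyset$, which holds for every $u$ since $\Gamma$ is molecular (no vertex of valence $<2$). First I would show that these subgroups, and their conjugates, are exactly the members of $\mathcal{M}$. For the inclusion ``$\mathcal{M} \subseteq \{$conjugates of $\langle \mathrm{star}(u)\rangle\}$'', apply Lemma \ref{lem:maxproduct}: any $H$ decomposing as a product lies in some $g\langle \Lambda \rangle g^{-1}$ with $\Lambda$ a join. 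The key point is that in a molecular graph (girth $\geq 5$, no valence-$1$ vertices) every join $\Lambda$ is contained in some $\mathrm{star}(u)$ — indeed a join $A \sqcup B$ with $|A|, |B| \geq 1$ and, say, $|A| \geq 2$ would, if $|B| \geq 2$, produce a $4$-cycle (girth $\geq 5$ forbids this), so one side is a single vertex $\{u\}$ and then $\Lambda \subseteq \mathrm{star}(u)$; one must also check $\Lambda$ cannot be a single edge in a way that escapes all stars, but an edge $\{u,v\}$ lies in $\mathrm{star}(u)$. Maximality of $\langle \mathrm{star}(u)\rangle$ as a product subgroup, and the fact that distinct stars give non-conjugate subgroups, follows from Lemma \ref{lem:conjincluded} together with the observation that $\mathrm{star}(u)$ is not properly contained in any other star (again using girth $\geq 5$, which prevents $\mathrm{link}(u) \subseteq \mathrm{star}(v)$ for $v \neq u$).

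Second, having established $\mathcal{M} = \{ g\langle \mathrm{star}(u)\rangle g^{-1} \mid g \in \Gamma\mathcal{G},\ u \in V(\Gamma)\}$, I would compute intersections $g\langle \mathrm{star}(u)\rangle g^{-1} \cap h\langle \mathrm{star}(v)\rangle h^{-1}$. Here the quasi-median geometry of $X(\Gamma, \mathcal{G})$ is the right tool: by Theorem \ref{thm:HypStab} these subgroups are carriers' stabilisers, i.e. stabilisers of the subgraphs $g\langle \mathrm{star}(u)\rangle$ and $h\langle \mathrm{star}(v)\rangle$, and a standard fact about convex subgraphs in such graphs is that the intersection of two such parabolic cosets, when non-empty, is again a parabolic coset $k\langle \Xi \rangle k^{-1}$ where $\Xi = \mathrm{star}(u) \cap (\text{translate of } \mathrm{star}(v))$ intersected appropriately; combinatorially $\Xi$ is an induced subgraph of both $\mathrm{star}(u)$ and $\mathrm{star}(v)$. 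One then runs through the possibilities for $\Xi$ using the structure of molecular graphs: because girth $\geq 5$, two distinct closed stars $\mathrm{star}(u)$, $\mathrm{star}(v)$ intersect in either the empty graph, a single vertex, a single edge, or (if $u \sim v$) the edge $\{u,v\}$ possibly together with nothing more — crucially they cannot share two independent vertices of a common link without creating a short cycle. So the non-trivial intersections that can occur are conjugates of: $\langle \mathrm{star}(u)\rangle$ itself (when the two stars coincide), $\langle G_u, G_v\rangle = \langle G_u \rangle \times \langle G_v \rangle$ when $u \sim v$ and the stars meet exactly in that edge, or $G_u$ when the stars meet in exactly one vertex $u$. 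This yields the three cases of the proposition.

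Third, I would verify the $\mathcal{C}$-minimal / medium / maximal classification. Maximality of the conjugates of $\langle \mathrm{star}(u)\rangle$ in $\mathcal{C}$ is immediate since they lie in $\mathcal{M}$. That $G_u$ is $\mathcal{C}$-minimal amounts to: $G_u$ contains no proper non-trivial member of $\mathcal{C}$, which is clear as $G_u$ is a vertex group and any $\mathcal{C}$-subgroup is conjugate to one of the three listed types, none of which can be properly contained in $G_u$ (a conjugate of $G_v$ inside $G_u$ would force $v = u$ by the normal form / Lemma \ref{lem:conjincluded}). And $\langle G_u, G_v\rangle$ is $\mathcal{C}$-medium because it properly contains $G_u$ (hence is not minimal) and is properly contained in $\langle \mathrm{star}(u)\rangle$ (hence is not maximal, since $\mathrm{link}(u)\setminus\{v\} \neq \emptyset$ as $u$ has valence $\geq 2$). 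Finally, I should confirm that each listed subgroup genuinely arises as an intersection of two members of $\mathcal{M}$: $\langle \mathrm{star}(u)\rangle$ trivially; $\langle G_u, G_v\rangle$ as $\langle \mathrm{star}(u)\rangle \cap \langle \mathrm{star}(v)\rangle$ for $u \sim v$ (one checks the intersection is exactly $\langle G_u, G_v \rangle$, using girth $\geq 5$ so that $\mathrm{star}(u) \cap \mathrm{star}(v) = \{u, v\}$ with the edge between them); and $G_u$ as $\langle \mathrm{star}(u)\rangle \cap \langle \mathrm{star}(w)\rangle$ for a suitable non-adjacent $w$ with $u \in \mathrm{link}(w)$ — such a $w$ exists because $u$ has a neighbour $v$, $v$ has valence $\geq 2$ so has a neighbour $w \neq u$, and $w \not\sim u$ by girth $\geq 4$.

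\textbf{Main obstacle.} The delicate part is the intersection computation and, within it, the combinatorial case analysis of $\mathrm{star}(u) \cap g\cdot\mathrm{star}(v)$ — i.e. correctly exploiting the girth-$\geq 5$ hypothesis to pin down exactly which induced subgraphs can be simultaneously contained in two closed stars, and correctly translating ``intersection of parabolic cosets is parabolic'' into this setting. One must be careful that the intersection subgraph is taken with respect to a common vertex of both cosets (the intersection could be empty, in which case no $\mathcal{C}$-subgroup arises), and that when it is non-empty the geometry of $X(\Gamma, \mathcal{G})$ — convexity of carriers (Theorem \ref{thm:QmHyperplanes}) and the projection results (Proposition \ref{prop:ProjHyp}, Corollary \ref{cor:minseppairhyp}) — indeed forces the intersection to be a single parabolic coset rather than something wilder. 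The rest is bookkeeping with Lemma \ref{lem:conjincluded} to rule out spurious coincidences and conjugacies.
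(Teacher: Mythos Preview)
Your outline for identifying $\mathcal{M}$ is correct and matches the paper's: Lemma~\ref{lem:maxproduct} plus the observation that in a girth-$\geq 5$ graph every join is contained in a star, together with Lemma~\ref{lem:conjincluded} for maximality.

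The gap is in the intersection step. You assert that when the cosets $g\langle \mathrm{star}(u)\rangle$ and $h\langle \mathrm{star}(v)\rangle$ are disjoint, ``no $\mathcal{C}$-subgroup arises'', i.e.\ the intersection of the conjugate subgroups is trivial. This is false. Take $\Gamma$ the $5$-cycle $a\!-\!b\!-\!c\!-\!d\!-\!e\!-\!a$ and a non-trivial $s_d\in G_d$. The carriers $\langle\mathrm{star}(a)\rangle$ and $s_d\langle\mathrm{star}(a)\rangle$ are distinct left cosets of the same subgroup, hence disjoint; yet $G_e$ lies in both $\langle\mathrm{star}(a)\rangle$ and $s_d\langle\mathrm{star}(a)\rangle s_d^{-1}$ (since $d\sim e$ forces $s_d$ to centralise $G_e$), so the intersection of stabilisers is non-trivial. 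Thus ``intersection of parabolic cosets'' does not control ``intersection of parabolic subgroups'' in the way you need, and your case analysis on $\mathrm{star}(u)\cap\mathrm{star}(v)$ misses exactly these contributions.

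The paper handles this by recasting $g\langle\mathrm{star}(u)\rangle g^{-1}$ as a hyperplane stabiliser and splitting into three geometric cases: the hyperplanes are transverse; not transverse but admit a common transverse hyperplane; or admit none. The middle case is precisely the phenomenon above, and the paper controls it via a projection argument (Lemma~\ref{lem:projhypsquaretrianglefree}): the projection of one carrier onto the other is a single clique, whose stabiliser is the rotative stabiliser of the unique common transverse hyperplane. This is what your outline is missing. (A minor separate slip: in your converse for $G_u$ you have the roles reversed --- you should take two distinct neighbours $x,y$ of $u$ and intersect $\langle\mathrm{star}(x)\rangle$ with $\langle\mathrm{star}(y)\rangle$, not $\langle\mathrm{star}(u)\rangle$ with $\langle\mathrm{star}(w)\rangle$.)
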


\begin{proof}
Suppose that $H \leq \Gamma \mathcal{G}$ belongs to $\mathcal{M}$, ie., is a maximal product subgroup. It follows from Lemma \ref{lem:maxproduct} that there exist an element $g \in \Gamma \mathcal{G}$ and a join $\Lambda \subset \Gamma$ such that $H \subset g \langle \Lambda \rangle g^{-1}$. Because $\Gamma$ is triangle-free and square-free, $\Lambda$ must be the star of a vertex, say $\Lambda= \mathrm{star}(u)$ where $u \in V(\Gamma)$. As $g \langle \mathrm{star}(u) \rangle g^{-1}$ decomposes as a product, namely $g \left( G_u \times \langle \mathrm{link}(u) \rangle \right)g^{-1}$, it follows by maximality of $H$ that $H= g \langle \mathrm{star}(u) \rangle g^{-1}$.

\medskip \noindent
Conversely, we want to prove that, if $g \in \Gamma \mathcal{G}$ is an element and $u \in V(\Gamma)$ a vertex, then $g \langle \mathrm{star}(u) \rangle g^{-1}$ is a maximal product subgroup. So let $P$ be a subgroup of $\Gamma \mathcal{G}$ splitting non-trivially as a direct product and containing $g \langle \mathrm{star}(u) \rangle g^{-1}$. It follows from Lemma \ref{lem:maxproduct} that there exist an element $h \in \Gamma \mathcal{G}$ and a join $\Xi \subset \Gamma$ such that 
$$g \langle \mathrm{star}(u) \rangle g^{-1} \subset P \subset h \langle \Xi \rangle h^{-1}.$$
By applying Lemma \ref{lem:conjincluded}, we deduce that $\mathrm{star}(u) \subset \Xi$ and that $h \in g \langle \mathrm{star}(u) \rangle \cdot \langle \mathrm{link}(\mathrm{star}(u)) \rangle \cdot \langle \Xi \rangle$. As $\mathrm{star}(u)$ is a maximal join, necessarily $\mathrm{star}(u)= \Xi$ and $\mathrm{link}(\mathrm{star}(u))= \emptyset$. As a consequence, $h \in g \langle \mathrm{star}(u) \rangle$, so that
$$g \langle \mathrm{star}(u) \rangle g^{-1} \subset P \subset h \langle \Xi \rangle h^{-1}= h \langle \mathrm{star}(u) \rangle h^{-1} = g \langle \mathrm{star}(u) \rangle g^{-1}.$$
Therefore, $g \langle \mathrm{star}(u) \rangle g^{-1} = P$. 

\medskip \noindent
Thus, we have proved that 
$$\begin{array}{lcl} \mathcal{M} & = & \{ g \langle \mathrm{star}(u) \rangle g^{-1} \mid g \in \Gamma \mathcal{G}, u \in V(\Gamma) \} \\ \\ & = & \{ \mathrm{stab}(J) \mid \text{$J$ hyperplane of $X(\Gamma, \mathcal{G})$} \} \end{array}$$
where the last equality is justified by Theorem \ref{thm:HypStab}. As a consequence, the collection $\mathcal{C}$ coincides with the non-trivial subgroups of $\Gamma \mathcal{G}$ obtained by intersecting two hyperplane-stabilisers. Therefore, the implication of our lemma follows from the following observation:

\begin{fact}\label{fact:HypStabInter}
Let $J_1$ and $J_2$ be two hyperplanes of $X(\Gamma, \mathcal{G})$. 
\begin{itemize}
	\item If $J_1$ and $J_2$ are transverse, then $\mathrm{stab}(J_1) \cap \mathrm{stab}(J_2)$ is conjugate to $\langle G_u,G_v \rangle$ for some adjacent vertices $u,v \in V(\Gamma)$.
	\item If $J_1$ and $J_2$ are not transverse and if there exists a third hyperplane $J$ transverse to both $J_1$ and $J_2$, then $\mathrm{stab}(J_1) \cap \mathrm{stab}(J_2)$ is conjugate to $G_u$ for some vertex $u \in V(\Gamma)$.
	\item If $J_1$ and $J_2$ are not transverse and if no hyperplane is transverse to both $J_1$ and $J_2$, then $\mathrm{stab}(J_1) \cap \mathrm{stab}(J_2)$ is trivial. 
\end{itemize}
\end{fact}

\noindent
Suppose that $J_1$ and $J_2$ are transverse. As the carriers $N(J_1)$ and $N(J_2)$ intersect, say that $g \in X(\Gamma, \mathcal{G})$ belongs to their intersection, it follows that there exist two adjacent vertices $u,v \in V(\Gamma)$ such that $J_1=gJ_u$ and $J_2=gJ_v$. Therefore,
$$\mathrm{stab}(J_1) \cap \mathrm{stab}(J_2)= g \langle \mathrm{star}(u) \rangle g^{-1} \cap g \langle \mathrm{star}(v) \rangle g^{-1} = g \langle G_u,G_v \rangle g^{-1},$$
proving the first point of our fact. Next, suppose that $J_1$ and $J_2$ are not transverse but that there exists a third hyperplane $J$ transverse to both $J_1$ and $J_2$. As a consequence of Lemma \ref{lem:projhypsquaretrianglefree} below, we know that the projection of $N(J_1)$ onto $N(J_2)$, which must be $\mathrm{stab}(J_1) \cap \mathrm{stab}(J_2)$-invariant, is reduced to a single clique $C$. So $\mathrm{stab}(J_1) \cap \mathrm{stab}(J_2) \subset \mathrm{stab}(C)$. Notice that $J$ is dual to $C$. Indeed, the hyperplane dual to $C$ must be transverse to both $J_1$ and $J_2$, but we also know from Lemma \ref{lem:projhypsquaretrianglefree} below that there exists at most one hyperplane transverse to $J_1$ and $J_2$. We conclude from Proposition \ref{prop:rotstabinX} that
$$\mathrm{stab}(C)= \mathrm{stab}_{\circlearrowleft}(J) \subset \mathrm{stab}(J_1) \cap \mathrm{stab}(J_2),$$
proving that $\mathrm{stab}(J_1) \cap \mathrm{stab}(J_2) = \mathrm{stab}(C)$. Then, the second point of our fact follows from Lemma \ref{lem:CliqueStab}. Finally, suppose that $J_1$ and $J_2$ are not transverse and that no hyperplane is transverse to both $J_1$ and $J_2$. Then $\mathrm{stab}(J_1) \cap \mathrm{stab}(J_2)$ stabilises the projection of $N(J_2)$ onto $N(J_1)$, which is reduced to a single vertex. As vertex-stabilisers are trivial, the third point of our fact follows. 

\medskip \noindent
Conversely, if $u,v \in V(\Gamma)$ are two adjacent vertices, then $\langle G_u , G_v \rangle$ is the intersection of $\langle \mathrm{star}(u) \rangle$ and $\langle \mathrm{star}(v) \rangle$; and if $w \in V(\Gamma)$ is a vertex, then $G_w$ is the intersection of $\langle \mathrm{star}(x) \rangle$ and $\langle \mathrm{star}(y) \rangle$ where $x,y \in V(\Gamma)$ are two distinct neighbors of $w$. 
\end{proof}

\noindent
The following result is used in the proof of Proposition \ref{prop:SubgroupsC}. 

\begin{lemma}\label{lem:projhypsquaretrianglefree}
Let $J_1,J_2$ two non-transverse hyperplanes of $X(\Gamma, \mathcal{G})$. There exists at most one hyperplane transverse to both $J_1$ and $J_2$. As a consequence, the projection of $N(J_1)$ onto $N(J_2)$ is either a single vertex or a single clique.
\end{lemma}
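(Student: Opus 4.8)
The plan is to prove the two assertions in turn, the first being the substantive one. Write $\mathcal{T}(A,B)$ for the set of hyperplanes transverse to both of two given hyperplanes $A,B$, and $\ell(J)\in V(\Gamma)$ for the label of a hyperplane $J$. Two reductions will be used repeatedly. The first is purely combinatorial: since $\Gamma$ has girth at least $5$ it is triangle-free and square-free, so two distinct non-adjacent vertices have at most one common neighbour, and $\mathrm{link}(v)$ is edgeless for every $v$. By Lemma~\ref{lem:transverseimpliesadj} the label map is a graph morphism $T(\Gamma,\mathcal{G})\to\Gamma$; hence if $H\in\mathcal{T}(A,B)$ then $\ell(H)$ is adjacent to both $\ell(A)$ and $\ell(B)$, which by triangle-freeness forces $\ell(A),\ell(B)$ to be equal, or distinct and non-adjacent. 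The second, key, reduction is:

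\medskip\noindent\textbf{(*)} \emph{If $A\ne B$ are non-transverse and all hyperplanes of $\mathcal{T}(A,B)$ carry the same label, then $|\mathcal{T}(A,B)|\le 1$.}

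\medskip\noindent To prove (*), translate so that $A=J_u$ with $u=\ell(A)$ and $1\in N(A)=\langle\mathrm{star}(u)\rangle$. A hyperplane transverse to $J_u$ crosses $N(J_u)$ (the edges of $J_u$ inside a witnessing square lie in $N(J_u)$, forcing the whole square there), and after translating by an element of $G_u=\mathrm{stab}_\circlearrowleft(J_u)$, which fixes it by Proposition~\ref{prop:rotstabinX}, it is dual to a clique $pG_v$ with $p\in\langle\mathrm{link}(u)\rangle$ and $v\in\mathrm{link}(u)$; moreover, using $\mathrm{star}(v)\cap\mathrm{link}(u)=\{v\}$ (triangle-freeness), distinct such hyperplanes meet $\langle\mathrm{link}(u)\rangle$ in distinct cliques. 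Since $\mathrm{link}(u)$ is edgeless, $\langle\mathrm{link}(u)\rangle\cong\ast_{v\in\mathrm{link}(u)}G_v$ and the subgraph it spans is $X(\mathrm{link}(u),\mathcal{G}_{|\mathrm{link}(u)})$, which has no squares and in which hyperplanes coincide with cliques. Now let $P=\mathrm{proj}_{N(A)}(N(B))$; it is convex, and by Corollary~\ref{cor:hypsepprojections} the hyperplanes crossing $P$ are exactly those crossing both $N(A)$ and $N(B)$ — which, using $A\ne B$ and non-transversality to see that neither of $A,B$ crosses the other's carrier, is precisely $\mathcal{T}(A,B)$. As $A\notin\mathcal{T}(A,B)$, $P$ lies in a single sector of $A$, i.e. in a translate of $\langle\mathrm{link}(u)\rangle$; being convex there and crossed only by cliques of the single label $w$, and since in a free product any geodesic joining two vertices of distinct $G_w$-cosets crosses a clique of label $\ne w$, $P$ crosses at most one $w$-clique. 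Hence $|\mathcal{T}(A,B)|\le 1$, proving (*).

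With (*) available, suppose for contradiction that $J_1\ne J_2$ are non-transverse with $|\mathcal{T}(J_1,J_2)|\ge 2$. By (*) these hyperplanes do not all share a label, so pick $K,K'\in\mathcal{T}(J_1,J_2)$ with $\ell(K)\ne\ell(K')$. Both labels lie in the edgeless graph $\mathrm{link}(\ell(J_1))$, so they are non-adjacent, whence $K,K'$ are not transverse (Lemma~\ref{lem:transverseimpliesadj}); they are distinct. On the other hand $J_1,J_2$ are each transverse to both $K$ and $K'$, so $\{J_1,J_2\}\subseteq\mathcal{T}(K,K')$ and $|\mathcal{T}(K,K')|\ge 2$. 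But every hyperplane of $\mathcal{T}(K,K')$ is labelled by a vertex adjacent to both $\ell(K)$ and $\ell(K')$, and since these are distinct and non-adjacent there is at most one such vertex; thus $\mathcal{T}(K,K')$ has a single label and (*) gives $|\mathcal{T}(K,K')|\le 1$, a contradiction. Hence at most one hyperplane is transverse to both $J_1$ and $J_2$. For the last assertion, $\mathrm{proj}_{N(J_2)}(N(J_1))$ is crossed exactly by $\mathcal{T}(J_1,J_2)$ (Corollary~\ref{cor:hypsepprojections}), so by Corollary~\ref{cor:diamproj} it has diameter at most $1$; if it is a single vertex we are done, and otherwise $\mathcal{T}(J_1,J_2)=\{H\}$, the projection is contained in a clique dual to $H$, and being $\mathrm{stab}_\circlearrowleft(H)$-invariant (note $\mathrm{stab}_\circlearrowleft(H)$ fixes $J_1,J_2$ by Proposition~\ref{prop:rotstabinX}) and $\mathrm{stab}_\circlearrowleft(H)$ acting transitively on that clique, it is the whole clique.

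The main obstacle is exactly the equal-label case isolated in (*): there the label map carries no information, and the resolution is the observation that, because girth $\ge 5$ makes every link edgeless, the projection onto a carrier lands inside a free-product Cayley graph, where convexity is extremely rigid. The rest is routine verification — that hyperplanes transverse to $J_u$ correspond to cliques inside $\langle\mathrm{link}(u)\rangle$ and that distinct ones give distinct cliques, and that the projection image is convex with exactly the claimed crossing hyperplanes — carried out with Propositions~\ref{prop:rotstabinX} and~\ref{prop:EdgesDualHyp} and Corollaries~\ref{cor:hypsepprojections} and~\ref{cor:diamproj}.
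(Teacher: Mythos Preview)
Your argument is correct and takes a genuinely different route from the paper's. The paper proves the first assertion by showing that the transversality graph $T(\Gamma,\mathcal{G})$ contains no induced $4$-cycle: given a hypothetical $4$-cycle $(J_1,J_2,J_3,J_4)$, one minimises $d(N(J_1),N(J_3))+d(N(J_2),N(J_4))$, forces the four carriers to pairwise intersect, invokes a Helly-type property to obtain a common vertex, and reads off an induced $4$-cycle of labels in $\Gamma$. Your approach instead isolates the equal-label case (*) and resolves it locally, using that $\langle\mathrm{link}(u)\rangle$ is a free product because the link is edgeless; the mixed-label case is then reduced to (*) by the square-freeness of $\Gamma$. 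This avoids the global minimisation and the Helly step entirely, at the cost of the extra case analysis. Either way one sees the same dichotomy at work: two common transversals with distinct labels would force a square in $\Gamma$, while two with the same label are ruled out by the free-product structure of the link.

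One minor point: you assert that $P=\mathrm{proj}_{N(A)}(N(B))$ is convex, but this is not established in the paper (it follows from general gatedness properties of quasi-median graphs, which the paper deliberately avoids citing). Fortunately your argument does not actually need it. You have already shown that every hyperplane separating two points of $P$ lies in $\mathcal{T}(A,B)$ and hence carries the label $w$; since the hyperplanes separating $x,y\in P$ correspond to the syllables of a reduced word for $x^{-1}y$, all these syllables lie in $G_w$, so the reduced word has length at most one and $x^{-1}y\in G_w$. Thus $P$ sits in a single $G_w$-coset directly, without passing through a geodesic in $P$. Your treatment of the final assertion (showing the projection is an \emph{entire} clique via the transitive action of $\mathrm{stab}_\circlearrowleft(H)$) is in fact more careful than the paper's, which simply appeals to Corollary~\ref{cor:diamproj} for diameter $\le 1$.
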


\begin{proof}
Because $\Gamma$ is triangle-free, we know from Corollary \ref{cor:cubdim} that the cubical dimension of $X(\Gamma)$ is two. Consequently, if there exist two hyperplanes transverse to both $J_1$ and $J_2$, they cannot be transverse to one another. So, in order to conclude that at most one hyperplane may be transverse to both $J_1$ and $J_2$, it is sufficient to prove the following observation:

\begin{claim}\label{claim:lengthfour}
The transversality graph $T(\Gamma, \mathcal{G})$ does not contain an induced cycle of length four.
\end{claim}

\noindent
Suppose by contradiction that $X(\Gamma, \mathcal{G})$ contains a cycle of four hyperplanes $(J_1, \ldots, J_4)$. Suppose that the quantity $d(N(J_1),N(J_3))+d(N(J_2),N(J_4))$ is minimal. If $N(J_1)$ and $N(J_3)$ are disjoint, they must be separated by some hyperplane $J$. Replacing $J_1$ with $J$ produces a new cycle of four hyperplanes of lower complexity, contradicting the choice of our initial cycle. Therefore, $N(J_1)$ and $N(J_3)$ must intersect. Similarly, $N(J_2) \cap N(J_4) \neq \emptyset$. Thus, $N(J_1), \ldots, N(J_4)$ pairwise intersect, so that there exists a vertex $x \in N(J_1) \cap \cdots \cap N(J_4)$. For every $1 \leq i \leq 4$, let $u_i$ denote the vertex of $\Gamma$ labelling the hyperplane $J_i$. Notice that, as a consequence of Lemma \ref{lem:transverseimpliesadj}, $u_1 \neq u_3$ (resp. $u_2 \neq u_4$) because $J_1$ and $J_3$ (resp. $J_2$ and $J_4$) are tangent. So $u_1, \ldots, u_4$ define a cycle of length four in $\Gamma$, contradicting the fact that $\Gamma$ is molecular.

\medskip \noindent
This concludes the proof of the first assertion of our lemma. The second assertion then follows from Corollary \ref{cor:diamproj}. 
\end{proof}

\noindent
Now, Theorem \ref{thm:mainGPgeneral} is clear since it follows from Proposition \ref{prop:SubgroupsC} that Theorem \ref{thm:conjugatingauto} applies, proving that $\mathrm{Aut}(\Gamma \mathcal{G})= \mathrm{ConjAut}(\Gamma \mathcal{G})= \mathrm{ConjP}(\Gamma \mathcal{G})$. In fact, we are able to prove a stronger statement, namely:

\begin{thm}
Let $\Gamma, \Phi$ be two molecular graphs and $\mathcal{G},\mathcal{H}$ two collections of groups indexed by $V(\Gamma), V(\Phi)$ respectively. Suppose that there exists an isomorphism $\varphi : \Gamma \mathcal{G} \to \Phi \mathcal{H}$. Then there exist an automorphism $\alpha \in \mathrm{ConjP}(\Gamma \mathcal{G})$ and an isometry $s : \Gamma \to \Phi$ such that $\varphi \alpha$ sends isomorphically $G_u$ to $H_{s(u)}$ for every $u \in V(\Gamma)$.
\end{thm}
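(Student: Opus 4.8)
The plan is to reduce this statement to Theorem~\ref{thm:ConjIsom}, whose hypothesis requires that the isomorphism $\varphi$ sends vertex groups of $\Gamma\mathcal{G}$ to conjugates of vertex groups of $\Phi\mathcal{H}$. The key observation is that, thanks to Proposition~\ref{prop:SubgroupsC}, this hypothesis is automatic for an \emph{arbitrary} isomorphism between two molecular graph products. First I would note that, since $\Phi$ is also molecular, Proposition~\ref{prop:SubgroupsC} applies to $\Phi\mathcal{H}$ as well, giving the analogous description of the classes $\mathcal{M}(\Phi,\mathcal{H})$ and $\mathcal{C}(\Phi,\mathcal{H})$: the $\mathcal{C}$-minimal subgroups of $\Phi\mathcal{H}$ are exactly the conjugates of its vertex groups. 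Now, the collections $\mathcal{M}$ and $\mathcal{C}$ are defined purely in group-theoretic terms (maximal direct-product subgroups, and non-trivial intersections of two of them), hence they are preserved by any isomorphism; in particular $\varphi$ induces a bijection between the $\mathcal{C}$-minimal subgroups of $\Gamma\mathcal{G}$ and those of $\Phi\mathcal{H}$. By Proposition~\ref{prop:SubgroupsC} applied to both sides, this means precisely that $\varphi$ sends each conjugate of a vertex group $G_u$ of $\Gamma\mathcal{G}$ to a conjugate of some vertex group $H_v$ of $\Phi\mathcal{H}$; in particular $\varphi(G_u)$ is a conjugate of a vertex group for every $u\in V(\Gamma)$.

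With this established, $\varphi$ satisfies the hypothesis of Theorem~\ref{thm:ConjIsom}. Applying that theorem directly yields an automorphism $\alpha\in\mathrm{ConjP}(\Gamma\mathcal{G})$ and an isometry $s:\Gamma\to\Phi$ such that $\varphi\alpha$ sends $G_u$ isomorphically onto $H_{s(u)}$ for every $u\in V(\Gamma)$, which is exactly the desired conclusion. So the proof is essentially a two-line deduction: "$\mathcal{C}$-minimality is an isomorphism invariant, hence by Proposition~\ref{prop:SubgroupsC} every isomorphism between molecular graph products is conjugating; now apply Theorem~\ref{thm:ConjIsom}."

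The only point that requires a small amount of care --- and which I expect to be the main (minor) obstacle --- is checking that the description of $\mathcal{C}$-minimal subgroups in Proposition~\ref{prop:SubgroupsC} genuinely characterises the conjugates of vertex groups among all $\mathcal{C}$-subgroups, i.e.\ that one cannot have a conjugate of some $G_u$ strictly contained in a conjugate of some $G_v$ (which would spoil minimality); but this is immediate from the fact that distinct vertex groups generate either a free or a direct product, so no conjugate of a vertex group is properly contained in another, and conjugates of $\langle G_u,G_v\rangle$ or $\langle\mathrm{star}(u)\rangle$ always properly contain a conjugate of a vertex group. One should also note that the three cases in Proposition~\ref{prop:SubgroupsC} are mutually exclusive as subgroup-isomorphism types in a way compatible with the inclusion order, so "$\mathcal{C}$-minimal" unambiguously picks out the conjugates of vertex groups on each side; this makes the transfer of $\mathcal{C}$-minimality under $\varphi$ legitimate. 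No further input is needed.
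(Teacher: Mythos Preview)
Your proposal is correct and follows essentially the same approach as the paper: use Proposition~\ref{prop:SubgroupsC} (applied to both molecular graph products) to see that conjugates of vertex groups are characterised purely algebraically as the $\mathcal{C}$-minimal subgroups, hence preserved by any isomorphism, and then invoke Theorem~\ref{thm:ConjIsom}. The paper's proof is exactly this two-step deduction; your extra paragraph verifying that the three cases of Proposition~\ref{prop:SubgroupsC} are mutually exclusive with respect to minimality is already built into the statement of that proposition (the ``if so, $H$ is $\mathcal{C}$-minimal/medium/maximal'' clauses), so no additional argument is needed there.
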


\begin{proof}
It follows from Proposition \ref{prop:SubgroupsC} that conjugates of vertex groups may be defined purely algebraically, so that the isomorphism $\varphi : \Gamma \mathcal{G} \to \Phi \mathcal{H}$ must send vertex groups of $\Gamma \mathcal{G}$ to conjugates of vertex groups of $\Phi \mathcal{H}$. Then Theorem \ref{thm:ConjIsom} applies, providing the desired conclusion. 
\end{proof}

\section{Graph products of groups over atomic graphs}\label{section:atomic}

\noindent
In this section, we focus on automorphism groups of graph products over a specific class of simplicial graphs, namely \emph{atomic graphs}. Recall from \cite{RAAGatomic} that a finite simplicial graph is \emph{atomic} if it is connected, without vertex of valence $<2$, without separating stars, and with girth $\geq 5$. In other words, atomic graphs are molecular graphs without separating stars. As a consequence of Theorem \ref{thm:mainGPgeneral}, the automorphism group of a graph product over an atomic graph is given by  $\mathrm{Aut}(\Gamma\mathcal{G})= \langle \mathrm{Inn}(\Gamma\mathcal{G}), \mathrm{Loc}(\Gamma \mathcal{G}) \rangle$. 

\begin{lem}\label{fact_2}
Let $\Gamma$ be a simplicial graph and $\mathcal{G}$ a collection of groups indexed by $V(\Gamma)$. If $\Gamma$ is not the star of a vertex, then 	$\mathrm{Inn}(\Gamma \mathcal{G}) \cap \mathrm{Loc}(\Gamma \mathcal{G})= \{ \mathrm{Id} \}$.
\end{lem}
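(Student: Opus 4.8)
The plan is to take an arbitrary $\varphi \in \mathrm{Inn}(\Gamma \mathcal{G}) \cap \mathrm{Loc}(\Gamma \mathcal{G})$, write it simultaneously as an inner automorphism $\varphi = \iota(g)$ for some $g \in \Gamma \mathcal{G}$ and as a local automorphism $(\sigma, \Phi)$, and deduce that $g = 1$. The bridge between the two descriptions is their effect on vertex groups: for every $u \in V(\Gamma)$ one has
$gG_ug^{-1} = \varphi(G_u) = (\sigma,\Phi)(G_u) = G_{\sigma(u)}$,
an equality of \emph{honest} subgroups (not merely up to conjugacy), since a local automorphism carries $G_u$ onto the vertex group $G_{\sigma(u)}$ on the nose.

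Next I would feed the containment $G_{\sigma(u)} \subseteq gG_ug^{-1}$ into Lemma \ref{lem:conjincluded}, applied with the one-vertex subgraphs $\Lambda_1 = \{\sigma(u)\}$ and $\Lambda_2 = \{u\}$, so that $\langle \Lambda_1 \rangle = G_{\sigma(u)}$ and $\langle \Lambda_2 \rangle = G_u$. The lemma gives $\{\sigma(u)\} \subseteq \{u\}$, i.e. $\sigma(u) = u$, together with $g \in \langle \Lambda_1 \rangle \cdot \langle \mathrm{link}(\Lambda_1) \rangle \cdot \langle \Lambda_2 \rangle = G_u \langle \mathrm{link}(u) \rangle G_u = \langle \mathrm{star}(u) \rangle$, the last equality holding because $\langle \mathrm{link}(u) \rangle$ centralises $G_u$. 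As $u \in V(\Gamma)$ was arbitrary, this shows $g \in \bigcap_{u \in V(\Gamma)} \langle \mathrm{star}(u) \rangle$.

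It then remains to observe that this intersection is trivial exactly because $\Gamma$ is not the star of a vertex. Indeed, by the normal form an element of a standard subgroup $\langle \Lambda \rangle$ has all of its syllables supported on $V(\Lambda)$; so if $g \neq 1$, the vertex $w$ carrying the first syllable of a reduced word for $g$ would have to lie in $\mathrm{star}(u)$ for every $u \in V(\Gamma)$, forcing $\mathrm{star}(w) = \Gamma$ and contradicting the hypothesis. Hence $g = 1$, so $\varphi = \iota(1) = \mathrm{Id}$, which is what we wanted.

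I do not anticipate a genuine obstacle: the only delicate points are the bookkeeping when invoking Lemma \ref{lem:conjincluded} with singleton subgraphs (and simplifying $\langle \mathrm{star}(u) \rangle$) and the fact that it is the exact equality $gG_ug^{-1} = G_{\sigma(u)}$ that is available. If one prefers to avoid Lemma \ref{lem:conjincluded}, the same conclusion can be reached geometrically: $gG_ug^{-1} = \mathrm{stab}_\circlearrowleft(gJ_u)$ and $G_{\sigma(u)} = \mathrm{stab}_\circlearrowleft(J_{\sigma(u)})$ by Proposition \ref{prop:rotstabinX}, so Fact \ref{fact:disctingrotativestab} forces $gJ_u = J_{\sigma(u)}$; comparing carriers via Theorem \ref{thm:HypStab} gives $g \in \langle \mathrm{star}(u) \rangle = \mathrm{stab}(J_u)$, whence $J_u = J_{\sigma(u)}$ and $\sigma(u) = u$, and one finishes as above.
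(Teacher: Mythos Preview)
Your proof is correct and follows essentially the same approach as the paper: obtain $gG_ug^{-1}=G_{\sigma(u)}$, deduce $\sigma(u)=u$ and $g\in\langle\mathrm{star}(u)\rangle$ for all $u$, then use the hypothesis on $\Gamma$ to conclude $g=1$. The only cosmetic difference is that where the paper invokes ``distinct vertex groups are not conjugate'' and then ``standard results on graph products'' for the normaliser, you extract both conclusions in one stroke from Lemma~\ref{lem:conjincluded}, which makes your argument slightly more self-contained within the paper's framework.
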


\begin{proof}
	Let $g \in \Gamma\cG$, let $\varphi\in \mathrm{Loc}(\Gamma \mathcal{G})$, let $\sigma$ be the automorphism of $\Gamma$ induced by $\varphi$, and suppose that $\iota(g) = \varphi$. Then in particular for every vertex $v$ of $\Gamma$, we have $gG_vg^{-1} = \varphi(G_v) = G_{\sigma(v)}.$ Since distinct local groups are not conjugated under $\Gamma\cG$, it follows that $\sigma(v) = v$, and hence $g$ normalises $G_v$. By standard results on graph products, this implies that $g \in \langle \mathrm{star}(v) \rangle$. As this holds for every vertex of $\Gamma$, we get $g \in \cap_v  \langle \mathrm{star}(v) \rangle$. Since $\Gamma$ is not the star of a vertex, it follows that $\cap_v  \langle \mathrm{star}(v) \rangle = \{1\}$, hence $\varphi = \iota(g)=\mathrm{Id}$. 
\end{proof}

Thus, the automorphism group of a graph product over an atomic graph is given by  $\mathrm{Aut}(\Gamma\mathcal{G})=  \mathrm{Inn}(\Gamma\mathcal{G})\rtimes \mathrm{Loc}(\Gamma \mathcal{G}) $. In particular, any automorphism of $\Gamma\cG$ decomposes in a unique way as a product of the form $ \iota(g) \varphi $ with $g \in \Gamma\cG, \varphi \in \mathrm{Loc}(\Gamma \mathcal{G}) $. 

Recall that the Davis complex of a graph product is a CAT(0) cube complex that was introduced in Definition \ref{def:Davis}. The fundamental observation is that in the case of atomic graphs, the automorphism group of $\Gamma\cG$ acts naturally on the associated Davis complex. 


\begin{lem}\label{prop:extendT}
Let $\Gamma$ be an atomic graph and $\mathcal{G}$ a collection of groups indexed by $V(\Gamma)$.  
The action of $\Gamma \mathcal{G}$ on the Davis complex $D(\Gamma, \mathcal{G})$ extends to an action $\mathrm{Aut}(\Gamma \mathcal{G}) \curvearrowright D(\Gamma, \mathcal{G})$, where $\Gamma \mathcal{G}$ is identified canonically with $\mathrm{Inn}(\Gamma \mathcal{G})$. More precisely, the action is given by 
$$ \iota(g) \varphi \cdot hH \coloneqq g\varphi(h)\varphi(H),$$
for $g, h \in \Gamma\cG$, $H$ a  subgroup of $\Gamma\cG$ of the form $\langle \Lambda \rangle$ for some complete subgraph $\Lambda$, and $\varphi \in \mathrm{Loc}(\Gamma \mathcal{G})$.
\end{lem}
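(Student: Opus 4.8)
The plan is to realise the claimed formula as the action obtained by \emph{gluing together} the left‑multiplication action of $\Gamma\mathcal{G}=\mathrm{Inn}(\Gamma\mathcal{G})$ on $D(\Gamma,\mathcal{G})$ with a natural action of $\mathrm{Loc}(\Gamma\mathcal{G})$, using the semidirect decomposition $\mathrm{Aut}(\Gamma\mathcal{G})=\mathrm{Inn}(\Gamma\mathcal{G})\rtimes\mathrm{Loc}(\Gamma\mathcal{G})$ recorded just above (with $\mathrm{Inn}(\Gamma\mathcal{G})$ canonically identified with $\Gamma\mathcal{G}$, which is legitimate since an atomic graph is not the star of a vertex, so $Z(\Gamma\mathcal{G})=\{1\}$, cf. the discussion around Lemma \ref{fact_2}). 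In particular the decomposition $\alpha=\iota(g)\varphi$ being unique, the formula $\alpha\cdot h\langle\Lambda\rangle:=g\varphi(h)\varphi(\langle\Lambda\rangle)$ is unambiguous, and it suffices to check it defines an action extending $\Gamma\mathcal{G}\curvearrowright D(\Gamma,\mathcal{G})$.

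First I would define the action of $\mathrm{Loc}(\Gamma\mathcal{G})$. Let $\varphi\in\mathrm{Loc}(\Gamma\mathcal{G})$ be induced by an isometry $\sigma$ of $\Gamma$ and isomorphisms $\varphi_u:G_u\to G_{\sigma(u)}$. Since $\varphi$ carries each $G_u$ isomorphically onto $G_{\sigma(u)}$, for every complete subgraph $\Lambda\subset\Gamma$ one has $\varphi(\langle\Lambda\rangle)=\langle\sigma(\Lambda)\rangle$, and $\sigma(\Lambda)$ is again a complete subgraph. Moreover, if $h\langle\Lambda\rangle=h'\langle\Lambda\rangle$ then $\varphi(h)^{-1}\varphi(h')=\varphi(h^{-1}h')\in\langle\sigma(\Lambda)\rangle$, so $h\langle\Lambda\rangle\mapsto\varphi(h)\langle\sigma(\Lambda)\rangle$ is a well‑defined self‑bijection of the vertex set of $D(\Gamma,\mathcal{G})$. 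It sends an edge, i.e. a pair $h\langle\Lambda_1\rangle$, $h\langle\Lambda_1\cup\{v\}\rangle$, to the pair $\varphi(h)\langle\sigma(\Lambda_1)\rangle$, $\varphi(h)\langle\sigma(\Lambda_1)\cup\{\sigma(v)\}\rangle$, hence is a graph automorphism of the $1$‑skeleton; by the construction of $D(\Gamma,\mathcal{G})$ in Definition \ref{def:Davis} (a $k$‑cube is filled in whenever the $1$‑skeleton of a $k$‑cube appears), it extends to a cubical automorphism. The computation on vertices $(\varphi_1\varphi_2)\cdot h\langle\Lambda\rangle=\varphi_1(\varphi_2(h))\langle\sigma_1\sigma_2(\Lambda)\rangle=\varphi_1\cdot(\varphi_2\cdot h\langle\Lambda\rangle)$ shows this is a genuine action $\mathrm{Loc}(\Gamma\mathcal{G})\curvearrowright D(\Gamma,\mathcal{G})$.

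Next I would record the compatibility of the two actions. Writing $\ell_g$ for left translation by $g\in\Gamma\mathcal{G}$, for $\varphi\in\mathrm{Loc}(\Gamma\mathcal{G})$ inducing $\sigma$ and any vertex $h\langle\Lambda\rangle$ one has
$$\varphi\cdot\bigl(g\cdot h\langle\Lambda\rangle\bigr)=\varphi(gh)\langle\sigma(\Lambda)\rangle=\varphi(g)\,\varphi(h)\langle\sigma(\Lambda)\rangle=\varphi(g)\cdot\bigl(\varphi\cdot h\langle\Lambda\rangle\bigr).$$
Since $\varphi\circ\ell_g$ and $\ell_{\varphi(g)}\circ\varphi$ are cubical automorphisms of $D(\Gamma,\mathcal{G})$ agreeing on vertices, they coincide, i.e. $\varphi\circ\ell_g\circ\varphi^{-1}=\ell_{\varphi(g)}$ — precisely the relation $\varphi\iota(g)\varphi^{-1}=\iota(\varphi(g))$ governing $\mathrm{Inn}(\Gamma\mathcal{G})\rtimes\mathrm{Loc}(\Gamma\mathcal{G})$. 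By the standard fact that two actions of the factors of a semidirect product $N\rtimes Q$ satisfying this compatibility assemble into an action of $N\rtimes Q$ via $(n,q)\mapsto\ell_n\circ(\text{action of }q)$, we obtain an action of $\mathrm{Aut}(\Gamma\mathcal{G})=\mathrm{Inn}(\Gamma\mathcal{G})\rtimes\mathrm{Loc}(\Gamma\mathcal{G})$ on $D(\Gamma,\mathcal{G})$; evaluating it on $\iota(g)\varphi$ gives $\iota(g)\varphi\cdot h\langle\Lambda\rangle=g\cdot(\varphi\cdot h\langle\Lambda\rangle)=g\varphi(h)\langle\sigma(\Lambda)\rangle=g\varphi(h)\varphi(\langle\Lambda\rangle)$, the asserted formula, while restricting to $\mathrm{Inn}(\Gamma\mathcal{G})$ recovers $\iota(g)\cdot h\langle\Lambda\rangle=gh\langle\Lambda\rangle$, the original left‑multiplication action of $\Gamma\mathcal{G}$. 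I do not expect a real obstacle here: once $\mathrm{Aut}(\Gamma\mathcal{G})=\mathrm{Inn}(\Gamma\mathcal{G})\rtimes\mathrm{Loc}(\Gamma\mathcal{G})$ is in hand, the argument is bookkeeping, and the only step genuinely using the structure of graph products is the identity $\varphi(\langle\Lambda\rangle)=\langle\sigma(\Lambda)\rangle$ with $\sigma(\Lambda)$ complete, which is what makes a local automorphism preserve the vertex set — and hence the whole combinatorial structure — of the Davis complex; the remaining care is with the direction of the semidirect‑product action and with the uniqueness of the decomposition $\iota(g)\varphi$.
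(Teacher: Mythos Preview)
Your proposal is correct and follows essentially the same approach as the paper: both verify that local automorphisms permute the subgroups $\langle\Lambda\rangle$ (hence act on the vertex set), extend this to the cubical structure via Definition~\ref{def:Davis}, and check compatibility with the left-multiplication action. Your write-up is simply more explicit about the semidirect-product bookkeeping (well-definedness on cosets, the compatibility $\varphi\circ\ell_g\circ\varphi^{-1}=\ell_{\varphi(g)}$, and the assembly lemma), whereas the paper condenses all of this into a couple of sentences.
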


\begin{proof} By definition, elements of $\mathrm{Loc}(\Gamma \mathcal{G})$ preserve the family of subgroups of the form $\langle \Lambda \rangle$ for some complete subgraph $\Lambda$, so the action is well defined at the level of the vertices of $D(\Gamma, \cG)$. By definition of the edges and higher cubes of the Davis complex, one sees that the action extends to an action on $D(\Gamma, \cG)$ itself. One checks easily from the definition that the restriction to Inn$(\Gamma\cG)$ coincides with the natural action of $\Gamma\cG$ on $D(\Gamma, \cG)$ by left multiplication. 
\end{proof}

\noindent
As a first application of Proposition \ref{prop:extendT}, we are able to show that the automorphism group of a graph product over an atomic graph does not satisfy Kazhdan's property (T). 

\begin{thm}\label{thm:noT}
Let $\Gamma$ be an atomic graph and $\mathcal{G}$ a collection of groups indexed by $V(\Gamma)$. Then the automorphism group $\mathrm{Aut}(\Gamma \mathcal{G})$ does not satisfy Kazhdan's property~(T).
\end{thm}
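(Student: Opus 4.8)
The plan is to exploit the action $\mathrm{Aut}(\Gamma\mathcal{G}) \curvearrowright D(\Gamma,\mathcal{G})$ provided by Lemma \ref{prop:extendT}, together with the classical fact that a group with property (T) acting on a CAT(0) cube complex must fix a point (equivalently, has bounded orbits), to derive a contradiction. Indeed, by the results of Niblo--Reeves and Nica (or the median-space formulation of property (T)), any action of a property (T) group on a CAT(0) cube complex has a global fixed point. So it suffices to exhibit an element of $\mathrm{Aut}(\Gamma\mathcal{G})$ — or a subgroup — whose action on $D(\Gamma,\mathcal{G})$ has unbounded orbits, witnessing the failure of a global fixed point, and then argue that this forces $\mathrm{Aut}(\Gamma\mathcal{G})$ itself not to have property (T). Since $\Gamma\mathcal{G} \cong \mathrm{Inn}(\Gamma\mathcal{G})$ already acts on $D(\Gamma,\mathcal{G})$ with unbounded orbits whenever $\Gamma\mathcal{G}$ is infinite (which it is, as $\Gamma$ is atomic hence has at least $5$ vertices and all vertex groups are nontrivial), the subgroup $\mathrm{Inn}(\Gamma\mathcal{G})$ has no global fixed point, so $\mathrm{Aut}(\Gamma\mathcal{G})$ has no global fixed point either.

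**The key point, and the main obstacle.** Having a subgroup without a fixed point does not by itself contradict property (T): the issue is that property (T) passes to quotients but not to subgroups, so one cannot simply say "$\mathrm{Inn}(\Gamma\mathcal{G}) = \Gamma\mathcal{G}$ lacks property (T), hence so does $\mathrm{Aut}$." The correct route is to find an action of the \emph{whole} group $\mathrm{Aut}(\Gamma\mathcal{G})$ on a CAT(0) cube complex (or Hilbert space) without a fixed point. The natural candidate is a \emph{quotient} action: I would look for an $\mathrm{Aut}(\Gamma\mathcal{G})$-invariant wall structure, or equivalently an equivariant quotient of $D(\Gamma,\mathcal{G})$, on which $\mathrm{Aut}(\Gamma\mathcal{G})$ still acts with unbounded orbits. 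Concretely, since $\mathrm{Loc}(\Gamma\mathcal{G})$ is finite (as $\mathrm{Loc}^0$ has finite index and $\mathrm{Loc}^0 \cong \bigoplus_v \mathrm{Aut}(G_v)$ — wait, this requires the vertex groups to be finitely generated; but even without finiteness of $\mathrm{Loc}$, the semidirect product structure $\mathrm{Aut}(\Gamma\mathcal{G}) = \mathrm{Inn}(\Gamma\mathcal{G}) \rtimes \mathrm{Loc}(\Gamma\mathcal{G})$ gives a surjection $\mathrm{Aut}(\Gamma\mathcal{G}) \twoheadrightarrow \mathrm{Aut}(\Gamma\mathcal{G})/\mathrm{Loc}(\Gamma\mathcal{G}) \cong ?$); more robustly, the abelianization or a suitable character can be used. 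The cleanest argument: exhibit an unbounded quasimorphism, or a nontrivial homomorphism to $\mathbb{Z}$, or an action on a tree. In fact, $\mathrm{Aut}(\Gamma\mathcal{G}) = \mathrm{Inn}(\Gamma\mathcal{G}) \rtimes \mathrm{Loc}(\Gamma\mathcal{G})$ and the partial-conjugation / inner part $\Gamma\mathcal{G}$ surjects onto $\mathbb{Z}$ in many cases, but not always; so instead I would use the full cubical action as follows.

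**Completing the argument.** The decisive observation is that $\mathrm{Loc}(\Gamma\mathcal{G})$ fixes the base vertex $\langle \emptyset \rangle = \{1\}$ of the fundamental domain $K$ of $D(\Gamma,\mathcal{G})$ (since $\varphi(\langle\emptyset\rangle) = \langle\emptyset\rangle$), and with respect to this basepoint the cocycle describing the $\mathrm{Aut}(\Gamma\mathcal{G})$-action can be analyzed: an element $\iota(g)\varphi$ moves $\{1\}$ to $\{g\}$, which is at distance $|g|$ from the basepoint. Thus the orbit of the basepoint under $\mathrm{Aut}(\Gamma\mathcal{G})$ is unbounded (it already contains the unbounded orbit of $\mathrm{Inn}(\Gamma\mathcal{G})$), and — crucially — there is no \emph{invariant} bounded subset and no fixed point for the \emph{whole} group, because $\mathrm{Aut}(\Gamma\mathcal{G})$ contains $\mathrm{Inn}(\Gamma\mathcal{G})$ which already has no fixed point in $D(\Gamma,\mathcal{G})$. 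Since $D(\Gamma,\mathcal{G})$ is a (finite-dimensional, hence complete) CAT(0) cube complex (Theorem \ref{Davis_building}), and a group with property (T) acting on such a complex must have a global fixed point by a theorem of Niblo--Reeves (see also \cite{BridsonHaefliger} for the CAT(0) fixed-point theorem combined with the fact that unbounded CAT(0) cube complex actions of property (T) groups are impossible), we conclude that $\mathrm{Aut}(\Gamma\mathcal{G})$ does not have property (T). The main obstacle in writing this up cleanly is making precise the statement "property (T) + action on CAT(0) cube complex without fixed point $\Rightarrow$ contradiction" at the level of generality needed (the cube complex may be infinite-dimensional if $\Gamma$ had large cliques — but atomic graphs are triangle-free, so $D(\Gamma,\mathcal{G})$ is in fact $2$-dimensional, which sidesteps this entirely), and citing the right fixed-point theorem.
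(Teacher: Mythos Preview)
Your proposal is correct and follows essentially the same route as the paper: use Lemma~\ref{prop:extendT} to get an action of the \emph{whole} group $\mathrm{Aut}(\Gamma\mathcal{G})$ on the CAT(0) cube complex $D(\Gamma,\mathcal{G})$, observe that this action has unbounded orbits (because the $\mathrm{Inn}(\Gamma\mathcal{G})$-orbit of the basepoint is already unbounded), and conclude via Niblo--Roller.

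However, your middle section (``The key point, and the main obstacle'') is a self-inflicted detour. The worry that ``having a subgroup without a fixed point does not by itself contradict property~(T)'' misidentifies the logic. You are \emph{not} arguing ``$\mathrm{Inn}(\Gamma\mathcal{G})$ lacks~(T), hence so does $\mathrm{Aut}$''. You are arguing: $\mathrm{Aut}(\Gamma\mathcal{G})$ itself acts on $D(\Gamma,\mathcal{G})$ (this is exactly what Lemma~\ref{prop:extendT} buys you), and the $\mathrm{Aut}$-orbit of the basepoint contains the $\mathrm{Inn}$-orbit, hence is unbounded; Niblo--Roller then applies \emph{directly to} $\mathrm{Aut}(\Gamma\mathcal{G})$. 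No quotient action, no abelianisation, no $\mathrm{Loc}$ analysis is needed. The paper's proof is two sentences long for this reason. Your ``Completing the argument'' paragraph gets this right; just delete everything in between. (Minor point: the relevant fixed-point theorem is Niblo--Roller, not Niblo--Reeves.)
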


\begin{proof}
The action of $\Gamma \mathcal{G}$ on $D(\Gamma, \mathcal{G})$ is non-trivial by construction, hence has unbounded orbits, so in particular the action of Aut$(\Gamma \mathcal{G})$ on the CAT(0) cube complex  $D(\Gamma, \mathcal{G})$ has unbounded orbits. The result then follows from a theorem of Niblo-Roller  \cite{MR1459140}.
\end{proof}

\noindent
As a second application, we prove the following: 

\begin{thm}\label{thm:acyl_hyp}
Let $\Gamma$ be an atomic graph and $\mathcal{G}$ a collection of finitely generated groups indexed by $V(\Gamma)$. 
Then the automorphism group $\mathrm{Aut}(\Gamma  \mathcal{G})$ is acylindrically hyperbolic.
\end{thm}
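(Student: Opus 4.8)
**Proof plan for Theorem~\ref{thm:acyl_hyp} (acylindrical hyperbolicity of $\mathrm{Aut}(\Gamma\mathcal{G})$ over an atomic graph).**

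The plan is to exploit the action $\mathrm{Aut}(\Gamma\mathcal{G}) \curvearrowright D(\Gamma,\mathcal{G})$ constructed in Lemma~\ref{prop:extendT}, combined with the decomposition $\mathrm{Aut}(\Gamma\mathcal{G}) = \mathrm{Inn}(\Gamma\mathcal{G}) \rtimes \mathrm{Loc}(\Gamma\mathcal{G})$ and the fact that $\mathrm{Loc}(\Gamma\mathcal{G})$ is commensurable to $\bigoplus_v \mathrm{Aut}(G_v)$, which is \emph{not} what we act with directly — instead we want to promote this cubical action to an action on a genuinely hyperbolic space. The standard route is Osin's criterion: a group is acylindrically hyperbolic if and only if it admits a non-elementary action on a hyperbolic space with a WPD element, or equivalently (by \cite{OsinAcyl}) it contains a so-called "generalized loxodromic" element. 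So first I would recall the theory of Caprace--Sageev and of Genevois relating actions on CAT(0) cube complexes to acylindrical hyperbolicity: if a group $G$ acts on a CAT(0) cube complex $X$ with a \emph{contracting} (equivalently, rank-one, equivalently "strongly separated hyperplanes") isometry, and the action is non-elementary, then $G$ is acylindrically hyperbolic (this is the cubical analogue due to Caprace--Sageev / Genevois; see also \cite{OsinSurvey}). Thus the goal reduces to producing a rank-one isometry for the extended action $\mathrm{Aut}(\Gamma\mathcal{G})\curvearrowright D(\Gamma,\mathcal{G})$ and verifying non-elementarity.

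The key observation is that $\Gamma\mathcal{G} = \mathrm{Inn}(\Gamma\mathcal{G})$ already acts on $D(\Gamma,\mathcal{G})$ cocompactly (Observation~\ref{obs:stab}) and that since $\Gamma$ is atomic — in particular connected, not a join, with no separating star, girth $\geq 5$ — the group $\Gamma\mathcal{G}$ is itself acylindrically hyperbolic: this is exactly the content of \cite{MinasyanOsinTrees} for graph products (a graph product decomposes non-trivially and the underlying graph is not a join and not complete, hence not a single clique, and $\Gamma\mathcal{G}$ acts acylindrically on a tree — more precisely, $\Gamma\mathcal{G}$ splits as an amalgam or HNN extension over a proper parabolic coming from a separating \emph{vertex} or more carefully, atomicity guarantees $\Gamma$ is not complete and has no central vertex, so $\Gamma\mathcal{G}$ is acylindrically hyperbolic). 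Concretely, an atomic graph contains an induced cycle of length $\geq 5$, and one can find an element $g \in \Gamma\mathcal{G}$ whose axis in $D(\Gamma,\mathcal{G})$ is a rank-one geodesic — such elements exist by \cite{CapraceSageev} precisely because $\Gamma$ is not a join (so $D(\Gamma,\mathcal{G})$ is irreducible and unbounded) and $\Gamma\mathcal{G}$ acts essentially and cocompactly. Then I would argue that this rank-one element $g \in \mathrm{Inn}(\Gamma\mathcal{G}) \leq \mathrm{Aut}(\Gamma\mathcal{G})$ remains rank-one for the larger action, since the ambient cube complex $D(\Gamma,\mathcal{G})$ is unchanged and the hyperplane combinatorics witnessing rank-one-ness are intrinsic to $X$. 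Finally, non-elementarity of the $\mathrm{Aut}$-action follows because $\mathrm{Inn}(\Gamma\mathcal{G})$ alone already acts non-elementarily (it contains $g$ and a conjugate $hgh^{-1}$ with disjoint, non-asymptotic axes, as $\Gamma\mathcal{G}$ is not virtually cyclic).

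The \textbf{main obstacle} I anticipate is the verification that $\mathrm{Aut}(\Gamma\mathcal{G})$ does not fix the endpoints of the axis of $g$ at infinity, i.e.\ that the action is genuinely non-elementary for the full automorphism group and not merely for $\mathrm{Inn}$; but this is handled by the observation just made — $\mathrm{Inn}(\Gamma\mathcal{G})$ is a (normal) subgroup already acting non-elementarily, and any group containing a non-elementarily-acting normal subgroup with a WPD/contracting element, such that that element remains WPD for the ambient group, is acylindrically hyperbolic by Osin's machinery together with \cite{MinasyanOsinTrees}, \cite{DGO}. Here the "finitely generated vertex groups" hypothesis enters to guarantee the relevant properness: $\mathrm{Loc}(\Gamma\mathcal{G})$ fits into a short exact sequence with finitely generated kernel $\bigoplus_v \mathrm{Aut}(G_v)$... — actually, more carefully, the finite generation is needed so that $\mathrm{Aut}(\Gamma\mathcal{G})$ is finitely generated (via Theorem~\ref{thm:mainGPgeneral}: it is generated by the finitely many partial conjugations and $\mathrm{Loc}(\Gamma\mathcal{G})$, and $\mathrm{Loc}^0 \cong \bigoplus_v \mathrm{Aut}(G_v)$ is finitely generated when each $G_v$ is), so that acylindrical hyperbolicity in the sense of admitting a non-elementary \emph{acylindrical} action follows from the existence of a generalized loxodromic by \cite[Theorem 1.2]{OsinAcyl}. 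I would therefore organize the proof as: (1) recall $\mathrm{Aut}(\Gamma\mathcal{G})$ is finitely generated; (2) produce a rank-one element $g\in\mathrm{Inn}(\Gamma\mathcal{G})$ for $\curvearrowright D(\Gamma,\mathcal{G})$ using atomicity and \cite{CapraceSageev}; (3) check $g$ remains contracting/WPD-like for the ambient $\mathrm{Aut}$-action and the action is non-elementary; (4) invoke the cubical acylindrical-hyperbolicity criterion \cite{OsinAcyl, MinasyanOsinTrees} to conclude.
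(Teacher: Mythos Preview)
Your overall architecture---extend the action to $\mathrm{Aut}(\Gamma\mathcal{G})$ via Lemma~\ref{prop:extendT}, find a rank-one element of $\mathrm{Inn}(\Gamma\mathcal{G})$, and conclude via a cubical criterion---is close to what the paper does, but there is a genuine gap in your step~(3), and it is precisely where the finite-generation hypothesis actually enters.

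The contracting/rank-one property of an isometry $g$ is, as you say, intrinsic to $D(\Gamma,\mathcal{G})$. The WPD property is \emph{not}: it depends on the acting group. For the $\mathrm{Aut}(\Gamma\mathcal{G})$-action, the stabiliser of the base vertex $\langle\varnothing\rangle$ is $\mathrm{Loc}(\Gamma\mathcal{G})$, which contains $\bigoplus_v \mathrm{Aut}(G_v)$ and is typically infinite. So a generic rank-one $g\in\Gamma\mathcal{G}$ need not be WPD for $\mathrm{Aut}(\Gamma\mathcal{G})$: one must show that only finitely many elements of $\mathrm{Loc}(\Gamma\mathcal{G})$ (almost) fix a long segment of the axis. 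The paper handles exactly this point (Lemma~\ref{lem:stab} together with the explicit construction in the proof): one computes $\mathrm{Stab}_{\mathrm{Aut}}(P)\cap\mathrm{Stab}_{\mathrm{Aut}}(gP)=\{\varphi\in\mathrm{Loc}:\varphi(g)=g\}$, and then \emph{uses the finite generating sets of the $G_v$} to build a specific $g$ whose unique reduced form forces any $\varphi\in\mathrm{Loc}$ fixing $g$ to fix every generator, hence to be the identity. That construction---not finite generation of $\mathrm{Aut}(\Gamma\mathcal{G})$---is where the hypothesis is used. Your claim that ``$\bigoplus_v\mathrm{Aut}(G_v)$ is finitely generated when each $G_v$ is'' is false in general, and in any case finite generation of $\mathrm{Aut}(\Gamma\mathcal{G})$ is neither needed nor sufficient here (Osin's equivalences in \cite{OsinAcyl} do not require countability or finite generation).

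That this is a real obstruction and not a technicality is confirmed by the paper's remark after the proof: taking each $G_v=\bigoplus_p\mathbb{Z}_p$ over a cycle $C_n$ (so the $G_v$ are not finitely generated) yields an $\mathrm{Aut}(\Gamma\mathcal{G})$ in which every element has centraliser containing $\mathbb{Z}^\infty$, hence $\mathrm{Aut}(\Gamma\mathcal{G})$ is not acylindrically hyperbolic. Thus no argument that ignores the interaction between $\mathrm{Loc}$ and the chosen element $g$ can succeed. To repair your plan, replace your step~(3) by the explicit construction of $g$ from the finite generating sets and the stabiliser computation of Lemma~\ref{lem:stab}; then either the paper's criterion (Theorem~\ref{thm:Chatterji_Martin}) or a direct WPD check goes through.
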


\noindent \textbf{For the rest of Section \ref{section:atomic}, we fix an atomic  graph $\Gamma$ and a collection of groups $\mathcal{G}$  indexed by $V(\Gamma)$.}  To prove Theorem \ref{thm:acyl_hyp}, we use a criterion introduced in \cite{NoteAcylHyp} to show the acylindrical hyperbolicity of a group via its action on a CAT(0) cube complex. Following \cite{CapraceSageev}, we say that the action of a group $G$ on a CAT(0) cube complex $Y$ is \textit{essential} if no $G$-orbits stay in some neighborhood of a half-space. Following \cite{ChatterjiFernosIozzi}, we say that the action is \textit{non-elementary} if $G$ does not have a finite orbit in $Y \cup \partial_{\infty}Y$.  We further say that $Y$ is \textit{cocompact} if its automorphism group acts cocompactly on it; that it is \textit{irreducible} if it does not split as the direct product of two non-trivial CAT(0) cube complexes; and that it \textit{does not have a free face} if every non-maximal cube is contained in at least two maximal cubes. 

\begin{thm}[{\cite[Theorem 1.5]{NoteAcylHyp}}]\label{thm:Chatterji_Martin}
	Let $G$ be a group acting  non-elementarily and essentially on a  finite-dimensional irreducible cocompact CAT(0) cube complex $X$ with no free face. If there exist two points $x,y \in X$ such that $\mathrm{stab}(x) \cap \mathrm{stab}(y)$ is finite, then $G$ is acylindrically hyperbolic.
\end{thm}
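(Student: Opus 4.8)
The plan is to verify Osin's criterion for acylindrical hyperbolicity \cite{OsinAcyl, DGO}: it suffices to exhibit a non-elementary action of $G$ on a hyperbolic space together with a loxodromic element satisfying the weak proper discontinuity (WPD) condition. The non-elementarity hypothesis already guarantees that $G$ is not virtually cyclic (such a group would have a finite orbit in $X \cup \partial_\infty X$). The strategy is to extract the required data from the cubical action by first producing a rank-one (contracting) isometry, and then promoting it to a WPD loxodromic on an auxiliary hyperbolic space, the finiteness hypothesis on $\mathrm{stab}(x) \cap \mathrm{stab}(y)$ being used precisely to establish WPD.

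First I would invoke the rank-rigidity machinery of Caprace--Sageev \cite{CapraceSageev}. Since the action of $G$ on $X$ is essential and non-elementary, and $X$ is finite-dimensional, irreducible, and cocompact, one obtains a pair of \emph{strongly separated} hyperplanes (a disjoint pair such that no hyperplane crosses both); the no-free-face hypothesis ensures that the CAT(0) geometry is non-degenerate, so that the configuration produced is genuinely contracting rather than bounded. By the Double Skewering Lemma of \cite{CapraceSageev}, there is an element $g \in G$ skewering this pair, and such a $g$ is a rank-one isometry whose axis $\gamma$ crosses a bi-infinite $g$-periodic chain of pairwise strongly separated hyperplanes.

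Second, the contraction property allows one to make $g$ loxodromic on a hyperbolic space on which $G$ acts. Concretely, one may use Hagen's contact graph of $X$, which is always hyperbolic and on which $G$ acts by isometries, an isometry crossing a chain of strongly separated hyperplanes acting loxodromically; alternatively one may build the Bestvina--Bromberg--Fujiwara projection complex from the $G$-orbit of $\gamma$. In either model the non-elementarity of the cubical action transfers to non-elementarity of the action on the hyperbolic space, since independent translates of $\gamma$ produce independent loxodromics.

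The main obstacle is the verification of WPD, and this is exactly where the finite-stabilizer hypothesis enters. Fixing a basepoint on $\gamma$, consider an element $k$ that coarsely fixes a long subsegment $\gamma([-T,T])$. For $T$ large, strong separation together with the contraction property forces $k$ to carry the strongly separated hyperplanes dual to $\gamma$ near $\gamma(-T)$ and near $\gamma(T)$ into bounded neighbourhoods of themselves; cocompactness of $\mathrm{Aut}(X)$ and the resulting local finiteness then confine $k$ to stabilising a fixed bounded subcomplex. Tracking the $g$-translates $g^n x$ and $g^{-n} y$ of the points $x,y$ provided by the hypothesis along the axis, one shows that the set of such $k$ lies in finitely many cosets of a conjugate of the finite group $\mathrm{stab}(x) \cap \mathrm{stab}(y)$, hence is finite. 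This is precisely the WPD condition for $g$. With a non-elementary action on a hyperbolic space and a WPD loxodromic element in hand, Osin's theorem \cite{OsinAcyl} yields that $G$ is acylindrically hyperbolic. The delicate point, and the one I expect to require the most care, is the passage from ``$k$ coarsely fixes a long axis segment'' to ``$k$ lies in a fixed finite set of cosets'', since it is there that the abstract finiteness of a single double-point stabiliser must be leveraged, via the contracting geometry, into uniform control over all coarse stabilisers of the axis.
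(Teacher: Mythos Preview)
The theorem you are attempting to prove is not proved in this paper at all: it is quoted as \cite[Theorem 1.5]{NoteAcylHyp} and used as a black box in the proof of Theorem \ref{thm:acyl_hyp}. There is therefore no proof in the present paper to compare your attempt against.

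That said, your sketch broadly follows the strategy one would expect from the cited reference: extract a contracting (rank-one) isometry via the Caprace--Sageev rank-rigidity machinery, pass to an auxiliary hyperbolic space (contact graph or a Bestvina--Bromberg--Fujiwara projection complex), and verify WPD for that element. Two points deserve comment. First, your invocation of the no-free-face hypothesis (``ensures that the CAT(0) geometry is non-degenerate'') is too vague to do any work; in the source this hypothesis has a specific role in linking the combinatorics of hyperplanes to the stabiliser condition, and you have not identified what that role is. Second, and more substantively, your WPD argument has a real gap. The points $x,y$ with $\mathrm{stab}(x)\cap\mathrm{stab}(y)$ finite are given abstractly and bear no a priori relation to the axis $\gamma$ of the contracting element $g$ you have produced. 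Writing ``tracking the $g$-translates $g^n x$ and $g^{-n} y$ along the axis'' presupposes exactly the relationship you need to establish: why should coarse stabilisers of a long subsegment of $\gamma$ be trapped in finitely many cosets of a conjugate of $\mathrm{stab}(x)\cap\mathrm{stab}(y)$, when $x$ and $y$ may lie far from $\gamma$? One must either arrange the contracting element so that its axis passes through (or near) $x$ and $y$, or argue, via strong separation and the no-free-face condition, that coarse stabilisers of long axis segments are forced to coarsely fix an arbitrary pair of prescribed points. You correctly flag this as the delicate step, but as written the argument does not bridge it.
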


We will use this criterion for the action of Aut$(\Gamma\cG)$ on the Davis complex $D(\Gamma, \cG)$. To this end, we need to check a few things about the action. 

\begin{lem}\label{lem:ess}
	The action of $\mathrm{Aut}(\Gamma\cG)$ on $D(\Gamma, \cG)$ is essential and non-elementary.
\end{lem}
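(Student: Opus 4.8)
The plan is to reduce both assertions to the action of the subgroup $\Gamma\cG \cong \mathrm{Inn}(\Gamma\cG)$ and then exploit the explicit description of the hyperplanes of the Davis complex. Since orbits of a subgroup are contained in orbits of the ambient group, it suffices for essentiality to check that $\Gamma\cG \curvearrowright D(\Gamma,\cG)$ is essential; and since a finite $\mathrm{Aut}(\Gamma\cG)$-orbit in $D(\Gamma,\cG)\cup\partial_\infty D(\Gamma,\cG)$ is a finite $\Gamma\cG$-invariant set, hence a finite union of finite $\Gamma\cG$-orbits, it suffices for non-elementarity to check that $\Gamma\cG$ has no finite orbit in $D(\Gamma,\cG)\cup\partial_\infty D(\Gamma,\cG)$.

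For essentiality I would show that every halfspace of $D(\Gamma,\cG)$ is deep. By Observation~\ref{obs:stab} the action is cocompact with strict fundamental domain $K$, whose edges are labelled by $V(\Gamma)$; hence every edge of $D(\Gamma,\cG)$, and therefore every hyperplane, is a $\Gamma\cG$-translate of one of the finitely many hyperplanes $\mathfrak h_v$ ($v\in V(\Gamma)$) dual to the edge joining the vertices $\langle\emptyset\rangle$ and $\langle v\rangle$. Since deepness is $\Gamma\cG$-invariant it is enough to treat the halfspaces of each $\mathfrak h_v$. The crucial elementary point is that for $1\ne h\in G_v$ the edges $\{\langle v\rangle,\langle\emptyset\rangle\}$ and $\{\langle v\rangle, h\langle\emptyset\rangle\}$ are two distinct edges of $D(\Gamma,\cG)$ sharing the vertex $\langle v\rangle$, so they are dual to distinct hyperplanes; thus $\langle\emptyset\rangle$ and $h\langle\emptyset\rangle$ lie on opposite sides of $\mathfrak h_v$, say $\langle\emptyset\rangle\in\mathfrak h_v^-$ and $h\langle\emptyset\rangle\in\mathfrak h_v^+$. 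Now one uses atomicity to pick a vertex $u\in\Gamma\setminus\mathrm{star}(v)$ and then a vertex $u'\in\Gamma\setminus\mathrm{star}(u)$ with $u'\ne v$: this is possible because for every vertex $w$ the graph $\Gamma\setminus\mathrm{star}(w)$ is non-empty and has at least two vertices — if it were a single vertex its two neighbours (present since there is no vertex of valence $<2$) would lie in $\mathrm{star}(w)$ and would create a $4$-cycle, contradicting girth $\ge 5$. The subgroup $H:=\langle G_u,G_{u'}\rangle=G_u\ast G_{u'}$ is infinite and contained in $\langle V(\Gamma)\setminus\{v\}\rangle$, so for $g\in H$ the vertices $\langle\emptyset\rangle$ and $g\langle\emptyset\rangle$ are joined by a path whose edges carry labels in $\{u,u'\}$, hence cross no hyperplane labelled $v$; thus $H\cdot\langle\emptyset\rangle\subseteq\mathfrak h_v^-$ and, likewise, $H\cdot(h\langle\emptyset\rangle)\subseteq\mathfrak h_v^+$. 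These orbits leave every bounded neighbourhood of $\mathfrak h_v$: its stabiliser contains no element with a syllable in $G_u$ (as $u\notin\mathrm{star}(v)$) and acts cocompactly on the carrier $N(\mathfrak h_v)$, whereas $d\big(g,\mathrm{stab}(\mathfrak h_v)\big)\to\infty$ along $g=(ab)^n$ with $a\in G_u\setminus\{1\}$, $b\in G_{u'}\setminus\{1\}$. Hence both halfspaces of $\mathfrak h_v$ are deep, and the action is essential.

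For non-elementarity, a finite orbit in $D(\Gamma,\cG)$ itself is impossible: it would have a fixed circumcentre, and since the action is without inversions this would place $\Gamma\cG$ inside a cube stabiliser, i.e. inside $\bigcap g_i\langle\Lambda_i\rangle g_i^{-1}$ with each $\Lambda_i$ complete, a proper subgroup because $\Gamma$ is not complete. To exclude a finite orbit in $\partial_\infty D(\Gamma,\cG)$ I would produce a rank-one isometry $\gamma\in\Gamma\cG$ of $D(\Gamma,\cG)$: since $\Gamma$ is atomic it is connected, not a join, and has no separating star, so $\Gamma\cG$ acts on $X(\Gamma,\cG)$ — and hence on the quasi-isometric complex $D(\Gamma,\cG)$ — with a contracting element $\gamma=ab$ for suitable non-trivial $a,b$ in vertex groups at a non-adjacent pair of vertices whose stars do not disconnect $\Gamma$; this is precisely where the absence of separating stars is used, and is the main obstacle in the argument. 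A finite orbit $F\subseteq\partial_\infty D(\Gamma,\cG)$ would be $\langle\gamma\rangle$-invariant, and the North–South dynamics of $\gamma$ force $F\subseteq\{\gamma^{+\infty},\gamma^{-\infty}\}$; then $\Gamma\cG$ would stabilise $\{\gamma^{+\infty},\gamma^{-\infty}\}$, so a finite-index subgroup of $\Gamma\cG$ would fix both endpoints and hence be virtually cyclic, quasi-acting on the axis of $\gamma$. But $\Gamma\cG$ is not virtually cyclic: as $\Gamma$ has girth $\ge 5$ and at least five vertices, it contains vertices $v_1,v_3,v_4$ with $v_3\sim v_4$ and $v_1$ adjacent to neither, whence $\Gamma\cG$ contains $G_{v_1}\ast(G_{v_3}\times G_{v_4})$, a free product of a non-trivial group with a group of order $\ge 4$, which has a non-abelian free subgroup. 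This contradiction finishes the proof.
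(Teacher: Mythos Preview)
Your essentiality argument contains a concrete error. You claim that for $g\in H=\langle G_u,G_{u'}\rangle$ one has $g\cdot(h\langle\emptyset\rangle)=gh\langle\emptyset\rangle\in\mathfrak h_v^{+}$, arguing ``likewise''. But the path you implicitly use from $h$ to $gh$ does not consist of edges labelled $u,u'$; those labels describe the path from $1$ to $g$, and left-multiplying by $g$ does not carry $\mathfrak h_v$ to itself (indeed $\mathrm{stab}(\mathfrak h_v)=\langle\mathrm{link}(v)\rangle$ and $u\notin\mathrm{link}(v)$). Concretely, for $a\in G_u\setminus\{1\}$ the geodesic $1\to G_u\to a\to aG_v\to ah$ crosses no edge dual to $\mathfrak h_v$, so $ah\in\mathfrak h_v^{-}$, contradicting your claim. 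The fix is immediate: use the coset $hH$ rather than the orbit $H\cdot h$, i.e.\ consider vertices $hg\langle\emptyset\rangle$; the path from $h$ to $hg$ is the $h$-translate of the path from $1$ to $g$, uses only labels $u,u'$, and hence stays in $\mathfrak h_v^{+}$.

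Your non-elementarity argument takes a different and harder route than the paper. You try to produce a single contracting element and then run North--South dynamics, invoking the absence of separating stars as ``the main obstacle''; and you conclude that a finite-index subgroup fixing $\{\gamma^{\pm\infty}\}$ must be virtually cyclic, which is not justified for this non-proper action. The paper instead observes that $D(\Gamma,\cG)$ is Gromov-hyperbolic (it is quasi-isometric to a building of type $W_\Gamma$, and $W_\Gamma$ is hyperbolic since $\Gamma$ has girth $\ge 5$), and then explicitly constructs two hyperbolic isometries with disjoint limit sets: from an induced path $v_1,v_2,v_3,v_4$ in $\Gamma$ and $s_i\in G_{v_i}\setminus\{1\}$ one takes $g=s_3s_1$ and $h=s_4s_2$, writes down combinatorial axes $\Lambda_g,\Lambda_h$ meeting only at $\langle\emptyset\rangle$, and notes that no hyperplane crosses both (their edge labels are disjoint). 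This simultaneously yields non-elementarity and, by placing any vertex $v$ as $v_2$ on such a path, essentiality of every hyperplane. In particular the separating-star hypothesis is not needed for this lemma, so your route both introduces an unnecessary assumption and leaves the key step unproved.
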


\begin{proof}
	It is enough to show that the action of $\Gamma \cG$ (identified with the subgroup  $\mathrm{Inn}(\Gamma\cG)$ of $\mathrm{Aut}(\Gamma\cG)$) acts essentially and non-elementarily on $D(\Gamma, \cG)$. 
	
	\textbf{Non-elementarity.} The Davis complex is quasi-isometric to a building with underlying Coxeter group the right-angled Coxeter group $W_\Gamma$ (see Section \ref{sec:dual}). As $\Gamma$ has girth at least $5$, $W_\Gamma$ is hyperbolic, and thus so is $D(\Gamma, \cG)$. Since the Davis complex is hyperbolic, the non-elementarity of the action will follow from the fact that there exist two elements $g, h \in \Gamma\cG$ acting hyperbolically on $D(\Gamma, \cG)$ and having disjoint limit sets in the Gromov boundary of $D(\Gamma, \cG)$, by elementary considerations of the dynamics of the action on the boundary of a hyperbolic space. We now construct such hyperbolic elements, following methods used in \cite{NoteAcylHyp} and \cite{CharneyMorrisArtinAH}.
	
	Since $\Gamma$ is leafless and has girth at least $5$, we can find an induced  geodesic of $\Gamma$ of the form $v_1, \ldots, v_4$. For each $1\leq i\leq 4$, choose a non-trivial element $s_i \in G_{v_i}$. For each $1\leq i\leq 4$, let also $e_{v_i}$ be the edge of $D(\Gamma, \cG)$ between the coset $\langle \varnothing \rangle $ and the coset $\langle v_i \rangle$. 
	
	We define the following two elements: $g = s_3s_1$ and $h = s_4s_2,$ 
	as well as the following combinatorial paths of $D(\Gamma, \cG)$: 
	$$\Lambda_g =  \bigcup_{k\in \bbZ} g^k (e_{v_1}\cup e_{v_3} \cup s_3 e_{v_3} \cup s_3e_{v_1}),~~~~~  \Lambda_h =  \bigcup_{k\in \bbZ} h^k (e_{v_2}\cup e_{v_4} \cup s_4 e_{v_4} \cup s_4e_{v_2}).$$
	
	We claim that $\Lambda_g$ and $\Lambda_h$ are combinatorial axes for $g$ and $h$ respectively. Indeed, by construction $\Lambda_g$ and $\Lambda_h$ make an angle $\pi$ at each vertex, hence are geodesic lines, and each element acts by translation on its associated path by construction.
	
	Now notice that no hyperplane of $D(\Gamma, \cG)$ crosses both $\Lambda_g$ and $\Lambda_h$,  since edges in $\Lambda_g$ and $\Lambda_h$ are disjoint and edges defining the same hyperplane of $D(\Gamma, \cG)$ have the same label.
	 This implies that for every vertex $u$ of $\Lambda_g$, the (unique) geodesic between $u$ and $\Lambda_h$ is the sub-path of $\Lambda_g$ between $u$ and the intersection point $\Lambda_g \cap \Lambda_h$. This in turn implies that the limit sets of $\Lambda_g$, $\Lambda_{h}$ are disjoint, and that the hyperplanes associated to the edges $e_{v_2}, e_{v_3}$ (which contain $\Lambda_g$, $\Lambda_{h}$ respectively) are essential. 
	
	\textbf{Essentiality.}   For every vertex $v$ of $\Gamma$, we can use the fact that $\Gamma$ is leafless and of girth at least $5$  to construct a geodesic path $v_1, \ldots, v_4$ of $\Gamma$ with $v_2=v$. The above reasoning implies that the hyperplane associated to $e_v$ is essential. As every hyperplane of $D(\Gamma, \cG)$ is a translate of such a hyperplane, it follows that  hyperplanes of $D(\Gamma, \cG)$  are essential. As the action of $\Gamma \cG$ on $D(\Gamma, \cG)$ is cocompact, the action of  $\mathrm{Aut}(\Gamma\cG)$ on $D(\Gamma, \cG)$ is also essential.
\end{proof}

\begin{lem}\label{lem:irred}
	The Davis complex $D(\Gamma, \cG)$ is irreducible.
\end{lem}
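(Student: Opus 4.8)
The plan is to reduce the irreducibility of $D(\Gamma, \cG)$ to a combinatorial statement about its hyperplanes, and then to play the labelling of hyperplanes by vertices of $\Gamma$ against the fact that an atomic graph is not a join. First I would recall the standard criterion: a finite-dimensional CAT(0) cube complex $Y$ splits non-trivially as a direct product of two CAT(0) cube complexes if and only if the parallelism classes of its hyperplanes can be partitioned into two non-empty families such that each hyperplane of one is transverse to each hyperplane of the other; equivalently, the crossing graph of $Y$ is a join of two non-empty subgraphs. One implication is immediate from the product structure (hyperplanes of a product come in two families, and two hyperplanes from different families always cross), and the converse is \cite{CapraceSageev}. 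Since $D(\Gamma, \cG)$ is finite-dimensional ($\Gamma$ being finite) and its crossing graph is naturally isomorphic to the transversality graph $T(\Gamma, \cG)$ (see the discussion at the beginning of Section \ref{section:algebraiccharacterisation}), it then suffices to prove that $T(\Gamma, \cG)$ is not a join of two non-empty subgraphs. Getting this reduction stated in exactly the right form — for parallelism classes of hyperplanes, with finite-dimensionality in hand — is the one point that needs care; everything after it is a short argument.

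Next I would argue by contradiction. Suppose the hyperplanes of $X(\Gamma, \cG)$ (the vertices of $T(\Gamma, \cG)$) split as $\mathcal{H}_1 \sqcup \mathcal{H}_2$, both non-empty, with every hyperplane of $\mathcal{H}_1$ transverse to every hyperplane of $\mathcal{H}_2$. Recall from Theorem \ref{thm:HypStab} that each hyperplane carries a label in $V(\Gamma)$, that every vertex $u$ is the label of the hyperplane $J_u$, and from Lemma \ref{lem:transverseimpliesadj} that transverse hyperplanes carry distinct labels. Let $L_i \subseteq V(\Gamma)$ collect the labels occurring among the hyperplanes of $\mathcal{H}_i$. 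A label lying in $L_1 \cap L_2$ would produce transverse hyperplanes with equal labels, contradicting Lemma \ref{lem:transverseimpliesadj}; hence $L_1 \cap L_2 = \emptyset$, and since every $J_u$ lies in $\mathcal{H}_1$ or $\mathcal{H}_2$, we obtain $V(\Gamma) = L_1 \sqcup L_2$ with both parts non-empty, with $J_u \in \mathcal{H}_1$ for $u \in L_1$ and $J_v \in \mathcal{H}_2$ for $v \in L_2$. Applying the transversality assumption to the pairs $(J_u, J_v)$ with $u \in L_1$, $v \in L_2$ and invoking Lemma \ref{lem:transverseimpliesadj} once more, every vertex of $L_1$ is adjacent to every vertex of $L_2$; that is, $\Gamma = L_1 * L_2$ is a non-trivial join.

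Finally I would rule this out using the structure of atomic graphs. If $|L_1| \geq 2$ and $|L_2| \geq 2$, then picking two vertices on each side yields in $\Gamma$ either a triangle or an induced $4$-cycle, contradicting the hypothesis that the girth is at least $5$. Otherwise one side, say $L_1 = \{w\}$, and then $\Gamma = \mathrm{star}(w)$; since $\Gamma$ is leafless with girth at least $5$ it contains an induced cycle of length $\geq 5$, and then either $w$ lies on that cycle — forcing a chord, contradicting inducedness — or $w$ lies off it and, being adjacent to every vertex, forms a triangle with an edge of the cycle, again contradicting the girth. Either way we reach a contradiction, so $T(\Gamma, \cG)$ is not a non-trivial join, and therefore $D(\Gamma, \cG)$ is irreducible.
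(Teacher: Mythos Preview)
Your proof is correct but takes a more global route than the paper's. The paper observes directly that the link in $D(\Gamma,\cG)$ of the vertex $\langle \varnothing \rangle$ (the trivial coset) is isomorphic to $\Gamma$; since a non-trivial product decomposition of a cube complex induces a non-trivial join decomposition of every vertex link, and since an atomic graph $\Gamma$ (girth $\geq 5$, leafless) is not a join, irreducibility follows in one line. You instead invoke the Caprace--Sageev hyperplane criterion, identify the crossing graph of $D(\Gamma,\cG)$ with the transversality graph $T(\Gamma,\cG)$, and then use the labelling of hyperplanes by vertices of $\Gamma$ together with Lemma~\ref{lem:transverseimpliesadj} to push a hypothetical join decomposition of $T(\Gamma,\cG)$ down to a join decomposition of $\Gamma$ itself. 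Both arguments land on the same combinatorial endpoint --- that $\Gamma$ is not a join --- but the paper reaches it locally via a single vertex link, whereas you reach it globally via the whole hyperplane system. Your approach has the merit of tying irreducibility of $D(\Gamma,\cG)$ explicitly to the structure of $T(\Gamma,\cG)$, which is in keeping with the rest of the paper; the paper's approach is simply shorter. One small remark: you only need the easy direction of the product/join criterion (a product decomposition yields a join of the crossing graph), so the appeal to \cite{CapraceSageev} for the converse, while true, is not actually needed for your contradiction argument.
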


\begin{proof}
	The link of every vertex of $D(\Gamma, \cG)$ corresponding to a coset of the trivial subgroup has a link which is isomorphic to $\Gamma$. As $\Gamma$ has girth at least $5$, such a link does not decompose non-trivially as a join, hence $D(\Gamma, \cG)$ does not decompose non-trivially as a direct product.
\end{proof}

\begin{lem}\label{lem:no_free_face}
	The Davis complex $D(\Gamma, \cG)$ has no free face. 
\end{lem}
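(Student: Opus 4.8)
The plan is to use the girth and leaflessness hypotheses to describe the maximal cubes of $D(\Gamma, \cG)$ explicitly, and then to verify the defining property on a representative of each $\Gamma\cG$-orbit of non-maximal cubes.

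First I would record that, since $\Gamma$ has girth at least $5$, it is in particular triangle-free, so the complete subgraphs of $\Gamma$ are exactly the empty graph, the singletons, and the edges of $\Gamma$; moreover, since every vertex of $\Gamma$ has valence at least $2$, the maximal complete subgraphs of $\Gamma$ are precisely its edges. It follows readily from Definition \ref{def:Davis} that the cubes of $D(\Gamma, \cG)$ are the subcomplexes $\{ g\langle \Xi \rangle \mid \Xi \subseteq \Lambda \}$ for $g \in \Gamma\cG$ and $\Lambda$ a complete subgraph of $\Gamma$, with dimension $|\Lambda|$. Hence $D(\Gamma, \cG)$ has no $3$-cube, and its maximal cubes are exactly the squares $S_{g,\{u,v\}} := \{ g\langle \Xi \rangle \mid \Xi \subseteq \{u,v\} \}$ with $g \in \Gamma\cG$ and $\{u,v\} \in E(\Gamma)$. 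In particular every non-maximal cube of $D(\Gamma, \cG)$ is a vertex or an edge, so it suffices to show that each vertex and each edge of $D(\Gamma, \cG)$ lies in at least two squares. Since $\Gamma\cG$ acts on $D(\Gamma, \cG)$ by cubical automorphisms, this property is $\Gamma\cG$-invariant, so after translating I may assume the cube under consideration involves the identity. There are then three cases for a vertex, namely $\langle \varnothing \rangle$, $\langle v \rangle$ with $v \in V(\Gamma)$, and $\langle u,v \rangle$ with $\{u,v\} \in E(\Gamma)$; and two cases for an edge, namely the edge $e_v^0$ joining $\langle \varnothing \rangle$ to $\langle v \rangle$, and the edge $e_{u,v}$ joining $\langle v \rangle$ to $\langle u,v \rangle$ for $\{u,v\} \in E(\Gamma)$.

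The ``shallow'' cases are immediate: the vertex $\langle \varnothing \rangle$ lies in every square $S_{1,\{u,v\}}$ with $\{u,v\} \in E(\Gamma)$, and $\Gamma$ has at least two edges since every vertex has valence at least $2$; and both $\langle v \rangle$ and $e_v^0$ lie in every square $S_{1,\{v,b\}}$ with $b$ a neighbour of $v$, of which there are at least two since $\Gamma$ is leafless (and distinct $b$ give distinct squares, as the two edge-labels occurring in $S_{1,\{v,b\}}$ are exactly $v$ and $b$). The only point requiring a little care---and the heart of the matter---is that of the ``deep'' cubes $\langle u,v \rangle$ and $e_{u,v}$, for which the single square $S_{1,\{u,v\}}$ does not suffice and one must instead use several of its $\Gamma\cG$-translates. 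For every $h \in \langle u,v \rangle$ one has $h\langle u,v \rangle = \langle u,v \rangle$, so $\langle u,v \rangle$ is a corner of $S_{h,\{u,v\}}$; these squares are pairwise distinct, as one sees from their $\langle \varnothing \rangle$-corners $\{h\}$, and $|\langle u,v \rangle| = |G_u|\cdot|G_v| \geq 4$. Likewise, for every $h \in G_v$ one has $h\langle v \rangle = \langle v \rangle$ and $h\langle u,v \rangle = \langle u,v \rangle$, so $e_{u,v}$ lies in $S_{h,\{u,v\}}$; these are again pairwise distinct, and $|G_v| \geq 2$ by our standing convention that the vertex groups are non-trivial. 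Hence every vertex and every edge of $D(\Gamma, \cG)$ lies in at least two squares, so $D(\Gamma, \cG)$ has no free face.
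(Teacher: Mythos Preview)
Your proof is correct and follows essentially the same approach as the paper: both use girth $\geq 5$ to reduce to dimension $2$, leaflessness to handle the ``shallow'' edge $e_v^0$ (the paper phrases this as the link at a trivial-coset vertex being isomorphic to $\Gamma$), and translates by the stabiliser $G_v$ to handle the ``deep'' edge $e_{u,v}$ (the paper phrases this as $|\mathrm{Stab}(e)|>|\mathrm{Stab}(C)|$). The only difference is that you also verify the vertex cases separately, whereas the paper checks only edges---which suffices, since every vertex of $D(\Gamma,\cG)$ lies on an edge and hence inherits two containing squares from any such edge.
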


\begin{proof}
	Since $\Gamma$ has girth at least $5$, the Davis complex is $2$-dimensional, and we have to show that every edge is contained in at least two squares.  Let $e$ be an edge of $D(\Gamma, \cG)$. If $e$ contains a vertex $v$ that is a coset of the trivial subgroup, then the link of $v$ is isomorphic to $\Gamma$, and it follows from the leafless-ness assumption that $e$ is contained in at least two squares. Otherwise  let $C$ be a any square containing $e$. As $\mathrm{Stab}_{\Gamma\cG}(C)$ is trivial and $\mathrm{Stab}_{\Gamma\cG}(e)$, which is conjugate to some $G_v$, contains at least two elements, it follows that there are at least two squares containing $e$. 
\end{proof}

\begin{lem}\label{lem:stab}
	Let $P$ be the fundamental domain of $D(\Gamma, \cG)$ and let $g \in \Gamma\cG$. Then 
	$$ \mathrm{Stab}_{\mathrm{Aut}(\Gamma\cG)}(P)\cap  \mathrm{Stab}_{\mathrm{Aut}(\Gamma\cG)}(gP) = \{ \varphi \in \mathrm{Loc}(\Gamma \cG) \mid \varphi(g)=g\}.$$	
\end{lem}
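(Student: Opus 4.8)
The plan is to compute both stabilisers explicitly and intersect them. Recall from Observation~\ref{obs:stab} that $P$ is the subcomplex of $D(\Gamma,\cG)$ spanned by the vertices $\langle\Lambda\rangle$ for $\Lambda$ a complete (possibly empty) subgraph of $\Gamma$, and that $gP$ is the image of $P$ under $\iota(g)\in\mathrm{Inn}(\Gamma\cG)$. Recall also that every automorphism of $\Gamma\cG$ decomposes uniquely as $\iota(h)\varphi$ with $h\in\Gamma\cG$, $\varphi\in\mathrm{Loc}(\Gamma\cG)$, acting on $D(\Gamma,\cG)$ by $\iota(h)\varphi\cdot x\langle\Lambda\rangle = h\varphi(x)\varphi(\langle\Lambda\rangle)$ (see the paragraph preceding Lemma~\ref{prop:extendT}).

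First I would prove that $\mathrm{Stab}_{\mathrm{Aut}(\Gamma\cG)}(P)=\mathrm{Loc}(\Gamma\cG)$. The inclusion $\supseteq$ is immediate: if $\varphi\in\mathrm{Loc}(\Gamma\cG)$ has associated graph isometry $\sigma$, then $\varphi\cdot\langle\Lambda\rangle=\langle\sigma(\Lambda)\rangle$ is again a vertex of $P$ since $\sigma$ preserves complete subgraphs, and as $\varphi$ is a cubical automorphism carrying cubes of $P$ to cubes of $P$, it stabilises $P$. For $\subseteq$, if $\alpha=\iota(h)\varphi$ stabilises $P$ then $\alpha\cdot\langle\varnothing\rangle=h\langle\varnothing\rangle$ must be a vertex of $P$; but the coset $h\langle\varnothing\rangle$ coincides with a vertex $\langle\Lambda\rangle$ of $P$ only when $\Lambda=\varnothing$ and $h\in\langle\varnothing\rangle=\{1\}$, so $h=1$ and $\alpha=\varphi\in\mathrm{Loc}(\Gamma\cG)$. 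Consequently $\mathrm{Stab}_{\mathrm{Aut}(\Gamma\cG)}(gP)=\iota(g)\,\mathrm{Loc}(\Gamma\cG)\,\iota(g)^{-1}$, and the intersection we want is $\mathrm{Loc}(\Gamma\cG)\cap\iota(g)\,\mathrm{Loc}(\Gamma\cG)\,\iota(g)^{-1}$.

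To finish, take $\varphi\in\mathrm{Loc}(\Gamma\cG)$; it lies in this intersection iff $\iota(g)^{-1}\varphi\,\iota(g)\in\mathrm{Loc}(\Gamma\cG)$. The identity $\varphi\circ\iota(g)=\iota(\varphi(g))\circ\varphi$ gives $\iota(g)^{-1}\varphi\,\iota(g)=\iota\big(g^{-1}\varphi(g)\big)\circ\varphi$, and since $\mathrm{Loc}(\Gamma\cG)$ is a subgroup containing $\varphi$, this element lies in $\mathrm{Loc}(\Gamma\cG)$ iff $\iota(g^{-1}\varphi(g))\in\mathrm{Inn}(\Gamma\cG)\cap\mathrm{Loc}(\Gamma\cG)$. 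By Lemma~\ref{fact_2} the latter intersection is trivial (an atomic graph is never the star of a vertex), so the condition reduces to $g^{-1}\varphi(g)\in Z(\Gamma\cG)$; and $Z(\Gamma\cG)=\{1\}$, since a central element normalises every $G_v$ and therefore lies in $\bigcap_{v}\langle\mathrm{star}(v)\rangle=\{1\}$, using again that $\Gamma$ is not a star. Hence $\varphi$ lies in the intersection iff $\varphi(g)=g$, which is the claimed description. I do not expect a genuine obstacle: the argument is essentially bookkeeping with the decomposition $\mathrm{Aut}(\Gamma\cG)=\mathrm{Inn}(\Gamma\cG)\rtimes\mathrm{Loc}(\Gamma\cG)$ and the conjugation identity, the one point to be careful about being the (standard) vanishing of $Z(\Gamma\cG)$ for atomic $\Gamma$. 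As a cross-check one can avoid the conjugation computation: once $\alpha=\varphi\in\mathrm{Loc}(\Gamma\cG)$, applying $\varphi$ to the vertex $g\langle\varnothing\rangle$ of $gP$ yields $\varphi(g)\langle\varnothing\rangle$, which is a vertex of $gP$ only if $\varphi(g)=g$, while conversely $\varphi(g)=g$ forces $\varphi\cdot g\langle\Lambda\rangle=g\langle\sigma(\Lambda)\rangle\in gP$.
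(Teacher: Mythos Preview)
Your proof is correct and follows essentially the same approach as the paper: identify $\mathrm{Stab}_{\mathrm{Aut}(\Gamma\cG)}(P)=\mathrm{Loc}(\Gamma\cG)$, use the conjugation identity $\varphi\circ\iota(g)=\iota(\varphi(g))\circ\varphi$, invoke Lemma~\ref{fact_2} to kill the inner part, and conclude via the triviality of the centre. The only cosmetic difference is that the paper introduces an auxiliary $\psi\in\mathrm{Loc}(\Gamma\cG)$ with $\varphi=\iota(g)\psi\iota(g)^{-1}$ and then shows $\varphi=\psi$, whereas you compute $\iota(g)^{-1}\varphi\iota(g)$ directly; your version is arguably cleaner, and your added cross-check via the vertex $g\langle\varnothing\rangle$ is a nice sanity check the paper does not spell out.
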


\begin{proof}
	Since an element of $\mathrm{Aut}(\Gamma\cG)$ stabilises the fundamental domain of $D(\Gamma, \cG)$ if and only if it stabilises the vertex corresponding to the coset $\langle \varnothing \rangle$, we have  $\mbox{Stab}_{\mathrm{Aut}(\Gamma\cG)}(P) = \mbox{Stab}_{\mathrm{Aut}(\Gamma\cG)}(\langle \varnothing \rangle)  = \mathrm{Loc}(\Gamma \cG)$. Therefore, if $\varphi \in \mathrm{Aut}(\Gamma\cG)$ belongs to $\mbox{Stab}_{\mathrm{Aut}(\Gamma\cG)}(P)\cap  \mbox{Stab}_{\mathrm{Aut}(\Gamma\cG)}(gP)$ then $\varphi \in  \mathrm{Loc}(\Gamma \cG)$ and there exists some $\psi \in \mathrm{Loc}(\Gamma \cG)$ such that $\varphi = \iota(g) \circ \psi \circ \iota(g)^{-1}$. Since $\psi \circ \iota(g)^{-1} = \iota(\psi(g))^{-1} \circ \psi$, we deduce that
	$$\varphi \circ \psi^{-1}= \iota(g) \circ \psi \circ \iota(g)^{-1} \circ \psi^{-1}  = \iota(g) \circ \iota(\psi(g))^{-1} \in \mathrm{Inn}(\Gamma\cG) .$$
	Thus, $\varphi \circ \psi^{-1} \in \mathrm{Inn}(\Gamma\cG) \cap \mathrm{Loc}(\Gamma \cG).$ On the other hand, we know from Lemma \ref{fact_2} that $\mathrm{Inn}(\Gamma\cG) \cap \mathrm{Loc}(\Gamma \cG)= \{ \mathrm{Id} \}$, whence $\varphi= \psi$ and $\iota(g)= \iota(\psi(g))$. 
As a consequence of \cite[Theorem 3.34]{GreenGP}, it follows from the fact that $\Gamma$ has girth at least $5$ that $\Gamma\cG$ is centerless, so that the equality $\iota(g)= \iota(\psi(g))$ implies that $\varphi(g)=g$, hence  the inclusion $\mbox{Stab}_{\mathrm{Aut}(\Gamma\cG)}(P)\cap  \mbox{Stab}_{\mathrm{Aut}(\Gamma\cG)}(gP) \subset \{ \varphi \in \mbox{Loc}(\Gamma \cG) \mid \varphi(g)=g\}.$
	The reverse inclusion is clear.
\end{proof}

\begin{proof}[Proof of Theorem \ref{thm:acyl_hyp}.]
	For each vertex $v$ of $\Gamma$, choose a finite generating set $\{s_{v,j} \mid 1 \leq j \leq m_v\}$. Up to allowing repetitions, we will assume that all the integers $m_v$ are equal, and denote by $m$ that integer. We now define a specific element $g \in \Gamma \mathcal{G}$ in the following way.
	
	Since $\Gamma$ is leafless and has girth at least $5$, we can find a sequence $v_1, \ldots, v_n$ exhausting all the vertices of $\Gamma$, such that for each $1\leq i < n$, $v_i$ and $v_{i+1}$ are not adjacent, and also $v_n$ and $v_1$ are not adjacent. We now define: 
	$$g_{j}:=g _{v_1, j}\cdots g_{v_n,j} \mbox{ for $1 \leq j \leq m$,}$$
	$$g:=g_{1}\cdots  g_m.$$
	Let $\varphi$ be an element of $ \mbox{Stab}_{\mathrm{Aut}(\Gamma\cG)}(P)\cap  \mbox{Stab}_{\mathrm{Aut}(\Gamma\cG)}(gP)$. By Lemma \ref{lem:stab}, it follows that $\varphi \in \mathrm{Loc}(\Gamma \cG)$ and $\varphi(g) = g$. By construction, $g$ can be written as a concatenation of the form $g= s_1 \cdots s_p$, where each $s_k$ is of the form $s_{v,j}$, and such that no consecutive $s_k, s_{k+1}$ belong to groups of $\cG$ that are joined by an edge of $\Gamma$. In particular, it follows from the properties of normal forms recalled in Section \ref{sec:generalities} that the decomposition $g= s_1 \cdots s_p$ is the unique reduced form of $g$. As $g = \varphi(g) = \varphi(s_1)\cdots\varphi(s_p)$ is an another reduced form of $g$, it follows that $\varphi(s_k)=s_k$ for every $1 \leq k \leq p$. As this holds for a generating set of $\Gamma\cG$, it follows that  $\varphi$ is the identity. We thus have that $ \mbox{Stab}_{\mathrm{Aut}(\Gamma\cG)}(P)\cap  \mbox{Stab}_{\mathrm{Aut}(\Gamma\cG)}(gP)$ is trivial. It now follows from Lemmas  \ref{lem:ess}, \ref{lem:irred}, \ref{lem:no_free_face}, and \ref{lem:stab} that Theorem \ref{thm:Chatterji_Martin} applies, hence Aut$(\Gamma\cG)$ is acylindrically hyperbolic.
\end{proof}

\begin{remark}
It is worth noticing that, in the statement of Theorem \ref{thm:acyl_hyp}, we do not have to require our vertex groups to be finitely generated. Indeed, during the proof, we only used the following weaker assumption: every $G \in \mathcal{G}$ contains a finite set $S \subset G$ such that every automorphism of $G$ fixing pointwise $S$ must be the identity. Of course, if $G$ is finitely generated, we may take $S$ to be  a finite generating set. But if $G= \mathbb{Q}$ for instance, then $S= \{1\}$ works as well, even though $\mathbb{Q}$ is not finitely generated. However, some condition is needed, as shown by the example below.

\medskip \noindent
Let $Z$ be the direct sum $\bigoplus\limits_{p \ \text{prime}} \mathbb{Z}_p$ and let $G_n$ be the graph product of $n$ copies of $Z$ over the cycle $C_n$ of length $n \geq 5$. We claim that $\mathrm{Aut}(G_n)$ is not acylindrically hyperbolic. 

\medskip \noindent
As a consequence of Corollary C (stated in the introduction), the automorphism group $\mathrm{Aut}(G_n)$ decomposes as $\left( \mathrm{Inn}(G_n) \rtimes \mathrm{Loc}(C_n, \mathcal{G}) \right) \rtimes \mathrm{Sym}(C_n, \mathcal{G})$. As the property of being acylindrically hyperbolic is stable under taking infinite normal subgroups \cite[Lemma 7.2]{OsinAcyl}, it is sufficient to show that $\mathrm{Inn}(G_n) \rtimes \mathrm{Loc}(C_n, \mathcal{G})$ is not acylindrically hyperbolic. So let $\iota(g) \varphi$ be an automorphism of this subgroup, where $g \in G_n$ and $\varphi \in \mathrm{Loc}(C_n, \mathcal{G})$. For each copy $Z_i$ of $Z$, the reduced word representing $g$ contains only finitely many syllables in $Z_i$; let $S_i \subset Z_i$ denote this set of syllables. Clearly, there exists an infinite collection of automorphisms of $Z_i$ fixing $S_i$ pointwise; furthermore, we may suppose that this collection generates a subgroup of automorphisms $\Phi_i \leq \mathrm{Aut}(Z_i)$ which is a free abelian group of infinite rank. Notice that $\phi(g)=g$ for every $\phi \in \Phi_i$. Therefore, for every $\psi \in \Phi_1 \times \cdots \times \Phi_n \leq \mathrm{Loc}$, we have
\begin{center}
$\psi \cdot \iota(g) \varphi= \iota(\psi(g)) \cdot \psi \varphi=g \cdot \psi \varphi= g \cdot \varphi \psi = g \varphi \cdot \psi$,
\end{center}
since $\varphi$ and $\psi$ clearly commute: each $\mathrm{Aut}(Z_i)$ is abelian so that $\mathrm{Loc}(C_n, \mathcal{G})$ is abelian as well. Thus, we have proved that the centraliser of any element of $\mathrm{Inn}(G_n) \rtimes \mathrm{Loc}(C_n, \mathcal{G})$ contains a free abelian group of infinite rank. Therefore, $\mathrm{Inn}(G_n) \rtimes \mathrm{Loc}(C_n, \mathcal{G})$ (and a fortiori $\mathrm{Aut}(G_n)$) cannot be acylindrically hyperbolic according to \cite[Corollary~6.9]{OsinAcyl}.
\end{remark}

\begin{remark}
In this section, it was more convenient to work with a CAT(0) cube complex rather than with a quasi-median graph because results already available in the literature allowed us to shorten the arguments. However, we emphasize that a quasi-median proof is possible. The main steps are the followings. First, as in Lemma \ref{prop:extendT}, the action $\Gamma \mathcal{G} \curvearrowright X(\Gamma, \mathcal{G})$ extends to an action $\mathrm{Aut}(\Gamma \mathcal{G}) \curvearrowright X(\Gamma, \mathcal{G})$ via $\iota(g) \varphi \cdot x = g \varphi(x)$ where $x \in X(\Gamma, \mathcal{G})$ is a vertex. So Theorem \ref{thm:noT} also follows since Niblo and Roller's argument \cite{MR1459140} can be reproduced almost word for word in the quasi-median setting; or alternatively, the theorem follows from the combination of \cite{MR1459140} with \cite[Proposition 4.16]{Qm} which shows that any quasi-median graph admits a ``dual'' CAT(0) cube complex. Next, in order to prove Theorem \ref{thm:acyl_hyp}, we need to notice that $X(\Gamma, \mathcal{G})$ is hyperbolic (as a consequence of \cite[Fact 8.33]{Qm}) and that the element $g$ constructed in the proof above turns out to be a WPD element. For the latter observation, one can easily prove the following criterion by following the arguments of \cite[Theorem 18]{MoiAcylHyp}: given a group $G$ acting on a hyperbolic quasi-median graph $X$, if an element $g \in G$ skewers a pair of strongly separated hyperplanes $J_1,J_2$ such that $\mathrm{stab}(J_1) \cap \mathrm{stab}(J_2)$ is finite, then $g$ must be a WPD element. 
\end{remark}

\addcontentsline{toc}{section}{References}

\bibliographystyle{alpha}
{\footnotesize\bibliography{ConjAutGP}}

\def\polhk#1{\setbox0=\hbox{#1}{\ooalign{\hidewidth
  \lower1.5ex\hbox{`}\hidewidth\crcr\unhbox0}}}
  \def\polhk#1{\setbox0=\hbox{#1}{\ooalign{\hidewidth
  \lower1.5ex\hbox{`}\hidewidth\crcr\unhbox0}}}
\begin{thebibliography}{BMMN02}

\bibitem[AM15]{AM}
Y.~Antol\`in and A.~Minasyan.
\newblock Tits alternatives for graph products.
\newblock {\em J. Reine Angew. Math.}, 704:55--83, 2015.

\bibitem[AMP16]{RAAGandT}
J.~Aramayona and C.~Mart\'{\i}nez-P\'{e}rez.
\newblock On the first cohomology of automorphism groups of graph groups.
\newblock {\em J. Algebra}, 452:17--41, 2016.

\bibitem[BF10]{BestvinaFeighnOutFnHyp}
M.~Bestvina and M.~Feighn.
\newblock A hyperbolic {${\rm Out}(F_n)$}-complex.
\newblock {\em Groups Geom. Dyn.}, 4(1):31--58, 2010.

\bibitem[BH99]{BridsonHaefliger}
Martin~R. Bridson and Andr{\'e} Haefliger.
\newblock {\em Metric spaces of non-positive curvature}, volume 319 of {\em
  Grundlehren der Mathematischen Wissenschaften [Fundamental Principles of
  Mathematical Sciences]}.
\newblock Springer-Verlag, Berlin, 1999.

\bibitem[BKS08]{RAAGatomic}
M.~Bestvina, B.~Kleiner, and M.~Sageev.
\newblock The asymptotic geometry of right-angled {A}rtin groups. {I}.
\newblock {\em Geom. Topol.}, 12(3):1653--1699, 2008.

\bibitem[BMMN02]{RACGrigidity}
N.~Brady, J.~McCammond, B.~M\"{u}hlherr, and W.~Neumann.
\newblock Rigidity of {C}oxeter groups and {A}rtin groups.
\newblock In {\em Proceedings of the {C}onference on {G}eometric and
  {C}ombinatorial {G}roup {T}heory, {P}art {I} ({H}aifa, 2000)}, volume~94,
  pages 91--109, 2002.

\bibitem[Bou97]{Bourdon}
M.~Bourdon.
\newblock Immeubles hyperboliques, dimension conforme et rigidit{\'e} de
  {M}ostow.
\newblock {\em Geometric {\&} Functional Analysis GAFA}, 7(2):245--268, May
  1997.

\bibitem[BV10]{BogopolskiVikentievAutF}
O.~Bogopolski and R.~Vikentiev.
\newblock Subgroups of small index in $\mathrm{Aut}(\mathbb{F}_n)$ and
  {K}azhdan's property ({T}).
\newblock In O.~Bogopolski, I.~Bumagin, O.~Kharlampovich, and E.~Ventura,
  editors, {\em Combinatorial and Geometric Group Theory}, pages 1--17, Basel,
  2010. Birkh{\"a}user Basel.

\bibitem[Cap14]{CapraceTW}
P.-E. Caprace.
\newblock Automorphism groups of right-angled buildings: simplicity and local
  splittings.
\newblock {\em Fund. Math.}, 224(1):17--51, 2014.

\bibitem[CFI16]{ChatterjiFernosIozzi}
I.~Chatterji, T.~Fern\'{o}s, and A.~Iozzi.
\newblock The median class and superrigidity of actions on {$\rm CAT(0)$} cube
  complexes.
\newblock {\em J. Topol.}, 9(2):349--400, 2016.
\newblock With an appendix by P.-E. Caprace.

\bibitem[CG12]{AutGPabelianSet}
L.~Corredor and M.~Gutierrez.
\newblock A generating set for the automorphism group of a graph product of
  abelian groups.
\newblock {\em Internat. J. Algebra Comput.}, 22(1):1250003, 21, 2012.

\bibitem[CM17]{NoteAcylHyp}
I.~Chatterji and A.~Martin.
\newblock A note on the acylindrical hyperbolicity of groups acting on
  \rm{CAT}(0) cube complexes.
\newblock {\em Proceedings of the conference `Beyond Hyperbolicity', LMS
  Lecture Notes Series}, accepted for publication, 2017.

\bibitem[CMW18]{CharneyMorrisArtinAH}
R.~Charney and R.~Morris-Wright.
\newblock Artin groups of infinite type: trivial centers and acylindical
  hyperbolicity.
\newblock {\em arXiv:1805.04028}, 2018.

\bibitem[CRSV10]{AutGPSIL}
R.~Charney, K.~Ruane, N.~Stambaugh, and A.~Vijayan.
\newblock The automorphism group of a graph product with no {SIL}.
\newblock {\em Illinois J. Math.}, 54(1):249--262, 2010.

\bibitem[CS11]{CapraceSageev}
P.-E. Caprace and M.~Sageev.
\newblock Rank rigidity for {CAT}(0) cube complexes.
\newblock {\em Geom. Funct. Anal.}, 21(4):851--891, 2011.

\bibitem[Dav98]{DavisBuildingsCAT0}
M.~Davis.
\newblock Buildings are {${\rm CAT}(0)$}.
\newblock In {\em Geometry and cohomology in group theory ({D}urham, 1994)},
  volume 252 of {\em London Math. Soc. Lecture Note Ser.}, pages 108--123.
  Cambridge Univ. Press, Cambridge, 1998.

\bibitem[DGO17]{DGO}
F.~Dahmani, V.~Guirardel, and D.~Osin.
\newblock {\em Hyperbolically embedded subgroups and rotating families in
  groups acting on hyperbolic spaces}, volume 245.
\newblock 2017.

\bibitem[DMSS18]{UnivRAbuildings}
T.~De~Medts, A.~Silva, and K.~Struyve.
\newblock Universal groups for right-angled buildings.
\newblock {\em Groups Geom. Dyn.}, 12(1):231--287, 2018.

\bibitem[Gen16]{MoiAcylHyp}
A.~Genevois.
\newblock Acylindrical action on the hyperplanes of a \rm{CAT}(0) cube complex.
\newblock {\em arXiv:1610.08759}, 2016.

\bibitem[Gen17]{Qm}
A.~Genevois.
\newblock Cubical-like geometry of quasi-median graphs and applications to
  geometric group theory.
\newblock {\em arXiv:1712.01618}, 2017.

\bibitem[Gen19]{GPvanKampen}
A.~Genevois.
\newblock On the geometry of van {K}ampen diagrams of graph products.
\newblock {\em arXiv:1901.04538}, 2019.

\bibitem[GL07]{OutSpaceFreeProduct}
V.~Guirardel and G.~Levitt.
\newblock The outer space of a free product.
\newblock {\em Proc. Lond. Math. Soc. (3)}, 94(3):695--714, 2007.

\bibitem[GL09]{GrunewaldLubotzkyAutF}
F.~Grunewald and A.~Lubotzky.
\newblock Linear representations of the automorphism group of a free group.
\newblock {\em Geom. Funct. Anal.}, 18(5):1564--1608, 2009.

\bibitem[GM18]{GPcycle}
A.~Genevois and A.~Martin.
\newblock Automorphism groups of cyclic products of groups.
\newblock {\em arXiv:1803.07536 \emph{(former version of the present article,
  not intended for publication)}}, 2018.

\bibitem[GPR12]{AutGPabelian}
M.~Gutierrez, A.~Piggott, and K.~Ruane.
\newblock On the automorphisms of a graph product of abelian groups.
\newblock {\em Groups Geom. Dyn.}, 6(1):125--153, 2012.

\bibitem[Gre90]{GreenGP}
E.~Green.
\newblock Graph products of groups.
\newblock {\em PhD Thesis}, 1990.

\bibitem[GS18]{VastAutRAAG}
V.~Guirardel and A.~Sale.
\newblock Vastness properties of automorphism groups of {RAAG}s.
\newblock {\em J. Topol.}, 11(1):30--64, 2018.

\bibitem[Hae16]{HaettelRigidity}
T.~Haettel.
\newblock Hyperbolic rigidity of higher rank lattices.
\newblock {\em to appear in Ann. Sci. ENS, arxiv:1607.02004}, 2016.

\bibitem[Hor14]{HorbezTitsAlt}
C.~Horbez.
\newblock The {T}its alternative for the automorphism group of a free product.
\newblock {\em arxiv:1408.0546}, 2014.

\bibitem[Hor16]{HorbezHypGraphsForFreeProducts}
C.~Horbez.
\newblock Hyperbolic graphs for free products, and the {G}romov boundary of the
  graph of cyclic splittings.
\newblock {\em J. Topol.}, 9(2):401--450, 2016.

\bibitem[Iva84]{IvanovAut}
N.~Ivanov.
\newblock Algebraic properties of the {T}eichm\"{u}ller modular group.
\newblock {\em Dokl. Akad. Nauk SSSR}, 275(4):786--789, 1984.

\bibitem[KKN18]{AutFnPropT}
M.~Kaluba, D.~Kielak, and P.~Nowak.
\newblock On property ({T}) for $\mathrm{Aut}(\mathbb{F}_n)$ and
  $\mathrm{SL}_n(\mathbb{Z})$.
\newblock {\em arxiv:1812.03456}, 2018.

\bibitem[KNO17]{Aut5PropT}
M.~Kaluba, P.~Nowak, and N.~Ozawa.
\newblock $\mathrm{Aut}(\mathbb{F}_5)$ has property $({T})$.
\newblock {\em arxiv:1712.07167}, 2017.

\bibitem[Lau95]{RAAGgenerators}
M.~Laurence.
\newblock A generating set for the automorphism group of a graph group.
\newblock {\em J. London Math. Soc.}, 52(2):318--334, 1995.

\bibitem[Mar17]{MartinHigman}
A.~Martin.
\newblock On the cubical geometry of {H}igman's group.
\newblock {\em Duke Math. J.}, 166(4):707--738, 03 2017.

\bibitem[McC89]{McCoolPropTaut}
J.~McCool.
\newblock A faithful polynomial representation of {${\rm Out}\,F_3$}.
\newblock {\em Math. Proc. Cambridge Philos. Soc.}, 106(2):207--213, 1989.

\bibitem[MO15]{MinasyanOsinTrees}
A.~Minasyan and D.~Osin.
\newblock Acylindrical hyperbolicity of groups acting on trees.
\newblock {\em Math. Ann.}, 362(3-4):1055--1105, 2015.

\bibitem[NR98]{MR1459140}
Graham~A. Niblo and Martin~A. Roller.
\newblock Groups acting on cubes and {K}azhdan's property ({T}).
\newblock {\em Proc. Amer. Math. Soc.}, 126(3):693--699, 1998.

\bibitem[Osi16]{OsinAcyl}
D.~Osin.
\newblock Acylindrically hyperbolic groups.
\newblock {\em Trans. Amer. Math. Soc.}, 368:851--888, 2016.

\bibitem[Osi17]{OsinSurvey}
D.~Osin.
\newblock Groups acting acylindrically on hyperbolic spaces.
\newblock {\em arxiv:1712.00814}, 2017.

\bibitem[RW16]{RuaneWitzel}
K.~Ruane and S.~Witzel.
\newblock \rm{CAT}(0) cubical complexes for graph products of finitely
  generated abelian groups.
\newblock {\em New York J. Math}, 22:637--651, 2016.

\bibitem[Ser89a]{RAAGServatius}
H.~Servatius.
\newblock Automorphisms of graph groups.
\newblock {\em J. Algebra}, 126(1):34--60, 1989.

\bibitem[Ser89b]{ServatiusCent}
H.~Servatius.
\newblock Automorphisms of graph groups.
\newblock {\em J. Algebra}, 126(1):34--60, 1989.

\bibitem[SS17]{VastAutRACG}
A.~Sale and T.~Susse.
\newblock Outer automorphism groups of right-angled {C}oxeter groups are either
  large or virtually abelian.
\newblock {\em arXiv:1706.07873}, 2017.

\bibitem[Tit88]{TitsAutCoxeter}
J.~Tits.
\newblock Sur le groupe des automorphismes de certains groupes de {C}oxeter.
\newblock {\em J. Algebra}, 113(2):346--357, 1988.

\bibitem[TW11]{TW}
A.~Thomas and K.~Wortman.
\newblock Infinite generation of non-cocompact lattices on right-angled
  buildings.
\newblock {\em Algebr. Geom. Topol.}, 11(2):929--938, 2011.

\end{thebibliography}

\Address

\end{document}